\newtheorem{thm}{Theorem}
\theoremstyle{definition}
\newtheorem{defn}[thm]{Definition}
\theoremstyle{remark}
\newtheorem{example}[thm]{Example}
\theoremstyle{remark}
\newtheorem{remark}[thm]{Remark}
\theoremstyle{lemma}
\newtheorem{lemma}[thm]{Lemma}
\theoremstyle{corollary}
\newtheorem{corollary}[thm]{Corollary}
\newtheorem{proposition}[thm]{Proposition}
\newcommand {\NN}{{\mathbb N}}
\newcommand {\QQ}{{\mathbb Q}}
\newcommand {\CC}{{\mathbb C}}
\newcommand {\RR}{{\mathbb R}}
\newcommand {\ZZ}{{\mathbb Z}}
\newcommand {\TT}{{\mathbb T}}
\newcommand {\PP}{{\mathbb P}}
\newcommand {\UU}{{\mathbb U}}
\newcommand{\zz}{{\mathbf{z}}} 
\newcommand{\kk}{{\mathbf{k}}}
\newcommand{\uu}{{\mathbf{u}}}
\renewcommand{\ll}{{\mathbf{l}}}
\newcommand{\lp}{{\mathcal{L}}}
\newcommand{\coker}{\mathrm{coker}}
\newcommand{\BraidSet}{{\mathsf B}}
\newcommand{\DivBraid}{{\mathsf{DB}}}
\newcommand{\CF}{{\mathsf{Conf}}}
\newcommand{\class}[1]{{\mathbf{#1}}}
\newcommand{\basic}{{\mathcal B}}
\begin{document}
\title{Divisor braids}
\author{Marcel B\"okstedt}
\address{Institut for Matematik, Aarhus Universitet, Ny Munkegade 118, bygn.~1530, 8000 \AA rhus C, Denmark}
\email{marcel@imf.au.dk}
\author{Nuno M.~Rom\~ao}
\address{Mathematisches Institut, Georg-August-Universit\"at G\"ottingen, Bunsenstra\ss e 3--5, 37073 G\"ottingen, Germany;  \newline
${}\qquad\quad\!$Institut des Hautes \'Etudes Scientifiques, 35 route de Chartres, 91440 Bures-sur-Yvette, France}
\email{nromao@ihes.fr}
\date{\today}
\begin{abstract} We study a novel type of braid groups on a closed orientable surface $\Sigma$. 
These are fundamental groups of certain manifolds that are hybrids between symmetric products and configuration spaces of
points on $\Sigma$; a class of examples arises naturally in  gauge theory, as moduli spaces of vortices in toric fibre bundles over $\Sigma$.
The elements of these braid groups, which we call divisor braids, have coloured strands that are allowed to intersect according to rules specified by a graph $\Gamma$.
In situations where there is more than one strand of each colour, we show that the corresponding braid group admits a metabelian presentation as a central
extension of the free Abelian group $H_1(\Sigma;\ZZ)^{\oplus r}$, where $r$ is the number of colours, and describe its Abelian commutator.
This computation relies crucially on producing a link invariant (of closed divisor braids) in the three-manifold $S^1 \times \Sigma $ for each graph $\Gamma$.
We also describe the von Neumann algebras associated to these groups in terms of rings that are familiar from noncommutative geometry.
\end{abstract}

\maketitle
\tableofcontents

MSC2010: {\tt 20F36; 57M27, 81T60.}

\newpage

\section{Introduction} 

In this paper, we study groups of generalised braids on a surface $\Sigma$. We shall assume that this surface is connected, oriented and closed, 
and we assign colours to all the strands of our braids.
The main novelty is that we want to extend the set of homotopies so as to allow strands  to intersect (i.e. pass through each other)
according to certain rules, unlike ordinary braids. These rules will depend on the colours of the strands, which we take from a finite set.
To implement the obvious composition law, we must require that all our braids are {\em colour-pure} in the sense that the point on $\Sigma$ where a given
strand starts will be the endpoint for a strand of the same colour (but this could possibly be a different strand); such coloured braids, pictured as usual in $[0,1] \times \Sigma$, also determine closed  braids inside $S^1 \times \Sigma$. 

Let $r$ be the number of colours used.
The rules of intersection of strands will be determined by a graph $\Gamma$ 
(without orientation) whose set of vertices ${\rm Sk}^0(\Gamma)$ is in
bijection with the set of colours.  As additional data, we will need a function 
$$\mathbf k: {\rm Sk}^0(\Gamma) \rightarrow \NN$$
on the set of vertices. The value $k_\lambda := \mathbf{k} (\lambda)$ will be thought of as a decoration or label at each vertex $\lambda$ of $\Gamma$; alternatively, we can
introduce an order on the set of vertices and record the labels $k_\lambda$ as a vector in $\NN^r$. The pair $(\Gamma, \mathbf{k})$ will be referred to as a {\em negative colour scheme}. Together with the genus of the surface $\Sigma$, it will completely specify a particular group of braids with
\begin{equation}\label{normk}
|\mathbf{k}| := \sum_{\lambda \in {\rm Sk}^0(\Gamma)} k_\lambda
\end{equation}
strands which we shall denote as $\DivBraid_{\mathbf{k}}(\Sigma, \Gamma  )$.

The intersection of strands is determined by the following rule: strands of two different colours are forbidden to intersect whenever the corresponding vertices are connected by an edge
in  $\Gamma$. In particular, we may discard graphs with multiple edges without loss of generality.
If $\Gamma$ consisted of a single vertex $\lambda$ and a single edge starting and ending at this vertex
(which we will call a {\em self-loop}), then $\DivBraid_{\mathbf{k}}(\Sigma, \Gamma )$ would simply be the braid group ${\sf B}_{k}(\Sigma)$ on $k:=k_\lambda$ strands
on the surface $\Sigma$, which
is a familiar object~\cite{Bir}. However, in this paper we want to restrict our attention to negative colour schemes whose graphs do not contain self-loops. As we shall see, the corresponding generalised braid groups will still be interesting objects, arising quite naturally in mathematics.
As a simple example, if $\Gamma$ is a complete graph (i.e. all pairs of vertices are connected by an edge) with $r$ vertices, all labelled by the integer $1$, we
obtain the pure braid group~\cite{KasTur} on $r$ strands ${\sf PB}_r(\Sigma) \subset {\sf B}_r(\Sigma)$ on the surface; a particular case is of course  the fundamental group $\pi_1(\Sigma) = {\sf PB}_1(\Sigma)$.

Given an element $\gamma$ of  $\DivBraid_{\mathbf{k}}(\Sigma, \Gamma)$ where $\Gamma$ has no self-loops, one can interpret the sub-braid $\gamma_{\lambda}$ consisting of all strands of a fixed colour $\lambda\in {\rm Sk}^0(\Gamma)$ as describing a homotopy class of loops on the space of effective divisors of degree $k_\lambda$ on $\Sigma$ based at a reduced divisor. In this spirit, we shall from now on refer to the elements of our braid groups $\DivBraid_{\mathbf{k}}(\Sigma, \Gamma)$ as {\it divisor braids}.

Our study of divisor braid groups is directly motivated by a problem in gauge theory. It is well known that when $\Sigma$ is a Riemann surface the symmetric products 
$S^k \Sigma :=\Sigma^k/\mathfrak{S}_k$ are smooth manifolds with an induced complex structure, and they are of interest in algebraic geometry
as spaces of effective divisors of degree $k$ on $\Sigma$. If in addition we endow $\Sigma$ with a symplectic structure  compatible with the
complex structure (a K\"ahler area form), there is a way of inducing K\"ahler structures (and in particular Riemannian metrics) on $S^k \Sigma$  as well --- in fact, a real one-dimensional family of them. This comes about because these manifolds are moduli spaces of solutions (modulo gauge
equivalence) of the {\em vortex equations} in line bundles of degree $k$ over $\Sigma$. The fundamental groups of these spaces $\mathcal{M}^\CC_{k}(\Sigma)\cong S^k \Sigma $ are either $\pi_1(\Sigma)$ for $k=1$ or its
Abelianisation $H_1(\Sigma;\ZZ)$ for $k>1$, and both of these groups are particular cases of our divisor braid groups (for the latter, we take a graph consisting of a single vertex with label $k$). 

There are many ways to generalise the vortex equations beyond line bundles.
One possibility is to consider vortices with more general K\"ahler toric target manifolds $X$, using the real torus $\TT$ of $X$ as the structure group of the gauge theory, and specify a moment map $\mu:X \rightarrow {\rm Lie}(\TT)^*$. When $X$ is compact, one is dealing with a class of Abelian nonlinear vortices whose moduli spaces $\mathcal{M}^X_{\mathbf{h}}(\Sigma)$ (under suitable stability asumptions) have been identified~\cite{BokRomTFB} with
 certain open submanifolds of a Cartesian product of symmetric products
 \begin{equation*}\label{Symk}
 S^{\mathbf{k}} \Sigma:=\prod_{\lambda \in {\sf Fan}_{\mu(X)}(1)}S^{k_\lambda} \Sigma.
 \end{equation*}
In this product, the indices $\lambda$ are taken from the set of rays $\mathsf{Fan}_{\mu(X)}(1)$  in the normal fan determining the toric manifold $X$; see e.g.~\cite{CLS} for background on toric geometry.
The label $\mathbf h$ can be interpreted as an element of the  $\TT$-equivariant homology group   $H^\TT_2(X;\ZZ)$, and it corresponds to the image of the generator
$[\Sigma]\in H_2(\Sigma;\ZZ) \cong \ZZ$  under the {\em BPS charge} that is relevant to this setup ---  see~\cite{BokRomTFB}
for the general definition in the framework of vortices in toric fibre bundles over closed K\"ahler manifolds of arbitrary dimension. The integers $k_\lambda$ are obtained from the formula
\begin{equation}\label{charges}
k_\lambda= \langle c_1^\TT (D_\lambda) , \mathbf{h} \rangle
\end{equation}
using the pairing of $\TT$-equivariant cohomology and homology of $X$ in degree 2; in this formula, $c_1^\TT(D_\lambda)$ are the $\TT$-equivariant first Chern classes of irreducible $\TT$-equivariant divisors in $X$ corresponding to the rays in the normal fan~\cite{BokRomTFB}. 
 The spaces  $\mathcal{M}^X_{\mathbf{h}}(\Sigma) \subset S^\mathbf{k} \Sigma $ also carry a natural K\"ahler structure (see Section~\ref{sec:vortices} below) which plays an important role in the description of  gauged nonlinear sigma-models associated to the vortex equations, both at classical and quantum level --- see~\cite{RomSpe} for a study of this so-called
 $L^2$-metric  in simple examples where $X=\PP^1$.

 It turns out that the moduli spaces $\mathcal{M}^X_{\mathbf{h}}(\Sigma)$ in this situation have fundamental groups that provide examples of the divisor braid groups described in this paper.  This fact is established in Proposition~\ref{onlySk1}, which also clarifies how to construct the relevant graph $\Gamma$ in the negative colour scheme from the combinatorial data of the toric target $X$; see also equation (\ref{fundgroups}).
 The set of vertices or colours ${\rm Sk}^0(\Gamma)= \mathsf{Fan}_{\mu(X)}(1)$ corresponds to the set of rays in the normal fan of $X$, 
 whereas the labels $k_\lambda$ are determined as in (\ref{charges}).
We shall provide the reader with more background about the whole setup in Section~\ref{sec:vortices}, and also indicate why understanding  the structure of this particular class of divisor braid groups and their representation varieties, as well as certain Hilbert modules over the associated von Neumann algebras, is significant in the context of supersymmetric quantum field theory in two dimensions.

The rest of the paper is organised as follows.  In Section~\ref{sec:fundamental}, we make our definition of divisor braids precise, and identify the
 groups  $\DivBraid_{ \mathbf{k}}(\Sigma, \Gamma)$  introduced more informally above as fundamental groups of a canonical type of $2|\mathbf{k}|$-dimensional configuration spaces  $\CF_{\mathbf{k}}(\Sigma, \Gamma)$; these spaces are hybrids between symmetric products and  ordinary configuration spaces modelled on the surface $\Sigma$. In Section~\ref{sec:pictures}, we show that each such $\DivBraid_{ \mathbf{k}}(\Sigma, \Gamma)$ is a central extension of the 
group $H_1(\Sigma;\ZZ)^{\oplus r}$ by a certain Abelian group $E$. We give generators for $E$ and some relations; this provides a surjective map $D\to E$, where $D$ is an Abelian group
presented in terms of generators and relations.  In Section~\ref{sec:linking} we prove that, for the case of two colours $r=2$, the map $D\to E$ is an isomorphism. The proof we shall give relies on the construction of a link invariant on $E$.  In Section~\ref{sec:colors}, we extend this link invariant to the case of an arbitrary number of colours. Again, this can be used to show that the map $D\to E$ is an isomorphism. In Section~\ref{sec:center} we explain how the group $D$ depends on the decorated graph $(\Gamma,\mathbf{k})$. As might be expected, the most intricate dependence occurs at the level of the finite Abelian group ${\rm Tor}\, D$; an approach to the study this phenomenon for negative colour schemes $(\Gamma,\mathbf{k})$ based on a fixed graph $\Gamma$ is presented in~\cite{Bok}. Section~\ref{sec.examples} collects a handful of examples chosen to illustrate some features of the general theory. Finally, in Section~\ref{sec:ncg}, we describe the von Neumann algebras associated to very composite divisor braid groups in terms of noncommutative tori; this exercise is motivated by the applications to mathematical physics that we describe in Section~2.\\

\noindent
{\bf Acknowledgements:}
The authors would like to thank Carl-Friedrich B\"odigheimer (Bonn) for a discussion about the theory of configuration spaces and for giving them access to his PhD thesis; Chris Brookes (Cambridge) for sharing some insights related to Section~\ref{sec:ncg}, as well as Vadim Alekseev (G\"ottingen) for general advice on that section; and J\o rgen Tornehave (Aarhus) for very helpful discussions about aspects of topology connected to this work.

%%%%%%%%%%%%%%%%%%%

\section{Divisor braids from gauge theory} \label{sec:vortices}

This section is intended to give a brief account of
our original motivation to study the groups 
$\DivBraid_{\mathbf{k}}(\Sigma, \Gamma)$, which was only mentioned in passing in the Introduction. A reader who is not interested in this material can skip  it without loss of continuity, referring back to it later as needed.

\subsection{The vortex equations} \label{sec:vorteq}

Let $\Sigma$ be a closed oriented  surface  equipped with a Riemannian metric $g_\Sigma$. The metric
determines a K\"ahler structure  $(\Sigma, j_\Sigma, \omega_\Sigma)$ on $\Sigma$, where the complex structure $j_\Sigma$ corresponds to a rotation by a right angle in
the direction prescribed by the orientation, and the symplectic structure $\omega_\Sigma$ is the associated area form.
We shall consider another K\"ahler manifold $(X, j_X, \omega_X)$ where a holomorphic Hamiltonian action of a Lie group $G$ is given, and fix a moment 
map $\mu: X\rightarrow {\rm Lie}(G)^*=: \mathfrak{g}^*$ for this action. We also fix a $G$-equivariant isomorphism of vector spaces $\kappa: \mathfrak{g}^*{\rightarrow} \mathfrak{g}$
and write $\mu^\kappa:= \kappa \circ \mu$; $\kappa_\mathfrak{g}$ and $\kappa_{\mathfrak{g}^*}$ denote the induced $G$-invariant inner products on $\mathfrak g$ and $\mathfrak{g}^*$.
The surface $\Sigma$ will play the role of source, whereas the manifold $X$ will be the target for the gauge field theories we are interested in.

Let $\pi: P\rightarrow \Sigma$ be a $G$-principal bundle over $\Sigma$. The {\em vortex equations} are the PDEs\footnote{With a slight abuse of language, one often omits the
pull-back $\pi^*$ in the second equation; this amounts to identifying the form $F_A$ with its ($G$-equivariant) pull-back to $\Sigma$, which is only unambiguous when $G$ is Abelian.}
\begin{equation} \label{vortex}
\bar \partial^A_{j_X,j_\Sigma}\, \phi =0,\qquad F_A +  (\mu^\kappa \circ \phi)\, \pi^* \omega_\Sigma =0 
\end{equation}
for a smooth $G$-equivariant map $\phi: P\rightarrow X$  and a connection $A$ in $P$ with curvature $F_A \in \Omega^2(P;\mathfrak{g})$. The connection $A$ can be seen as a $G$-equivariant splitting of vector bundles ${\rm T}P\cong \ker {\rm d}\pi \oplus \pi^*{\rm T}\Sigma$ over $P$, or as 1-form $A:{\rm T}P \rightarrow \mathfrak{g}$ corresponding to
the projection onto the first summand of that splitting, whereas the equivariant map $\phi$ can also be interpreted as a section $\phi:\Sigma \rightarrow E_X$ of the fibre bundle
\begin{equation}\label{bundleE}
\pi_X: E_X:=P\times_G X\rightarrow \Sigma
\end{equation}
with fibre $X$ associated to $P$ via the $G$-action on $X$. So there is a projection 
$p^A: {\rm T}E_X \cong \ker A \oplus \ker ({\rm d}\pi_X) \rightarrow \ker ({\rm d}\pi_X) \cong {\rm T}X$ specified by $A$, and this in turn determines a covariant derivative ${\rm d}^A$ on sections of $\pi_X$ by ${\rm d}^A \phi := p^A \circ {\rm d} \phi$. Then one use may the complex structures $j_\Sigma$ and $j_X$ to construct the  holomorphic structure operator
$$
\phi \mapsto  \bar \partial^A_{j_X,j_\Sigma}\phi := \frac{1}{2} \left({\rm d}^A \phi +j_X \circ {\rm d}^A \phi \circ j_\Sigma\right)
$$
appearing in the first equation in (\ref{vortex}). The kernel of $  \bar \partial^A_{j_X,j_\Sigma}$ specifies the holomorphic sections of $\pi_X$, and indeed sections 
$\phi: \Sigma \rightarrow E_X$ in this kernel can be regarded as holomorphic maps with respect to $j_\Sigma$ and a complex structure $J^A_{E_X}$ induced on $E_X$ by $j_X$ and $A$, see~\cite{Kob}.

When $\Sigma$ is compact, a section $\phi$ of  the bundle $\pi_X$ determines a $G$-equivariant 2-homology class $[\phi]^G \in H_2^G(X;\ZZ)$ which will play the role of 
topological invariant  in the moduli problem associated to the vortex equations (\ref{vortex}). To see how this invariant arises, we take a classifying map $f: \Sigma \rightarrow {\rm B}G$ for the principal $G$-bundle $P \cong f^*{\rm E}G$, and consider the map $\tilde f \times \phi: P\rightarrow {\rm E}G \times X $, where $\tilde f$ is the lift of $f$ 
and $\phi$ is  again regarded as $G$-equivariant map. Since $\tilde f \times \phi$ is $G$-equivariant, it descends to a map $\tilde \phi: \Sigma \rightarrow {\rm E}G\times_G X$ to the Borel construction for the $G$-action on $X$. Then we take the fundamental class $[\Sigma] \in H_2(\Sigma;\ZZ)$ and set 
\begin{equation} \label{phiG}
[\phi]^G:= \tilde \phi_*[\Sigma] \in H_2({\rm E}G\times_G X;\ZZ)=: H_2^G(X;\ZZ).
\end{equation}

There is also a natural $G$-equivariant $2$-cohomology class  $[\eta]_G \in H^2_G(X;\ZZ)$ determined by  the equivariantly closed form of degree 2
\begin{equation} \label{eta}
\eta(\xi) = \omega_X - \langle \mu, \xi  \rangle\,  \in\,  ({\rm Sym}(\mathfrak{g}^*) \otimes \Omega^*(X))^G,\qquad \xi \in \mathfrak{g}
\end{equation}
in the Cartan complex of the $G$-action on $X$ \cite{GuiSteSS}. To each $G$-connection $A$ in $P\rightarrow \Sigma$ we can associate 
the closed 2-form
\begin{equation}\label{etaA}
\eta(A) = \omega_X - {\rm d}(\mu, A) \in \Omega^2(P\times X)
\end{equation}
which can be seen to descend through the quotient $q: P \times X \rightarrow E_X$. i.e. $\eta(A)=q^*\eta_{E_X}(A)$. The cohomology class $[\eta_{E_X}(A)] \in H^2(E;\ZZ)$ is in fact independent of $A$, so the evaluation at the connection $A$ can be interpreted as a $G$-equivariant version of the Chern--Weil homomorphism~\cite{Mun}.
This construction can be used to model the pairing of the classes   $[\eta]_G \in H_G^2(X;\ZZ)$ and $[\phi]^G \in H^G_2(X;\ZZ)$ through the formula
\begin{equation} \label{equivpairing}
\langle [\eta]_G , [\phi]^G \rangle = \int_\Sigma \phi^* \eta_{E_X}(A).
\end{equation}
For a solution $(A,\phi)$ of the system of PDEs  (\ref{vortex}), the quantity (\ref{equivpairing}) admits a physical interpretation as total energy of the field configuration, which also
ensures that it must be nonnegative (see \cite{CGS, MunHK} and our discussion in Section~\ref{sec:susy}).

The pairs $(A,\phi)$ playing the role of variables in the system of equations (\ref{vortex}) form the infinite-dimensional manifold 
\begin{equation} \label{fields}
\mathcal{F}(P,X,\Sigma):= \mathcal A(P) \times \Gamma(\Sigma,E_X),
\end{equation}
 where the first factor (the space of $G$-connections in $P$) is an affine space over the vector space $\Omega^1(X,\mathfrak{g})$, and the second factor denotes smooth sections of $\pi_X$. This manifold supports an action of the infinite-dimensional Lie group $\mathcal{G}={\rm Aut}_{\Sigma}(P)$ by $(A,\phi) \mapsto ({\rm Ad}_g A-\pi^* (g^{-1} {\rm d}g), g\cdot \phi)$, $g\in \mathcal{G}$. Each tangent space
$$
{\rm T}_{(A,\phi)} \mathcal{F}(P,X,\Sigma) \cong \Omega^1(\Sigma,\mathfrak{g}) \times \Gamma(\Sigma,\phi^*( P \times_G {\rm T}X))
$$
receives an induced complex structure that can be written locally as
$$J: (\dot A,\dot \phi) \mapsto (*_{g_{\Sigma}}\dot A, (\phi^*j_X)\dot \phi ),$$ where $*_{g_\Sigma}$ is the Hodge star operator of the metric $g_\Sigma$, as well as a
Hermitian inner product  
\begin{equation} \label{L2metric}
\langle\!\langle(\dot A_1,\dot \phi_1) ,(\dot A_2,\dot \phi_2)\rangle\!\rangle_{L^2(\Sigma)}  := \int_\Sigma\left( \langle   \dot A_1 , \dot A_2 \rangle_{\kappa_{\mathfrak{g}},g_\Sigma} +
\langle \dot \phi_1, \dot \phi_2\rangle_{\phi^*g_X,g_\Sigma} \right) 
\end{equation}
which can be regarded as a generalisation  of the usual $L^2$-metric on spaces of functions. It is easy to check that these two geometric structures are compatible on the space of all fields.

The linearisation of each of the equations in $(\ref{vortex})$ around a solution $(A,\phi)$ defines a subspace of $
{\rm T}_{(A,\phi)} \mathcal{F}(P,X,\Sigma) $ which is invariant under the infinitesimal 
 $\mathcal{G}$-action --- so $\mathcal{G}$ also acts on the space of solutions. In physics, one is interested in the whole set of solutions much less than on the spaces
 of $\mathcal{G}$-orbits
 \begin{equation}\label{moduli}
 \mathcal{M}^X_{\mathbf{h}}(\Sigma):= \{ (A,\phi) \in\mathcal{F}(P,X,\Sigma) \,|\, \text{equations (\ref{vortex}) hold and }[\phi]^G = \mathbf{h} \}/\mathcal{G}
 \end{equation}
 for each $\mathbf{h} \in H_2^G(X;\ZZ)$. When non-empty, theses spaces (which are referred to as {\em moduli spaces of vortices}) are finite-dimensional and possess
 mild singularities. Moreover, their locus of regular points receives a K\"ahler structure, which can be formally understood as a symplectic reduction of the $L^2$-metric
 (\ref{L2metric}) as follows. First of all by looking at suitable completions, one first needs to interpret (\ref{L2metric}) as a K\"ahler metric. The first equation in (\ref{vortex}) is preserved under the complex structure $J$, thus it cuts out a complex submanifold of $\mathcal{A}(P) \times  \Gamma(\Sigma,E_X)$  which becomes a K\"ahler manifold with the pull-back of the ambient symplectic form. In turn, the left-hand side of the
 second equation in (\ref{vortex}) can be recast  as a moment map for the $\mathcal{G}$-action on the complex submanifold, with respect to this induced symplectic structure. Under these conditions, the definition  (\ref{moduli}) thus corresponds to an infinite-dimensional analogue of the Meyer--Marsden--Weinstein quotient in the context of
 finite-dimensional K\"ahler geometry.
 
A simple example of this construction, which bypasses part of the analysis required on the space of fields, is obtained when $X=\CC$ with standard K\"ahler metric and action of $G={\rm U}(1)$. The corresponding moduli spaces
$\mathcal{M}^\CC_\mathbf{h}(\Sigma)$ were first studied in~\cite{Tau} for $\Sigma=\CC$. In this example $\Sigma$ is not compact, and  $\mathbf{h}$ should be
interpreted in terms of a winding number for $\phi$  at infinity; a key result is that  $\mathcal{M}^\CC_{k}(\CC)\cong S^k \CC \cong  \CC^k$ for positive winding $k \in \NN$.
 The case where $\Sigma$ is compact was studied e.g.~in \cite{Bra,GP}, and the equivalent result is
 \begin{equation}\label{modspU1C}
 \mathcal{M}^{\CC}_{k}(\Sigma) \cong
 \left\{
 \begin{array}{cl}
 \emptyset & \text{ if } \frac{\tau}{4 \pi} {\rm Vol}(\Sigma) <  k , \\
 {\rm Pic}^k \Sigma & \text{ if } \frac{\tau}{4 \pi} {\rm Vol}(\Sigma) =  k, \\
 S^k \Sigma & \text{ if } \frac{\tau}{4 \pi} {\rm Vol}(\Sigma) >  k,
  \end{array}
 \right.
 \end{equation}
where ${\rm Vol}(\Sigma):=\int_\Sigma \omega_\Sigma$, ${\rm Pic}^k\Sigma$ is the Picard variety parametrising holomorphic line bundles of degree $k$ on $\Sigma$, and we write the function $\mu^\kappa$ as $x \mapsto -\frac{1}{2{\rm i}}(|x|^2 - \tau)$ with $\tau \in \RR$.
One can regard this linear example  as a toy model for the nonlinear situation we want to address in this paper --- more specifically, our focus will be in
the case where a stability condition analogous to $\frac{\tau}{4 \pi} {\rm Vol}(\Sigma) >  k$ is imposed. The corresponding K\"ahler metrics on $\mathcal{M}^{\CC}_{k}(\Sigma)$ 
are still poorly understood; but see e.g.~\cite{ManRom}  for a discussion of the  limit  $\frac{\tau}{4 \pi} {\rm Vol}(\Sigma) \searrow  k$.

\subsection{Generalities on K\"ahler toric targets} \label{sec:genKaehler}

From now on, we want to focus on the special case where $(X, j_X, \omega_X)$ is a K\"ahler structure on a compact  toric manifold with real torus $\TT$, and use $G=\TT$ as the structure group of the gauge theory.

The most convenient way of realising our toric manifold $X$ (see~\cite{CLS}) is perhaps by starting from a fixed free Abelian group $M\cong \ZZ^n$, which determines $\TT:={\rm Hom} (M,{\rm U}(1)) \cong {\rm U}(1)^n$, and specify a convex polytope $\Delta \subset M _\RR := M\otimes_\ZZ \RR $  with the following properties:
\begin{enumerate}
\item[(i)] At each vertex of $\Delta$, exactly $n$ {\em edges}  (i.e.~line segments between two vertices) meet.
\item[(ii)] All {\em edge directions} in $\Delta$ (i.e~one-dimensional subspaces of $M_\RR$ generated by a  difference of vertices) are rational, in the sense of admitting generators in
the lattice $M$.
\item[(iii)] One can choose generators for the $n$ edge directions associated to each vertex of $\Delta$ to form a basis of $M$.
\end{enumerate}
It is common practice to refer to such $\Delta$ as a {\em Delzant polytope}~\cite{Del,Gui}.
The {\em in}ward-pointing normal directions to the (closed) facets of $\Delta$ determine
{\em rays} $\rho$ of a complete {\em fan} in the dual space $N_\RR:=M_\RR^\vee \cong \RR^n$,  denoted ${\sf Fan}_\Delta$; we will write as $\mathbf{n}_\rho \in N$ the primitive generator of the semigroup $\rho \cap N$, where $N:={\rm Hom}_\ZZ(M,\ZZ)\subset N_\RR$ is the dual lattice to $M$.  
The most basic construction in toric geometry (see~\cite{CLS}, section 3.1) defines a complex variety $X$ from a fan such as ${\sf Fan}_\Delta$ by glueing  together $n$-dimensional affine varieties associated to the {\em cones} in the fan. Each of these affine pieces contains  the complex torus $\TT_\CC\cong (\CC^*)^n$, which is the piece corresponding to the zero cone in $N_\RR$. The restrictions we have put on $\Delta$ imply that the variety $X$ is smooth and projective with ${\rm dim}_\CC X=n$, so we will treat it as a compact complex $n$-manifold $(X,j_X)$. The complex torus $\TT_\CC={\rm Hom} (M, \CC^*)  \supset {\rm Hom} (M, {\rm U}(1))=:  \TT$ also acts on $X$, and in fact $X$ can be regarded as a completion of $\TT_\CC$ to which the action of $\TT_\CC$ on itself can be extended as a holomorphic action.  However, we want to emphasise that in
our gauge-theory setting it is the compact {\em real} torus $\TT$ that plays a more prominent role. We will always assume that the compact toric manifold in our discussion is specified by a Delzant polytope $\Delta$, but shall write it as $X$ rather than $X_\Delta$ or $X_{\mathsf{Fan}_\Delta}$ for short.

We denote by ${\sf Fan}_\Delta(1)$ the subset of {rays} (i.e.~one-dimensional cones) in the fan  of $\Delta$. Each
ray $\rho$ determines a ($\TT_\CC$- and in particular) $\TT$-equivariant divisor $D_\rho:= \overline{O(\rho)}\cong X_{{\rm Star}(\rho)}$ in $X$ as its {\em orbit closure}, which is a compact toric variety itself (see~\cite{CLS}, Theorem~3.2.6 and Proposition~3.2.7). The $\ZZ$-linear map $\alpha$ given by
 $$\alpha: \mathbf m\mapsto \sum_{\rho \in {\sf Fan}_\Delta(1) } \langle \mathbf m,\mathbf n_\rho\rangle \, D_\rho, \qquad \mathbf m\in M$$ induces a short exact sequence of Abelian groups
 (cf.~\cite{CLS}, Theorem~4.1.3)
 \begin{equation} \label{divclass}
0\longrightarrow M   \stackrel{\alpha} {\longrightarrow } {\rm CDiv}_\TT (X) \stackrel{\beta} {\longrightarrow} {\rm Cl}(X) \longrightarrow 0.
\end{equation}
Here, ${\rm CDiv}_\TT (X) \cong \bigoplus_{\rho \in {\sf Fan}_\Delta(1)} \ZZ D_\rho$ denotes the group of $\TT$-equivariant (Cartier) divisors, whereas
${\rm Cl}(X)$ stands for the divisor class group of $X$. The latter coincides with the Picard variety ${\rm Pic}(X)$, since $X$ is smooth (\cite{CLS}, Proposition~4.2.6). In this subsection we want to  state some  facts that, on one hand, revolve around the basic short exact sequence (\ref{divclass}), and on the other hand relate to aspects of the $\TT$-equivariant homology and cohomology of $X$ relevant to our subsequent discussion.

Let us start with some topological preliminaries. Since we are assuming that $X$ is compact, its fan is complete (see~\cite{CLS}, Theorem~3.1.19(c)), and so $X$ is simply connected
(Theorem 12.1.10 in \cite{CLS}, ); hence $H_1(X;\ZZ)=0=H^1(X;\ZZ)$ by Hurewicz and the universal coefficient theorem. There is an isomorphism  $H^2(X;\ZZ)\cong {\rm Pic}(X)$ (\cite{CLS}, Theorem~12.3.2), so Proposition~4.2.5 in~\cite{CLS} implies that the group $H^2(X;\ZZ)$ is free Abelian; it is also freely generated by virtue of (\ref{divclass}). By the universal coefficient theorem, we conclude that also $H_2(X;\ZZ)\cong {\rm Hom}(H^2(X;\ZZ),\ZZ)$ is  a finitely generated free Abelian group.

We observe that there is a spectral sequence converging to the $\TT$-equivariant cohomology of $X$:
\begin{equation} \label{specseq}
E_2^{p,q}:=H^p({\rm B}\TT; H^q(X;\ZZ) ) \Rightarrow  H^{p+q}(X\times_\TT {\rm E}\TT;\ZZ).
\end{equation}
In total degree two,  (\ref{specseq}) degenerates to the
short exact sequence
\begin{equation}\label{degses}
0 \longrightarrow H^2({\rm B}\TT;\ZZ) {\longrightarrow} H^2_\TT (X;\ZZ) \longrightarrow H^2(X;\ZZ)\longrightarrow 0.
\end{equation}
The two nontrivial maps in (\ref{degses}) are induced by the structure maps of the fibre bundle $X \hookrightarrow X\times_\TT {\rm E}\TT \rightarrow {\rm B}\TT$. Since $H^2(X;\ZZ)$ is free Abelian, the short exact sequence (\ref{degses}) splits; 
thus we have an abstract isomorphism
$$
H^2_\TT(X;\ZZ) \cong H^2({\rm B}\TT;\ZZ) \oplus H^2(X;\ZZ).
$$
Note that  $H^2({\rm B}\TT;\ZZ)\cong \ZZ^n$ by the K\"unneth formula. There is an identification of this group with the lattice $M$ under a natural  isomorphism between
(\ref{degses}) and the basic short exact sequence (\ref{divclass}), namely:

\begin{lemma} \label{ladder}
There is an isomorphism of short exact sequences
$$
\xymatrix{
0 \ar[r] & M \ar[r] \ar[d]_{\cong} & {\rm CDiv}_\TT (X)  \ar[r]^\beta  \ar[d]_{\cong}^{c_1^\TT} &  {\rm Cl}(X) \ar[r] \ar[d]_\cong^{c_1} & 0 \\
0 \ar[r] &H^2({\rm B}\TT;\ZZ) \ar[r]^{\tilde \alpha} &  H^2_\TT (X;\ZZ) \ar[r] & H^2(X;\ZZ) \ar[r] & 0
}
$$
\end{lemma}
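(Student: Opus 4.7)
The plan is to construct the middle vertical map $c_1^\TT$ explicitly, verify that both squares commute, and conclude by the five lemma, using that the two outer vertical maps are already known to be isomorphisms. The left vertical map is the canonical identification $M \cong H^2(\mathrm{B}\TT;\ZZ)$: a character $\chi^{\mathbf m}:\TT\to \mathrm{U}(1)$ determines a map $\mathrm{B}\TT\to \mathrm{B}\mathrm{U}(1)=\CC P^\infty$, and pulling back the standard generator realises the K\"unneth isomorphism $M\cong H^2(\mathrm{B}\TT;\ZZ)\cong \ZZ^n$. The right vertical map $c_1:\mathrm{Cl}(X)\cong \mathrm{Pic}(X)\to H^2(X;\ZZ)$ is the standard first Chern class map, which is an isomorphism by \cite{CLS}, Theorem 12.3.2. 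For the middle map, I would send a $\TT$-equivariant Cartier divisor $D$ to the $\TT$-equivariant first Chern class $c_1^\TT(\mathcal{O}_X(D))$ of the associated line bundle, endowed with its canonical $\TT$-linearisation; this is well-defined because $X$ is smooth and $D$ comes with a canonical $\TT$-equivariant meromorphic section.

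Commutativity of the right square is the easiest step: the Borel forgetful homomorphism $H^2_\TT(X;\ZZ)\to H^2(X;\ZZ)$ sends $c_1^\TT(\mathcal L)$ to $c_1(\mathcal L)$ for any $\TT$-equivariant line bundle, and $\beta:\mathrm{CDiv}_\TT(X)\to \mathrm{Cl}(X)$ sends $D$ to the isomorphism class of $\mathcal{O}_X(D)$ with its linearisation forgotten, so the square commutes tautologically.

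Commutativity of the left square is the substantive point. For $\mathbf m\in M$, the divisor $\alpha(\mathbf m)=\sum_\rho \langle \mathbf m,\mathbf n_\rho\rangle D_\rho$ is the $\TT$-invariant principal divisor of the character $\chi^{\mathbf m}$, viewed as a rational section of the trivial $\TT$-equivariant line bundle $X\times \CC_{\mathbf m}$ (the trivial bundle with $\TT$-action twisted by $\chi^{\mathbf m}$ on the fibre); this is a standard identification in toric geometry (\cite{CLS}, Theorem 4.1.3 and the discussion of $\TT$-linearisations in Chapter 4). Hence $c_1^\TT(\alpha(\mathbf m))$ equals the equivariant Chern class of $X\times \CC_{\mathbf m}$, which is pulled back from the equivariant Chern class of the one-dimensional $\TT$-representation $\CC_{\mathbf m}$ on the point. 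But that class is, by construction of the isomorphism $M\cong H^2(\mathrm{B}\TT;\ZZ)$, precisely the image of $\mathbf m$ under $\tilde\alpha$, which is the edge map of the spectral sequence (\ref{specseq}) induced by pulling back along $X\times_\TT \mathrm{E}\TT\to \mathrm{B}\TT$. This verifies commutativity.

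Having assembled the diagram and verified commutativity of both squares, I would apply the five lemma: since the left and right vertical maps are isomorphisms, so is the middle map $c_1^\TT$. The main obstacle is the left square; everything reduces to the assertion that the equivariant first Chern class of the trivial line bundle twisted by the character $\chi^{\mathbf m}$ is pulled back from the tautological class on $\mathrm{B}\TT$ labelled by $\mathbf m$. I would handle this either by choosing a concrete model $\mathrm{E}\TT=(S^\infty)^n\to (\CC P^\infty)^n$ and tracing the Borel construction, or by invoking the multiplicativity of equivariant Chern classes on tensor products of characters to reduce to the rank-one toric case.
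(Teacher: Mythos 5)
Your proposal is correct and follows essentially the same route as the paper: the middle map is the first Chern class of the Borel construction $\xi_D\times_\TT{\rm E}\TT$ of the canonically linearised bundle $\mathcal{O}_X(D)$, the right square records that this lifts the ordinary $c_1$, and the left square plus the five lemma (against the already-established exactness of both rows) finish the argument. The paper itself only sketches this, deferring the details (in particular the left-square computation with ${\rm div}(\chi^{\mathbf m})$ and the twisted trivial bundle $X\times\CC_{\mathbf m}$, which you supply correctly up to a sign convention absorbed into the identification $M\cong H^2({\rm B}\TT;\ZZ)$) to Proposition~4 of the companion paper.
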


Let us sketch how to understand the ladder diagram above (see~Proposition~4 in~\cite{BokRomTFB} for further details).
A  divisor in $X$  such as $D_\rho$ gives rise
to a line bundle $\xi_{D_\rho}$ over $X$ (whose isomorphism class depends only on the divisor class $\beta(D_\rho)$). The restriction of this bundle 
to $D_\rho$  is the normal bundle of the (Weil) divisor $D_\rho$;   so the first Chern class
$c_1(\xi_{D_\rho})$ coincides with the Poincar\'e dual of the 2-homology class determined by $D_\rho$. Since $D_\rho$ is $\TT$-invariant,  $\xi_{D_\rho}$ is actually
an equivariant bundle (see~\cite{CLS}, section 12.4), hence we also obtain a complex line bundle 
\begin{equation}\label{equivlb}
\nu_{D_\rho}:=\xi_{D_\rho} \times_\TT {\rm E}\TT \longrightarrow X \times_{\TT} {\rm E}\TT.
\end{equation}
The first Chern class  $c_1(\nu_{D_\rho})$ of this bundle is an element of 
$H^2( X \times_{\TT} {\rm E}\TT;\ZZ)$, so we obtain a map on $\TT$-invariant divisors $c_1^\TT:= c_1 \circ \nu$ which extends to a homomorphism  ${\rm CDiv}_\TT (X) \rightarrow H^2_\TT (X;\ZZ)$. This provides a lift of the more familiar
map  $c_1:{\rm Pic}(X) \cong {\rm Cl}(X) \rightarrow H^2(X;\ZZ)$  taking the first Chern class of line bundles over $X$ (up to isomorphism) --- note that by surjectivity of the
map $\beta$ in (\ref{divclass}), all isomorphism classes of line bundles on $X$ contain $\TT$-equivariant representatives. The homomorphism $c_1^{\TT}$
 can be interpreted (with some abuse of terminology) as the evaluation of an equivariant first Chern
class~\cite{Web}.

At this point, we will divert our discussion from topology to geometry.
We start by recalling that there is an alternative way of obtaining $(X,j_X) $ as a quotient, which goes as follows (see~\cite{CLS}, Section~5.1).
 Consider the affine space  $\bigoplus_{\rho \in {\sf Fan}_\Delta(1)} \CC \rho\cong  H^2_\TT (X;\CC)$ with polynomial
 coordinate ring  generated by variables $x_\rho$; for convenience, we denote this affine space by $\CC^r$ with $r:= |{\sf Fan}_\Delta(1)|$. A cone $\sigma \in {\sf Fan}_\Delta$ determines a monomial
 $x^{\hat \sigma}:= \prod_{\rho \not\subset \sigma} x_\rho$, and these generate the so-called {\em irrelevant ideal} $B( {\sf Fan}_\Delta ) :=\langle x^{\hat \sigma} \,|\, \sigma \in {\sf Fan}_\Delta \rangle$. Applying the functor ${\rm Hom}_\ZZ(\cdot ,\CC^*)$ to the short exact sequence (\ref{divclass}) exhibits the group of characters  ${\rm Hom}_\ZZ ({\rm Cl}(X),\CC^*)$ as a subgroup of the complex torus $(\CC^*)^r$ acting on $\CC^r$:
 \begin{equation}
 1 \longrightarrow {\rm Hom}_\ZZ ({\rm Cl}(X),\CC^*)  \longrightarrow  (\CC^*)^r {\longrightarrow}\TT_\CC \longrightarrow 1.
 \end{equation}
 This subgroup acts on the complement 
 $$\CC^r_\Delta := 
  \CC^r  \setminus V(B( {\sf Fan}_\Delta ))$$
 of the exceptional set $V(B( \mathsf{Fan}_\Delta ))$, the affine variety associated to the irrelevant ideal, and one can show (see \cite{CLS}, Theorem~5.1.1) that
\begin{equation} \label{Xasquotient}
 X \cong \CC^r_\Delta  /{\rm Hom}_\ZZ ({\rm Cl}(X),\CC^*).
\end{equation}

It turns out that this quotient construction endows $X$ with a family of symplectic structures. This is because one can be recast $X$ as a symplectic quotient of the affine
variety $\CC^r_\Delta $, seen as a Hamiltonian subspace of $\CC^r$ with standard ${\rm U}(1)^r$-action, i.e. a product  of $r$ copies of the
standard  ${\rm U}(1)$-action $({\rm e}^{{\rm i} \theta},x)\mapsto  {\rm e}^{{\rm i} \theta} x$ on  $(\CC,\frac{\rm i}{2}{\rm d}x \wedge {\rm d}\bar x )$. More precisely,
let us identify ${\rm Lie}({\rm U}(1)^r)^* = ({\rm i}\, \RR^r)^*\cong  \RR^r$ using the standard Euclidean inner
product, and denote by $\mu_\Delta:=(\beta_\RR \circ \mu^{\rm st})|_{\CC^r_\Delta} $  the restriction to $\CC^r_\Delta$ of the composition of the moment map $\mu^{\rm st}:\CC^r \rightarrow \RR^r $ for the standard product  torus action, with components $\mu^{\rm st}_\rho (x_{\rho_1},\ldots, x_{\rho_r})=  \frac{1}{2} |x_\rho|^2$, with the extension $\beta_\RR$ to real coefficients   of the quotient map $\beta$ in (\ref{divclass}). For any choice of  a class $\delta$ in the K\"ahler cone  $\mathcal{K}(X) \subset H^{1,1}(X;\RR)\cong {\rm Cl}(X)\otimes_\ZZ \RR$, one has an action of the real character group ${\rm Hom_\ZZ({\rm Cl}(X),{\rm U}(1))}$ on the pre-image 
$\mu_\Delta^{-1}(\delta) \subset \CC^r_\Delta$. The space of orbits for this action acquires a complex structure from the one of $\CC^r$ (see Proposition~4.2 in~\cite{Hit}), and  there is a map 
 \begin{equation} \label{quotient}
\mu_{\Delta}^{-1}(\delta)/{\rm Hom}_\ZZ({\rm Cl}(X),{\rm U}(1)) \rightarrow X
\end{equation}
which is a biholomorphism (cf.~Section~8.4 in~\cite{MumFogKir}). This process produces  a Marsden--Weinstein--Meyer symplectic form $\omega_\delta \in \Omega^2(X;\RR)$, and one can check that in fact $[\omega_\delta]=\delta$ (see~\cite{Cox}, p.~399).
The compact real torus $\TT$  is recovered as the quotient of the real torus ${\rm U}(1)^r \subset  (\CC^\times)^r$ acting on  $\CC^r$ by the  subtorus in the left-hand side of (\ref{quotient}), and its action on $(X,j_X,\omega_{\delta})$ is both Hamiltonian and holomophic. Thus for each $\delta$ in the K\"ahler cone, one can speak of a canonical K\"ahler structure on $X$. Under this construction, there is also a moment map $\mu$ on $(X,\omega_\delta)$ induced from $\mu^{\rm st}|_{\CC^r_\Delta}$, and it turns out that $\mu(X)=\Delta$
has an interpretation as space of $\TT$-orbits in $X$.

Under additional  assumptions, various Moser-type results ensure that a symplectic structure $\tilde \omega_X$ on $X$ with $[\tilde \omega_X]=\delta$ will be symplectomorphic to $\omega_\delta$ in the sense that $\varphi^*\tilde\omega_X=\omega_\delta$ for some $\varphi\in {\rm Diff}(X)$ (see Section~7.3 of~\cite{McDSal}), but the symplectomorphism $\varphi$
need not relate compatible complex structures.
A theorem of Delzant~\cite{Del} asserts that if $\tilde \omega_X$ is $\TT$-invariant, there exists one such symplectomorphism $\varphi$ which is $\TT$-equivariant
(this amounts to a classification of compact symplectic toric manifolds by Delzant polytopes up to translations in $M_\RR$). In turn, Abreu~\cite{Abr} showed that the K\"ahler structures $(X,\tilde \omega_X,\tilde \jmath_X)$ and $(X,\omega_\delta,j_X)$ can be related by a $\TT$-equivariant biholomorphism $(X,\tilde\jmath_X)\rightarrow (X,j_X)$, but this will not be a symplectomorphism in general. 

We now want to review very briefly some geometry of cones associated to the toric manifold $X$. This is needed to describe certain positivity conditions that arise in the context of the vortex equations. The relevant cones are contained in real extensions of the Abelian groups in the ladder diagram of Lemma~\ref{ladder}, e.g. ${\rm Cl}(X)_\RR := {\rm Cl}(X)\otimes_\ZZ \RR$. For more detail, we refer the reader to Section~3.3 of reference~\cite{BokRomTFB}, where we deal with the case of vortices on K\"ahler manifolds of arbitrary dimension.

Recall that the  {\em nef cone} of a toric variety $X$ (denoted ${\rm Nef}(X) \subset {\rm Cl}(X)_ \RR$ in~\cite{CLS}), is generated by classes of numerically effective (Cartier) divisors. In the case where $X$ is smooth, there is a very explicit description of the dual of this cone (known as the {\em Mori cone} and denoted $\overline{NE}(X)$) due to Batyrev~\cite{Bat}, in terms of  {primitive collections} associated to the normal fan $\mathsf{Fan}_{\Delta}$.  In our setting, the Mori cone is always strongly convex (this follows from Proposition~6.3.24 in~\cite{CLS}), and so the primitive relations associated to the normal fan of $X$ give minimal generators for  $\overline{NE}(X)$, which determines the K\"ahler cone of the manifold by duality.

Now the linear maps $\tilde \beta_\RR$ and $\tilde \beta_\RR^*$, obtained from extending $\tilde \beta := c_1\circ \beta \circ (c_1^\TT)^{-1}$ in Lemma~\ref{ladder} to real coefficients and duality, 
relate the basic cones $\mathcal{K}(X)$ and $ \overline{NE}(X)$ to natural cones in $\TT$-equivariant cohomology and homology, respectively. First of all, we have the following fact from Section~3.3 of \cite{BokRomTFB}:

\begin{proposition} \label{thmcones}
Any ray in the nef cone of $X$ admits a generator of the form $\beta \left(\sum_{\rho\in \mathsf{Fan}_{\Delta}(1)} a_\rho D_\rho\right)$ with all $a_\rho \in \NN_0$.
\end{proposition}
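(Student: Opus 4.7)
The plan is to prove the stronger statement that every class in $\mathrm{Nef}(X)\cap \mathrm{Cl}(X)$ admits a $\TT$-equivariant effective representative, i.e.\ can be written as $\beta\bigl(\sum_\rho a_\rho D_\rho\bigr)$ with all $a_\rho\in \NN_0$. The proposition then follows at once, because $\mathrm{Nef}(X)$ is a rational polyhedral cone (dual to the Mori cone, which is generated by the classes of the finitely many $\TT$-invariant curves coming from primitive collections), so every ray admits a generator in the image of $\mathrm{Cl}(X)\to \mathrm{Cl}(X)_\RR$.

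To establish this stronger claim, fix a nef integral class $\delta\in \mathrm{Nef}(X)\cap \mathrm{Cl}(X)$. Using surjectivity of $\beta$ in (\ref{divclass}), pick any $\TT$-equivariant lift $D=\sum_\rho a_\rho D_\rho$ with $\beta(D)=\delta$; note that the $a_\rho$ need not be non-negative yet. The key input is the classical theorem of toric geometry asserting that on a complete toric variety a nef Cartier divisor is automatically basepoint-free, hence globally generated (CLS, Theorem~6.3.12). This is what will let us shift $D$ by a principal divisor to achieve non-negativity.

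For a $\TT$-equivariant divisor $D=\sum_\rho a_\rho D_\rho$, the space of global sections of $\mathcal{O}_X(D)$ is identified with the lattice points of the polytope
$$P_D=\bigl\{m\in M_\RR \,:\, \langle m,\mathbf{n}_\rho\rangle \geq -a_\rho\ \text{for all}\ \rho\in \mathsf{Fan}_\Delta(1)\bigr\},$$
and each $m\in P_D\cap M$ yields a linearly equivalent $\TT$-equivariant divisor $D+\alpha(m)=\sum_\rho (a_\rho+\langle m,\mathbf{n}_\rho\rangle)\,D_\rho$. Global generation implies $P_D\cap M \neq \emptyset$; pick any such $m$ and set $b_\rho:=a_\rho+\langle m,\mathbf{n}_\rho\rangle\geq 0$. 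Exactness of (\ref{divclass}) then gives $\delta=\beta(D)=\beta(D+\alpha(m))=\beta\bigl(\sum_\rho b_\rho D_\rho\bigr)$, with all $b_\rho\in \NN_0$, as required.

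The main obstacle is the toric-geometry input "nef implies globally generated" on complete toric varieties; this would fail for general projective varieties, and the point is that it holds here because the combinatorial criterion for nefness (testing against the finitely many $\TT$-invariant curves) already forces the polytope $P_D$ to be non-degenerate enough to contain lattice points. Everything else amounts to elementary bookkeeping with the exact sequence (\ref{divclass}).
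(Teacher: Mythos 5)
Your argument is correct, but note that the paper itself offers no proof of this proposition to compare against: it is imported verbatim from Section~3.3 of the companion paper \cite{BokRomTFB}, so the only basis for assessment is the internal soundness of your reasoning. On that score the proof holds up, and it is the standard toric-geometry argument: reduce to showing that every integral nef class has a $\TT$-invariant effective representative; lift via surjectivity of $\beta$ in (\ref{divclass}) to some $D=\sum_\rho a_\rho D_\rho$; invoke the equivalence of nef and basepoint-free for Cartier divisors on complete toric varieties (\cite{CLS}, Theorem~6.3.12 --- applicable since $X$ is smooth, so Weil divisors are Cartier) to conclude $H^0(X,\mathcal{O}_X(D))\neq 0$, hence $P_D\cap M\neq\emptyset$; and translate by $\alpha(m)$ for a lattice point $m\in P_D$, which kills nothing in $\mathrm{Cl}(X)$ by exactness of (\ref{divclass}) and makes all coefficients non-negative. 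One could shortcut the appeal to basepoint-freeness by using the combinatorial nefness criterion directly (the Cartier data $m_\sigma$ of a nef divisor all lie in $P_D$, so any one of them serves as the required lattice point), but this is a matter of taste. The single caveat is your opening reduction: a rational polyhedral cone guarantees integral generators only for its \emph{extremal} rays (or, more generally, rays in rational directions), not for an arbitrary half-line contained in the cone; so the proposition should be read as a statement about extremal rays, which is clearly the intended meaning given that it is used to dominate arbitrary nef classes by non-negative combinations of the $\beta(D_\rho)$ in establishing property (P2).
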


We consider the cone
 \begin{equation}  \label{BPScone}
 \mathcal{C}_{\rm BPS}(X):= \left(\sum_{\rho \in  \mathsf{Fan}_{\Delta}(1)} \RR_{\ge 0}\,  c_1^\TT(D_\rho) \right)^\vee \subset H_2^{\TT}(X;\RR),
 \end{equation}
 which is dual to the closed strictly convex polyhedral cone in $H^2_\TT(X;\RR)$ generated by $\TT$-equivariant first Chern classes  (see (\ref{equivlb}) and Lemma~\ref{ladder})
 $$c_1^\TT(D_\rho)= c_1(\nu_{D_\rho})\in H^2_\TT(X;\ZZ) $$
 associated to effective $\TT$-equivariant divisors  $D_\rho \subset X$. In the context of the present paper,   $\mathcal{C}_{\rm BPS}(X)$ can be identified with
  the {\em BPS cone} introduced in Section~4 of reference~\cite{BokRomTFB} (for vortex equations on bases of arbitrary dimension), because $H_2(\Sigma;\ZZ)$ is cyclic and
  the fundamental class $[\Sigma]$ provides a natural generator.
 The intersection of $\mathcal{C}_{\rm BPS}(X)$ with the image of $N_1(X)$ in $H^\TT_2(X;\RR)$ under $\tilde \beta_\RR^*$ is again a cone, and it is contained within the image of the Mori cone  $\overline{NE}(X)$.
 
 Given $\mathbf{h} \in H_2^\TT(X;\ZZ) \cap  \mathcal{C}_{\rm BPS}(X)$, the following two positivity properties are a direct consequence of   the definition~(\ref{BPScone}) and Proposition~\ref{thmcones}:
 \begin{itemize}
 \item[(P1)]  $k_\rho := \langle c_1^\TT(D_\rho),\mathbf{h} \rangle \in \NN_0$ for each ray
 $ \rho \in \mathsf{Fan}_\Delta(1)$;
 \item[(P2)] $ \langle \eta^{\omega_\delta}_X , \mathbf{h}\rangle \ge 0$ for each K\"ahler class $\delta$ of $X$.
 \end{itemize}
 
The assertion (P1) implies that whenever a class $\mathbf h \in H_2^\TT(X;\ZZ)$ is taken from the cone $\mathcal{C}_{\rm BPS}(X)$, one obtains nonnegative $k_\lambda$ in
the prescription (\ref{charges}). In the next subsection, we clarify why one should identify rays $\rho$ in the normal fan of $\Delta$ with colours $\lambda$, in the context of
divisor braid groups. Assertion (P2) is an energy positivity condition for vortex configurations $(A,\phi)$ with target $X$ and topological charge $[\phi]^\TT = \mathbf h$ contained in the cone $\mathcal{C}_{\rm BPS}(X)$, independently of 
the K\"ahler form $\omega_X$ prescribed on  $X$.

 \subsection{Vortices in compact toric fibre bundles} \label{sec:vortoric}

As mentioned in the Introduction, one topic of this paper is  the topology of a certain type of
configuration spaces associated to an oriented surface $\Sigma$ and a decorated graph $(\Gamma,\mathbf{k})$. In this subsection, we give a more general definition 
(see Definition~\ref{defconf}) before specialising to the configuration spaces that are most directly related to divisor braids (see Definition~\ref{CF}). 
Our main goal here is to elucidate how these constructions arise quite naturally from the study of moduli spaces of vortices in (\ref{moduli}) and their topology.

\begin{defn} \label{defconf}
Let  $\Lambda$ be a simplicial complex, $\mathbf{k}:{\rm Sk}^0(\Lambda) \rightarrow \NN_0, \lambda \mapsto k_\lambda$ a nonzero function on its set of vertices, and $\Sigma$ a Riemann surface. Let us denote an
$\ell$-simplex in $\Lambda$ by $[\lambda_1;\ldots;\lambda_\ell]$, where the $\lambda_i$ are distinct vertices (which we may also refer to as {\em colours}). The {\em space of  effective divisors of degree $\mathbf{k}$ on $\Sigma$ braiding by $\Lambda$} is the subset of
$S^\mathbf{k} \Sigma:=\prod_{\lambda \in {\rm Sk}^0(\Lambda)} S^{k_\lambda} \Sigma$
given by 
\begin{equation}\label{confGamma}
{\sf Div}^\kk_+(\Sigma,\Lambda):= \left\{  \mathbf{d} \in  S^{\mathbf{k}} \Sigma:  [\lambda_1; \ldots; \lambda_\ell] \notin \Lambda \;\Rightarrow \; \bigcap_{i=1}^\ell
{\rm supp}(d_{\lambda_i})  = \emptyset \right\}.
\end{equation}
We refer to $(\Lambda,\kk)$ as a {\em colour scheme}, and to $\mathbf{k}$ as its {\em coloured degree}. 
\end{defn}

In this definition, we interpret the nonzero  components $d_\lambda \in S^{k_\lambda} \Sigma $ of $\mathbf d$ as effective divisors of degree $k_\lambda$ on $\Sigma$, and denote
 their support by ${\rm supp}(d_{\lambda}) \subset \Sigma$.  
Observe that  $S^\mathbf{k} \Sigma$ is a manifold equipped with a complex structure induced by the one of $\Sigma$, since the same holds for each Cartesian factor; a short argument uses essentially Newton's theorem on symmetric functions, see e.g.~\cite[p.~18]{ACGH}.  Clearly,
${\sf Div}^\kk_+(\Sigma,\Lambda)$ is an open dense complex submanifold, its dimension being given by the {\em total degree}
$$
 |\mathbf{k}|:= \sum_{\lambda \in {\rm Sk}^0(\Lambda)} k_\lambda \;=\; {\rm dim}_\CC\, {\sf Div}^\kk_+(\Sigma,\Lambda).
$$

\begin{lemma} \label{connected}
If $\Sigma$ is connected, then ${\sf Div}_+^\kk (\Sigma,\Lambda)$ is connected for any colour scheme $(\Lambda,\kk)$.
\end{lemma}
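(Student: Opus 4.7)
The plan is to realize ${\sf Div}_+^\kk(\Sigma,\Lambda)$ as the complement of a sufficiently small closed subset in the connected complex manifold $S^\kk\Sigma$, and then invoke the standard fact that removing a subset of real codimension at least $2$ from a connected (complex) manifold preserves connectedness.

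First I would note that $S^k\Sigma$ is connected whenever $\Sigma$ is: the continuous surjection $\Sigma^k \twoheadrightarrow S^k\Sigma$ transports connectedness, and hence the Cartesian product $S^\kk\Sigma = \prod_{\lambda}S^{k_\lambda}\Sigma$ is connected as well. Next, I would rewrite the complement of ${\sf Div}_+^\kk(\Sigma,\Lambda)$ in $S^\kk\Sigma$ as the finite union
\begin{equation*}
Y \;=\; \bigcup_{[\lambda_1;\ldots;\lambda_\ell]\notin\Lambda}\, Y_{\lambda_1,\ldots,\lambda_\ell}, \qquad Y_{\lambda_1,\ldots,\lambda_\ell} := \bigl\{ \mathbf{d}\in S^\kk\Sigma \,:\, {\textstyle\bigcap_{i=1}^\ell \mathrm{supp}(d_{\lambda_i})\neq \emptyset}\bigr\},
\end{equation*}
where one may take the union over the (finitely many) minimal non-simplices, and where $\ell\ge 2$ since singletons $[\lambda]$ are simplices in $\Lambda$ by assumption.

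The key step is to check that each $Y_{\lambda_1,\ldots,\lambda_\ell}$ is a closed complex-analytic subvariety of complex codimension $\ell-1\ge 1$ in $S^\kk\Sigma$. Locally around a configuration $\mathbf{d}$ with a single common point $p\in\bigcap_i \mathrm{supp}(d_{\lambda_i})$, the symmetric product $S^{k_{\lambda_i}}\Sigma$ looks like a product of small discs around the points of $d_{\lambda_i}$, and the incidence condition ``the disc around $p$ in the $\lambda_1$-factor, $\ldots$, the disc around $p$ in the $\lambda_\ell$-factor are all equal'' is cut out by $\ell-1$ complex equations. Taking the union over the finitely many points in the intersection and summing over all minimal non-simplices then shows that $Y$ is closed and of real codimension at least $2$ in $S^\kk\Sigma$.

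From here the conclusion follows from the elementary topological lemma: if $M$ is a connected topological manifold and $Y\subset M$ is closed with $\dim_\RR Y \le \dim_\RR M - 2$, then $M\setminus Y$ is (path-)connected, because any continuous path in $M$ can be perturbed, by transversality in a smooth chart, to avoid $Y$. The main obstacle I anticipate is the bookkeeping needed to verify the codimension bound cleanly when several points of the supports coincide simultaneously, but this only \emph{increases} the codimension of the locus in question, so the generic estimate already suffices. One could alternatively argue by hand: given $\mathbf{d},\mathbf{d}'\in{\sf Div}_+^\kk(\Sigma,\Lambda)$, move the points of the divisors one colour at a time along paths in $\Sigma$ that avoid the (two-real-codimensional) forbidden locus, which is always possible since $\Sigma$ is a connected real $2$-manifold.
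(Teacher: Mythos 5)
Your proposal is correct, but it argues along a genuinely different line from the paper. The paper's proof is an explicit path construction: fix a base configuration $\mathbf{d}^*$ supported on $r$ pairwise distinct points, perturb an arbitrary $\mathbf{d}$ so that its support is disjoint from that of $\mathbf{d}^*$, and then drag the points of $\mathbf{d}$ one at a time to the base points along arcs in $\Sigma$ avoiding the finitely many points of ${\rm supp}\,\mathbf{d}\cup{\rm supp}\,\mathbf{d}^*$ --- essentially the ``by hand'' alternative you mention in your last sentence. Your main argument instead exhibits the complement of ${\sf Div}_+^\kk(\Sigma,\Lambda)$ in $S^\kk\Sigma$ as a finite union of closed analytic subvarieties of complex codimension at least one (indexed by the minimal non-simplices, necessarily of dimension $\ell-1\ge 1$), and invokes the fact that deleting a closed set of real codimension at least $2$ does not disconnect a connected manifold. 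Both are sound; your codimension count is most cleanly justified by projecting the incidence variety $\{(p,\mathbf{d}): p\in\bigcap_i{\rm supp}(d_{\lambda_i})\}\subset\Sigma\times S^\kk\Sigma$, whose fibres over $p\in\Sigma$ have codimension $\ell$, so the image has codimension at least $\ell-1$. The trade-offs: the paper's argument is elementary, needs no complex structure or stratification/transversality input, and works verbatim for any connected surface; your argument requires verifying that the bad locus is a genuine analytic (hence stratifiable) subset so that the perturbation step is legitimate, but in exchange it yields more --- it reproves density of ${\sf Div}_+^\kk(\Sigma,\Lambda)$ in $S^\kk\Sigma$ and anticipates the filtration-and-transversality technique that the paper deploys later for the fundamental-group computations in Lemma~\ref{lambdaiso1} and Proposition~\ref{lambdaiso2}.
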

\begin{proof}
The Riemann surface $\Sigma$ is locally path-connected, so it is path-connected.
For each colour $\lambda$, we pick a point $z_\lambda\in \Sigma$, such that the points
$z_\lambda$ are pairwise distinct. This defines a base point
$\mathbf{d}^*=[k_{\lambda_1}z_{\lambda_1};\ldots;k_{\lambda_\ell}z_{\lambda_\ell}]\in  {\sf Div}^\kk_+(\Sigma,\Lambda)$.
Given an arbitrary 
$\mathbf{d}\in  {\sf Div}^\kk_+(\Sigma,\Lambda)$ we want to find a path from $\mathbf{d}$ to $\mathbf{d}^*$.

Because $\Lambda$ is a simplicial complex, it is easy to find a path in ${\sf Div}^\kk_+(\Sigma,\Lambda)$ from $\mathbf{d}$ to $\mathbf{d}'$ such that
the support of $\mathbf{d}'$ is distinct from the support of $\mathbf{d}^*$.
So we can assume without loss of generality that $\mathbf{d}\cap \mathbf{d}^*$ is empty.

Pick a point $z$ from the divisor $\mathbf{d}$. Let us say that its colour is $\lambda$. We can find a path
in $\gamma:[0,1]\to \Sigma $ such that $\gamma(0)=z$, $\gamma(1)=z_\lambda$ and 
$\gamma(0,1)\in \Sigma \setminus ({\rm supp}\,  \mathbf{d}\cup {\rm supp}\, \mathbf{d}^*)$. This gives a path 
in ${\sf Div}^\kk_+(\Sigma,\Lambda)$ from $\mathbf{d}$ to a divisor where $z$ is replaced by $z_\lambda$. 
We do this inductively for all the points in $\mathbf{d}$, and obtain a path from 
$\mathbf{d}$ to $\mathbf{d}^*$ as required. 
\end{proof}

\begin{defn}
\label{def.composite}
We say that a coloured degree $\mathbf{k}$ is 
\begin{itemize}
\item
{\em composite} if its components are not of the form
$k_\lambda=\delta_{\lambda \lambda'}$ for some vertex $\lambda'$, where $\delta$ is the Kronecker delta;
\item
{\em effective} if $k_\lambda \ne 0$ for all vertices $\lambda$;
\item
{\em very composite} if $k_\lambda \ge 2$ for all  vertices $\lambda$.
\end{itemize}
\end{defn}

If  $k_\lambda=0$ for a vertex $\lambda$, one may eliminate the $\lambda$-component of the coloured degree, as well as all the simplices in $\Lambda$ incident to $\lambda$
(in particular, the vertex itself), without affecting the definition of $\mathsf{Div}^\kk_+ (\Sigma,\Lambda)$. In particular, we see immediately that ${\sf Div}^\kk_+ (\Sigma,\Lambda)=\Sigma$ if
$\mathbf k$ is not composite. The reason we allow for this apparent redundancy is that
it is natural to consider ${\sf Div}^\kk_+ (\Sigma,\Lambda)$ as a family of spaces depending on $|{\rm Sk}^0(\Lambda)|$ integers, and in some circumstances it is convenient
to allow these integers to be zero; but the discussion of this paper will be restricted to effective coloured degrees.

\begin{remark} \label{lovenothate}
In the particular case where $\Lambda$ is a graph, which we emphasise by writing $\Lambda= \Gamma$, the set (\ref{confGamma}) is  reminiscent of the generalised configuration spaces $\Sigma_\Gamma$ or ${\rm Conf}_\Gamma(\Sigma)$ introduced in references~\cite{EasHug, CeyMar}, which depend on a manifold (here $\Sigma$) and a graph $\Gamma$. The definition given in these references agrees with ours
provided that in our conventions the graph $\Gamma$ is replaced by its {\em negative} $\neg \Gamma$ (a graph with the same set of vertices but complementary set of edges) and one takes all $k_\lambda=1$. Note that $\neg(\neg \Gamma)= \Gamma$. We justify  taking the negative (see e.g.\ equation~(\ref{ConfDiv}) below) by the fact that, for a simplicial complex $\Lambda$ of dimension higher than one (such as $\Lambda=(\partial \Delta)^\vee$ in Theorem~\ref{conjecture} below for $n\ge 3$),
the obvious generalisation of $\neg \Gamma$ does not yield a simplicial complex.
\end{remark}

We now go back to the vortex equations (\ref{vortex}), in the setting where the target $(X,j_X,\omega_X)$ is a K\"ahler toric manifold defined by a given Delzant polytope $\Delta$, together with a choice of K\"ahler form $\omega_X$.
Note that the condition $\mu(X)=\Delta$ fixes the translational ambiguity of the moment map: $\Delta$ determines $\mu$ and vice versa. As in Section~\ref{sec:vorteq}, we assume that $\Sigma$ is
a compact and connected oriented Riemannian surface with K\"ahler structure $(\Sigma,j_\Sigma,\omega_\Sigma)$, and denote by
$$
[\omega_\Sigma]^\vee \in H_2(\Sigma;\RR)\cong \RR
$$
the {\em dual K\"ahler class}, on which  the K\"ahler class $[\omega_\Sigma] \in H^2(\Sigma;\RR)$ evaluates as unity.

Let us take a fixed class $\mathbf{h}$ in the semigroup $H_2^\TT(X;\ZZ) \cap \mathcal{C}_{\rm BPS}(X)$ defined in equivariant 2-homology by the cone  (\ref{BPScone}). This will determine the homotopy
class of a principal $\TT$-bundle $P\rightarrow \Sigma$ where we want to consider as base
for the vortex equations (\ref{vortex}). One way to understand this is as follows: $\mathbf h$ determines a unique homomorphism
$\vec{\mathbf{h}}: H_2(\Sigma;\ZZ) \rightarrow H^\TT_2(X;\ZZ)$ satisfying 
$$\vec{\mathbf{h}} ([\Sigma])= \mathbf{h}$$ 
where $[\Sigma]$ is the fundamental class; specifically, if $\mathbf{h} = {\phi}^\TT$ as in equation (\ref{phiG}), then $\vec{\mathbf{h}} = \tilde \phi_*$.
Composing with the homomorphism
$\tilde{\alpha}^*:H_2({\rm B}\TT;\ZZ) \rightarrow H_2^\TT(X;\ZZ)$ obtained by dualising $\tilde \alpha$ in the diagram of Lemma~\ref{ladder}, we get the map
\begin{equation}\label{1stChernclass}
\tilde \alpha^*  \circ \vec{\mathbf{h}} : H_2(\Sigma;\ZZ) \rightarrow H_2({\rm B}\TT;\ZZ)\cong \ZZ^{\oplus n},
\end{equation}
which can be interpreted as an element of $  H^2(\Sigma;\ZZ) \otimes_\ZZ H_2({\rm B}\TT;\ZZ)\cong H^2(\Sigma;\ZZ)^{\oplus n}  $. We then set
$$
c_1(P) = \tilde \alpha^* \circ \vec{\mathbf{h}},
$$
and by a well-known property of the first Chern classes this determines $P\rightarrow \Sigma$ up to homotopy. We shall denote by $(\tilde \alpha^*  \circ \vec{\mathbf{h}} )_\RR$
the extension of the $\ZZ$-linear map (\ref{1stChernclass}) to real coefficients.

The following result provides the main link connecting divisor braid groups to moduli spaces of vortices on Riemann surfaces.

\begin{thm} \label{conjecture}
Suppose that the charge $\mathbf{h} \in  H_2^\TT(X;\ZZ) \cap \mathcal{C}_{\rm BPS}(X)$ is such that
\begin{equation} \label{stability}
(\tilde \alpha^* \circ   \vec {\mathbf{h}})_\RR ([\omega_\Sigma]^\vee)\; \in\; {\rm int}\, \mu^\kappa (X).
\end{equation}
Then the moduli space $\mathcal{M}^X_\mathbf{h}(\Sigma)$ of the vortex equations (\ref{vortex}) on $(\Sigma,j_\Sigma, \omega_\Sigma)$ with target $(X, j_X, \omega_X, \TT, \mu^\kappa)$, defined in (\ref{moduli}), is nonempty. Setting 
$\mathbf{k} (\rho):=\langle c_1^\TT(D_\rho), \mathbf{h}\rangle$ for each $\rho \in \mathsf{Fan}_{\mu(X)}(1)$  as in (\ref{charges}),  there is a homeomorphism
\begin{equation}\label{eqconjecture}
\mathcal{M}^X_\mathbf{h}(\Sigma) \; \cong \; {\mathsf{Div}}_+^{\mathbf{k}}(\Sigma,(\partial \mu (X))^\vee).
\end{equation}
\end{thm}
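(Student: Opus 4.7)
The plan is to reduce the nonabelian $\TT$-vortex problem on $\Sigma$ with target $X$ to a coupled system of $r=|{\sf Fan}_{\mu(X)}(1)|$ abelian vortex problems on Hermitian line bundles $L_\rho \to \Sigma$, one for each ray of the normal fan. The reduction is dictated by the Cox presentation (\ref{Xasquotient}), which exhibits $X$ as a symplectic quotient of $\CC^r_\Delta$ by the compact subtorus $K:={\rm Hom}_\ZZ({\rm Cl}(X),{\rm U}(1)) \subset {\rm U}(1)^r$ at the level $\delta$ determined by $[\omega_X]$; after gauge fixing, the moduli should be a product of linear moduli (\ref{modspU1C}) cut out by a pointwise constraint that enforces avoidance of the irrelevant locus $V(B({\sf Fan}_{\mu(X)}))$.

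First I would produce, for each $\rho \in {\sf Fan}_{\mu(X)}(1)$, a Hermitian line bundle $L_\rho\to\Sigma$ from the equivariant bundle $\nu_{D_\rho}$ of (\ref{equivlb}) and the classifying map for $P$, so that by Lemma~\ref{ladder} its degree equals $k_\rho=\langle c_1^\TT(D_\rho),\mathbf{h}\rangle$ as in (\ref{charges}). Lifting $P$ to a ${\rm U}(1)^r$-bundle $\tilde P$ via the splitting of (\ref{degses}), a section $\phi$ of $\pi_X$ with the prescribed class $[\phi]^\TT=\mathbf{h}$ is equivalent to a ${\rm U}(1)^r$-equivariant map $\tilde P\to\CC^r_\Delta$ landing in $\mu_\Delta^{-1}(\delta)$, equivalently to a tuple of sections $(s_\rho)_\rho \in \bigoplus_\rho \Gamma(\Sigma,L_\rho)$ subject to the pointwise $K$-moment-map constraint $\mu_\Delta\circ (s_\rho)=\delta$. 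In these variables, the coupled equations (\ref{vortex}) split: the holomorphicity equation becomes $\bar\partial^{A_\rho}s_\rho=0$ on each factor, while the curvature equation decomposes into $r$ abelian vortex equations $F_{A_\rho}+\frac{1}{2{\rm i}}(|s_\rho|^2-\tau_\rho)\,\omega_\Sigma=0$ with parameters $\tau_\rho$ read off from $\delta$, together with the residual $K$-moment-map equation, automatic from the torus splitting.

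Next I would apply (\ref{modspU1C}) fibrewise. The stability hypothesis (\ref{stability}), which places $(\tilde\alpha^*\circ\vec{\mathbf{h}})_\RR([\omega_\Sigma]^\vee)$ in the open polytope ${\rm int}\,\mu^\kappa(X)$, is equivalent via Lemma~\ref{ladder} to the strict inequalities $\frac{\tau_\rho}{4\pi}{\rm Vol}(\Sigma)>k_\rho$ for every ray $\rho$; hence each abelian subsystem sits in the last case of (\ref{modspU1C}), whose moduli is $S^{k_\rho}\Sigma$. Thus every solution $(A,\phi)$ of the full system is recorded, modulo gauge, by the tuple of zero divisors $d_\rho:={\rm div}(s_\rho) \in S^{k_\rho}\Sigma$. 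Conversely, given holomorphic sections $(s_\rho)_\rho$ obtained from the prescribed divisors, a Kempf--Ness/Hitchin--Kobayashi argument (along the lines of~\cite{BokRomTFB}) produces a unique $K_\CC$-orbit solving the full system, continuously in $(d_\rho)_\rho$.

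Finally I would identify the image. At every $p\in\Sigma$ the tuple $(s_\rho(p))_\rho$ must represent a well-defined point of $X\cong\mu_\Delta^{-1}(\delta)/K$, so it cannot lie in the exceptional set $V(B({\sf Fan}_{\mu(X)}))$. Unpacking the irrelevant ideal, this is equivalent to $\{\rho:s_\rho(p)=0\}=\{\rho: p\in{\rm supp}(d_\rho)\}$ being contained in the rays of some cone of ${\sf Fan}_{\mu(X)}$, i.e.\ being a simplex of $(\partial \mu(X))^\vee$, which is precisely the defining condition of ${\sf Div}_+^{\mathbf{k}}(\Sigma,(\partial\mu(X))^\vee)$ from Definition~\ref{defconf}. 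Non-emptiness then follows by choosing any $(d_\rho)_\rho$ with pairwise disjoint supports. The main obstacle I expect is the Hitchin--Kobayashi step: proving that the pointwise $K$-moment-map equation admits a unique solution in every $K_\CC$-orbit of holomorphic data under the strict stability (\ref{stability}), and that the resulting assignment $(d_\rho)_\rho \mapsto [A,\phi]$ is a homeomorphism (not merely a continuous bijection). This requires combining the abelian existence/uniqueness fibrewise with a convexity argument for $\mu_\Delta$ whose strict feasibility is furnished precisely by the interior condition on the polytope in (\ref{stability}).
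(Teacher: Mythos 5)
Your outline follows essentially the same route the paper takes: the theorem is not proved in full here but deferred to Theorem~4 of the companion paper \cite{BokRomTFB}, and the sketch given in Section~\ref{sec:vortoric} --- passing between the nonlinear toric target and the linear target $\CC^r_\Delta$ via the Cox quotient, reducing to line-bundle vortices whose moduli are symmetric products under the stability condition (\ref{stability}), and invoking Mundet's Hitchin--Kobayashi correspondence to make the descent/lift surjective --- is precisely your strategy, including your identification of the Hitchin--Kobayashi step as the crux. Your final identification of the image with ${\sf Div}_+^{\mathbf{k}}(\Sigma,(\partial\mu(X))^\vee)$ via pointwise avoidance of the irrelevant locus also matches the paper's reading of the braiding condition.
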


In equation (\ref{eqconjecture}), the moduli spaces are identified with spaces of effective divisors on $\Sigma$ (as in Definition~\ref{defconf}) braiding by a simplicial complex constructed from the Delzant polytope
$\Delta = \mu(X)$ defining the $n$-dimensional toric target manifold $X$. This simplex is obtained by dualising the boundary of $\Delta$ (interpreted as an $(n-1)$-dimensional
spherical polyhedron). It is easily checked that condition (i) in the second paragraph of
Section~\ref{sec:genKaehler} implies that such a dual polyhedron forms a simplicial complex, for any Delzant polytope $\Delta$.

The assumption~(\ref{stability}) (where `int' denotes the interior) can be interpreted as a natural stability condition. The closed version of this equation,
$$
(\alpha_\RR^* \circ   \vec {\mathbf{h}}) ([\omega_\Sigma]^\vee)\; \in\;  \kappa(\Delta),
$$
is  a necessary condition for existence of vortex solutions --- this was shown by Baptista as Theorem~4.1 of~\cite{Ba1}, using essentially the convexity of $\mu(X)$; 
see~\cite{Gui}. In the toy example of Section~\ref{sec:vorteq} where $X=\CC$, condition (\ref{stability}) corresponds to the third alternative on the right-hand side of~(\ref{modspU1C}).

Theorem~\ref{conjecture} is a particular case of Theorem~4 in our companion paper~\cite{BokRomTFB}, where we consider vortex equations with compact K\"ahler toric targets $X$ on compact K\"ahler manifolds $Y$ of arbitrary dimension. This result positively answers a question/conjecture formulated by Baptista in Section~7 of~\cite{Ba1}. Previously, the
identification (\ref{eqconjecture}) had only been verified in the case where $X=\PP^n$ equipped with its Fubini--Study K\"ahler structure in~\cite{Ba1}. Early versions of this result
in the special case $n=1$ (i.e. $X=\PP^1$) had appeared independently
in~\cite{Mun} and~\cite{Sib2Yan}.

The proof of Theorem~\ref{conjecture} does provide further intuition about Abelian vortices.
The basic idea is that, under assumption (\ref{stability}), there is a one-to-one correspondence (modulo gauge equivalence) between solutions of the Abelian vortex equations (\ref{vortex}) on $\Sigma$ with nonlinear 
$n$-dimensional toric targets $X=X_\Delta$ and vortex solutions (also on $\Sigma$) with linear $r$-dimensional targets $\CC^r_\Delta$. This is in the same spirit of the more familiar correspondence between {\rm un}gauged nonlinear sigma-models with toric targets and linear sigma-models that is familiar in the physics
literature~\cite{WitP2d}, where the former appear as low-energy effective theories for the latter. For a further application of this idea to the study of the geometry of moduli spaces, we refer the reader to~\cite{RomSpe}.

When $X$ is given a canonical
symplectic structure $\omega_X=\omega_\delta$, Baptista showed that one can descend vortices with target $\CC^r_\Delta$ through the quotient (\ref{Xasquotient}); an important
ingredient was a simplified version of the the Hitchin--Kobayashi correspondence proved by Mundet in~\cite{MunHK}. However, the result by Abreu in~\cite{Abr} that we mentioned in Section~\ref{sec:genKaehler} and the fact that the complex structure $\omega_X$ does not appear in the PDEs (\ref{vortex}) allows to deform solutions within the same target K\"ahler class. Since the moduli of linear vortices are parametrised by the right-hand side of (\ref{eqconjecture}) and the descent map is injective, this argument allowed Baptista to
interpret ${\mathsf{Div}}_+^{\mathbf{k}}(\Sigma,(\partial \mu (X))^\vee)$ as a family of vortex moduli with target $X$.

The proof in~\cite{BokRomTFB} that this descent process is surjective is based on a construction of a lift of a solution $(A,\phi)$ of (\ref{vortex}) to a vortex with linear target $\CC^r_\Delta$.
First of all, one can define a principal $\TT \times {\rm Hom}_\ZZ ({\rm Cl}(X),{\rm U}(1))$-bundle on $\tilde P \rightarrow \Sigma$ and a smooth section an associated bundle with fibre
$\CC^r_\Delta$ lifting $\phi$ which is unique up to homotopy; in this step, it is crucial that the underlying structure groups are commutative. Then one lifts the connection $A$ in such
a way as to make the lifted section holomorphic. These two steps do not yet guarantee that the real vortex equation (involving the moment map) is satistfied, but once again
the Hitchin--Kobayashi correspondence of Mundet provides a complex gauge transformation to a full solution of the vortex equations with target $\CC^r_\Delta$.

In Section~\ref{sec:susy} below, we shall justify that the fundamental groups of the vortex moduli spaces $\mathcal{M}^X_\mathbf{h}(\Sigma)$  have direct physical interest. Given the description (\ref{eqconjecture}), the calculation of such groups is simplified by observing that one can model them on fundamental groups of simpler
spaces of effective braiding divisors on $\Sigma$, for which the braiding is determined by a suitable graph:

\begin{proposition} \label{onlySk1}
Let $\Sigma$ be a connected Riemann surface and $(\Lambda,\mathbf{k})$ a colour scheme as in Definition~\ref{defconf}. Then there is an isomorphism of fundamental groups
\begin{equation} \label{sk1enough}
\pi_1\, {\sf Div}^\kk_+(\Sigma,\Lambda) \cong \pi_1\, {\sf Div}^\kk_+(\Sigma,{\rm Sk}^1(\Lambda)).
\end{equation}
\begin{proof}
By Lemma~\ref{connected}, 
the  space of braiding effective divisors is connected, so we can base the fundamental group at a tuple $\mathbf{d}^* = (d^*_{\lambda})_{\lambda \in {\rm Sk}^0(\Lambda)}$ containing only reduced divisors $d^*_{\lambda} \in S^{k_\lambda} \Sigma$ such that $\bigcap_{\lambda \in {\rm Sk}^0(\Lambda)} {\rm supp}(d_\lambda) = \emptyset$, and interpret 
loops in ${\sf Div}^\kk_+(\Sigma,\Lambda)$  as closed coloured braids in $S^1\times \Sigma$; the colours of the strands are determined by the colours of their basepoints.
Then the main task is to show that, when one identifies all possible loops using the homotopies naturally specified by the simplicial complex $\Lambda$, intersections involving more than two strands occur in codimension higher than two, and thus do not give rise to further relations in the fundamental group. 
(In Section~\ref{sec:variegated}, we shall be more specific about the type of homotopies that we want to allow.)
A careful proof of this fact would involve a filtration argument,
taking advantage of the simplicial property of $\Lambda$ to resolve arbitrary intersections of strands into those involving only two colours. We refrain from spelling out the full argument in detail, since the same technique will also be employed to justify  {\em Claim~1} in the proof of Proposition~\ref{lambdaiso2} below.
\end{proof}
\end{proposition}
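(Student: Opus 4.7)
My approach is to exhibit the natural inclusion
$$
j : {\sf Div}^\kk_+(\Sigma,{\rm Sk}^1(\Lambda)) \hookrightarrow {\sf Div}^\kk_+(\Sigma,\Lambda)
$$
as the complement of a closed analytic subset $Z$ of real codimension at least three, and then invoke the standard general-position principle that excising such a subset from a smooth connected manifold leaves the fundamental group unchanged. Concretely, since $\Lambda$ and ${\rm Sk}^1(\Lambda)$ agree on $0$- and $1$-simplices, a divisor lies in $Z$ iff there exists a simplex $[\lambda_1;\ldots;\lambda_\ell]\in\Lambda$ with $\ell\geq 3$ such that the supports ${\rm supp}(d_{\lambda_i})$ ($i=1,\ldots,\ell$) share a common point on $\Sigma$. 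In the braid picture, this says precisely that the discrepancy between using $\Lambda$ and its $1$-skeleton is the intuition that triple-or-higher intersections of strands occur in codimension too high to give further homotopy relations.

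For the codimension estimate, I would filter $Z = \bigcup_\sigma Z_\sigma$ by the simplices of $\Lambda$ with $\ell\geq 3$ vertices. Any coincidence of $\ell\geq 4$ supports forces a coincidence of some subtriple, so it suffices to analyse the stratum attached to a single $2$-dimensional simplex $\sigma = [\lambda_1;\lambda_2;\lambda_3]$, namely
$$
Z_\sigma = \Bigl\{\mathbf{d}\in S^{\kk}\Sigma\,:\,\bigcap_{i=1}^3 {\rm supp}(d_{\lambda_i})\neq\emptyset\Bigr\}.
$$
This $Z_\sigma$ is the image under the proper projection $\Sigma\times S^{\kk}\Sigma\to S^{\kk}\Sigma$ of the incidence variety $\widetilde{Z}_\sigma:=\{(z,\mathbf{d})\,:\, z\in {\rm supp}(d_{\lambda_i}),\,i=1,2,3\}$. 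For fixed $z$, the condition $z\in {\rm supp}(d_{\lambda_i})$ cuts out a complex hypersurface in $S^{k_{\lambda_i}}\Sigma$ isomorphic to $S^{k_{\lambda_i}-1}\Sigma$, so $\widetilde{Z}_\sigma$ has complex codimension $3$ in $\Sigma\times S^{\kk}\Sigma$. The projection is generically finite (the $z$-fibre over a given $\mathbf{d}$ is the finite set $\bigcap_i {\rm supp}(d_{\lambda_i})$), so $Z_\sigma$ has complex codimension $3-1=2$, i.e. real codimension $4$, in $S^{\kk}\Sigma$, and the same holds inside the open submanifold ${\sf Div}^\kk_+(\Sigma,\Lambda)$.

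Since the subcomplex of $\Lambda$ supported on vertices with $k_\lambda > 0$ has only finitely many simplices of dimension $\geq 2$, $Z$ is a finite union of analytic subvarieties of real codimension at least $4$. A standard transversality argument then applies: any based loop in ${\sf Div}^\kk_+(\Sigma,\Lambda)$ can be smoothly approximated and then pushed off $Z$ by a generic normal perturbation, giving surjectivity of $j_{*}$; the same argument applied to a null-homotopy of a loop that already avoids $Z$ yields injectivity.

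The main obstacle, of a technical nature, is that the strata $Z_\sigma$ are in general singular and meet along intricate incidence patterns (for instance when a single point $z\in\Sigma$ belongs to the supports of four colours, or when two disjoint triples coincide at two different points). The clean resolution is to introduce a Whitney-type refinement of the stratification, indexed by the finite set of coincidence points together with the colours and multiplicities at each; all refined strata still satisfy the codimension-$\geq 4$ bound, so stratumwise transversality (applied inductively from top to bottom of the filtration) concludes. This is precisely the filtration argument the authors defer to \emph{Claim~1} inside the proof of Proposition~\ref{lambdaiso2}.
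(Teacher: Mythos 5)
Your proposal is correct and follows essentially the same route as the paper's (deliberately sketchy) proof: both rest on the observation that the locus where three or more supports meet has real codimension at least four, so that general position for loops and for null-homotopies identifies the two fundamental groups. In fact your write-up supplies precisely the details the authors defer — the reduction to $2$-simplices via the face property of $\Lambda$, the incidence-variety codimension count using the embeddings $S^{k_\lambda-1}\Sigma\hookrightarrow S^{k_\lambda}\Sigma$ in elementary symmetric coordinates, and the stratified transversality step — which is the same filtration technique the paper points to in Claim~1 of Proposition~\ref{lambdaiso2}.
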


According to equation (\ref{sk1enough}), braiding by a graph (more precisely, by the 1-skeleton of the relevant simplicial complex, given in equation (\ref{eqconjecture})) is sufficient to describe $\pi_1 \mathcal{M}^X_\mathbf{h}(\Sigma)$. Graphs occurring as 1-skeleta of simplicial complexes such as $(\partial \mu (X))^\vee$ satisfy the following properties:
\begin{itemize}
\item[(i)]
they are simple, i.e.~do not contain multiple edges connecting two given vertices;
\item[(ii)]
they do not contain any {\em self-loops} (i.e.~edges beginning and ending at the same vertex).
\end{itemize}
It is clear that  these two properties define a much more general class of graphs  than those obtained as 1-skeleta of dualised boundaries of Delzant polytopes. This class is closed under the involution  $\neg$ (taking the negative of a graph) mentioned in Remark~\ref{lovenothate}.
Since a space of effective divisors braiding by such a graph $\Gamma$ is reminiscent of a configuration space, we shall write (see also Definition~\ref{CF})
\begin{equation}\label{DivConf}
{\sf Div}^\kk_+(\Sigma,\Gamma) =: {\sf Conf}_\kk(\Sigma, \neg \Gamma)
\end{equation}
to make contact with the usual notation --- which is recovered by setting all $k_\lambda=1$.

For an undirected graph $\Gamma$ satisfying properties (i) and (ii) above, a function $\mathbf{k}: {\rm Sk}^0(\Gamma)\rightarrow \NN$ and an orientable connected surface $\Sigma$,
we sketched the notion of {\em divisor braid group}  $\mathsf{DB}_\mathbf{k}(\Sigma,\Gamma)$ in the Introduction to this paper. In Section~\ref{sec:fundamental}, we shall add more precision to that first description (see Definition~\ref{DB}), and establish in Proposition~\ref{lambdaiso2} that there is an isomorphism
\begin{equation} \label{DBisoFG}
\mathsf{DB}_\mathbf{k}(\Sigma,\Gamma) \cong \pi_1\, {\sf Div}^\kk_+(\Sigma,\neg \Gamma),
\end{equation}
where the right-hand side is the fundamental group of the generalised configuration space ${\sf Conf}_\kk(\Sigma,\Gamma)$ (see Definition~\ref{CF}).
At present, the point we want to make, which relies on Theorem~\ref{conjecture}, is that there are many divisor braid groups $\DivBraid_\kk (\Sigma,\Gamma)$ that can be realised as fundamental groups of moduli spaces of vortices on $\Sigma$ valued in a  toric target $X=X_{{\sf Fan}_\Delta}$. Making use of Proposition~\ref{onlySk1}, one obtains namely
\begin{equation} \label{fundgroups}
\pi_1 \, \mathcal{M}_\mathbf{h}^{X}(\Sigma)  \cong  \mathsf{DB}_{\mathbf{k}} (\Sigma,\neg \, {\rm Sk}^1((\partial \Delta)^\vee))
\end{equation}
with coloured degree $\kk$ given as claimed in (\ref{charges}), i.e.
$$
\mathbf{k}(\rho)=\langle \mathbf{n}_\rho, \mathbf{h}\rangle \;\; \text{ for each }\;\; {\rho \in {\rm Sk}^0(\neg {\rm Sk}^1 ( (\partial \Delta)^\vee) ) =  {\sf Fan_\Delta (1)}}.
$$

 We shall come back to the gauge-theoretic viewpoint  (\ref{fundgroups})  in Section~\ref{sec.examples}, illustrating how certain groups of such
 ``realisable'' divisor braids turn out to be interesting and computable
 for simple targets $X=X_{\mathsf{Fan}_\Delta}$. Actually, the original motivation for the present work was to devise  basic tools for such computations.
 In the rest of this section, we explain briefly why results of this sort are relevant in the context of two-dimensional quantum field theory.

\subsection{Supersymmetric gauged sigma-models and vortex moduli} \label{sec:susy}

Let us briefly recall the definition of the $(1+2)$-dimensional   bosonic gauged nonlinear sigma-model on an oriented Riemannian surface  $(\Sigma,g_\Sigma)$ with K\"ahler target $(X,j_X,\omega_X)$, admitting a Hamiltonian holomorphic $G$-action with moment map $\mu$. Solutions of this model can be understood as paths $I \rightarrow \mathcal{F}(P,X,\Sigma)$, i.e.~maps from a real interval $I$ (parametrised by a variable $t$ representing time) to the space of fields defined in (\ref{fields}), 
where $P\rightarrow \Sigma$ is a principal $G$-bundle and $E_X\rightarrow \Sigma$ is as in (\ref{bundleE}). We look for paths satisfying the Euler--Lagrange equations of the action functional
\begin{equation} \label{sigmaaction}
\mathcal{S}_\xi (\tilde A,\phi)= - \| F_{\tilde A} \|_{L^2} ^2 + \| {\rm d}^{\tilde A}  \phi \|_{L^2}^2 - \xi \|\mu \circ \phi \|_{L^2}^2  
\end{equation}
with appropriate conditions on their values at one or maybe both ends of $I$. These equations are second-order PDEs on the manifold $I \times \Sigma$.
In (\ref{sigmaaction}), $\tilde A := A_t {\rm d}t + A(t)$ is a connection (with curvature $F_{\tilde A}$) on the pull-back $p_2^*P$ under the
projection  $p_2:\RR \times \Sigma\rightarrow \Sigma$, while the component $ \phi$ can be interpreted as a path $I \rightarrow C_G^\infty(P,X)$ of $G$-equivariant maps; $\| \cdot \|_{L^2}$ is used to denote
the $L^2$-seminorms on $I\times \Sigma$ determined by the
Lorentzian metric ${\rm d}t^2-g_\Sigma$, by the K\"ahler metric $g_X$ on $X$, and by $G$-invariant bilinear forms $\kappa_\mathfrak{g}$ and $\kappa_{\mathfrak{g}^*}$ associated to a given nondegenerate $G$-equivariant map $\kappa: \mathfrak{g}^* \rightarrow \mathfrak{g}$; and $\xi$ is a positive real parameter.

The integrands in (\ref{sigmaaction}) are invariant under paths to the unitary gauge group $\mathcal{G}:={\rm Aut}_\Sigma(P)$. We are interested in the gauge equivalence classes of solutions to the Euler--Lagrange equations of the functional, and we will argue that this problem simplifies considerably for the value $\xi=1$, which is  referred to as the {\em self-dual} point (or regime) of the sigma-model.

The Lagrangian, or time-integrand in (\ref{sigmaaction}), can be recast as the difference between a non-negative kinetic energy and a 
potential energy given by the positive functional
\begin{equation}\label{potential}
\mathcal{V}_\xi (A(t),\phi(t))=\int_\Sigma  \left( \left|F_{A(t)}\right|_{\kappa_{\mathfrak{g}},g_\Sigma}^2 + \left|{\rm d}^{A(t)} \phi(t)\right|_{g_X,g_\Sigma}^2 + \xi |\mu \circ \phi(t)|_{\kappa_{\mathfrak{g}^*},g_\Sigma}^2\right).
\end{equation}
Assuming $\Sigma$ compact for simplicity, one can rewrite the integral in (\ref{potential}) as in~\cite{Mun}
\begin{eqnarray}
\mathcal{V}_\xi (A(t),\phi(t)) &=& \pm \int_\Sigma \phi(t)^*(\eta_E(A(t))) + {(\xi -1)} \int_\Sigma |\mu\circ \phi(t)|_{\kappa_{\mathfrak{g}^*},g_\Sigma}^2 \nonumber \\
&& +  \int_\Sigma  \left|F_{A(t)} \pm (\mu^\kappa \circ \phi(t))\, \omega_\Sigma\right|_{\kappa_{\mathfrak{g}},g_\Sigma}^2   \label{Bog} \\
&& +  \int_\Sigma \left|{\rm d}^{A(t)}\phi(t) \pm j_X\circ {\rm d}^{A(t)}\phi (t)\circ j_\Sigma\right|_{g_X,g_\Sigma}^2 ,  \nonumber
 \end{eqnarray}
where $\eta_E(A)$ denotes a 2-form on $E=P\times_G X$ pulling back to (\ref{etaA}), as before; this rearrangement is sometimes called the Bogomol'ny\u\i\ trick. 

The two possible choices of signs above give distinct ways of rewriting equation~(\ref{potential}), and the choice that renders the first term on the right-hand side of (\ref{Bog}) non-negative provides a useful estimate on the energy of the system. Recalling equation (\ref{equivpairing}) we observe that, once this choice is made, one can write the first term
as $|\langle [\eta]_G,[\phi(t)]^G\rangle|$. This quantity only depends on the homotopy class of the path $t \mapsto \phi(t)$, i.e.~it is a constant determined by the connected component
of $\mathcal{F}(P,X,\Sigma)$ where the path $t\mapsto (A(t),\phi(t))$ takes values.
To simplify things further, we also assume that $\xi=1$, so that the second term in~(\ref{Bog}) vanishes; then we conclude that
$\mathcal{V}_1 (A(t),\phi(t))\geq |\langle [\eta]_G,[\phi]^G\rangle| $. 

Now suppose that, for each $t \in I$,  the field configuration $(A(t),\phi(t))$ is such that the last two integrands of (\ref{Bog}) for the choice of sign made above vanish. 
By definition, the kinetic energy vanishes for a {\em static} field, so if in addition this $ t \mapsto (A(t),\phi(t))$ is constant, then it minimises not only $\mathcal{V}_1$ 
with value $ |\langle [\eta]_G,[\phi]^G\rangle|$, but also the energy of the system, and it follows that it represents a critical point for
the functional $\mathcal{S}_1$. On the other hand, a field can only be {\em stable} (i.e.~minimise the total energy) if their  kinetic energy  vanishes
and $\mathcal{V}_1 (A(t),\phi(t))$ attains its minimum value within the given homotopy class. Thus the equations defined by the vanishing of the last two integrands
(pointwise on $\Sigma$) define the static and stable field configurations of the model. The choice of upper sign in (\ref{Bog}) corresponds to the vortex equations (\ref{vortex}),
whereas the lower sign defines anti-vortices. There may still exist genuinely dynamical solutions to the Euler--Lagrange equations of $\mathcal{S}_1$, in particular in components where no vortex or antivortex exist.

Ultimately, we are interested in field configurations in $\mathcal{F}(P,X,\Sigma)$ modulo the action of the gauge group.   
There is a version of Gau\ss{}'s law, obtained as the Euler--Lagrange equation of (\ref{sigmaaction}) corresponding to $A_t$, expressing that
the path $t\mapsto (A(t),\phi(t))$ is orthogonal to  the  $\mathcal{G}$-orbits in the spatial $L^2$-norm; this equation has the special status of
a constraint on the space of fields, just as the component $A_t$ is to be regarded as an auxiliary field (its time derivative does not appear in the integrand of (\ref{sigmaaction})),
and it ensures that dynamics of the action $\mathcal{S}_\xi$ descends to a dynamical system on $\mathcal G$-orbits.

If the boundary conditions incorporate the vortex equations (\ref{vortex}) in some way, one may try (following Manton~\cite{ManSut})
 to approximate a time dependent field solving the Euler--Lagrange equations of the sigma-model by a time-dependent
solution to the vortex equations. The hope is to approximate the kinetic terms in the action (\ref{sigmaaction}) by the squared norm 
in the metric on the moduli space of vortices described in Section~\ref{sec:vorteq}; 
this is sensible as far as the trajectory $(A(t),\phi(t))$ remains close to vortex configurations. 
Since the potential energy for vortices at $\xi = 1$ within a given class $\mathbf{h}$ remains constant, 
this proposal amounts to approximating dynamics in the sigma-model
by geodesic motion on the moduli space $\mathcal{M}_{\mathbf{h}}^X(\Sigma)$. Rigorous study of the simplest models 
(for $\Sigma=\CC$, $X=\CC$ and $G={\rm U}(1)$ in~\cite{Stu}, and for $\Sigma=T^2$, $X=\PP^1$ and trivial $G$ in~\cite{Spe}) revealed that this approximation is sound for a Cauchy problem with initial velocities that are small and tangent to the moduli space, and that it is even robust for small values of $|\xi-1|$, provided that a  potential energy term (corresponding to the restriction of second integral in~(\ref{Bog}) to the moduli space) is included in the moduli space dynamics. In other words: at slow speed, the classical $(1+2)$-dimensional field theory with target $X$ and action (\ref{sigmaaction}) is well described by a $(1+0)$-dimensional sigma-model with target $\mathcal{M}_{\mathbf{h}}^X(\Sigma)$.

One should hope to take a further step, and try to understand the `BPS sector'  of the quantisation of these sigma-models, for 
$X$ a K\"ahler toric manifold and $G=\TT$ its embedded torus,
via geometric quantisation of the truncated classical phase space
$$\coprod_{{\mathbf{h}\in H_2^\TT(X;\ZZ)\cap \mathcal{C}_{\rm BPS}(X)}}  {\rm T}^*\mathcal{M}_{\mathbf{h}}^X(\Sigma) $$
with the canonical symplectic structure. This semiclassical regime should capture the physics at energies that are close to the ground states of the model in each topological sector within
the BPS cone (with an obvious extension for the anti-BPS cone).

An important feature of the sigma-model described by $\mathcal{S}_1$ is
that it admits a supersymmetric extension  --- more precisely, for the topological A-twist this extension is described in detail by Baptista in~\cite{BapTSM}, which yields an $N=(0,2)$ 
supersymmetric topological field theory whose quantum observables localise to vortex moduli spaces
as Hamiltonian Gromov--Witten invariants~\cite{CGMS}. This is also the case for 
the effective slow-speed approximation defined above --- in fact, it is  known~\cite{HV} that one-dimensional sigma-models onto a K\"ahler target such as  
$\mathcal{M}_{\mathbf{h}}^X(\Sigma)$ extend to models with $N=(2,2)$ supersymmetry, and thus by adding fermions to our effective (0,1)-dimensional model one could even accommodate the amount of local supersymmetry present in the classical theory. 

According to Witten~\cite{WitSMT}, the ground states in the
effective supersymmetric quantum mechanics on the moduli space should be described by complex-valued harmonic (wave)forms on each component $\mathcal{M}_{\mathbf{h}}^X(\Sigma)$. The relevant Laplacian is the one associated to the natural K\"ahler metric (\ref{L2metric}), which is defined from the kinetic terms of the parent bosonic model (\ref{sigmaaction}). The supersymmetric parity of such states will be given by their degree as forms reduced mod 2. However, multi-particle
quantum states, corresponding equivariant homotopy classes $\mathbf h$ with composite coloured degrees $\mathbf k$ (in the sense of Definition~\ref{def.composite}) should also be interpreted in terms of individual
solitons. This leads to the expectation that the Hilbert spaces corresponding to the quantisation of each component $\mathcal{M}_{\mathbf{h}}^X(\Sigma)$ with coloured
degrees (\ref{charges}) should split nontrivially into sums of tensor products of elementary Hilbert spaces corresponding to individual particles. The individual vortex species corresponding
to different facets of the Delzant polytope $\Delta$ determining the target $X$ should be among these individual particles, but there could well be more types of particles needed to
account for the observed ground states. A case-study where extra (composite) particles occur, which still fit into a common framework~\cite{BokRomP}, is considered in~\cite{RomWeg}.

It is natural to extend the semiclassical approximation slightly by allowing nontrivial holonomies (i.e.~anyonic phases) of the multi-particle waveforms in supersymmetric quantum mechanics, as in the Aharonov--Bohm effect for electrically charged particles. This is implemented by advocating, as in~\cite{Wit-QFS},  that the waveforms be valued in local systems over the moduli space. It is well-known that the different choices involved are parametrised by representations of $\pi_1\, \mathcal{M}_{\mathbf{h}}^X(\Sigma)$, each representation corresponding to a flat connection. The whole picture is familiar from the quantisation~\cite{OliWes} of dyonic particles in $\RR^3$ where, in addition to a topological magnetic charge $k\in \NN$ (specifying a moduli space ${M}^0_k$ of centred BPS $k$-monopoles~\cite{AtiHit}), one needs to choose a representation of $\pi_1 ({M}^0_k) \cong \ZZ_k$ specifying the electric charge of the dyon.

As we will see (in contrast with the case of BPS monopoles), the fundamental group $\pi_1\, \mathcal{M}_{\mathbf{h}}^X(\Sigma)$ in the context of vortices turns out to be infinite, leading to a continuum of representations. In this situation, it is physically more natural to deal with distributions of flat connections over a measurable subset of the representation variety
rather than a specific representation. An alternative~\cite{BokRomP,BokRomWeg} to dealing with a collection of bundles over $\mathcal{M}_{\mathbf{h}}^X(\Sigma)$ is to perform the quantisation over a cover $ \widetilde{\mathcal{M}}_{\mathbf{h}}^X(\Sigma)$ of the moduli space where the relevant local systems trivialise --- of course, this will always be the case for the universal cover. Then one views 
linear combinations (or wavepackets) of waveforms representing each quantum multi-particle state over a distribution of representations (see also~\cite{AtiEO}) as elements of 
the $L^2$-completion
\begin{equation} \label{BPSsector}
\bigoplus_{{\mathbf{h}\in H_2^\TT(X;\ZZ)\cap {\mathcal{C}}_{\rm BPS}(X)}} \!\!\!\!\!\!\!\!\!\!\!\!\!\!\!
L^2\Omega_c^*\left( \widetilde{\mathcal{M}}_{\mathbf{h}}^X(\Sigma), \partial_0  \widetilde{ \mathcal{M}}_{\mathbf{h}}^X(\Sigma) ;\CC \right)
\end{equation}
of a space of compactly supported forms. Here, we admit that  the the covering spaces $ \widetilde{\mathcal{M}}_{\mathbf{h}}^X(\Sigma)$ may not be cocompact, and we do not assume that their metrics (which are just the pull-backs of (\ref{L2metric}) from the quotients) are necessarily complete; but that one can distinguish a component  $\partial_0  \widetilde{ \mathcal{M}}_{\mathbf{h}}^X(\Sigma)$ of each boundary mapping surjectively onto the subset of the boundary of  $\mathcal{M}_{\mathbf{h}}^X(\Sigma)$ that is accessible to finite-time geodesic flow. These assumptions take into account the most recent analytic work on the geometry of the moduli of nonlinear vortices~\cite{RomSpe}, which revealed non-completeness and finiteness of volume over compact subsets of the surface $\Sigma$. Then
it is physically sensible  to impose vanishing conditions for the waveforms over  $\partial_0  \widetilde{ \mathcal{M}}_{\mathbf{h}}^X(\Sigma)$, and we incorporate this in the notation (\ref{BPSsector}).

The BPS sector (\ref{BPSsector}) of the quantum Hilbert space is an infinite-dimensional vector space, but it admits an action of
the group of deck transformations, which will be the whole fundamental group $\pi_1  \mathcal{M}_{\mathbf{h}}^X(\Sigma)$ if we work directly on the universal cover. In order to obtain information about interesting subspaces (where this group acts), 
we need to proceed statistically in the sense of Murray--von Neumann dimensions~\cite{MurVNeu,Lue} over the group von Neumann algebra~(see \cite{BroOza}, p.~43)
\begin{equation}\label{gvNa}
\mathcal{N}(\pi_1  \mathcal{M}_{\mathbf{h}}^X(\Sigma)):=  \mathcal{B}(\ell^2(\pi_1  \mathcal{M}_{\mathbf{h}}^X(\Sigma)))^{\pi_1  \mathcal{M}_{\mathbf{h}}^X(\Sigma)}.
\end{equation}
In particular, for the study of the ground states (where one should restrict to harmonic forms), this problem reduces to computing analytic $L^2$-Betti numbers~\cite{AtiEO,Lue} of the covering space equipped with the group action. In the usual setting of geometric quantisation, where quantum states take values in line bundles, it is sufficient to use the maximal Abelian cover and
representations of the fundamental group at rank one,
as in~\cite{BokRomP}. Note that this is equivalent to working with the Abelianisation $H_1( \mathcal{M}_{\mathbf{h}}^X(\Sigma) ;\ZZ)$. As we shall see, in our situation of Abelian vortices this a free Abelian group, so the machinery of $L^2$-invariants  has the flavour of classical Fourier analysis in this setting.

For a classical phase space whose fundamental group is nonabelian, one may also go beyond line bundles, and construct nonabelian waveforms valued in
local systems of higher rank. Such waveforms provide examples of {\em nonabelions}~\cite{MooRea}, which have been sought after in QFT model-building. They have been proposed to describe the phenomenology of correlated  electrons (for which $\phi$ would play the role of order parameter) in certain contexts of interest for
condensed-matter physics~\cite{NSSFD}. The results of
this paper demonstrate that some of the gauge theories we described above, with K\"ahler toric targets $X$, determine moduli spaces $ \mathcal{M}_{\mathbf{h}}^X(\Sigma) $
having {nonabelian} fundamental groups.
Perhaps surprisingly, this means that specific quantum {\em Abelian} supersymmetric gauged sigma-models (as we shall illustrate in Section~\ref{sec.examples}) are good candidates for effective  field theories whose quantum  states braid with {\em nonabelionic} statistics.

\section{A  fundamental-group interpretation of $\DivBraid_{\mathbf{k}}(\Sigma, \Gamma)$}
\label{sec:fundamental} 

In this section we provide a more rigorous definition of the divisor braid groups $\DivBraid_{\mathbf{k}}(\Sigma, \Gamma)$ described in our Introduction. We shall also justify the
isomorphism (\ref{DBisoFG}) that allows to interpret them as fundamental groups of certain generalised  configuration spaces that we denote as $\CF_{\mathbf{k}}(\Sigma, \Gamma)$. 

\subsection{Basic definitions}

For the rest of the paper, it will be more convenient to work with the following convention for the combinatorial data specifying intersections. 

\begin{defn}
  A {\em negative colour scheme} is a pair $(\Gamma,\mathbf{k})$ consisting of an undirected graph $\Gamma$ with no self-loops and no multiple edges, and a
  function $\mathbf{k}: {\rm Sk}^0 (\Gamma) \rightarrow \NN$ on the set of vertices (i.e. 0-skeleton) of $\Gamma$. The vertices  of the graph $\Gamma$ will sometimes be referred to as {\em colours},
 each image $k_\lambda := \kk(\lambda)$ as the  {\em degree} in colour $\lambda$,  and the map $\kk$ as the {\em coloured degree}.
\end{defn}
 
 Some of the main results of this paper concern coloured degrees $\kk$ for which $k_\lambda \ge 2$ for all colours $\lambda$; recall that we refer to such coloured degrees, or to any object defined from them, as {\em very composite} (see Definition~\ref{def.composite} for nomenclature concerning $\kk$). 
We will often simplify the notation by introducing a total order on the set of vertices, which will identify them with consecutive integers --- e.g. $1,\dots,r$. 
Accordingly, we can represent an effective coloured degree $\mathbf k$ by the tuple of degrees $k_\lambda$ in each colour, i.e. by an element of $\NN^r$, where $r$ is the number of colours. 

As anticipated by the notation, the configuration spaces we are interested in will be associated to a negative colour scheme  $(\Gamma,\mathbf{k})$ together with a smooth surface $\Sigma$. 
For each colour $\lambda$, we consider the symmetric product $S^{k_\lambda}\Sigma$. A point of this symmetric power can be considered as an effective divisor of degree $k_\lambda$ on $\Sigma$. We shall write
$$\Sigma^\kk := \prod_{\lambda \in {\rm Sk}^0 (\Gamma)} \Sigma^{k_\lambda }  \qquad \text{and} \qquad
S^\kk\Sigma := \prod_{\lambda\in {\rm Sk}^0 (\Gamma)} S^{k_\lambda}\Sigma.$$ 
It is evident that the symmetric group 
\begin{equation} \label{symmgrp}
\mathfrak{S}_\kk:=\prod_{\lambda \in {\rm Sk}^0 (\Gamma)} \mathfrak{S}_{k_\lambda}
\end{equation}
acts on $\Sigma^\kk$ with quotient $S^\kk\Sigma$. Once again we note that that this quotient is always smooth, even though the action of $\mathfrak{S}_\kk$ on $\Sigma^\kk$ is not free in general.
 
\begin{defn} \label{CF}
The space $\CF_\kk (\Sigma,\Gamma)$ of configurations of points on $\Sigma$ with negative colour scheme $(\Gamma,\kk)$ is the set
\[
\left\{
(d_1,\dots,d_r)\in S^\kk\Sigma\, \left| \, 
\begin{array}{c}
\text{If an edge connects the vertices $\lambda$ and $\lambda'$ in $\Gamma$,}\\
\text{ then the supports of $d_\lambda$ and $d_{\lambda'}$ in $\Sigma$ are disjoint.}
\end{array}
 \right. \right\}.  
\]  
\end{defn}

Referring back to our more general Definition~\ref{defconf}, we can alternatively write
\begin{equation}\label{ConfDiv}
{\sf Conf}_\kk (\Sigma,\Gamma) = {\sf Div}^\kk_+ (\Sigma,\neg \Gamma)
\end{equation}
as in  (\ref{DivConf}), recovering the negative graph $\neg \Gamma$ as the 1-dimensional simplicial complex of a (genuine) colour scheme in the sense of that definition;
this also makes contact with the established nomenclature for configuration spaces (see Remark~\ref{lovenothate}).

Recall that if $\Sigma$ is given the structure of a Riemann surface, then each $S^{k_\lambda} \Sigma$ receives an induced complex structure. Via the inclusion $\CF_\kk (\Sigma,\Gamma) \subset S^\kk \Sigma$ as a dense open set, the configuration space becomes a complex manifold; its complex dimension is the {\em total degree} $|\kk|$ already defined in equation (\ref{normk}),
$$
|\kk| := \sum_{\lambda \in {\rm Sk}^0 (\Gamma)} k_\lambda = \dim_\CC  S^\kk \Sigma = \dim_\CC \CF_\kk (\Sigma,\Gamma) .
$$

If the coloured degree $\kk$ is the function $\mathbf{1}$ with constant value 1 and $\Gamma$ is the complete graph with $r$ vertices, then 
the configuration space $\CF_{\mathbf{1}} (\Sigma,\Gamma)$  is the classical configuration space  of
$r$ ordered, distinct points on $\Sigma$.  It is well know that the fundamental group of $\CF_{\mathbf 1} (\Sigma,\Gamma)$  is the group of
pure braids ${\sf PB}_r(\Sigma)$ on $\Sigma$.  We will  focus next on how to handle the other end of the spectrum, i.e. when higher symmetric powers of the surface $\Sigma$ occur
as components of the ambient space $S^\kk\Sigma \supset \CF_{\mathbf{k}} (\Sigma,\Gamma)$. (These symmetric powers occur in {\em all} components when $\kk$ is very composite.)

\subsection{The monochromatic case} \label{sec:monochrom}

By the Dold--Thom theorem~\cite{DolTho}, if $k\geq 2$ then the fundamental group of the symmetric product $S^k\Sigma$ equals $H_1(\Sigma;\ZZ)$. As a warm-up to other constructions, we want to interpret this object
as a group of braids on $\Sigma$.
Since we know the group $H_1(\Sigma;\ZZ)$ perfectly well, this venture may sound like a pointless pursuit. However, another point of view on the following material, which might be more instructive, is that it concerns the obvious surjective map from the group of (not necessarily pure) braids on $\Sigma$ to the first homology group of $\Sigma$.
Our goal is to exhibit explicit elements of the braid group that normally generate the kernel of this map.

For convenience, we will assume that $\Sigma$ has a Riemannian metric. However, our constructions will not depend on this metric in any essential way. There is a projection map $p:\Sigma^k\to S^k\Sigma$ corresponding to the quotient by $\mathfrak{S}_k$, which in classical algebraic geometry is written as $(z_1,\ldots, z_k) \mapsto \sum_{i=1}^k z_i$. We fix a basepoint $z\in\Sigma$, and similarly basepoints $z^k=(z,z,\dots,z)\in \Sigma^k$ and $\mathbf{z}=p(z^k)= k\, z\in S^k \Sigma $.  We want to interpret the fundamental group of $\Sigma$ as a  generalised braid group.

We define the appropriate generalised braids in the following way. A braid consists of  a differentiable  map
\[
\gamma:[0,1]\to \Sigma^k
\]
with the property that for all $i\not=j$ and $t\in (0,1)$ we have that $\gamma_i(t)\not= \gamma_j(t)$. In particular, we cannot have that $\gamma(0)=z^k$ or $p(\gamma(0))=\zz$ for some $z\in \Sigma$. Instead, we consider a disc centred at $z$, pick arbitrarily $k$ distinct points $z_1, \dots,z_k$ inside this disc, and define $\zz^\prime$ to be the image  (i.e. equivalence class) $\sum_{i=1}^k z_i$ in $S^k\Sigma$ of $(z_1,\dots,z_k)\in \Sigma^k$ under $p$. Up to homotopy, there is a unique path from $z$ to each $z_i$ in the chosen disc, providing a canonical identification of the fundamental groups $\pi_1(S^k\Sigma)$ defined using those two base points. To simplify our notation, we will usually suppress the dependence of the fundamental group on the choice of $\zz^\prime$, and even the difference between $\zz$ and its `reduced' version $\zz^\prime$.

For braids $\gamma$ which satisfy $\gamma(0)=(z_1,\dots,z_k)$, we shall impose the restriction  $p\circ\gamma(1)=\zz^\prime$.  Because the $z_i$ are distinct points, there will be a well-defined permutation $\sigma \in \mathfrak{S}_k$ such that $\gamma(1)=(z_{\sigma(1)},\dots,z_{\sigma(k)})$. This permutation  will obviously depend on $\gamma$. There is also a well-defined way of composing two such braids $\gamma^{(1)}$ and $\gamma^{(2)}$ by glueing at the end points. Precisely, if $\sigma$ is the permutation determined by $\gamma^{(1)}$, then $\gamma^{(1)}*\gamma^{(2)}$  is the composition of the path $\gamma^{(1)}$ with the path $t\mapsto \left(\gamma^{(2)}_{\sigma(1)}(t),\dots,\gamma^{(2)}_{\sigma(k)}(t)\right)$.

Now the braid $\gamma$ determines an element $[p\circ \gamma]\in \pi_1(S^k\Sigma)$. We introduce an equivalence relation on such braids, generated by two types of relations.   First, if two braids are homotopic through braids, they are equivalent. Secondly, we express that we allow any strands of this braid to pass through each other.

Suppose that $\gamma_1,\gamma_2:[0,1]\to \Sigma$ are two continuous paths. Together, they represents a path $p\circ (\gamma_1,\gamma_2)$
in $S^2\Sigma$. Suppose that $\gamma_1(1/2)=\gamma_2(1/2)$. Then, we can can consider
\[
\gamma_1'(t)=
\begin{cases}
\gamma_1(t)& 0\leq t\leq 1/2\\
\gamma_2(t)& 1/2\leq t\leq 1\\
\end{cases} 
\qquad
\gamma_2'(t)=
\begin{cases}
\gamma_2(t)& 0\leq t\leq 1/2\\
\gamma_1(t)& 1/2\leq t\leq 1\\
\end{cases} 
\]
The path $p\circ (\gamma_1',\gamma_2')$ equals the path
$p\circ (\gamma_1,\gamma_2)$. This will produce a ``crossing relation'' in the fundamental group of $S^2\Sigma$ which we have to take into account.

Let $\gamma$ and $\tilde \gamma$ denote two braids in the sense considered before. Let $z\in \Sigma$ be a point, $\epsilon >0$ such that the exponential map associated to our fixed metric is a homeomorphism on a ball of radius $\epsilon$ in ${\rm T}_z(\Sigma)$. In particular, the metric ball $B_\epsilon(z):=\exp_z(B_\epsilon(0))$ is a disc. Let $\tau>0$ and $0< t_0< 1$. Now assume that $i_1\not= i_2$ are such that
\begin{itemize}
\item $\gamma_i(t)=\tilde\gamma_i(t)$ unless $i\in \{i_1,i_2\}$.
\item If $t< t_0-\tau$, then $\gamma_{i_1}(t)=\tilde \gamma_{i_1}(t)$ and  $\gamma_{i_2}(t)=\tilde \gamma_{i_2}(t)$.
\item  If $t> t_0+\tau$, then $\gamma_{i_1}(t)=\tilde \gamma_{i_2}(t)$ and  $\gamma_{i_2}(t)=\tilde \gamma_{i_1}(t)$.
\item If $\vert t-t_0\vert\leq \tau$ and $i\not =i_1, i\not= i_2$, then $\gamma_i(t)\not \in B_\epsilon(z)$. 
\item If $\vert t-t_0\vert\leq \tau$, and either  $i=i_1$ or $i=i_2$, then $\gamma_i(t)\in B_\epsilon(z)$.
\end{itemize} 
If these conditions are satisfied for some choice of $z,\epsilon,i_1,i_2$, we say that $\gamma$ and $\tilde\gamma$ are related by a {\em crossing move}.

 It is quite easy to check that the equivalence relation generated by homotopies and crossing moves is compatible with composition of braids, so we also get a composition on equivalence classes. The constant braid $\gamma_i(t)=z_i$ gives an identity element for this composition, and the reverse of braids (in the usual sense) is its inverse. It follows that  the equivalence classes form a group.

\begin{defn}
The group of monocromatic divisor braids $\DivBraid_k(\Sigma)$ is the group of equivalence classes of braids modulo homotopies and crossing moves.
\end{defn}

Our goal is to relate this group with the first homology of the surface $\Sigma$, which is the fundamental group of $S^k\Sigma$.

\begin{lemma}
\label{maptofundamental1}
If $\gamma$ and $\tilde\gamma$ are braids in the same equivalence class, then 
$[p\circ\gamma]=[p\circ \tilde\gamma]\in \pi_1(S^k\Sigma)$. 
\begin{proof}
If $\gamma$ and $\tilde\gamma$ are homotopic, the assertion is immediate. Now suppose they are related by a crossing move. We indicate how to construct a homotopy from $p\circ \gamma$ to $p\circ \tilde \gamma$. This homotopy will be a composition of two homotopies. 
The first homotopy only changes the strands $\gamma_{i_1}(t)$ and $\gamma_{i_2}(t)$, and only for $\vert t-t_0\vert<\tau$. It moves  
$\gamma$ to a path $\gamma^{(1)}$ which satifies that $\gamma_{i_1}^{(1)}(t_0)=\gamma_{i_2}^{(1)}(t_0)$. This path is not a braid (since two strands intersect), but $p\circ \gamma^{(1)}$ still defines an element of 
$\pi_1(S^k\Sigma)$, and actually 
the same element as $p\circ \gamma$. Now consider the path $\gamma^{(2)}:[0,1]\to \Sigma^k$ defined by
\[
\gamma^{(2)}_i(t)=
\begin{cases}
\gamma^{(1)}_i(t)&\text{ if $i\not\in \{i_1,i_2\}$,}\\
\gamma^{(1)}_i(t)&\text{ if $i\in \{i_1,i_2\}$, and $t\leq t_0$.}\\
\gamma^{(1)}_{i_2}(t)&\text{ if $i=i_1$ and $t\geq t_0$.}\\
\gamma^{(1)}_{i_1}(t)&\text{ if $i=i_2$ and $t\geq t_0$.}\\
\end{cases}
\]
We see that $p \circ \gamma^{(2)}=p \circ \gamma^{(1)}$. But it is also easy to see that we have a second homotopy, similar to the first one,  from
$\gamma^{(2)}:[0,1]\to\Sigma^k$ to $\tilde\gamma:[0,1]\to\Sigma^k$. We have now proved the lemma, since $p\circ \gamma^{(2)}$ defines the same element in $\pi_1(\Sigma^k)$ as $p\circ \tilde\gamma$. 
\end{proof}
\end{lemma}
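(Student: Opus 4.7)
The plan is to handle the two kinds of generating equivalence relations separately. A homotopy of braids is, by definition, a continuous map $H:[0,1]\times[0,1]\to \Sigma^k$ interpolating between $\gamma$ and $\tilde\gamma$ through braids with common endpoints. Postcomposing with the continuous quotient map $p:\Sigma^k\to S^k\Sigma$ produces a basepoint-preserving homotopy $p\circ H$ between $p\circ\gamma$ and $p\circ\tilde\gamma$, so their classes in $\pi_1(S^k\Sigma)$ coincide at once. The real content is therefore the crossing-move case.

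For a crossing move, $\gamma$ and $\tilde\gamma$ coincide as maps into $\Sigma^k$ outside the time window $(t_0-\tau,t_0+\tau)$, except that for $t>t_0+\tau$ their $i_1$-th and $i_2$-th components are swapped. In particular no homotopy in $\Sigma^k$ rel endpoints can connect them, but at the level of $S^k\Sigma$ the swap is invisible. My construction follows the author's sketch. First I produce an intermediate path $\gamma^{(1)}:[0,1]\to \Sigma^k$ that agrees with $\gamma$ outside the window and satisfies $\gamma^{(1)}_{i_1}(t_0)=\gamma^{(1)}_{i_2}(t_0)$, by deforming the two colliding strands toward a common collision point inside the geodesic ball $B_\epsilon(z)$. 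That ball is diffeomorphic to a disc and convex in the exponential coordinates, so such a deformation is available as a straight-line interpolation. Away from $B_\epsilon(z)$ nothing changes, so the deformation extends to a homotopy of $\gamma$ in $\Sigma^k$, continuous although not through braids; projecting by $p$ yields a homotopy of loops, giving $[p\circ\gamma]=[p\circ\gamma^{(1)}]$.

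Next I define $\gamma^{(2)}$ by swapping the $i_1$-th and $i_2$-th components of $\gamma^{(1)}$ for $t\ge t_0$. The key observation is that for every $t\in[0,1]$ the unordered pair $\{\gamma^{(2)}_{i_1}(t),\gamma^{(2)}_{i_2}(t)\}$ equals $\{\gamma^{(1)}_{i_1}(t),\gamma^{(1)}_{i_2}(t)\}$ (they differ only by a permutation, and coincide at $t=t_0$), while all other components are unchanged. Hence $p\circ\gamma^{(2)}=p\circ\gamma^{(1)}$ \emph{as loops}, not merely up to homotopy. Finally, $\gamma^{(2)}$ is connected to $\tilde\gamma$ by a homotopy symmetric to the first, namely a separation of the two strands from the collision point inside $B_\epsilon(z)$ to the endpoints prescribed by $\tilde\gamma$. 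Chaining the three equalities gives $[p\circ\gamma]=[p\circ\tilde\gamma]$.

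The main technical point to verify is that the intermediate paths $\gamma^{(1)}$ and $\gamma^{(2)}$, although not braids, still descend under $p$ to bona fide continuous loops in $S^k\Sigma$ with the correct basepoint; this is immediate from continuity of $p$ and the fact that the $i_1, i_2$-strands coincide only on a closed subset of the window. One also has to check that the collision/separation homotopies can be chosen to leave the remaining strands untouched, which is built into the definition of a crossing move via the condition $\gamma_i(t)\notin B_\epsilon(z)$ for $i\notin\{i_1,i_2\}$ and $|t-t_0|\le\tau$. No subtlety arises from the non-freeness of the $\mathfrak{S}_k$-action: although $p$ fails to be a covering map, it is a continuous quotient of Hausdorff spaces, which is all that is needed to push loops and their homotopies forward.
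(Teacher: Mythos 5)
Your proposal is correct and follows essentially the same route as the paper's own proof: a first homotopy collapsing the two strands to a common point at time $t_0$ inside $B_\epsilon(z)$, the exact identity $p\circ\gamma^{(2)}=p\circ\gamma^{(1)}$ after swapping the $i_1$- and $i_2$-components for $t\ge t_0$, and a symmetric second homotopy to $\tilde\gamma$. The extra care you take about convexity of the exponential ball and about the non-freeness of the $\mathfrak{S}_k$-action is sound but not a departure from the paper's argument.
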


Lemma~\ref{maptofundamental1} provides us with a group homomorphism
\begin{equation}\label{xi}
\xi:\DivBraid_k(\Sigma)\to \pi_1(S^k\Sigma).
\end{equation}
\begin{lemma}
\label{lambdaiso1}
$\xi$ is an isomorphism.
\begin{proof}
Recall that from partitions of $k$ one obtains a filtration of $S^k\Sigma$ after how many of the components of each pre-image $k$-tuple $(z_1,\dots,z_k)$ are distinct. That is,  we take
$$F^i:=\{[(z_1,\dots ,z_k)]\vert \text{ at most $i$ of the points  $z_j$ are distinct}\}.$$ In the filtration
\[
\emptyset = F^0 \subset \Sigma \cong F^1\subset F^2\subset \dots\subset  F^k=S^k\Sigma
\]  
the (real) codimension of the space $F^i$ is $2(k-i)$. In general, the spaces $F^i$ have singularities.  However, each $F^i$ is closed in $F^{i+1}$, so the complements $U^i:=S^k\Sigma \setminus F^i$ form a filtration of open subspaces
\[
\emptyset=U^k  \subset U^{k-1} \subset \dots U^0  = S^k\Sigma .
\]
The set $U^{k-1}$ is the configuration space of $k$ distinct unordered  points on $\Sigma$. 

\emph{Claim 1:}  The difference $U^{i-1}\setminus U^{i}=F^i\setminus F^{i-1}$ is a closed submanifold of $U^{i-1}$ of dimension $2i$. 

Closure is immediate: since $F^i$ is closed in $S^k\Sigma$, we have that $F^i \setminus F^{i-1}$ is closed in $S^k\Sigma \setminus F^{i-1}$. We need to check that it is a submanifold. This question is local, so we can as well assume that $\Sigma=\CC$. 

We can identify $\CC^k/\mathfrak{S}_k$ with $\CC^k$ through the map $\sum_{i=1}^k z_i \mapsto  (s_1,\dots,s_k)$, where the $s_i$ denotes the $i$-th elementary symmetric function in $z_1,\dots, z_k$. The image in $\CC^k/\mathfrak{S}_k$ of the (total) diagonal subspace of $\CC^k$ is a copy of $\CC$, and its inclusion followed by the identification is the map $\phi_k(z)= \left(\binom k1 z,\binom k2 z^2,\dots, \binom kk z^k\right)$. This is clearly the inclusion of a submanifold.

Now suppose $\mathbf{z}\in  F^i\setminus F^{i-1}$ for some $i$. That is, 
\[
\mathbf{z} =p(\underbrace{z_1,\dots,z_1}_{\gets k_1 \to},\underbrace{z_2,\dots,z_2}_{\gets k_2 \to},\dots,\underbrace{z_i,\dots,z_i}_{\gets k_i \to}),
\] 
where the points $z_1, z_2, \ldots, z_i$ are pairwise distinct.
We get a chart for $S^k\Sigma$ in a neighbourhood of $\mathbf{z}$ by using the elementary symmetric functions on the first $k_1$ variables, then the elementary symmetric functions on the next $k_2$ variables, and so on. In a neighbourhood of $\mathbf{z}$, the subset $F^i$ is given by $i$ points, and the inclusion is locally given by $(\phi_{k_1},\phi_{k_2},\dots,\phi_{k_i})$. This map is a product of $i$ embeddings of $\CC$, so it is the inclusion of a submanifold. The claim is proved.
 
\emph{Claim 2:} $\xi$ is surjective.

By transversality, a map $S^1\to U^0= S^k\Sigma$ is homotopic, by an arbitrarily small homotopy, to a map whose image avoids $F^{0}$. Using transversality inductively we can avoid all $F^i$ for $i<k$. It follows that there is an arbitrarily small homotopy that moves $\gamma$ to a map whose image is in $U^{k-1}$. This map can obviously be lifted to a braid $[0,1]\to \Sigma^k$, which proves Claim 2. Actually, this lift is unique and compatible with composition. 

\emph{Claim 3:} $\xi$ is injective.

Assume that  $\gamma^{(0)}$ and  $\gamma^{(1)}$ are two braids and $\Phi:[0,1]\times[0,1]\to S^k\Sigma$ is a homotopy from $p\circ\gamma^{(0)}$ to $p\circ \gamma^{(1)}$. 
We can make the homotopy transversal to the filtration $F^*$. In particular, we can assume without restriction that its image does not meet $F^{k-2}$, and that it will meet $F^{k-1}$ transverally in finitely many points at times $t_1,\dots , t_n $. At each $t_i$ the homotopy will perform a crossing move, so that 
we can transform $\gamma^{(0)}$ to $\gamma^{(1)}$ by a finite number of homotopies and a finite number of crossing moves.
\end{proof}
\end{lemma}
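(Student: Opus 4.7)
The natural strategy is to exploit the stratification of the symmetric product by coincidence patterns and use transversality; this is the standard trick for comparing configuration spaces with their symmetric-product analogues, and it fits the situation here because the only equivalence we allow on braids beyond homotopy is the crossing move, which is precisely what lets us cross the codimension-2 stratum where two points collide.

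Concretely, my plan is to introduce the filtration $\emptyset = F^0\subset F^1\subset\cdots\subset F^k=S^k\Sigma$, where $F^i$ consists of unordered tuples supported on at most $i$ points of $\Sigma$. The first step is to verify that each successive difference $F^i\setminus F^{i-1}$ is a submanifold (of complex dimension $i$, hence real codimension $2(k-i)$ in $S^k\Sigma$); this is a local statement that reduces to checking what happens in $S^k\CC\cong \CC^k$ under the elementary symmetric coordinates, where the embedding of a partial diagonal factors as a product of the standard embeddings $z\mapsto\bigl(\binom{m}{1}z,\ldots,\binom{m}{m}z^m\bigr)$ for each block size $m$, each of which is a smooth embedding of $\CC$.

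For surjectivity of $\xi$, I would take an arbitrary loop $S^1\to S^k\Sigma$ based at $\zz^\prime$ and apply transversality to push it off $F^{k-1}$ by an arbitrarily small homotopy: since $F^{k-1}$ has real codimension $\geq 2$ and the loop has real dimension $1$, a generic small perturbation will miss $F^{k-1}$ entirely. The resulting loop lies in the ordinary ordered configuration space quotient $U^{k-1}$ and has a unique (up to the $\mathfrak{S}_k$-action, already absorbed into the target) lift to a braid $[0,1]\to\Sigma^k$, giving a preimage under $\xi$.

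For injectivity I would apply the same transversality technology to a homotopy $\Phi:[0,1]^2\to S^k\Sigma$ between $p\circ\gamma^{(0)}$ and $p\circ\gamma^{(1)}$. A generic homotopy can be assumed to avoid $F^{k-2}$ altogether (codimension $\geq 4$ against a $2$-dimensional source) and to meet $F^{k-1}\setminus F^{k-2}$ transversally, hence in a finite set of points $t_1,\ldots,t_n$; near each such point exactly two strands of the ambient braid collide, so by the local picture one may split $\Phi$ into finitely many sub-homotopies, each realising either a braid homotopy or a single crossing move of the form specified in the definition of $\DivBraid_k(\Sigma)$. Concatenating the resulting moves shows that $\gamma^{(0)}$ and $\gamma^{(1)}$ already represent the same element of $\DivBraid_k(\Sigma)$.

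The main obstacle I anticipate is the last paragraph: the transversality step itself is routine, but one must be careful to match the abstract intersection data with the combinatorial definition of a crossing move, in particular checking that near each transverse intersection with $F^{k-1}\setminus F^{k-2}$ one can genuinely arrange small neighbourhoods $B_\epsilon(z)$ and times $t_0\pm\tau$ so that the remaining strands stay uniformly away and exactly two strands lie inside the disc. Formalising this amounts to a local normal-form argument near the big diagonal, but since the big diagonal is a submanifold of real codimension $2$ meeting the homotopy transversally at an isolated point, the local model is essentially that of two strands crossing in a disc, which is exactly a crossing move.
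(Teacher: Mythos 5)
Your proposal is correct and follows essentially the same route as the paper's own proof: the same filtration of $S^k\Sigma$ by the number of distinct support points, the same local submanifold verification via elementary symmetric functions and the embeddings $z\mapsto\bigl(\binom{m}{1}z,\ldots,\binom{m}{m}z^m\bigr)$, and the same transversality arguments for surjectivity and injectivity, including the identification of transverse intersections with $F^{k-1}$ as crossing moves. The only difference is cosmetic (you push the loop off the strata in one step by codimension counting, where the paper phrases this as an induction), so there is nothing further to add.
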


\subsection{Braids of many colours} \label{sec:variegated}

Suppose that $\Sigma$ is an oriented surface and $(\Gamma, \kk)$ is a given negative colour scheme in $r>1$ colours. To render the notation somewhat less cumbersome, we will write $k:= |\kk|$ for the total degree.

Observe that the configuration space $\CF_\kk (\Sigma,\Gamma)$ of Definition~\ref{CF} can be considered as a quotient of an open subset $U \subset \Sigma^\kk$. 
There is a restricted action of $ \mathfrak{S}_{\mathbf k}$ on this $U$, and the space we are interested in is the quotient 
$${\CF}_\kk (\Sigma,\Gamma):=U /\mathfrak{S}_{\mathbf k} \subset S^\kk \Sigma. $$ 
Let $p: U  \to S^\kk \Sigma$ once again denote the quotient map. 

Let $z_1,\dots,z_r$ be arbitrarily chosen, disjoint points in $\Sigma$. These points determine a point $(z_1^{k_1},\dots, z_r^{k_r}) \in U$, where as in Section~\ref{sec:monochrom} we are writing
$z_\lambda^{k_\lambda} := (z_\lambda,\cdots, z_\lambda) \in \Sigma^{k_\lambda}$.
 We choose the image of this point under the quotient map $p$ as a base point in $S^\kk \Sigma$. 

For $1 \leq \lambda \leq r$, we chose $r$ disjoint metric discs $D_\lambda \subset \Sigma $ where $z_\lambda \in D_\lambda$. Inside each disc $D_\lambda$, we chose
distinct points $z_{\lambda,j}$ where $1\leq j\leq k_\lambda$. In particular, the points $z_{\lambda,j}$ (when all the labels are taken) are distinct. Let $Z_\lambda:=\{z_{\lambda,j}\in \Sigma \, \vert\,  1\leq j\leq k_\lambda\}$ and $Z:=\bigcup_{\lambda=1}^r Z_\lambda$. 

We first consider the full set ${\BraidSet}_{\kk}$ of {\em colour-pure} braids on $k$ strands in $\Sigma$, starting and ending at $Z$.
Precisely, an element $\gamma$ of this set consists of a family of smooth maps $\gamma_{\lambda,j}:[0,1]\to \Sigma$ satisfying the following conditions:
\begin{itemize}
\item If  $(\lambda,j) \not=(\lambda^\prime,j^\prime)$, then $\gamma_{\lambda,j}(t)\not=\gamma_{\lambda^\prime,j^\prime}(t)$  for all $t\in[0,1]$.
\item $\gamma_{\lambda,j}(0)\in Z_\lambda$ and $\gamma_{\lambda,j}(1)\in Z_\lambda$.
\end{itemize}

We can think of such a braid as built out of $k$ strands $\gamma_{\lambda,j}$, such that every strand is coloured by the first index $\lambda$. That is, we say
that $\gamma_{\lambda,j'}$ has the same colour as $\gamma_{\lambda,j''}$ for any $j',j''$, and the collection $$\gamma_\lambda:=\left\{ \gamma_{\lambda,j}: [0,1] \rightarrow \Sigma\right\}_{j=1}^{k_\lambda}$$ forms the
sub-braid of colour $\lambda$ in $\gamma$.

Now we introduce an equivalence relation on this set. As in the monochromatic case, the equivalence we are interested in is generated by two types of relations. The first type is that colour-pure braids which are homotopic through braids satisfying the same conditions are equivalent. The second is that 
two braids are equivalent if they are related by a crossing move involving two strands $\gamma_{\lambda,j}$ and $\gamma_{\lambda',j^\prime}$ that are either: (i) of the same colour (i.e. $\lambda=\lambda'$), or (ii) of colours $\lambda,\lambda'$ corresponding to vertices in $\Gamma$ that are {\em not} connected by an edge of the graph.

We can generalise the definition of divisor braids to the many-colour case as follows.

\begin{defn}[Divisor braid groups] \label{DB}
A {\em divisor braid} of negative colour scheme $(\Gamma, \kk)$  is an equivalence class of elements in $\BraidSet_\kk$ under homotopy of colour-pure braids and the crossing moves of type (i) and (ii) described above. The set of equivalence classes of such divisor braids is denoted $\DivBraid_\kk(\Sigma,\Gamma)$. One can compose divisor braids by concatenation as usual, and it is easy to see that they form a group. 
\end{defn}

As in the case of one colour, a braid $\gamma_{\lambda,j}$ determines a closed path $p\circ \gamma_{\lambda,j}(t)$, which can be interpreted as an element 
$\xi(\gamma)\in \pi_1(S^\kk \Sigma)$
for the obvious generalisation 
\begin{equation} \label{DBaspi1}
\xi: \DivBraid_\kk(\Sigma,\Gamma) \rightarrow \pi_1 (\CF_\kk(\Sigma,\Gamma))
\end{equation}
of the group homomorphism (\ref{xi}).
\begin{lemma}
\label{maptofundamental2}
If $\gamma^{(1)}$ and $\gamma^{(2)}$ are equivalent, then $\xi(\gamma^{(1)})=\xi(\gamma^{(2)})$.
\begin{proof}
If the braids are homotopic, they clearly define the same element. If the braids are related by a crossing move involving braids of the same colour, they determine the same element by the same argument as in the proof of Lemma~\ref{maptofundamental1}.
\end{proof}
\end{lemma}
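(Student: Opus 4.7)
The plan is to reduce the proof to essentially the argument already given for Lemma~\ref{maptofundamental1}, handling the three kinds of equivalences separately. For homotopies of colour-pure braids, the induced paths in $\CF_\kk(\Sigma,\Gamma)$ are homotopic by composition with $p$, and the claim is immediate. For crossing moves of type (i), i.e.~involving two strands of the same colour $\lambda$, one fixes the sub-braid $\gamma_{\lambda'}$ for every $\lambda' \neq \lambda$ and observes that the argument in Lemma~\ref{maptofundamental1}, applied to the colour-$\lambda$ sub-braid inside the factor $S^{k_\lambda}\Sigma$, goes through verbatim: it produces intermediate paths $\gamma^{(1)}, \gamma^{(2)}$ whose images under $p$ agree pointwise (because $p$ collapses $\mathfrak{S}_{k_\lambda}$-orbits), yielding a homotopy in the Cartesian factor, which is then a homotopy in $\CF_\kk(\Sigma,\Gamma)$ since nothing else is altered.

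The real content is the case of a crossing move of type (ii), involving two strands $\gamma_{\lambda,j}$ and $\gamma_{\lambda',j'}$ of distinct colours $\lambda \neq \lambda'$ whose vertices are not adjacent in $\Gamma$. I would adapt the proof of Lemma~\ref{maptofundamental1} as follows: pick the disc $B_\epsilon(z) \subset \Sigma$ and the interval $|t-t_0| \le \tau$ from the definition of the crossing move, and construct the same intermediate paths $\gamma^{(1)}$ (bringing the two strands together at a single point $(z, z) \in \Sigma \times \Sigma$ at $t = t_0$) and $\gamma^{(2)}$ (swapping the $\lambda$- and $\lambda'$-components of the coincident endpoints after $t_0$). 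The key point is that these intermediate paths, viewed through $p$, take values in $\CF_\kk(\Sigma,\Gamma)$: at every $t$, the only potential obstruction is a shared point of support between $d_\lambda$ and $d_{\lambda'}$, but such sharing is permitted precisely because $\lambda$ and $\lambda'$ are not joined by an edge of $\Gamma$. All other strands remain outside $B_\epsilon(z)$ during the transition, so the disjointness conditions for other edges of $\Gamma$ are preserved by continuity.

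Once the intermediate paths are shown to land in $\CF_\kk(\Sigma,\Gamma)$, the rest of the argument is formal: $p \circ \gamma^{(1)} = p \circ \gamma$ because $\gamma^{(1)}$ only modifies $\gamma$ on the interval $|t-t_0| \le \tau$ by an identification that $p$ erases, and similarly $p \circ \gamma^{(2)} = p \circ \tilde \gamma$. The map to the fundamental group is well-defined on the intermediate (non-braid) paths because they still map into the subspace $\CF_\kk(\Sigma,\Gamma)$ where collisions of colours $\lambda, \lambda'$ are allowed, hence they define bona fide loops in this space based at the chosen basepoint.

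The main obstacle is the bookkeeping for type-(ii) moves: one must verify that the two coincidence configurations appearing along the interpolation really do lie in $\CF_\kk(\Sigma,\Gamma)$ and not outside it, i.e.~that permitting one forbidden collision (between colours $\lambda,\lambda'$ that are \emph{not} edge-connected) does not inadvertently create a disallowed collision with some third strand. This is handled by the fourth and fifth conditions in the definition of a crossing move, which isolate the activity to the disc $B_\epsilon(z)$ and the two distinguished strands; this ensures that all edge-constraints of $\Gamma$ involving any third colour remain satisfied throughout the homotopy.
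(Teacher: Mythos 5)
Your handling of homotopies and of type-(i) (same-colour) crossing moves agrees with the paper's proof, which is in fact terser than yours: it disposes of homotopies as immediate and refers same-colour moves back to the argument of Lemma~\ref{maptofundamental1}, saying nothing explicit about type-(ii) moves. So your attempt to spell out the cross-colour case is welcome, and your stated ``key point'' for it is the right one: the interpolating configurations only force the supports of $d_\lambda$ and $d_{\lambda'}$ to meet for the non-adjacent pair $\lambda,\lambda'$, and the last two conditions in the definition of a crossing move confine all activity to $B_\epsilon(z)$, so no constraint attached to an actual edge of $\Gamma$ is violated and the interpolation stays inside $\CF_\kk(\Sigma,\Gamma)$.

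However, the mechanism you import from Lemma~\ref{maptofundamental1} for type (ii) --- the intermediate path $\gamma^{(2)}$ obtained by swapping the two coincident strands after $t_0$, together with the claim that $p\circ\gamma^{(2)}=p\circ\tilde\gamma$ because $p$ erases the identification --- fails for strands of different colours. The quotient is by $\mathfrak{S}_\kk=\prod_\lambda \mathfrak{S}_{k_\lambda}$, which only permutes points \emph{within} a colour; a transposition exchanging a $\lambda$-strand with a $\lambda'$-strand, $\lambda\neq\lambda'$, is not in $\mathfrak{S}_\kk$, so $p\circ\gamma^{(1)}$ and $p\circ\gamma^{(2)}$ are genuinely different paths in $S^\kk\Sigma$ (they already differ in the factor $S^{k_\lambda}\Sigma$). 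Moreover, a cross-colour swap after $t_0$ is incompatible with colour-purity: if $\gamma_{i_1}(t)=\tilde\gamma_{i_2}(t)$ for $t>t_0+\tau$ with $i_1,i_2$ of different colours, then $\gamma_{i_1}(1)\in Z_{\lambda'}$, contradicting $\gamma_{i_1}(1)\in Z_\lambda$. Hence for a type-(ii) move there is no swap at all: the two braids agree strand-by-strand outside the window $|t-t_0|\le\tau$, and inside the window the two distinguished strands are paths in the disc $B_\epsilon(z)$ with common endpoints, hence homotopic rel endpoints. That homotopy --- which may pass through coincidences of the two strands --- is precisely the required homotopy of loops in $\CF_\kk(\Sigma,\Gamma)$, and your non-adjacency observation is exactly what certifies that it lands there. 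Deleting the swap step and keeping the rest of your argument repairs the proof.
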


 As in the monochromatic case, we have the following result:
\begin{proposition} 
\label{lambdaiso2}
The map (\ref{DBaspi1})   is a well-defined group isomorphism.
\begin{proof}
We follow the proof of Lemma~\ref{lambdaiso1}.
The main ingredient in the proof is a filtration of $S^\kk \Sigma$. We partially order the set $(l_1,\dots,l_r)$ of $r$-tuples of nonnegative numbers by 
$(l_1,\dots,l_r)\leq (l_1^\prime,\dots,l_r^\prime)$ if $l_\lambda \leq l^\prime_\lambda $ for all $\lambda$. For $\ll\leq \kk$ we define
$F^\ll\subset S^\kk(\Sigma)$ as the set of points $(z_1,\dots,z_r)$ such that $z_\lambda \in S^{k_\lambda }(\sigma)$ has at most $l_\lambda$ distinct elements. If $\ll^\prime \leq \ll$, clearly $F^{\ll^\prime}\subset F^{\ll}$ is a closed subset. Now consider
\[
F^{<\ll}:=\bigcup_{\ll^\prime<  \ll}F^{\ll^\prime}\subset F^\ll.
\] 
This is a closed subset. The complement $U^\ll=F^\kk\setminus F^{<\ll}$ is an open set in the $2k$- dimensional manifold $F^\kk$, and $V^\ll=F^\ll\setminus F^{<\ll}$ is an open set in $F^\ll$. 

\emph{Claim 1:} $V^\ll$ is a submanifold of $U^\ll$ of (real) dimension  $2(\sum_{\lambda=1}^r l_\lambda)$. 

There are two conditions to check. First, we have to check that $V^\ll$ is closed in $U^\ll$.  But since $F^\ll$ is closed in $F^\kk$, it follows immediately that $V^\ll = F^\kk\cap U^\ll$ is closed in $U^\ll$. Next, for each $y \in V^\ll$ we need to find a submanifold chart on a neighborhood of $y$ in $U^\ll$. 
We then construct a chart around the image $\mathbf{y}=p(y)=(\mathbf{y}_1,\ldots,\mathbf{y}_r)$ proceeding around each component $\mathbf{y}_\lambda$ exactly  as in the proof of Lemma~\ref{lambdaiso1}. 

\emph{Claim 2:} $\xi$ is an isomorphism.
  
If $\gamma:[0,1]\to \CF_\kk(\Sigma,\Gamma)$, we can do an arbitrarily small pertubation of $\gamma$ to a map whose image is disjoint from $F^{<\kk}$. But this means that the element it represents in $\pi_1(\CF_\kk(\Sigma, \Gamma))$ is in the image of $\xi$. Similarly, if $F$ is a homotopy between two braids $\gamma^{(1)}$ and $\gamma^{(2)}$, we can make the homotopy transversal to all $F^\ll\setminus F^{<\ll}$. By counting dimensions, we see that this means (using transversality) that  the image of the homotopy is disjoint from $F^\ll$ unless $l_\lambda=k_\lambda$ except for $\lambda=\lambda_0$, and $l_{\lambda_0}=k_{\lambda_0}-1$. It follows that the homotopy intersects $F^{<\kk}$ in isolated points. Arguing as in the monochromatic case, we get that the braids $\gamma^{(1)}$ and $\gamma^{(2)}$ are equivalent. 
\end{proof}
\end {proposition}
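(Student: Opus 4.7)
The plan is to parallel the argument of Lemma~\ref{lambdaiso1}, with three technical upgrades: a multi-indexed stratification of $S^\kk\Sigma$; careful book-keeping to ensure that the only generic coincidences within $\CF_\kk(\Sigma,\Gamma)$ are those corresponding to the two types of crossing moves permitted by the colour scheme; and a verification that $\xi$ descends through all crossing moves, not just the monochromatic ones.

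For well-definedness of $\xi$, crossing moves of type (i) are already handled by Lemma~\ref{maptofundamental2}. For type (ii), involving strands of distinct colours $\lambda\ne \lambda'$ whose vertices are \emph{not} joined by an edge of $\Gamma$, one exhibits the analogous two-stage homotopy $p\circ\gamma \simeq p\circ\gamma^{(1)} = p\circ\gamma^{(2)}\simeq p\circ\tilde\gamma$: at the instant when the two strands meet at $z\in B_\epsilon(z)$, the supports of $d_\lambda$ and $d_{\lambda'}$ share a point, which is permissible inside $\CF_\kk(\Sigma,\Gamma)$ precisely because $\{\lambda,\lambda'\}$ is a non-edge. The intermediate paths $\gamma^{(1)}, \gamma^{(2)}$ therefore stay in the configuration space and produce identical projections, as in the proof of Lemma~\ref{maptofundamental1}.

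For the stratification, define $F^\ll := \{(d_1,\dots,d_r)\in S^\kk\Sigma : |\mathrm{supp}(d_\lambda)| \le l_\lambda \text{ for each }\lambda\}$, intersect with the open subspace $\CF_\kk(\Sigma,\Gamma) \subset S^\kk\Sigma$, and set $V^\ll := F^\ll\setminus F^{<\ll}$, $U^\ll := S^\kk\Sigma \setminus F^{<\ll}$. The local submanifold structure (Claim~1) follows component-by-component exactly as in Lemma~\ref{lambdaiso1}: on each factor $S^{k_\lambda}\Sigma$ one uses elementary symmetric functions to produce a chart around $\mathbf{y}_\lambda$ in which the appropriate subdiagonal is linearly embedded. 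Since the factors are independent, the combined chart exhibits $V^\ll$ as a submanifold of $U^\ll$ of real dimension $2|\ll|:=2\sum_\lambda l_\lambda$, hence of codimension $2\sum_\lambda (k_\lambda - l_\lambda)$.

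For surjectivity of $\xi$, apply transversality to a loop $\gamma:S^1\to \CF_\kk(\Sigma,\Gamma)$: since each $V^\ll$ with $\ll<\kk$ has codimension at least $2$, a small perturbation avoids $F^{<\kk}$ entirely, and the perturbed loop lifts through the étale cover $U\to U/\mathfrak{S}_\kk$ to an element of $\BraidSet_\kk$. For injectivity, a homotopy $F:[0,1]^2\to \CF_\kk(\Sigma,\Gamma)$ between $p\circ\gamma^{(1)}$ and $p\circ\gamma^{(2)}$ can be made transverse to every stratum; by dimension count, its image meets $V^\ll$ only when the codimension is $\le 2$, i.e.\ when $\ll = \kk - e_{\lambda_0}$ for a single colour $\lambda_0$ (so two strands of colour $\lambda_0$ coincide at an isolated spacetime point, producing a type-(i) move), or when a single coincidence between strands of distinct colours $\lambda,\lambda'$ occurs; by definition of $\CF_\kk(\Sigma,\Gamma)$ the latter is only possible when $\{\lambda,\lambda'\}$ is a non-edge of $\Gamma$, yielding a type-(ii) move. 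Thus $\gamma^{(1)}$ and $\gamma^{(2)}$ are related by finitely many crossing moves and smooth homotopies through braids, so they are equivalent in $\DivBraid_\kk(\Sigma,\Gamma)$.

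The main obstacle I anticipate is the transversality step inside $\CF_\kk(\Sigma,\Gamma)$ rather than inside $S^\kk\Sigma$: one must argue that the coincidence strata corresponding to permitted cross-colour collisions actually sit as codimension-$2$ submanifolds of the open configuration space, and that the isolated transverse intersections produced by the homotopy satisfy the localisation conditions (the strands stay in a small metric disc over a short time interval) required by the definition of a crossing move. This is essentially the filtration argument alluded to in the proof of Proposition~\ref{onlySk1}, and it is what makes it legitimate to ignore higher-order coincidences (which would require us to discuss simultaneous crossings at a single point).
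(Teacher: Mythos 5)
Your proposal is correct and follows essentially the same route as the paper's proof: the same multi-indexed filtration $F^\ll$ of $S^\kk\Sigma$, the same component-by-component chart construction via elementary symmetric functions, and the same transversality and dimension-counting argument for surjectivity and injectivity. If anything, you are slightly more explicit than the paper in spelling out the well-definedness of $\xi$ under type-(ii) crossing moves and in noting that the permitted cross-colour coincidence loci must also be accounted for as codimension-two strata inside $\CF_\kk(\Sigma,\Gamma)$, points the paper leaves largely implicit.
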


\begin{remark}
\label{Hurewicz}
A divisor braid gives a 1-cycle in $\Sigma$ for each colour $\lambda$, namely the sum of the $k_\lambda$  trajectories on $\Sigma$ described by the basepoints of the $\lambda$-coloured strands. If there are $r$ colours, it is easy to see that this defines a group homomorphism
\begin{equation}\label{Hurew}
h: \DivBraid_\kk(\Sigma,\Gamma) \to H_1(\Sigma;\ZZ)^{\oplus r}.
\end{equation}
In the case of only one colour ($r=1$), this map can be identified with the Hurewicz map $\pi_1(\Sigma) \to \pi_1(S^k\Sigma)\cong H_1(\Sigma;\ZZ)$. 
\end{remark}

\section{Some computations using pictures}
\label{sec:pictures}

The group of braids ${\DivBraid_\kk(\Sigma,\Gamma)}$  defined in the last section is a quotient group of a certain subgroup of the full braid group ${\sf B}_k(\Sigma)$  on $k=|\kk | := \sum_{\lambda \in {\rm Sk}^0(\Gamma)} k_\lambda$ strands on $\Sigma$. Let $\sigma : {\sf B}_k(\Sigma) \to \mathfrak{S}_k$ be the usual map from the full braid group to the symmetric group, given by picking an ordering of the set $Z=\{z_{\lambda,j}\}_{\lambda,j=1}^{r,k_\lambda}$ and extracting from every braid (not necessarily colour-pure) the associate bijection mapping the starting points to the endpoints of its strands.  The Cartesian product $\mathfrak{S}_{\kk}$ in (\ref{symmgrp}) corresponds to the subgroup of permutations that leaves each of the $r$ subsets $Z_\lambda=\{z_{\lambda,j}\ \vert\,  1\leq j\leq k_\lambda\}$ invariant.

The subgroup ${\sf B}_\kk (\Sigma)= \sigma^{-1}(\mathfrak{S}_\kk)\subset {\sf B}_{|\kk|}(\Sigma)$ is the group of braids coloured by the $r$ colours subject to the relations of homotopy. According to Definition~\ref{DB}, we obtain the group of divisor braids $\DivBraid_\kk(\Sigma,\Gamma)$ from this group by introducing the additional relations generated by the crossing moves of both types (i) and (ii). 

Let ${\sf PB}_{k}(\Sigma)$ be the usual group of pure braids on $k$ strands on $\Sigma$.
There is a diagram whose top row is exact, and where the vertical map is surjective:
\[
\begin{CD}
1@>>>{\sf PB}_{|\kk|}(\Sigma) @>>> {\sf B}_{\kk}(\Sigma) @>>> \mathfrak{S}_{\kk}@>>>1\\
@.@. @VVV  @. \\
@.@. \DivBraid_\kk(\Sigma,\Gamma) @.
\end{CD}
\]  
\begin{lemma}
\label{surjectivity}
The composite map $$\Lambda: {\sf PB}_{|\kk|} (\Sigma)\to {\sf B}_{\kk}(\Sigma) \to \DivBraid_\kk(\Sigma,\Gamma)$$ is surjective.
\begin{proof}
By the above diagram, it is sufficient to find a set of elements $\alpha_\lambda$ in the kernel of the map ${\sf B}_\kk (\Sigma) \to \DivBraid_\kk (\Sigma,\Gamma)$ such that their images
 in $\mathfrak{S}_\kk$ generate this group. Now, let $\alpha$ be related to the trivial element by a single crossing move,  involving the strands starting at $z_{\lambda,j}$ and $z_{\lambda,j^\prime}$. By the definition of the equivalence relation, these elements are in the kernel of the map ${\sf B}_\kk (\Sigma) \to \DivBraid_\kk(\Sigma,\Gamma)$. On the other hand, its image in the symmetric group is the transposition of $z_{\lambda,j}$ and $z_{\lambda,j^\prime}$, so the family of such elements generate $\mathfrak{S}_\kk$. This proves the lemma.
\end{proof}
\end{lemma}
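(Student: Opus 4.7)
The plan is to use the diagram displayed just before the lemma to reduce to a group-theoretic splitting question: given any element $b \in \DivBraid_\kk(\Sigma,\Gamma)$, lift it to some $\tilde b \in {\sf B}_\kk(\Sigma)$ via the surjective vertical map, and let $\sigma := \sigma(\tilde b) \in \mathfrak{S}_\kk$. If for every $\sigma \in \mathfrak{S}_\kk$ I can construct an element $\alpha_\sigma \in {\sf B}_\kk(\Sigma)$ lying in the kernel of $\phi: {\sf B}_\kk(\Sigma) \to \DivBraid_\kk(\Sigma,\Gamma)$ and mapping to $\sigma$ in $\mathfrak{S}_\kk$, then by exactness of the top row $\alpha_\sigma^{-1}\tilde b$ lies in ${\sf PB}_{|\kk|}(\Sigma)$; and since $\alpha_\sigma \in \ker(\phi)$, this pure braid maps onto $b$ under $\Lambda$.

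So the task is to realize $\mathfrak{S}_\kk$ as a subset of the image of $\ker(\phi)$ in $\mathfrak{S}_\kk$. Because $\mathfrak{S}_\kk = \prod_\lambda \mathfrak{S}_{k_\lambda}$ is generated by transpositions within each colour factor, it is enough to produce, for every pair of distinct same-colour indices $(\lambda,j)$ and $(\lambda,j')$, a kernel element whose underlying permutation is the corresponding transposition. To do this, fix a small disc $D\subset \Sigma$ containing both $z_{\lambda,j}$ and $z_{\lambda,j'}$ and disjoint from all other starting points of $Z$, and take $\tau^\lambda_{j,j'} \in {\sf B}_\kk(\Sigma)$ to be the standard half-twist of those two strands inside $D$ with all other strands held constant. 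Its image in $\mathfrak{S}_\kk$ is the prescribed transposition. Moreover, by construction, $\tau^\lambda_{j,j'}$ is related to the trivial braid by a single crossing move of type (i) in the sense of Section~\ref{sec:variegated}, taking the central point $z \in D$ on the crossing arc at the appropriate time $t_0$, so $\tau^\lambda_{j,j'} \in \ker(\phi)$.

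The image of $\ker(\phi)$ in $\mathfrak{S}_\kk$ is thus a subgroup containing all transpositions within each same-colour block; hence it is all of $\mathfrak{S}_\kk$, and we can select $\alpha_\sigma$ as a product of the $\tau^\lambda_{j,j'}$'s realising $\sigma$. I do not expect any real obstacle here, since the only subtlety is checking that the half-twist in a small disc genuinely fits the defining conditions of a crossing move listed in Section~\ref{sec:variegated}: the two strands in question stay in $D=B_\epsilon(z)$ during the crossing, no other strand enters $D$, and the endpoints match after the swap. With those checks in hand the argument is complete.
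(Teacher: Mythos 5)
Your proposal is correct and follows essentially the same route as the paper: both arguments reduce, via the displayed diagram, to producing elements of $\ker\bigl({\sf B}_\kk(\Sigma)\to \DivBraid_\kk(\Sigma,\Gamma)\bigr)$ whose images generate $\mathfrak{S}_\kk$, and both realise the same-colour transpositions by braids related to the trivial braid through a single type-(i) crossing move. You merely make explicit the lifting step ($\alpha_\sigma^{-1}\tilde b\in{\sf PB}_{|\kk|}(\Sigma)$ with $\Lambda(\alpha_\sigma^{-1}\tilde b)=b$) that the paper leaves to the reader.
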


It follows from Lemma~\ref{surjectivity} that a generating set for the pure braid group will provide us with a set of generators for $\DivBraid_\kk(\Sigma,\Gamma)$. We intend to argue geometrically, so we want to describe the generators by suitable pictures. 

For the rest of this subsection, we make two simplifying assumptions:
\begin{itemize}
\item $\Sigma$ is compact and  orientable; we shall denote its genus by $g$;
\item  $\kk$ is very composite (i.e. $k_\lambda \geq 2$ for every colour $\lambda$).
\end{itemize}

There are several presentations of the pure braid group of an orientable surface available in the literature. 
For higher genus surfaces, the first one seems to be given in~\cite{BirOBG}. Actually, we are only interested in specifying a set of generators.
We are going to describe a presentation given in reference~\cite{ GM}. Represent an orientable surface of genus $g$ as a regular $4g$-gon, where opposite sides have been identified respecting the orientation. Choose an auxiliary oriented line $L$ passing through two opposite vertices of this $4g$-gon, and assume that $z_1,\dots, z_k$ are points lying on this line, such that the ordering of the labels of the points respects the the linear order on the line.   

We will be interested in a particular class of pure braids. Consider a closed path $\gamma_i:[0,1]\to \Sigma$ such that $\gamma_i(0)=\gamma_i(1)=z_i$,  and furthermore such that $\gamma_i(t)\not=z_j$ for all $t\in [0,1]$ and $i\not=j$. We define a braid $\Phi(\gamma_i)$ by the paths in $\Sigma^k$ with components
\[
\Phi(\gamma_i)_j(t)=
\begin{cases}
z_j&\text{ if $j\not=i$,}\\
\gamma_i(t)&\text{ if $j=i$,}
\end{cases}, \qquad(j=1,\ldots, k).
\]
We call an object of this type a {\em monic braid}. We shall give generators for the divisor braid groups which are monic braids.

We define $a_{i,\ell}$ for $1\leq i\leq k$ and $1\leq \ell \leq 2g$ to be the monic braid  given by letting the path $\gamma_i(t)$ run along a homotopic deformation of a straight line from $\gamma_i(0)=z_i$ to the middle point of an edge labelled $\ell$ in the $4g$-gon (Figure~1). After that, it runs from the middle of the opposite edge back to to $z_i$.

\begin{figure}
\label{fig.a}
\vspace{5mm}
\includegraphics[width=5cm,angle=0]{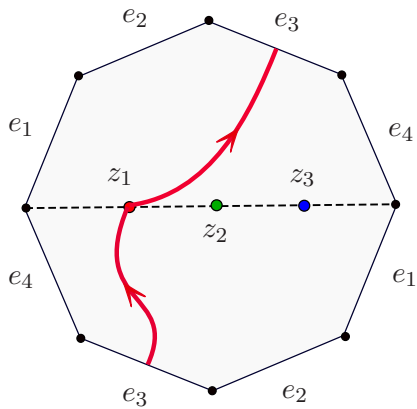} \\[-0pt] 
\vspace{-40mm}\hspace{-50mm}$e_1$\\
\vspace{-19mm}\hspace{-20mm}$e_2$\\
\vspace{-4mm}\hspace{20mm}$e_3$\\
\vspace{10mm}\hspace{50mm}$e_4$\\
\vspace{15mm}\hspace{-50mm}$e_4$\\
\vspace{11mm}\hspace{-20mm}$e_3$\\
\vspace{-5mm}\hspace{22mm}$e_2$\\
\vspace{-20mm}\hspace{51mm}$e_1$\\
\vspace{-18mm}\hspace{-24mm}$z_1$ \\
\vspace{3mm}\hspace{1mm}$z_2$ \\
\vspace{-12mm}\hspace{24mm}$z_3$ \\
\vspace{32mm}

\caption{The path $a_{1,3}$.}
\end{figure}

\begin{figure}
\label{fig.t}
\vspace{5mm}
\includegraphics[width=5cm,angle=0]{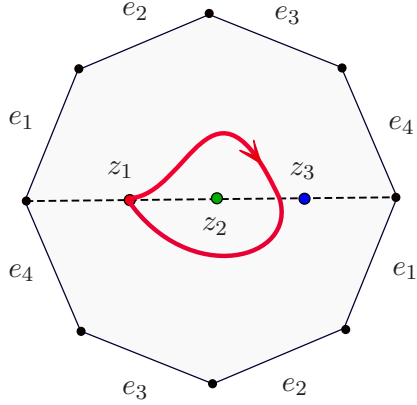} \\[-0pt] 
\vspace{-40mm}\hspace{-50mm}$e_1$\\
\vspace{-19mm}\hspace{-20mm}$e_2$\\
\vspace{-4mm}\hspace{20mm}$e_3$\\
\vspace{10mm}\hspace{50mm}$e_4$\\
\vspace{15mm}\hspace{-50mm}$e_4$\\
\vspace{11mm}\hspace{-20mm}$e_3$\\
\vspace{-5mm}\hspace{22mm}$e_2$\\
\vspace{-20mm}\hspace{51mm}$e_1$\\
\vspace{-18mm}\hspace{-24mm}$z_1$ \\
\vspace{3mm}\hspace{1mm}$z_2$ \\
\vspace{-12mm}\hspace{24mm}$z_3$ \\
\vspace{32mm}

\caption{The path $t_{1,2}$.}
\end{figure}
Further, for each  pair $(i,j)$ with $1\leq i < j \leq k$, we choose a path $t_{i,j}$ starting at $z_i$, staying in the interior of the $4g$-gon, surrounding each of the the points $z_{i+1},z_{j}$ and intersecting $L$ in exactly one point besides $z_i$. Call the corresponding monic braid $t_{i,j}$ (Figure~2). 
It is useful for us to look for a slightly different set of generators for the pure surface braid group. For
$1\leq i ,j \leq k$, let us consider the path that starts at $z_i$, goes through the upper half of the $4g$-gon to a neighbourhood of $z_j$, runs once around $z_j$ in the positive sense, and then follows the same path back to $z_i$. We denote the corresponding monic braid by $b_{i,j}$. 

\begin{thm}
\label{PureBraidGroup}
Let $\Sigma$ be a compact oriented surface. If the genus  $g\geq 1$, the pure braid group is
generated by the classes $a_{i,j}$ and $b_{i,j}$. If $\Sigma=S^2$, the pure braid group is generated
by the classes $t_{i,j}$.   
\begin{proof}

First consider the case $g\geq 1$.
 According to \cite[\S 4]{GM},
the homotopy equivalence classes of the  $a_{i,\ell}$ and $t_{i,j}$ thus defined generate the pure braid group  ${\sf PB}_k(\Sigma)$. The list of relations presented there is rather long and complicated, but we will not need the relations.
Clearly, we can express $t_{i,j}$ as a product of the elements $b_{i,j}$ with $i<j\leq k$, and it follows that the classes of $a_{i,\ell}$ and $b_{i,j}$ must also generate the pure surface braid group.   
In the case $\Sigma \cong S^2$, the map induced by inclusion from the pure braid group of the plane to the pure braid group of the
sphere is surjective. 
The pure braid group of the plane is generated by braids $A_{i,j}$ (see~\cite{Artin}) which maps to the braids $t_{i,j}$ in the 
braid group of $S^2$. 
\end{proof} 
\end{thm}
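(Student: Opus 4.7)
The plan is to invoke the known presentation of ${\sf PB}_k(\Sigma)$ from~\cite[\S 4]{GM}, where the classes $a_{i,\ell}$ (for $1\le i \le k$, $1 \le \ell \le 2g$) together with $t_{i,j}$ (for $1 \le i < j \le k$) constitute a generating set. For the case of genus $g\ge 1$, this reduces the problem to showing that each $t_{i,j}$ lies in the subgroup generated by the $b_{i,m}$, for one can then replace each occurrence of $t_{i,j}$ in any word in the old generators by the corresponding word in the $b_{i,m}$, producing the desired generating set $\{a_{i,\ell},\, b_{i,m}\}$. The case $\Sigma = S^2$, where no $a_{i,\ell}$ is required, will be handled independently by a surjectivity argument from the plane.

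To carry out the substitution step, I would produce an explicit homotopy (relative to $z_i$, inside the $4g$-gon and avoiding $\{z_1,\ldots,z_k\}\setminus\{z_i\}$) from the path underlying $t_{i,j}$ to a concatenation of $j-i$ loops, each of which encircles a single one of $z_{i+1},\ldots,z_j$ exactly once before returning to $z_i$. Each such elementary loop is isotopic to the path defining some $b_{i,m}$, possibly conjugated by a product of earlier factors to account for the order of excursions. The geometric point is that the loop representing $t_{i,j}$ is supported in a simply connected open subset of the $4g$-gon whose closure contains $z_i,z_{i+1},\ldots,z_j$ in its interior but none of the other marked points; within such a region, the free homotopy class of the loop is determined by the collection of winding numbers around the enclosed punctures, leading to an expression $t_{i,j} = \prod_{m=i+1}^{j} b_{i,m}^{(*)}$, where each $b_{i,m}^{(*)}$ is an appropriate conjugate of $b_{i,m}$ by a product of preceding factors.

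For $\Sigma \cong S^2$, I would use that the inclusion $\CC \hookrightarrow S^2$ (as the complement of a point chosen off $\{z_1,\ldots,z_k\}$) induces a surjective homomorphism ${\sf PB}_k(\CC) \twoheadrightarrow {\sf PB}_k(S^2)$; this classical fact follows e.g.\ from the Fadell--Neuwirth fibration obtained by forgetting the extra puncture, as in the work of Fadell--Van Buskirk. Now Artin's standard generators $A_{i,j}$ of ${\sf PB}_k(\CC)$~\cite{Artin}, each represented by a loop based at $z_i$ encircling $z_{i+1},\ldots,z_j$, can be chosen inside an affine chart of $S^2$ in such a way that their images under the surjection are precisely the classes $t_{i,j}$ as defined in the present paper. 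Surjectivity then immediately yields that the $t_{i,j}$ generate ${\sf PB}_k(S^2)$.

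The main obstacle is the explicit decomposition $t_{i,j}=\prod b_{i,m}^{(*)}$: although the geometric picture is transparent, controlling the algebraic form of the conjugating factors and verifying that no spurious $a_{i,\ell}$-type terms appear requires a careful choice of homotopy. In particular, one must ensure that during the deformation no strand is dragged across an identified boundary edge of the $4g$-gon, which would introduce precisely the unwanted $a_{i,\ell}$-contributions; this is where the assumption that all $z_m$ involved lie on the auxiliary line $L$ passing through the two opposite vertices of the $4g$-gon becomes essential, as it guarantees that the support of the homotopy can be kept in the interior.
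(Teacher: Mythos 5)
Your proposal is correct and follows essentially the same route as the paper: cite the generating set $\{a_{i,\ell},\,t_{i,j}\}$ from \cite[\S 4]{GM}, express each $t_{i,j}$ in terms of the $b_{i,m}$ (the paper leaves this as ``clearly'', while you spell out the homotopy and the conjugating factors), and handle $S^2$ via the surjection ${\sf PB}_k(\CC)\twoheadrightarrow {\sf PB}_k(S^2)$ carrying Artin's generators $A_{i,j}$ to the $t_{i,j}$. The extra care you take about keeping the homotopy supported away from the identified edges of the $4g$-gon is a sensible elaboration of the same argument, not a different one.
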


It is a consequence of Theorem~\ref{PureBraidGroup} and Lemma~\ref{surjectivity} that the images  
$\Lambda(a_{i,\ell})$ and $\Lambda(b_{i,j})$ (respectively $\Lambda(t_{i,j})$ for $\Sigma=S^2$) generate $\DivBraid_\kk(\Sigma,\Gamma)$.
We want to produce relations between these elements in $\DivBraid_\kk(\Sigma,\Gamma)$. 
We start with two general lemmas that ensure that certain elements of $\DivBraid_\kk (\Sigma,\Gamma)$ commute with each other.

\begin{lemma}
\label{moveA} 
Let $\gamma$ and $\gamma^\prime$ be two paths in $\Sigma$ such that $\gamma(0)=\gamma(1)=z_i$ and  $\gamma'(0)=\gamma'(1)=z_{i^\prime}$. Further, assume that the images of the two paths in $\Sigma$ that do not intersect, that is, for all $t,t^\prime$ we have that $\gamma(t)\not=\gamma^\prime(t^\prime)$.  Then the divisor braids $\Phi(\gamma)$ and $\Phi(\gamma^\prime)$ commute.
\begin{proof}
The $k$ paths in the concatenation $\Phi(\gamma)* \Phi(\gamma^\prime)$ only have two nonconstant strands, namely 
\begin{align*}
(\Phi(\gamma) * \Phi(\gamma^\prime))_i(t)&=
\begin{cases}
\gamma(2t) &\text{ if $t\leq 1/2$,}\\
z_i &\text{ if $t\geq 1/2$.}
\end{cases}\\
(\Phi(\gamma') * \Phi(\gamma))_{i^\prime}(t)&=
\begin{cases}
z_{i^\prime} &\text{ if $t\leq 1/2$,}\\
\gamma^\prime(2t-1) &\text{ if $t\geq 1/2$.}
\end{cases}\\
\end{align*} 
We define a homotopy $F_s$ between these divisor braids.
For $0\leq s\leq 1$  we define each strand of $F_s$ to be constant, except for  
 \begin{align*}
(F_s)_i(t)&=
\begin{cases}
\gamma(2t-s) &\text{ if $s/2 \leq t\leq (s+1)/2$,}\\
z_i &\text{ if either $s/2 \geq t$ or $t \geq (s+1)/2$.}
\end{cases}\\
(F_s)_{i^\prime}(t)&=
\begin{cases}
z_{i^\prime} &\text{ if either $t \geq 1-s/2$ or $t\leq (1-s)/2$,}\\
\gamma^\prime(2t+s-1) &\text{ if $1-s/2\geq t\geq (1-s)/2$.}
\end{cases}\\
\end{align*}
This is a homotopy from $F_0=\Phi(\gamma) * \Phi(\gamma^\prime)$ to
$F_0=\Phi(\gamma') * \Phi(\gamma)$.
\end{proof}
\end{lemma}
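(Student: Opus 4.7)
The plan is to construct an explicit homotopy in the space of colour-pure braids between the concatenations $\Phi(\gamma) * \Phi(\gamma')$ and $\Phi(\gamma') * \Phi(\gamma)$. The key input is that since $\gamma$ and $\gamma'$ have disjoint images in $\Sigma$, the two nonconstant strands cannot collide no matter when each is in motion, so we are free to reschedule when each traversal takes place.

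First I would write out the two concatenated braids explicitly. In $\Phi(\gamma)*\Phi(\gamma')$, every strand other than the $i$-th and the $i'$-th is constant, the $i$-th strand traverses $\gamma$ on $[0,1/2]$ and then sits at $z_i$ on $[1/2,1]$, while the $i'$-th strand sits at $z_{i'}$ on $[0,1/2]$ and then traverses $\gamma'$ on $[1/2,1]$. For $\Phi(\gamma')*\Phi(\gamma)$ the roles of the two intervals are swapped. I would then continuously slide the ``active'' time-interval of strand $i$ from $[0,1/2]$ to $[1/2,1]$ while simultaneously sliding the active interval of strand $i'$ from $[1/2,1]$ to $[0,1/2]$, parametrised by $s\in[0,1]$. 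Concretely, for each $s$ one sets strand $i$ equal to $\gamma$ (suitably reparametrised) on an interval of length $1/2$ whose left endpoint moves from $0$ to $1/2$ linearly in $s$, and does the analogous thing with $\gamma'$ for strand $i'$; outside these intervals each strand rests at its basepoint.

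The verification then has three parts: (a) at $s=0$ one recovers $\Phi(\gamma)*\Phi(\gamma')$ and at $s=1$ one recovers $\Phi(\gamma')*\Phi(\gamma)$, (b) the endpoint conditions $F_s(0)=F_s(1)=(z_1,\ldots,z_k)$ hold for every $s$, and (c) at every $s$ and every $t$ the tuple $F_s(t)\in \Sigma^k$ has pairwise distinct components. Part (c) is the main point, and it is exactly where the hypothesis is used: the only strands that move are $i$ and $i'$, the remaining strands stay at the points $z_j$ which differ from all points on $\gamma$ and $\gamma'$, and strands $i$ and $i'$ cannot coincide with each other because their trajectories take values in $\gamma([0,1])$ and $\gamma'([0,1])$ respectively, and these two subsets of $\Sigma$ are disjoint by assumption.

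The main obstacle is just bookkeeping to make sure that the reparametrisations produce a genuinely continuous homotopy, and that the other fixed points $z_j$ (for $j \neq i,i'$) really lie outside both $\gamma([0,1])$ and $\gamma'([0,1])$; the latter is harmless because we may perturb $\gamma$ and $\gamma'$ within their homotopy classes away from the remaining $z_j$ before running the argument, at the cost of replacing $\Phi(\gamma)$ and $\Phi(\gamma')$ by equivalent divisor braids. Once the homotopy is in place, the commutation $\Phi(\gamma)\Phi(\gamma')=\Phi(\gamma')\Phi(\gamma)$ in $\DivBraid_\kk(\Sigma,\Gamma)$ follows immediately, without needing to invoke any crossing move of type (i) or (ii).
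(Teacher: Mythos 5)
Your proposal is correct and follows essentially the same route as the paper: the paper's proof constructs exactly the homotopy you describe, sliding the active interval of strand $i$ from $[0,1/2]$ to $[1/2,1]$ while sliding that of strand $i'$ in the opposite direction, with disjointness of the images guaranteeing the tuple stays in the braid space throughout. (The only cosmetic difference is that your worry about the other basepoints $z_j$ is already handled by the definition of a monic braid, which requires $\gamma_i(t)\neq z_j$ for $j\neq i$.)
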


\begin{lemma}
\label{commutingbraids}
 Let $\gamma$ and $\gamma^\prime$ be paths in $\Sigma$ such that $\gamma(0)=\gamma(1)=z_i$ and $\gamma^\prime(0)=\gamma^\prime(1)=z_{i^\prime}$. Suppose that the  that $z_i$ and $z_{i^\prime}$ have either the same colour or colours corresponding to vertices in $\Gamma$ which are not connected by an edge. Then the monic braids $\Phi(\gamma)$ and $\Phi(\gamma^\prime)$ commute.
\begin{proof}
As in the proof of the previous lemma, we construct a homotopy from $\Phi(\gamma)*\Phi(\gamma^\prime)$ to  $\Phi(\gamma^\prime)*\Phi(\gamma)$ consisting of a family of maps $F_s:[0,1]\to \Sigma^k$. In this case we do not know that $F_s(t)_i\not=F_s(t)_{i^\prime}$. But we can make $F_s$ transversal to the fat diagonal in $\Sigma^k$. This can be done by an arbitraily small pertubation, constant at the ends of the homotopy. We obtain a new homotopy $\tilde F_s$. Except for finitely many values of $s$, this is indeed a braid. When $s$ passes through one of the exceptional points, the braid changes by a crossing move. This proves the lemma.
\end{proof}
\end{lemma}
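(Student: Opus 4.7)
The plan is to adapt directly the homotopy constructed in the proof of Lemma~\ref{moveA}, only now allowing the two non-constant strands to intersect, and then to extract a finite sequence of crossing moves from the generic singularities of the homotopy.

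Write down the one-parameter family $F_s:[0,1]\to\Sigma^k$ with formulas identical to those of Lemma~\ref{moveA}: the $i$-th strand slides $\gamma$ from the first half of the time interval into a shifting window $[s/2,(s+1)/2]$, while the $i'$-th strand slides $\gamma'$ from the second half into $[(1-s)/2,1-s/2]$; all other strands remain constant at their basepoints $z_j$. At $s=0$ we recover $\Phi(\gamma)\ast\Phi(\gamma')$ and at $s=1$ we recover $\Phi(\gamma')\ast\Phi(\gamma)$. Because $\gamma$ and $\gamma'$ may meet (they are not assumed disjoint, unlike in Lemma~\ref{moveA}), the intermediate maps $F_s$ need not be braids: the $i$-th and $i'$-th strand positions may coincide.

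Now perturb $F$, viewed as a map $[0,1]\times[0,1]\to \Sigma^k$, keeping $F_0$, $F_1$ and the constant strands fixed, to a map $\tilde F$ transverse to the fat diagonal $\Delta\subset\Sigma^k$. Two observations make this straightforward. First, a $C^0$-small perturbation of the paths $\gamma,\gamma'$ can be chosen to avoid the finitely many basepoints $\{z_j: j\ne i,i'\}$, so the moving strands only ever risk meeting \emph{each other}. Second, the locus in $\Sigma^k$ where exactly the $i$-th and $i'$-th coordinates coincide is a smooth codimension-$2$ submanifold of $\Delta$ (the deeper strata have codimension $\ge 4$ and are avoided generically by the two-parameter family $\tilde F$). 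Hence $\tilde F^{-1}(\Delta)$ consists of finitely many points in the open square $(0,1)\times(0,1)$, each of which is a transverse intersection with the $\{i,i'\}$-diagonal.

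Project this finite set onto the $s$-axis to obtain critical values $0<s_1<\dots<s_N<1$. On each open complementary interval, $\tilde F_s$ is a genuine colour-pure braid, and the whole family defines a homotopy of braids between these successive representatives. Passing through a value $s_n$ performs a crossing move between the strand starting at $z_i$ and the strand starting at $z_{i'}$, in the precise sense of Section~\ref{sec:variegated}: near $s_n$ the two strands meet at a single point of $\Sigma$ at a single time, while the remaining strands stay far away. By hypothesis, the colours of $z_i$ and $z_{i'}$ either coincide or correspond to non-adjacent vertices of $\Gamma$, so each such crossing is an admissible relation of type (i) or (ii) in Definition~\ref{DB}. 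Therefore $\Phi(\gamma)\ast\Phi(\gamma')$ and $\Phi(\gamma')\ast\Phi(\gamma)$ represent the same class in $\DivBraid_\kk(\Sigma,\Gamma)$.

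The main technical point, and the only one requiring a little care, is confirming that the transversality perturbation can be carried out simultaneously with the requirement that the moving strands avoid the constant basepoints $z_j$; this is why we first perturb $\gamma$ and $\gamma'$ generically in $\Sigma\setminus\{z_j:j\ne i,i'\}$ before applying transversality to $F$ itself. Everything else is a straightforward dimension count analogous to the one used in the proofs of Lemmas~\ref{lambdaiso1} and \ref{lambdaiso2}.
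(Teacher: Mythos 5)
Your proof is correct and follows essentially the same route as the paper: reuse the interleaving homotopy $F_s$ from Lemma~\ref{moveA}, perturb it to be transverse to the fat diagonal so that the strands $i$ and $i'$ meet only at finitely many parameter values, and observe that each such passage is an admissible crossing move by the colour hypothesis. The extra care you take about avoiding the constant basepoints is harmless but already guaranteed, since the paths defining monic braids are required to avoid the other marked points by definition.
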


\begin{corollary}
\label{homologyfactorization}
Let $Z_\lambda':=\bigcup_{\lambda' \not = \lambda}Z_{\lambda'}$ be the union of all the basepoints that do not have colour $\lambda$.
Let $\gamma$ and $\gamma^\prime$ be loops of the same colour $\lambda$ in $\Sigma\setminus Z_\lambda'$. 
Suppose that they determine the same homology class in $H_1(\Sigma\setminus Z_\lambda';\ZZ)$. Then $\Phi(\gamma)=\Phi(\gamma^\prime)$.
\begin{proof}
Since $H_1(\Sigma;\ZZ)$ is the Abelianization of $\pi_1(\Sigma)$, it suffices to show that the monic braid constructed from a commutator vanishes. But this follows immediately from Lemma~\ref{moveA}.  
\end{proof} 
\end{corollary}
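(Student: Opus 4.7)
The plan is to reduce the claim to the (classical) fact that $\pi_1$ of a symmetric product of a surface is its first homology, by embedding the monic braid $\Phi(\gamma)$ into a natural slice of the configuration space. By Proposition~\ref{lambdaiso2}, it suffices to compare $\Phi(\gamma)$ and $\Phi(\gamma')$ as elements of $\pi_1(\CF_{\mathbf{k}}(\Sigma,\Gamma))$. Since both braids hold every $d_{\lambda'}$ (for $\lambda'\neq\lambda$) fixed at its basepoint value, the corresponding loops lie in the slice
\[
\mathcal{S} \;:=\; \CF_{\mathbf{k}}(\Sigma,\Gamma) \cap \Big( S^{k_\lambda}\Sigma \times \prod_{\lambda'\neq\lambda}\{d^*_{\lambda'}\}\Big),
\]
which is naturally identified with $S^{k_\lambda}(\Sigma\setminus S_\lambda)$, where $S_\lambda:=\bigcup_{\lambda'\sim\lambda}\operatorname{supp}(d^*_{\lambda'})\subseteq Z'_\lambda$ collects the basepoints of colours adjacent to $\lambda$ in $\Gamma$.

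Using $k_\lambda\ge 2$ (very compositeness), the Dold--Thom theorem gives $\pi_1(\mathcal{S})\cong H_1(\Sigma\setminus S_\lambda;\ZZ)$, with $[\Phi(\gamma)]$ corresponding to $[\gamma]$. The open inclusion $\Sigma\setminus Z'_\lambda\hookrightarrow\Sigma\setminus S_\lambda$ ``fills in'' the punctures at basepoints of non-adjacent colours and hence induces a surjection on first homology; equality in $H_1(\Sigma\setminus Z'_\lambda;\ZZ)$ thus forces equality in $H_1(\Sigma\setminus S_\lambda;\ZZ)$, which translates back to equality in $\pi_1(\mathcal{S})$, and then (via the map induced by $\mathcal{S}\hookrightarrow\CF_{\mathbf{k}}(\Sigma,\Gamma)$) to equality in $\pi_1(\CF_{\mathbf{k}}(\Sigma,\Gamma))\cong\DivBraid_{\mathbf{k}}(\Sigma,\Gamma)$.

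The main delicacy is the invocation of Dold--Thom for the non-compact surface $\Sigma\setminus S_\lambda$, but the theorem applies since this is still a connected CW complex. An alternative strategy, closer to the sketch that follows the statement, is to observe that $H_1(\Sigma\setminus Z'_\lambda;\ZZ)$ is the abelianization of $\pi_1(\Sigma\setminus Z'_\lambda;z_i)$ and reduce the problem to showing $[\Phi(\alpha),\Phi(\beta)]=1$ for any two loops $\alpha,\beta$ based at $z_i$. The hardest step in that route is that the common basepoint $z_i$ obstructs direct application of Lemma~\ref{moveA} or Lemma~\ref{commutingbraids}; one would need to use very compositeness to transfer one of the two loops onto a different same-colour strand by a type-(i) crossing move before arranging the two trajectories to be disjoint. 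The slice argument above sidesteps this issue and gives the cleanest proof.
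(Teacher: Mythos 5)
Your proof is correct, but it takes a genuinely different route from the paper's. The paper argues entirely inside the braid calculus of Section~\ref{sec:pictures}: since $H_1(\Sigma\setminus Z'_\lambda;\ZZ)$ is the Abelianisation of $\pi_1(\Sigma\setminus Z'_\lambda)$, it reduces the claim to showing that $\Phi$ kills commutators, and then appeals to Lemma~\ref{moveA} (monic braids built from loops with disjoint images commute). As you correctly point out, that appeal is not literally immediate, because the two loops of a commutator share the basepoint $z_i$, so their images are never disjoint; one must first use $k_\lambda\ge 2$ and a type-(i) crossing move to transplant one loop onto a different strand of colour $\lambda$ before Lemma~\ref{moveA} (or Lemma~\ref{commutingbraids}) applies. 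Your slice argument sidesteps this entirely: restricting to $\mathcal{S}\cong S^{k_\lambda}(\Sigma\setminus S_\lambda)$, invoking $\pi_1(S^{k}X)\cong H_1(X;\ZZ)$ for $k\ge 2$ (the paper already uses this Dold--Thom-type statement in Section~\ref{sec:monochrom}, and the filtration proof of Lemma~\ref{lambdaiso1} works verbatim for the open surface $\Sigma\setminus S_\lambda$), and pushing forward along $\mathcal{S}\hookrightarrow\CF_\kk(\Sigma,\Gamma)$ together with Proposition~\ref{lambdaiso2}. What your approach buys is a transparent localisation of where very-compositeness enters, and in fact a slightly stronger conclusion, since you only need agreement of the classes in $H_1(\Sigma\setminus S_\lambda;\ZZ)$ (punctures at adjacent colours only) rather than in $H_1(\Sigma\setminus Z'_\lambda;\ZZ)$; note also that the surjectivity of $H_1(\Sigma\setminus Z'_\lambda;\ZZ)\to H_1(\Sigma\setminus S_\lambda;\ZZ)$ is not actually needed, only functoriality. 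What the paper's route buys is self-containedness within the picture calculus, which is then reused for the neighbouring Lemmas~\ref{commutator} and~\ref{central}; your argument instead leans on the fundamental-group model of Section~\ref{sec:fundamental}.
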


A consequence of Corollary~\ref{homologyfactorization} is that the classes $\Lambda(a_{i,\ell})$ and $\Lambda(b_{i,j})$ only depend on the colour of their basepoints. For each colour $\lambda$, we choose one of the basepoints of this colour. Let us say that it is indexed by the subscript $i_\lambda$.  For each number $\ell$, $1\leq \ell \leq 2g$ and colours $\lambda, \mu$,  we define 
\begin{equation}\label{alphabeta}
\alpha_{\lambda,\ell} := \Lambda(a_{{i_\lambda},\ell}) \qquad \text{and} \qquad \beta_{\lambda,\mu}:=\Lambda(b_{i_\lambda,i_\mu}).
\end{equation}

\begin{lemma} 
\label{generating.set}
The classes $\alpha_{\lambda,\ell}$ and $\beta_{\lambda,\mu}$ generate $\DivBraid_\kk (\Sigma,\Gamma)$.
The classes $\beta_{\lambda,\mu}$ satisfy that $\beta_{\lambda,\lambda}=e$ and $\beta_{\lambda,\lambda^\prime}=\beta_{\lambda^\prime,\lambda}$.

\begin{proof}
Since the collection of all classes $a_{i,\ell}$ and $b_{i,j}$ generate the pure braid group ${\sf PB}_k(\Sigma)$, it follows from Lemma~\ref{surjectivity} that the classes $\Lambda(a_{i,\ell})$ and
$\Lambda(b_{i,j})$ generate $\DivBraid_\kk(\Sigma,\Gamma)$.  It follows from Corollary~\ref{homologyfactorization} that the classes $\Lambda(a_{i,\ell})$ only depend on the colours of the points $z_i$ and the edge $e_\ell$, and similarly that the classes $\Lambda(b_{i,j})$ only depend on the colour of the points $z_i$ and $z_j$. So
$\DivBraid_\kk (\Sigma,\Gamma)$ is generated by the classes $\alpha_{\lambda,\ell}$ and $\beta_{\lambda,\mu}$.
The relations asserted for the classes $\beta_{\lambda,\mu}$ follow from elementary manipulation.
\end{proof}
\end{lemma}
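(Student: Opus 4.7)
My plan is to combine Lemma~\ref{surjectivity}, Theorem~\ref{PureBraidGroup} and Corollary~\ref{homologyfactorization} to establish the generation statement, and then to verify the two relations on $\beta_{\lambda,\mu}$ by local geometric arguments inside a suitable disc of $\Sigma$.

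For the generation claim, I would first invoke Lemma~\ref{surjectivity} to present $\DivBraid_\kk(\Sigma,\Gamma)$ as a quotient of ${\sf PB}_{|\kk|}(\Sigma)$, and Theorem~\ref{PureBraidGroup} to exhibit a generating set $\{a_{i,\ell},b_{i,j}\}$ of the latter (in genus $g=0$ this reduces to $\{b_{i,j}\}$, since each $t_{i,j}$ is a product of $b$-type generators). Thus $\{\Lambda(a_{i,\ell}),\Lambda(b_{i,j})\}$ generates $\DivBraid_\kk(\Sigma,\Gamma)$. Corollary~\ref{homologyfactorization} then allows me to collapse this generating set to $\{\alpha_{\lambda,\ell},\beta_{\lambda,\mu}\}$: the loop underlying $a_{i,\ell}$ is determined in $H_1(\Sigma\setminus Z_\lambda';\ZZ)$ by the colour $\lambda$ of $z_i$ and by $\ell$ alone (its winding numbers around all points of $Z_\lambda'$ vanish), so $\Lambda(a_{i,\ell})=\alpha_{\lambda,\ell}$; a parallel observation for $b_{i,j}$ takes care of the dependence on the index $i$ within a colour class, and a type~(i) crossing move among $\mu$-coloured strands handles the dependence on $j$, yielding $\Lambda(b_{i,j})=\beta_{\lambda,\mu}$.

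For $\beta_{\lambda,\lambda}=e$, I would note that $b_{i_\lambda,i_\lambda}$ is by construction a small loop based at $z_{i_\lambda}$ that encircles $z_{i_\lambda}$ itself, which is \emph{not} in $Z_\lambda'$; hence the loop is null-homotopic in $\Sigma\setminus Z_\lambda'$, and Corollary~\ref{homologyfactorization} applied with the constant loop gives $\Phi(b_{i_\lambda,i_\lambda})=e$ in $\DivBraid_\kk(\Sigma,\Gamma)$. For the symmetry $\beta_{\lambda,\mu}=\beta_{\mu,\lambda}$, I would pick a small topological disc $D\subset\Sigma$ containing $z_{i_\lambda}$ and $z_{i_\mu}$ but no other point of $Z$. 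Using Corollary~\ref{homologyfactorization} colour-wise, I may replace the loops underlying $b_{i_\lambda,i_\mu}$ and $b_{i_\mu,i_\lambda}$ by small loops confined to $D$ with the same respective homologies, without changing their $\Lambda$-images. The resulting monic braids both lie in the image of the natural inclusion ${\sf PB}_2(D)\hookrightarrow{\sf PB}_k(\Sigma)$ (the other $k-2$ strands remain constant at their basepoints outside $D$); since ${\sf PB}_2(D)\cong\ZZ$ is detected by the linking number of the two strands, each represents the positive full twist, so they already coincide in ${\sf PB}_k(\Sigma)$. Applying $\Lambda$ then yields $\beta_{\lambda,\mu}=\beta_{\mu,\lambda}$.

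The main obstacle I anticipate is justifying, rigorously, that $\Lambda(b_{i,j})$ depends only on the colour pair $(\lambda,\mu)$: Corollary~\ref{homologyfactorization} is most naturally read as fixing a single colour and comparing loops via homology in a single $H_1(\Sigma\setminus Z_\lambda';\ZZ)$, so to pass between loops encircling different $\mu$-coloured basepoints one must supplement it with an explicit type~(i) crossing move among the $\mu$-strands. The very composite assumption $k_\mu\ge 2$ for every colour $\mu$ is precisely what makes such auxiliary strands available inside $\DivBraid_\kk(\Sigma,\Gamma)$.
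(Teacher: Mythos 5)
Your proof follows the same route as the paper's: Lemma~\ref{surjectivity} and Theorem~\ref{PureBraidGroup} give the generators $\Lambda(a_{i,\ell})$ and $\Lambda(b_{i,j})$, and Corollary~\ref{homologyfactorization} collapses them to the $\alpha_{\lambda,\ell}$ and $\beta_{\lambda,\mu}$. The extra care you take is warranted and slightly exceeds the paper's terse argument: the corollary alone does not show that $\Lambda(b_{i,j})$ is independent of which $\mu$-coloured point $z_j$ is encircled (loops around distinct points of $Z_\lambda'$ are not homologous in $\Sigma\setminus Z_\lambda'$), so your supplementary type~(i) crossing move among the $\mu$-strands is exactly the missing ingredient, and your disc-local ${\sf PB}_2(D)\cong\ZZ$ argument is a clean realisation of the ``elementary manipulation'' giving $\beta_{\lambda,\mu}=\beta_{\mu,\lambda}$.
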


In order to manipulate divisor braids, it is very convenient to have a graphical representation of them, as well as a repertoire of moves that do not change the equivalence class. This will be our next task.

A divisor braid can be considered as a family 
of coloured paths in $\Sigma \times [0,1]$, satisfying the additional condition that paths of colours associated to vertices connected by an edge in the graph $\Gamma$  never intersect. We choose to  draw the images of the braids in $\Sigma$. In this picture, there will quite likely be some points where the images of strands of different colours intersect. Accordingly, we will adhere to the convention that if $\gamma(t)=\gamma^\prime(t^\prime)\in\Sigma$ and $t<t^\prime$, then we will draw $\gamma$ as undercrossing and $\gamma^\prime$ as overcrossing. This is quite similar to the ordinary representation of braids through a picture that is supposed to be three-dimensional but is drawn on a plane. The difference is that, in those pictures, you are usually projecting the surface $D^2$ onto an interval, and preserving the time coordinate. We are now projecting away from the time coordinate. We can assume that this is done in such a way that the projection is a number of immersed oriented curves in the plane with a finite number of intersections. We call such projections allowable. To improve legibility, we also allow that the constant curve is to be represented by a single point in the projection. Note that such a representation completely determines the braid up to homotopy, and that a homotopy of projections can be lifted to a homotopy of braids. However, we might have homotopies of braids which do not correspond to a homotopy of allowable projections.

In Figure~3, we sketch how to gain a better understanding of the homotopy equivalences involved by comparing the projections of  the same divisor braids in a disc $D^2$ in two different directions. The lower row depicts the projections we discussed above, the upper row corresponds to a projection $D^2\to D^1$. Of course, we are just making use of the technique of drawing three-dimensional objects that came to be called ``descriptive geometry''.
 
The most fundamental braid is the one represented by one point circling another one in a small disk.
In Figure~3, this corresponds to the lower middle picture. Let us call this picture $\mathcal X$. 
There are certain obvious variations of this picture. We can perform the operation of reversing the orientation of the
blue curve. This yields a new picture that we will call $B\mathcal{X}$. We can also reverse the orientation of the red curve, which gives us the
two new pictures that we denote by $R\mathcal{X}$ and  $RB\mathcal{X}$. Finally, we can change the overcrossing to an undercrossing, and the
undercrossing to an overcrossing. This gives us four new pictures, which we shall refer to as $C\mathcal{X},CB\mathcal{X},CR\mathcal{X}$ and $CRB\mathcal{X}$. 

\begin{lemma}
\label{illusion}
The pictures $C\mathcal{X}$ and $RB\mathcal{X}$ are not projections of braids. 
 The braids corresponding to the projections $\mathcal{X},CB\mathcal{X}$ and $CR\mathcal{X}$ are homotopic. The braids
corresponding to the projections $B\mathcal{X},R\mathcal{X}$ and $CRB\mathcal{X}$ are also homotopic, and represent the
inverse of the braid represented by $\mathcal X$. 
\begin{proof}
The proof is by elementary manipulation. The best way of convincing oneself of the truth of the lemma is probably to experiment with a real-world three-dimensional string model. 

Let us first discuss why $C\mathcal{X}$ cannot be the projection of a braid. Suppose it were. The blue
strand would be represented by a map $r_B:[0,1]\to D^2\times [0,1]$, and similarly with the red strand.
The lower crossing would represent a point $(t_x^B,x)$ on the blue strand of $C\mathcal{X}$ and a point $(t_x^R,x)$ on the red strand,
and similarly the upper crossing would represent points $(t_y^B,y)$ and $(t_y^R,y)$. 
Because we have now inverted the crossings,  we have that $t_x^B<t_x^R$ and $t_y^R<t_y^B$. On the other hand,
the orientation of the strands shows that $t_x^R <t_y^R$ and $t_y^B <t_x^B$, so we get
$t_x^R<t_y^R<t_y^B<t_x^B<t_x^R$, which is a contradiction. 
 
The three pictures in  Figure~3 represent braids homotopic to $\mathcal X$.
As is easily seen in the upper row pictures, reversing time replaces a braid by its inverse.
This introduces an involution of the situation. The effect on the pictures in the lower row is
that the crossings are inverted and also that the orientations of both strands are inverted.
In particular, this means that $CRB\mathcal{X}$ is indeed the projection of a braid, and actually by the 
inverse of the braid projecting to $\mathcal X$. Applying the involution to the picture  $C\mathcal{X}$, we get
$BR\mathcal{X}$. Since $C\mathcal{X}$ cannot be the projection of a braid, neither can $BR\mathcal{X}$ be.

Applying the involution to the lower right picture, we see that inverting the orientation of the blue strand 
inverts the corresponding braid. This shows that $B\mathcal{X}$ and $R\mathcal{X}$ represent the inverses of $\mathcal X$. 
Similarly, $CBR\mathcal{X}$ represents the inverse of both $CR\mathcal{X}$ and $CB\mathcal{X}$. Since we already proved that
$CBR\mathcal{X}$ represents the inverse of $\mathcal{X}$, we are done with the  proof of the lemma.
\end{proof}
\end{lemma}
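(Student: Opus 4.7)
The proof naturally splits into two parts. For the non-realizability of $C\mathcal{X}$ and $RB\mathcal{X}$, the plan is to reason directly with time coordinates at the crossings. Label the two crossings in the projection by $x$ and $y$, and denote by $t_x^B,t_x^R,t_y^B,t_y^R\in[0,1]$ the times at which the blue and red strands visit these points in the $D^2\times[0,1]$ picture. A decorated projection lifts to a braid precisely when (i) the over/under datum at each crossing agrees with the order of the two times there (over-strand has the larger time), and (ii) the orientation of each strand is compatible with increasing $t$, so that the order in which a given strand meets the crossings is determined by its orientation. For $\mathcal{X}$ one can solve the resulting inequalities. For $C\mathcal{X}$, keeping the same orientations but inverting both crossings gives a strict chain $t_x^R<t_y^R<t_y^B<t_x^B<t_x^R$, which is impossible. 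The case of $RB\mathcal{X}$ will be handled by the analogous chain of inequalities, or alternatively deduced from the time-reversal symmetry discussed next.

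For the homotopy equivalences, the central tool will be the time-reversal involution $\gamma\mapsto\gamma(1-\cdot)$ on braids, which sends a braid class to its inverse. Its effect on a projection is twofold: every strand's orientation is reversed and every over/undercrossing is flipped. In the notation of the lemma this is the composite operation $CRB$. Hence, for any projection $\mathcal{Y}$ arising from a braid $[\gamma]$, the projection $CRB\,\mathcal{Y}$ arises from $[\gamma]^{-1}$. This immediately transports any statement about the first triple $\{\mathcal{X},CB\mathcal{X},CR\mathcal{X}\}$ to the inverse triple $\{B\mathcal{X},R\mathcal{X},CRB\mathcal{X}\}$, and also explains both the non-realizability of $RB\mathcal{X}$ (the involution exchanges $C\mathcal{X}$ with $RB\mathcal{X}$) and the fact that $B\mathcal{X}$ and $R\mathcal{X}$ represent the inverse of $\mathcal{X}$.

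To produce the three-fold homotopy $\mathcal{X}\sim CB\mathcal{X}\sim CR\mathcal{X}$ directly, I would exhibit explicit isotopies in $D^2\times[0,1]$ that slide one of the two strands through an unlinking position: these are essentially Reidemeister-type moves restricted to the two-strand case, and each of the three equivalent pictures corresponds to pulling the blue or the red strand through a different region of the disc while keeping the 3D crossing data fixed. Composing any one such isotopy with the involution $CRB$ recovers the remaining equivalences.

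The main obstacle I expect is to give a clean rigorous formulation of these Reidemeister-style moves purely as homotopies through braids, rather than through singular configurations with strand collisions. A satisfying argument proceeds by constructing one explicit isotopy (for instance, the equivalence $\mathcal{X}\sim CB\mathcal{X}$) as a one-parameter family of embedded pairs of paths in $D^2\times[0,1]$, and then deriving the other two from it by applying, respectively, the left–right reflection of the disc and the time-reversal involution. The case analysis and the picture manipulation are the real substance of the lemma, and, as the statement itself suggests, are most convincingly verified with a three-dimensional string model.
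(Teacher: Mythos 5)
Your proposal follows essentially the same route as the paper: the non-realizability of $C\mathcal{X}$ via the cyclic chain of time inequalities $t_x^R<t_y^R<t_y^B<t_x^B<t_x^R$, the time-reversal involution $CRB$ to handle $RB\mathcal{X}$ and the inverse triple, and a direct isotopy argument (which the paper delegates to the descriptive-geometry pictures of Figure~3) for the equivalences $\mathcal{X}\sim CB\mathcal{X}\sim CR\mathcal{X}$. The argument is correct and matches the paper's proof in all essentials.
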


\begin{figure}[ht]
\label{fig.descriptive}
\vspace{5mm}
\includegraphics[width=13cm,angle=0]{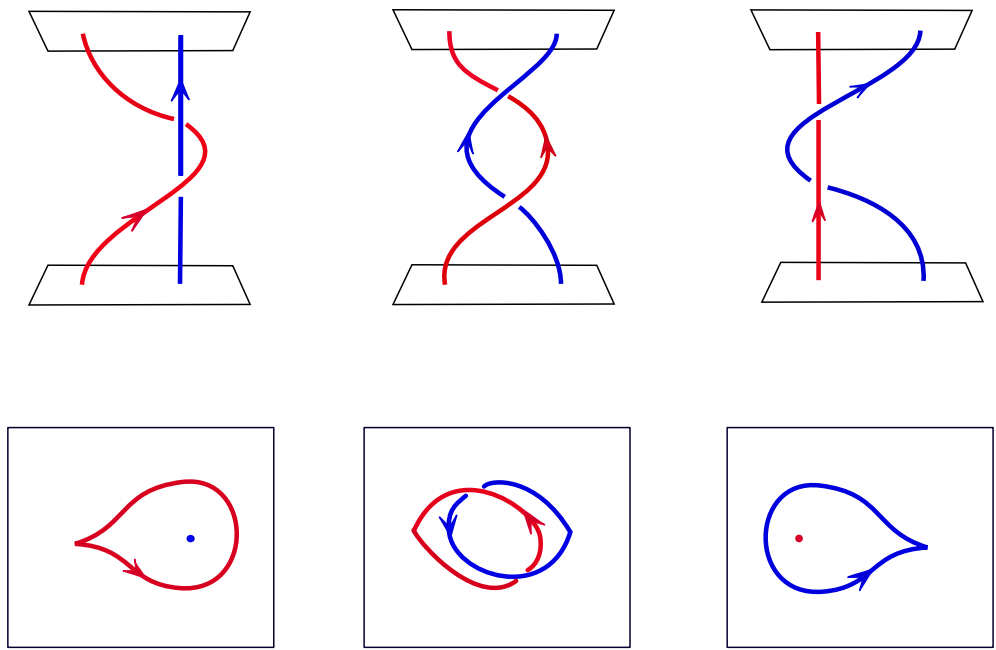} \\[-0pt]

\vspace{5mm}
\caption{The two pictures in each column are two different projections of the same divisor braid. The braid on the left side of the picture is homotopic to the braid on the right side.}
\end{figure}

\begin{figure}[hb]
\label{fig.b}
\vspace{5mm}
\includegraphics[width=5cm,angle=0]{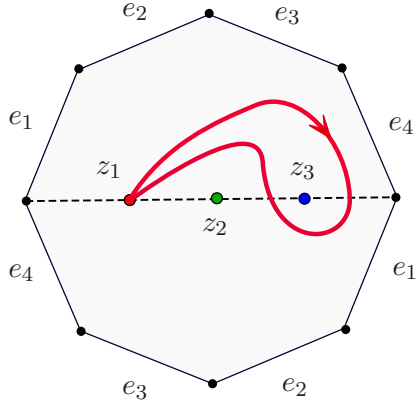} \\[-0pt] 
\vspace{-40mm}\hspace{-50mm}$e_1$\\
\vspace{-19mm}\hspace{-20mm}$e_2$\\
\vspace{-4mm}\hspace{20mm}$e_3$\\
\vspace{10mm}\hspace{50mm}$e_4$\\
\vspace{15mm}\hspace{-50mm}$e_4$\\
\vspace{11mm}\hspace{-20mm}$e_3$\\
\vspace{-5mm}\hspace{22mm}$e_2$\\
\vspace{-20mm}\hspace{51mm}$e_1$\\
\vspace{-18mm}\hspace{-27mm}$z_1$ \\
\vspace{3mm}\hspace{1mm}$z_2$ \\
\vspace{-12mm}\hspace{24mm}$z_3$ \\
\vspace{32mm}
\caption{The path $\beta_{1,3}$.}
\end{figure}

\begin{lemma}
\label{commutator}
Let $\lambda, \mu$ be two different colours, and $1\le \ell, \ell' \le 2g$. Then 
\[
[\alpha_{\lambda,\ell},\alpha_{\mu,\ell'}]=
\beta_{\lambda,\mu}^{-1}
\]
\begin{proof}
The left-hand side of the equation above is the commutator of two monic braids, built from paths that cross each other in exactly one point.  We draw the situation in Figure~5. Using the argument from  Lemma~\ref{illusion}, we obtain the assertion.
\end{proof}
\end{lemma}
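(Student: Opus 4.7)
The plan is to identify the commutator $[\alpha_{\lambda,\ell},\alpha_{\mu,\ell'}]$ with a representative of $\beta_{\lambda,\mu}^{-1}$ by an explicit geometric manipulation of braid pictures. Since only the two non-constant strands at $z_{i_\lambda}$ and $z_{i_\mu}$ play a non-trivial role in the commutator, and Corollary~\ref{homologyfactorization} allows us to replace the loops $a_{i_\lambda,\ell}$ and $a_{i_\mu,\ell'}$ by any representatives of their homology classes in $\Sigma\setminus Z_\lambda'$ and $\Sigma\setminus Z_\mu'$ respectively, the first step is to select representatives $a$ and $a'$ in $\Sigma$ that meet transversely in a single point $p$. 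For the straight-line representatives through the midpoints of the edges $e_\ell$, $e_{\ell'}$ in the $4g$-gon (continued through the opposite edges), such a configuration is exactly what is depicted in the figure accompanying the statement.

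Next I would write out the commutator divisor braid on the four time-subintervals $[0,\tfrac14]$, $[\tfrac14,\tfrac12]$, $[\tfrac12,\tfrac34]$, $[\tfrac34,1]$: on these, the $\lambda$-strand respectively traces $a$, stays, traces $a^{-1}$, stays, while the $\mu$-strand stays, traces $a'$, stays, traces $a'^{-1}$. Projecting the braid to $\Sigma$, the image is the wedge $a\cup a'$ with four transits through the intersection point $p$ in the cyclic time-order $(a, a', a^{-1}, a'^{-1})$. Reading off the over/under data dictated by this order, one recognises the local configuration near $p$ as exactly the basic linking picture $\mathcal{X}$ of Lemma~\ref{illusion}, i.e.\ a braid in which the $\lambda$-strand and the $\mu$-strand link once. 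By further admissible homotopies of the projection (provided by Lemma~\ref{illusion}, together with the freedom to deform the $\lambda$-strand inside $\Sigma\setminus Z_\lambda'$ as granted by Corollary~\ref{homologyfactorization}), the linking may be slid from $p$ along the static portion of the $\mu$-strand down to $z_{i_\mu}$. What remains is a monic braid in which the $\lambda$-strand performs a single small loop around $z_{i_\mu}$, and this is, up to sign, a representative of $\beta_{\lambda,\mu}$.

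The principal obstacle is the bookkeeping of this sign: distinguishing $\beta_{\lambda,\mu}^{-1}$ from $\beta_{\lambda,\mu}$ requires a careful correlation between the orientation of the transverse intersection of $a$ and $a'$ at $p$ as paths in the oriented surface $\Sigma$, the over/under convention of Lemma~\ref{illusion}, and the orientation of the small loop implicit in the definition of $\beta_{\lambda,\mu}$ through $b_{i_\lambda,i_\mu}$. The fact that all basepoints lie on the common oriented line $L$ through two opposite vertices of the $4g$-gon constrains the local intersection sign at $p$, so that the outcome is uniformly $\beta_{\lambda,\mu}^{-1}$ for every pair $(\ell,\ell')$; this apparent independence of the labels reflects the fact that commutators of such generators must land in a common central Abelian subgroup of $\DivBraid_\kk(\Sigma,\Gamma)$, whose structure is the subject of the subsequent sections.
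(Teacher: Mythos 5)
Your proposal is correct and follows essentially the same route as the paper: the paper's own proof simply observes that the commutator is built from two paths crossing in exactly one point (Figure~5) and invokes the crossing analysis of Lemma~\ref{illusion}, which is precisely the argument you spell out in detail (choice of singly-intersecting representatives via Corollary~\ref{homologyfactorization}, identification of the local picture with $\mathcal{X}$, and reduction to a small loop around $z_{i_\mu}$). Your explicit attention to the sign bookkeeping is a welcome elaboration of a step the paper leaves implicit, and is at least as careful as the published argument.
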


\begin{figure}
\label{fig.commutator}
\vspace{5mm}
\includegraphics[width=5cm,angle=0]{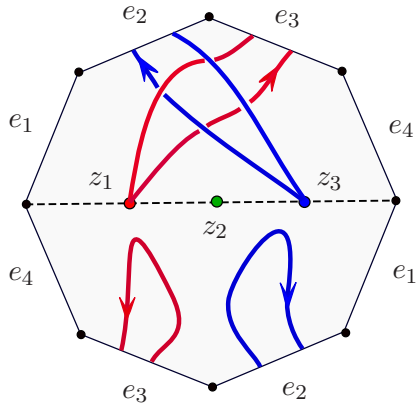} \\[-0pt] 
\vspace{-40mm}\hspace{-50mm}$e_1$\\
\vspace{-19mm}\hspace{-20mm}$e_2$\\
\vspace{-4mm}\hspace{20mm}$e_3$\\
\vspace{10mm}\hspace{50mm}$e_4$\\
\vspace{15mm}\hspace{-50mm}$e_4$\\
\vspace{11mm}\hspace{-20mm}$e_3$\\
\vspace{-5mm}\hspace{22mm}$e_2$\\
\vspace{-20mm}\hspace{51mm}$e_1$\\
\vspace{-17mm}\hspace{-29mm}$z_1$ \\
\vspace{2mm}\hspace{1mm}$z_2$ \\
\vspace{-11mm}\hspace{31mm}$z_3$ \\
\vspace{32mm}

\caption{The commutator $[\alpha_{1,3},\alpha_{3,2}]$.}
\end{figure}

\begin{corollary}
Provided that the genus of $\Sigma$ is positive, the classes $\alpha_{\lambda,\ell}$ generate $\DivBraid_\kk (\Sigma,\Gamma)$. In the case $\Sigma \cong S^2$, the classes $\beta_{\lambda,\mu}$ generate $\DivBraid_\kk (\Sigma,\Gamma)$.
\begin{proof}
The first statement follows from Lemma~\ref{generating.set} and Lemma~\ref{commutator}.

In the genus-zero case  $\Sigma\cong S^2$, the group $\DivBraid_\kk(\Sigma,\Gamma)$ is generated by the classes $\beta_{\lambda,\mu}$, because they are the images of Artin's generators~\cite{KasTur} for the pure braid group. 

\end{proof}
\end{corollary}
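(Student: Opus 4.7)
The plan is to combine the three previously established results: Lemma~\ref{generating.set}, Lemma~\ref{commutator}, and the appropriate clause of Theorem~\ref{PureBraidGroup}. The corollary asserts two distinct statements depending on the genus, and each admits a short direct argument.

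For positive genus $g \geq 1$, the strategy is to observe that Lemma~\ref{generating.set} already exhibits $\DivBraid_\kk(\Sigma,\Gamma)$ as being generated by the joint family $\{\alpha_{\lambda,\ell}\} \cup \{\beta_{\lambda,\mu}\}$, so it suffices to express each $\beta$ generator as a word in the $\alpha$'s. The diagonal $\beta_{\lambda,\lambda}$ is trivial by the same lemma. For $\lambda \neq \mu$, I would apply Lemma~\ref{commutator} with any pair $\ell, \ell'$ from the nonempty set $\{1,\ldots,2g\}$ to obtain
$$
\beta_{\lambda,\mu} \;=\; \bigl[\alpha_{\lambda,\ell},\, \alpha_{\mu,\ell'}\bigr]^{-1},
$$
so the $\alpha$'s suffice on their own.

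For $\Sigma \cong S^2$ the argument has to go in the opposite direction, since the $4g$-gon picture degenerates at $g=0$ and no $\alpha$-generators are available. Here I would invoke the $S^2$ clause of Theorem~\ref{PureBraidGroup}, which presents ${\sf PB}_k(S^2)$ via the classes $t_{i,j}$; Lemma~\ref{surjectivity} then forces the images $\Lambda(t_{i,j})$ to generate $\DivBraid_\kk(S^2,\Gamma)$. From the discussion preceding Theorem~\ref{PureBraidGroup}, each $t_{i,j}$ can be written as a product of $b_{i,j}$'s, and Corollary~\ref{homologyfactorization} identifies $\Lambda(b_{i,j}) = \beta_{\lambda,\mu}$ where $\lambda, \mu$ are the colours of $z_i$ and $z_j$. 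Consequently the $\Lambda(t_{i,j})$ are words in the $\beta_{\lambda,\mu}$, and the latter generate.

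The main obstacle is essentially bookkeeping: there is no serious technical difficulty once the preparatory lemmas have been recognised as doing the heavy lifting. The one point to be careful about is precisely the genus-zero case, since the convenient commutator shortcut from Lemma~\ref{commutator} is unavailable on $S^2$ (there are no $\alpha$'s to commute), requiring instead the alternative pure braid generating set specific to the sphere.
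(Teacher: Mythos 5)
Your proposal is correct and follows essentially the same route as the paper: for $g\geq 1$ it combines Lemma~\ref{generating.set} with the commutator identity of Lemma~\ref{commutator} to eliminate the $\beta_{\lambda,\mu}$ from the generating set, and for $S^2$ it falls back on the sphere clause of Theorem~\ref{PureBraidGroup} together with Lemma~\ref{surjectivity}, the factorisation of $t_{i,j}$ into the $b_{i,j}$, and Corollary~\ref{homologyfactorization}. The only difference is that you spell out the genus-zero bookkeeping that the paper compresses into the phrase ``images of Artin's generators''.
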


\begin{lemma}
\label{central}
The elements $\beta_ {\lambda,\mu}\in \DivBraid_k(\Sigma,\Gamma)$ are central.
\begin{proof} Consider first the case $g\geq 1$. Since the classes $\alpha_{\lambda,\ell}$ generate the group, 
we only need to show that $\alpha_{\lambda,\ell}$ commutes with $\beta_{\mu,\nu}$ for all choices of $\lambda,\mu,\nu,\ell$.
We are free to chose any of a number of equivalent braids to represent $\beta_{\mu,\nu}$.
First, since $k_\mu\geq 2$, we can find a special point $z_e$ of color $\mu$, which is different from 
the point $z_{i_\lambda}$ where $\alpha_{\lambda, \ell}$ starts out.  

According to Lemma~\ref{homologyfactorization}, 
if we pick any loop $\gamma$ starting at $z_e$ which represents the homology class in 
$H_1( \Sigma\setminus \{z_j\, \vert\,  j\not=i_\mu\};\ZZ)$ equal to a small circle around $z_{i_\nu}$, then the braid $\Phi(\gamma)$ will represent $\beta_{\mu,\nu}$. It is easy to see that we can always find such a  
curve  $\gamma$ that does not intersect $a_{\lambda, \ell}$. By Lemma~\ref{commutingbraids}, the statement follows.
\end{proof}
\end{lemma}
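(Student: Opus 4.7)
My strategy would be to split by genus and, in each case, reduce centrality to commutation with a set of generators (given by the corollary following Lemma~\ref{commutator}). For each pair (generator, $\beta_{\mu,\nu}$) I would produce an \emph{alternative} monic braid representative of $\beta_{\mu,\nu}$ whose supporting path in $\Sigma$ is genuinely disjoint from the path supporting the generator, and then invoke Lemma~\ref{moveA} (which needs no hypothesis on the colours beyond disjoint images). The flexibility to choose such an alternative rests on Corollary~\ref{homologyfactorization} — which permits any loop in the correct homology class as the representative — combined crucially with the very composite hypothesis $k_\lambda\ge 2$ that gives us room to move base points of any colour around.

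\textbf{The case $g\ge 1$.} Here it suffices to show that each $\beta_{\mu,\nu}$ commutes with every $\alpha_{\lambda,\ell}$, as the latter generate $\DivBraid_\kk(\Sigma,\Gamma)$. Using $k_\mu\ge 2$, I pick a base point $z_e\in Z_\mu$ distinct from $z_{i_\lambda}$ (this is the only place where very compositeness plays a role, in particular when $\lambda=\mu$). Corollary~\ref{homologyfactorization} permits representing $\beta_{\mu,\nu}$ as $\Phi(\gamma)$ for any loop $\gamma\subset\Sigma\setminus Z_\mu'$ based at $z_e$ whose class in $H_1(\Sigma\setminus Z_\mu';\ZZ)$ equals that of a small positively oriented circle encircling only $z_{i_\nu}$. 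Since the image of $a_{i_\lambda,\ell}$ is a compact $1$-dimensional subset of the $2$-surface $\Sigma$, I construct $\gamma$ as the concatenation of (i) a detour path from $z_e$ to a small punctured disc around $z_{i_\nu}$ avoiding both this image and the finite set $Z_\mu'$, (ii) a circle around $z_{i_\nu}$ of radius small enough to lie in that disc, and (iii) the reverse of (i). The resulting $\gamma$ has image disjoint from that of $a_{i_\lambda,\ell}$, so Lemma~\ref{moveA} gives $[\alpha_{\lambda,\ell},\beta_{\mu,\nu}]=1$.

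\textbf{The case $g=0$.} On $\Sigma=S^2$ the generating set is $\{\beta_{\lambda,\mu}\}$, so pairwise commutativity is what I need. The same approach applies: using $k_\lambda,k_\nu\ge 2$ I choose auxiliary basepoints $z_e\in Z_\lambda$, $z_f\in Z_\nu$ (distinct from one another and from the relevant $z_{i_\mu},z_{i_\sigma}$), and realise $\beta_{\lambda,\mu}=\Phi(\gamma_1)$, $\beta_{\nu,\sigma}=\Phi(\gamma_2)$ where $\gamma_1,\gamma_2$ are tightly localised ``detour-plus-tiny-circle'' loops of the form just described, now around $z_{i_\mu}$ and $z_{i_\sigma}$ respectively. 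Since $S^2$ minus a finite point set is a connected open surface and the circle radii may be shrunk at will, I can arrange disjointness of the two supports and invoke Lemma~\ref{moveA} once more.

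\textbf{Main anticipated obstacle.} The delicate point will be homology-class preservation after detouring: rerouting the connecting path from $z_e$ to the punctured neighbourhood of $z_{i_\nu}$ to avoid the obstructing 1-dimensional set could, in principle, introduce unwanted winding around other points of $Z_\mu'$, changing the class in $H_1(\Sigma\setminus Z_\mu';\ZZ)$ and therefore spoiling the identification with $\beta_{\mu,\nu}$. I expect to handle this by a standard general-position argument in a $2$-manifold: within any prescribed homotopy class of paths (rel.\ endpoints) in $\Sigma\setminus Z_\mu'$ there is a representative missing any given compact 1-dimensional subset, because the ambient dimension exceeds the obstruction dimension. Making this rigorous — and book-keeping the degenerate configurations where $\lambda=\mu$, or where $\mu=\nu$ or $i_\mu=i_\sigma$ (relevant on $S^2$) — is the main technical step.
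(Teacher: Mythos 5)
Your proposal is correct and follows essentially the same route as the paper: use the very composite hypothesis together with Corollary~\ref{homologyfactorization} to re-represent $\beta_{\mu,\nu}$ by a monic braid based at an auxiliary point $z_e$ of colour $\mu$ whose supporting loop is disjoint from that of the generator, then conclude by a disjoint-support commutation lemma. The only (harmless) differences are that you invoke Lemma~\ref{moveA} where the paper cites Lemma~\ref{commutingbraids} — arguably the more apt reference once disjointness is arranged — and that you spell out the $g=0$ case explicitly, which the paper leaves implicit.
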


Let  now $E_\kk (\Gamma)$ be the Abelian subgroup of  $\DivBraid_\kk(\Sigma,\Gamma)$ generated by the classes $\beta_{\lambda,\mu}$. These classes were constructed from  strands projecting to a disc in $\Sigma$, so it is evident that this group does not depend on the genus of the surface; this is why we choose to suppress the dependence on $\Sigma$.   Recall the map $h$ in equation (\ref{Hurew}), which provided a generalisation of the Hurewicz homomorphism in
Remark~\ref{Hurewicz}.
\begin{thm}
\label{centralextension}
The group $\DivBraid_\kk (\Sigma,\Gamma)$ sits in  a central extension
\begin{equation} \label{centrext}
 E_\kk(\Gamma) \to \DivBraid_\kk (\Sigma,\Gamma) \xrightarrow{h} H_1(\Sigma;\ZZ)^{\oplus r}.
\end{equation}
\begin{proof}
Assume that $g\geq 1$.
Since   
$E_\kk (\Gamma)$ is generated by the commutators $\beta_{\mu ,\nu}$, it is certainly contained in the 
kernel of $h$.
The group $E_\kk (\Gamma)$ is a central subgroup of
$\DivBraid_\kk (\Gamma)$. The quotient group $A := \DivBraid_\kk (\Sigma,\Gamma)/E_\kk (\Gamma)$ is Abelian, generated by the equivalence classes
$\alpha_{\lambda, \ell}$. The map $h$ factors over the quotient, and we have to show that the  
induced homomorphism $\bar h:A \to H_1(\Sigma;\ZZ)^{\oplus r}$ is an isomorphism.
By the definition of $\alpha_{\lambda,\ell}$, the group $H_1(\Sigma;\ZZ)$ is generated by the cycles 
$h(\alpha_{\lambda,\ell})$,  where $1\leq \ell \leq 2g$. It follows that $\bar h$ is surjective. 
Since   $H_1(\Sigma;\ZZ)^{\oplus r}$ has rank greater or equal to the rank of $A$, the map $\bar h$
has to be an isomorphism. 
This finishes the proof of the theorem in positive genus.

Finally, suppose that the genus $g$ is zero. In this case,  $\DivBraid_\kk(\Sigma,\Gamma)$ is equal to $E_\kk (\Gamma)$, so the statement in the theorem is trivially true. 
\end{proof}
\end{thm}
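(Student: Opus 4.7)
The plan is to quotient by the central subgroup $E_\kk(\Gamma)$ and verify that the induced map $\bar h: A := \DivBraid_\kk(\Sigma,\Gamma)/E_\kk(\Gamma) \to H_1(\Sigma;\ZZ)^{\oplus r}$ is an isomorphism; the genus-zero case will fall out separately. First, I would check that $h$ descends to $\bar h$ by verifying $E_\kk(\Gamma) \subseteq \ker h$: each generator $\beta_{\lambda,\mu} = \Lambda(b_{i_\lambda,i_\mu})$ is represented by a monic braid whose moving strand traces a small loop around $z_{i_\mu}$, which is null-homotopic in $\Sigma$ and hence sent to zero by $h$. By Lemma~\ref{generating.set}, the classes $\alpha_{\lambda,\ell}$ (with $1 \leq \ell \leq 2g$ and $\lambda$ ranging over colours) together with the $\beta_{\lambda,\mu}$ generate $\DivBraid_\kk(\Sigma,\Gamma)$, so their images generate $A$; in particular $A$ is finitely generated by at most $2gr$ elements.

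The main task is to show $A$ is Abelian. For pairs of distinct colours, Lemma~\ref{commutator} directly yields $[\alpha_{\lambda,\ell},\alpha_{\mu,\ell'}] = \beta_{\lambda,\mu}^{-1} \in E_\kk(\Gamma)$, which vanishes in $A$. The subtler step, which I view as the main obstacle, is the same-colour commutator $[\alpha_{\lambda,\ell}, \alpha_{\lambda,\ell'}]$: this is itself a monic braid of colour $\lambda$ whose moving strand traces the commutator loop $[a_\ell, a_{\ell'}]$ in $\pi_1(\Sigma)$. Although this loop is null-homologous in $\Sigma$, its class in $H_1(\Sigma \setminus Z_\lambda';\ZZ)$ need not vanish; however, the kernel of the natural surjection $H_1(\Sigma \setminus Z_\lambda';\ZZ) \to H_1(\Sigma;\ZZ)$ is generated by small loops around the points of $Z_\lambda'$, so this class is a $\ZZ$-linear combination of such loops. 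By Corollary~\ref{homologyfactorization}, the monic braid $[\alpha_{\lambda,\ell}, \alpha_{\lambda,\ell'}]$ therefore equals the corresponding product of $\beta_{\lambda,\mu}^{\pm 1}$'s, which all lie in $E_\kk(\Gamma)$.

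Finally, I would establish that $\bar h$ is an isomorphism. Surjectivity is immediate: the images $h(\alpha_{\lambda,\ell})$ span the $\lambda$-th factor of $H_1(\Sigma;\ZZ)^{\oplus r}$ as $\ell$ ranges from $1$ to $2g$, since the paths $a_{i_\lambda,\ell}$ realise the standard generators of $\pi_1(\Sigma)$. For injectivity, since $A$ is Abelian and generated by at most $2gr$ elements, it is a quotient of $\ZZ^{2gr}$; the composition $\ZZ^{2gr} \twoheadrightarrow A \twoheadrightarrow H_1(\Sigma;\ZZ)^{\oplus r} \cong \ZZ^{2gr}$ is then a surjective endomorphism of the finitely generated free Abelian group $\ZZ^{2gr}$, hence an isomorphism, forcing $\bar h$ itself to be an isomorphism. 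The genus-zero case is automatic: for $\Sigma = S^2$ the target $H_1(S^2;\ZZ)^{\oplus r}$ vanishes, and the corollary immediately following Lemma~\ref{commutator} shows that $\DivBraid_\kk(S^2,\Gamma)$ is already generated by the $\beta_{\lambda,\mu}$'s, so $\DivBraid_\kk(S^2,\Gamma) = E_\kk(\Gamma)$ and the sequence (\ref{centrext}) is trivially exact.
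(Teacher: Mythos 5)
Your proof is correct and follows essentially the same route as the paper: quotient by the central subgroup $E_\kk(\Gamma)$, use Lemma~\ref{commutator} and Corollary~\ref{homologyfactorization} to see that the quotient is Abelian and generated by the classes $\alpha_{\lambda,\ell}$, and conclude with the surjective-endomorphism (rank) argument on $\ZZ^{2gr}$, together with the trivial genus-zero case. Your explicit treatment of the same-colour commutators, which the paper leaves implicit, is a welcome addition --- though note that $[a_\ell,a_{\ell'}]$ is already a commutator of loops in $\Sigma\setminus Z_\lambda'$ and therefore actually vanishes in $H_1(\Sigma\setminus Z_\lambda';\ZZ)$, so $[\alpha_{\lambda,\ell},\alpha_{\lambda,\ell'}]$ is in fact trivial, not merely an element of $E_\kk(\Gamma)$.
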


\begin{remark}
This result is in the same spirit of the presentations given in~\cite{BelGodGua} for the so-called {\em mixed braid groups}  defined in \cite{AnKo}. We emphasise that our divisor braid
groups are distinct from such mixed braid groups. One relation between the two is that there is an obvious epimorphism from the mixed braid group of $\Sigma$ in
$(k_1,k_2)$-strings $\mathbf{B}_{k_1,k_2}$ to the particular divisor braid group ${\sf DB}_{k_1,k_2}(\Sigma; \bullet\!\! \!-\!\!\!-\!\circ)$ that we shall study in more detail in Section~\ref{sec:twocolours}, and this morphism is not injective in general.
\end{remark}

We close this section by summarising the properties of the classes $\beta_{\lambda,\mu}$.

\begin{proposition}
\label{Erelations}
The group $E_\kk (\Gamma)$ is generated by the classes $\beta_{\lambda ,\mu}$, $1 \leq \lambda,\mu \leq k$.
They satisfy the following relations:
\begin{itemize}
\item $\beta_{\lambda,\mu}=0$ if there is no edge in $\Gamma$ from $\lambda$ to $\mu$
\item $\beta_{\lambda,\mu}=\beta_{\mu,\lambda}$
\item 
$
\sum_{\mu \not=\lambda} k_\mu \beta_{\lambda,\mu}=0.
$
\end{itemize}
\begin{proof}
The only nontrivial statement left is the last relation.
Pick a disc $D^2$ containing all the points $z_{i_\lambda}$. Let $\gamma$ be a path stating in $z_{i_\lambda}$, going to the boundary of the disc, follow the whole boundary of the $4g$-gon once and then returning to $z_{i_\lambda}$. This path is homotopic to a product of commutators in $\pi_1(\Sigma\setminus \bigcup_{j\not=i_\lambda} \{z_j\},z_{i_\lambda})$. It follows that the monic braid $\Phi(\gamma)$ yields the trivial element of $\DivBraid_\kk(\Sigma,\Gamma)$. On the other hand, it circles each point $z_{j}$ for $j\not = i_\lambda$ exactly once, so it does represent the sum in the formula. 
\end{proof}
\end{proposition}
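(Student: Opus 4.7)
My plan is to follow the sketch indicated in the excerpt. Generation of $E_\kk(\Gamma)$ by the classes $\beta_{\lambda,\mu}$ is the definition. The first two relations are light work: the case $\lambda=\mu$ of the vanishing relation is part of Lemma~\ref{generating.set}; the case $\lambda\ne\mu$ with no edge in $\Gamma$ follows because a type-(ii) crossing move between strands $i_\lambda$ and $i_\mu$ lets the defining loop of $b_{i_\lambda,i_\mu}$ pass through the $i_\mu$-th strand and contract in $\Sigma$; and the symmetry $\beta_{\lambda,\mu}=\beta_{\mu,\lambda}$ is the other half of Lemma~\ref{generating.set}. The real content is the third relation, which I shall establish by computing one element $\Lambda(\Phi(\gamma))\in\DivBraid_\kk(\Sigma,\Gamma)$ in two ways.

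Represent $\Sigma$ as a $4g$-gon with opposite sides identified, arranging all $|\kk|$ basepoints inside this fundamental polygon. Let $\gamma$ be the loop based at $z_{i_\lambda}$ that runs from $z_{i_\lambda}$ via a direct path to the boundary of the polygon, then traces the entire boundary once positively along the standard edges $a_1,b_1,\ldots,a_g,b_g$, and finally returns to $z_{i_\lambda}$ along the reverse of its exit path. Viewed in $\pi_1\bigl(\Sigma\setminus\bigcup_{j\ne i_\lambda}\{z_j\},\,z_{i_\lambda}\bigr)$, this loop is (a conjugate of) the surface relator word $\prod_{i=1}^g[a_i,b_i]$. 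Since the map $\Phi$ sending a based loop to its monic braid is a group homomorphism (concatenation of loops corresponds to concatenation of braids), composing with $\Lambda$ yields
\[
\Lambda(\Phi(\gamma))\;=\;\prod_{i=1}^g\bigl[\alpha_{\lambda,2i-1},\,\alpha_{\lambda,2i}\bigr]
\]
in $\DivBraid_\kk(\Sigma,\Gamma)$. By Lemma~\ref{commutingbraids} same-colour monic braids commute, so each bracket on the right is trivial and $\Lambda(\Phi(\gamma))=e$.

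For the second computation, Corollary~\ref{homologyfactorization} lets me reduce $\Lambda(\Phi(\gamma))$ to the class of $\gamma$ in $H_1(\Sigma\setminus Z_\lambda';\ZZ)$, where $Z_\lambda'=\bigcup_{\mu\ne\lambda}Z_\mu$. Because $\gamma$ bounds the $4g$-gon (an embedded disc in $\Sigma$), inside this disc minus the interior punctures $Z_\lambda'$ we have $[\gamma]=\sum_{z_j\in Z_\lambda'}[c_j]$, where $c_j$ denotes a small positive loop around the puncture $z_j$, and this equality persists after inclusion into $H_1(\Sigma\setminus Z_\lambda';\ZZ)$. Each loop $c_j$ with $z_j$ of colour $\mu$ determines the monic braid $b_{i_\lambda,j}$, which maps to $\beta_{\lambda,\mu}$ under $\Lambda$; since $Z_\lambda'$ contains exactly $k_\mu$ points of each colour $\mu\ne\lambda$, this produces $\Lambda(\Phi(\gamma))=\sum_{\mu\ne\lambda}k_\mu\,\beta_{\lambda,\mu}$. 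Equating this with the result $e$ of the first computation yields the third relation in $E_\kk(\Gamma)$. The main delicacy is ensuring that the homology computation takes place on the correct space $\Sigma\setminus Z_\lambda'$ rather than on the more heavily-punctured $\Sigma\setminus\bigcup_{j\ne i_\lambda}\{z_j\}$ (whose extra punctures at same-colour basepoints would contribute spurious terms), which is exactly what Corollary~\ref{homologyfactorization} licenses: same-colour basepoints are invisible to the divisor braid group.
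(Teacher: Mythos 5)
Your proof is correct and follows essentially the same route as the paper's: the boundary loop of the $4g$-gon is evaluated once as a product of commutators of same-colour monic braids (hence trivial) and once, via Corollary~\ref{homologyfactorization}, as $\sum_{\mu\ne\lambda}k_\mu\,\beta_{\lambda,\mu}$, and the first two relations are handled exactly as in Lemma~\ref{generating.set}. The one small citation slip is that the vanishing of the brackets $[\alpha_{\lambda,2i-1},\alpha_{\lambda,2i}]$ should be attributed to Corollary~\ref{homologyfactorization} rather than Lemma~\ref{commutingbraids}, since both loops are based at the \emph{same} point $z_{i_\lambda}$ and so the strand-swapping homotopy of that lemma does not literally apply; the corollary supplies precisely the needed fact that $\Phi$ factors through $H_1(\Sigma\setminus Z_\lambda';\ZZ)$.
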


\section{Linking numbers  and two colours}
\label{sec:linking}

In the previous section, we obtained a set of generators for our divisor braid groups, together with some relations. 
We claim that there are no further relations, so that we have actually obtained a presentation of the groups.
To prove this, one has to
find tools to detect that a braid is non-trivial.  One idea is to use  invariants for the links (or `closed braids') in the closed three-manifold $S^1 \times \Sigma$ obtained from colour-pure braids on $\Sigma$.  In this section, we will first discuss link invariants for disjoint cycles in such coloured closed braids, and then apply them to the presentation of divisor braid groups in a crucial situation: very composite colour schemes whose graph consists of a single edge and two vertices.

\subsection{Linking numbers of cycles}

We start with a fairly general situation.
Let $R$ be a ring, $n\in \NN$, and $M$ a compact $(2n+1)$-dimensional manifold with an $R$-orientation. Assume that $\alpha_1, \alpha_2$ are chains in $C_n(M;R)$ which are boundaries.
Also assume that these cycles $\alpha_i$ are disjoint. Our goal is to define an homotopy-invariant linking number in $R$, which will be able to detect whether the $n$-cycles $\alpha_1$ and $\alpha_2$ are linked. 

If $\alpha_1$ and $\alpha_2$ are contained in an open set homeomorpic to $\RR^{2n+1}$, we can consider the 
usual linking number in this open set. Think of this number as a local linking number. If this local linking number is different from zero, the $n$-chains are linked locally. However, even if the local linking number is different from zero, they might get unlinked by some global move. So if we can define a global linking number for links on $M$    
it will have to be a weakening of the local linking number. In general, even if the local linking number is nonzero, the global linking number might still vanish. 

By assumption, there is a chain $A_i\in C_{n+1}(M;R)$ such that $ \partial A_i=\alpha_i$. Consider the chain $A_1\times \alpha_2\in C_{2n+1}(M \times M ;R)$.
Since the cycles $\alpha_i$ are disjoint, we obtain a boundary $\partial(A_1\times \alpha_2)\in C_{2n}(M\times M\setminus M_\Delta;R)$, where $M_\Delta$ is the usual diagonal. It follows that $A_1\times \alpha_2$ defines a cycle in $C_{2n+1}(M\times M, M\times M\setminus M_\Delta;R)$. 
\begin{defn}
The {\em linking number} $L(\alpha_1,\alpha_2) \in R$ of the disjoint $n$-cycles $\alpha_1,\alpha_2$ in $M$ is the intersection of the homology classes $[A_1\times \alpha_2]$ and $[M_\Delta]$ in $M\times M$. 
\end{defn}
\begin{defn} A {\em continuous family of singular chains} parametrised by $t$ is a formal sum $\sum_i c_i\, S_i(t)$, where $c_i\in R$ are constants and each $S_i(t)$ is a singular simplex, continuously parametrized by $t$.
\end{defn}

We now collect basic facts about this  linking number.

\begin{proposition} 
\label{chaininvariant}
The linking number $L(\alpha_1,\alpha_2)$ enjoys the following properties:
\begin{itemize}
\item[(i)]
It does not depend of the choice of $A_1$. 
\item[(ii)]
It is graded skew-symmetric.
\item[(iii)]
It is $R$-bilinear in the appropriate sense.
\item[(iv)]
It is constant in a continuous family of disjoint pairs of boundaries $(\alpha_1(t),\alpha_2(t))$.
\end{itemize}
\begin{proof}
(i) Two different choices $A_1, A_1'$ differ by a cycle $z\in Z_{2n}(M;R)$. The corresponding homology classes $[A_1 \otimes \alpha_2], [A_1'\otimes \alpha_2]\in H_{2n+1}(M\times M,M\times M\setminus M_\Delta;R)$ differ by $[z\times \alpha_2]$. This is actually a cycle in $C_{2n+1}(M\times M;R)$, and the corresponding homology class  vanishes already there since $[\alpha_2]=0\in H_{n}(M;R)$.

(ii) Skew-symmetry follows from observing that, if $\partial A_2=\alpha_2$, then $$\partial(A_1\otimes A_2)=(-1)^n\alpha_1\otimes A_2+A_1\otimes \alpha_2,$$ so that the classes $$[A_1 \otimes \alpha_2], (-1)^{n+1}[A_2\otimes \alpha_1]\; \in\;  H_{2n+1}(M\times M, M \times M\setminus M_\Delta;R)$$
agree. 

(iii) Linearity in the second argument is immediate from the definition. The caveat `in the appropriate sense' simply means that 
$$L(\alpha_1,\alpha_2^\prime+\alpha_2^{\prime\prime})=L(\alpha_1,\alpha_2^\prime)+L(\alpha_1,\alpha_2^\prime)$$ holds under the condition that both $\alpha_2^\prime$ and
$\alpha_2^{\prime\prime}$ are disjoint from $\alpha_1$. 
 Linearity in the first argument follows from this equality and skew-symmetry.

(iv) By a compactness argument, every continuous path of pairs of disjoint cycles is composed by paths of cycles for which either the first or the second chain is held constant.  
By skew-symmetry, it is enough to prove homotopy invariance in the case where $\alpha_2(t)$ is constant. 
The homotopy $\alpha_1(t)$ gives a chain $A$ which is disjoint from $\alpha_2$ and such that $\partial A=\alpha_1(1)-\alpha_1(0)$. By definition,
$L(\alpha_1(1)-\alpha_1(0),\alpha_2)$ is the intersection of $M_\Delta$ with $A\times \alpha_1$. But this intersection is empty, so $L(\alpha_1(1)-\alpha_1(0),\alpha_2)=0$.
Therefore, by linearity in the first argument,
\[
L(\alpha_1(0),\alpha_2)=L(\alpha_1(1)-\alpha_1(0),\alpha_2)+L(\alpha_1(0),\alpha_2)=L(\alpha_1(1),\alpha_2),
\]
which proves the homotopy invariance.
\end{proof}
\end {proposition}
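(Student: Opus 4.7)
The plan is to exploit the fact that, although $A_1 \times \alpha_2$ is not an absolute cycle (its boundary equals $\pm\,\alpha_1 \times \alpha_2$), the disjointness of $\alpha_1$ and $\alpha_2$ forces this boundary inside $M \times M \setminus M_\Delta$, so $[A_1 \times \alpha_2]$ represents a well-defined class in $H_{2n+1}(M \times M, M \times M \setminus M_\Delta; R)$ pairable with $[M_\Delta]$ via the intersection product on $M \times M$. Property~(i) then follows because two choices of bounding chain differ by an absolute cycle $z \in Z_{n+1}(M; R)$; since $[\alpha_2] = 0$ in $H_n(M; R)$, the Künneth class $[z \times \alpha_2] = [z] \otimes [\alpha_2]$ vanishes already in $H_{2n+1}(M \times M; R)$, so its intersection with $[M_\Delta]$ is zero. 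Property~(iii) becomes essentially automatic: linearity in $\alpha_2$ follows from linearity of the cross product in the second factor and of the intersection pairing, while linearity in $\alpha_1$ is then obtained by combining this with~(ii).

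The core of~(ii) is the Leibniz identity
\[
\partial(A_1 \times A_2) = \alpha_1 \times A_2 + (-1)^{n+1} A_1 \times \alpha_2,
\]
from which one reads off in $H_{2n+1}(M\times M, M\times M\setminus M_\Delta; R)$ the relation $[A_1 \times \alpha_2] = (-1)^{n}[\alpha_1 \times A_2]$; intersecting with $[M_\Delta]$ and tracking the action of the swap involution of $M \times M$ on both the relative class and the (symmetric) diagonal class then produces the graded skew-symmetry. For~(iv), I would argue by compactness, decomposing any continuous family of disjoint pairs into finitely many pieces along which one of the two cycles is held constant; by~(ii) it suffices to handle the case where $\alpha_2$ is fixed while $\alpha_1(t)$ varies through cycles disjoint from $\alpha_2$. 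The trace of such a family furnishes a chain $A \in C_{n+1}(M; R)$ with $\partial A = \alpha_1(1) - \alpha_1(0)$ whose support is disjoint from $\alpha_2$; using $A$ as the bounding chain for the difference $\alpha_1(1) - \alpha_1(0)$, the whole product $A \times \alpha_2$ sits inside $M \times M \setminus M_\Delta$, so $L(\alpha_1(1) - \alpha_1(0), \alpha_2) = 0$, and bilinearity finishes the proof.

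The step I expect to be the main obstacle is the sign bookkeeping in~(ii), specifically justifying how the swap involution of $M \times M$ interacts both with the relative class living in $H_{2n+1}(M \times M, M \times M \setminus M_\Delta; R)$ and with the intersection pairing against $[M_\Delta]$. The cleanest route is probably to use the Thom isomorphism for a tubular neighbourhood of $M_\Delta$ to identify the relative homology group with $H^{2n+1}(M_\Delta; R)$, turning the pairing with $[M_\Delta]$ into evaluation of a cohomology class on the fundamental class of $M_\Delta$; the required signs then follow by naturality under the swap. In contrast, the remaining parts of the proposition are purely formal chain-level manipulations for which the present outline should be routine to flesh out.
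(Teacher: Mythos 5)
Your proposal follows essentially the same route as the paper's proof: (i) via the difference cycle $z$ and the vanishing of $[z\times\alpha_2]$ because $[\alpha_2]=0$, (ii) via the Leibniz identity for $\partial(A_1\otimes A_2)$, (iii) by linearity of the construction plus skew-symmetry, and (iv) by decomposing the homotopy, reducing to constant $\alpha_2$, and using the trace chain $A$ disjoint from $\alpha_2$. The only difference is that you are more explicit than the paper about the role of the swap involution of $M\times M$ in part (ii) — the paper passes silently from $[\alpha_1\otimes A_2]$ to $[A_2\otimes\alpha_1]$ — but this is a refinement of the same argument rather than a different one.
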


The example we will now focus on is basic, and it brings us back to the  stage where $M=S^1 \times \Sigma$ for a compact oriented surface $\Sigma$, and thus $n=1$. 

Let $k_1,k_2 \in \NN$ and $\{z_i \, | \, 1\leq i\leq k_1 + k_2 \}$ be a set of distinct points of $\Sigma$. We denote by $\gamma_i$  the constant path at the point $z_i$. Let $f:D^2\to \Sigma$ be an oriented  chart, such that $f(0,0)=z_1$ and $f(1,0)=z_{k_1+1}$, and whose image  does not contain any point $z_i$ for $i\not =1,k_1+1$.  Let $\gamma_1^\prime$ be the restriction of such a map $f$ to the boundary. Then  we can reinterpret $\gamma_1'$  as a loop on $\Sigma$, starting at $z_{k_1+1}$, winding once around $z_1$ in the positive direction, and returning back to $z_{k_1+1}$.

Consider the disjoint cycles $\alpha_1,\alpha_2 \in C_1(S^1\times \Sigma;\ZZ/{{\rm gcd}(k_1,k_2)})$ given by the families of maps $\{\gamma_1^\prime,\gamma_2,\gamma_3,\dots,\gamma_{k_1}\}$ and   $\{\gamma_{k_1+1},\dots,\gamma_{k_1+k_2}\}$, respectively. 

\begin{lemma}
\label{simpleloop}
$L(\alpha_1,\alpha_2) \equiv 1\, ({\rm mod}\, {\rm gcd}(k_1,k_2))$.
\begin{proof} 
Let $F:S^1\times [0,1]\to S^1\times \Sigma$ be the map $F(t,u)=(t,f((1-u)t))$. This map defines a chain $A_1$ whose boundary is the difference between the chains represented by $\gamma_1$ and $\gamma_1^\prime$. To compute the linking number $L(\alpha_1,\alpha_2)$ we first need to make $A_1$ transversal to $\alpha_2$. We can do this by moving $\alpha_2$ to another constant path nearby.  The intersection number is then the number of points  $((t_1,u),(t_2))\in (S^1 \times [0,1])\times S^1)$ such that $F(t_1,u)=(t_2,w)$ where $w$ is a point close (but not equal to) to $z_1$, let us say $w = f(\epsilon,0)$. The unique solution is  $t_1 = t_2 = 1-\epsilon$ and $u= (1,0)$. We also have to keep track of the sign, which with the usual orientation conventions is positive in this case, so that the intersection number is $1\, ({\rm mod}\, {\rm gcd}(k_1,k_2))$ as claimed.
\end{proof}    
\end{lemma}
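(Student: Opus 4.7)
The plan is to construct an explicit bounding $2$-chain $A_1$ for $\alpha_1$ whose signed intersection with $\alpha_2$ inside the $3$-manifold $S^1\times\Sigma$ is visibly $+1$. By definition, $L(\alpha_1,\alpha_2)$ is the intersection number of $[A_1\times\alpha_2]$ with the diagonal $M_\Delta\subset M\times M$, which (since $x=y$ for $(x,y)\in(A_1\times\alpha_2)\cap M_\Delta$) equals the intersection number $A_1\cdot\alpha_2$ in $M=S^1\times\Sigma$.

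First I would clarify the role of the coefficient ring. Each constant loop $\gamma_i$, read as a $1$-cycle in $S^1\times\Sigma$ via $t\mapsto(t,z_i)$, represents the generator $[S^1]$ of the $S^1$-factor in $H_1(S^1\times\Sigma;\ZZ)$, and $\gamma_1'$ is homologous to the same class (it has degree one on the $S^1$-factor and bounds a disc in $\Sigma$). Hence $[\alpha_1]=k_1[S^1]$ and $[\alpha_2]=k_2[S^1]$, and both are null-homologous precisely modulo $\gcd(k_1,k_2)$. Picking a \emph{connected} open neighbourhood $U\subset\Sigma$ of $\{z_1,\dots,z_{k_1}\}$ disjoint from $\{z_{k_1+1},\dots,z_{k_1+k_2}\}$, the $0$-chain $\sum_{i=1}^{k_1}z_i$ becomes a boundary in $C_0(U;\ZZ/\gcd(k_1,k_2))$; choosing $\sigma\subset U$ with $\partial\sigma=\sum_{i=1}^{k_1}z_i$, the $2$-chain $B:=\sigma\times S^1\subset S^1\times U$ satisfies $\partial B=\sum_{i=1}^{k_1}\gamma_i$ and is disjoint from $\alpha_2$ by construction.

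Next I would assemble $A_1$ using the ``cap'' $F(t,u)=(t,f((1-u)t))$, which radially collapses $\gamma_1'$ at $u=0$ onto the constant loop $\gamma_1$ at $u=1$. Writing $C:=F_*[S^1\times I]$ one has $\partial C=\gamma_1'-\gamma_1$, so $A_1:=C+B$ satisfies $\partial A_1=\alpha_1$. Since $B\cap\alpha_2=\emptyset$, the contribution of $B$ to the intersection vanishes, and $L(\alpha_1,\alpha_2)=C\cdot\alpha_2$. A small perturbation (justified by Proposition~\ref{chaininvariant}(iv)) replaces the first strand of $\alpha_2$ by the constant loop at $w=f(\epsilon,0)$ for small $\epsilon>0$, while the other strands of $\alpha_2$ remain disjoint from the image of $F$ because their basepoints $z_j$ ($j>k_1+1$) lie outside $f(D^2)$. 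Solving $F(t,u)=(t,w)$ yields the unique transverse intersection point $(t,u)=((1,0),\,1-\epsilon)$, contributing $+1$ under the standard product orientations.

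The main obstacle I expect is orientation bookkeeping: verifying that the sign at the unique intersection is $+1$ rather than $-1$, by tracking the push-forward of the product orientation on $S^1\times[0,1]$ under $F$ together with the chosen orientation on $\Sigma$ (via $f$) and the orientation on $\alpha_2$ inside the ambient $S^1\times\Sigma$. Once this is settled, the conclusion $L(\alpha_1,\alpha_2)\equiv 1\pmod{\gcd(k_1,k_2)}$ follows immediately.
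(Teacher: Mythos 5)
Your proposal is correct and follows essentially the same route as the paper: cap off $\gamma_1'-\gamma_1$ by the radial chain $F(t,u)=(t,f((1-u)t))$, perturb the relevant strand of $\alpha_2$ to a nearby constant path at $w=f(\epsilon,0)$, and count the single transverse intersection point with sign $+1$. Your explicit completion of the bounding chain by $B=\sigma\times S^1$ disjoint from $\alpha_2$ makes precise a step the paper leaves implicit, namely that the remaining constant strands bound away from $\alpha_2$ once coefficients are reduced modulo $\gcd(k_1,k_2)$.
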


\subsection{Application to  divisor braids in two colours} \label{sec:twocolours}
In this section we focus on the case of a very composite negative colour scheme with graph $\Gamma = \bullet\!\! \!-\!\!\!-\!\circ$; thus we deal with $r=2$ colours.
We want to determine in this situation the Abelian group $E_{(k_1,k_2)}(\Sigma,\Gamma)$ defined in the paragraph before Theorem~\ref{centralextension}. 

The first step is to construct an invariant of divisor braids.  Suppose that $\gamma:[0,1]\to \Sigma^{k_1+k_2}$ represents a divisor braid. We consider the maps $\bar\gamma_i:[0,1]\to S^1\times \Sigma$ where $\bar\gamma_i(t)=(t\,({\rm mod}\, 1),\gamma_i(t))$. (Here, we identify $S^1$ with $\RR/\ZZ$.) The first $k_1$ of the maps $\gamma_i:[0,1]\to \Sigma$ determine a 1-cycle $\alpha_1=\sum_{j=1}^{k_1}[\bar\gamma_j]$ and the last $k_2$ determine a 1-cycle $\alpha_2=\sum_{j=k_1+1}^{k_1+k_2}[\bar\gamma_j]$ . By assumption, these two cycles can be taken disjoint in $S^1\times \Sigma$.

Now assume triviality of the homology classes $h(\alpha_\lambda)=0\in H_1(\Sigma;\ZZ)$ for both $\lambda=1,2$. Then the homology class 
$$h(\bar\alpha_\lambda)\; \in \; H_1(S^1 \times \Sigma;\ZZ)\cong H_1(S^1;\ZZ) \oplus H_1(\Sigma;\ZZ)$$
 equals $k_\lambda[S^1]$, for $\lambda=1,2$. In particular, if we consider homology with coefficients in the ring $\ZZ/{\rm gcd}(k_1,k_2)$, then the corresponding homology classes are trivial. 
 
 We can now define the {\em link invariant} $L(\gamma)$ of the divisor braid in two colours $\gamma$ to be the linking number $$L(\alpha_1,\alpha_2)\; \in\;  \ZZ/{\rm gcd}(k_1,k_2)$$ introduced above.
According to Proposition~\ref{chaininvariant}, this is both well defined and homotopy invariant. It is also easy to see that if two braids are related by a crossing move, then they have the same invariant (the most obvious argument consists of subdividing the chains at the crossing point, and using that a chain is chain-homotopic to its subdivision).

\begin{example} Let $\gamma$ be a trivial braid of constant paths. The 2-cycle $A_1$ does not intersect $\alpha_2$, so the braid invariant $L(\gamma)$ is zero.
\end{example}

\begin{example} \label{excalculate} 
We can compute the link invariant  $L(\beta_{1,2})$ for the divisor braid defined in (\ref{alphabeta}) and associated to the negative colour scheme 
$(\Gamma ,\kk)= (\bullet\!\! \!-\!\!\!-\!\circ, (k_1,k_2))$. This braid consists of one point of colour 1 starting at $z_1$ and tracing a small circle in positive direction around a point $z_2$ of colour 2.  Using Lemma~\ref{simpleloop},  we see that the link invariant equals
$1\, ({\rm mod}\, {\rm gcd}(k_1,k_2))$. More generally, if $n\in \ZZ$ then  $$L(n\, \beta_{1,2})\;  \equiv\;  n\, ({\rm mod}\, {\rm gcd}(k_1,k_2)).$$ 
\end{example}

The following result can be regarded as the first nontrivial computation of a genuine divisor braid group.

\begin{thm}
\label{twocolors}
The reduction ${\rm mod}\, {\rm gcd}(k_1,k_2)$ of the intersection pairing of 1-cycles
$$H_1(\Sigma;\ZZ)\otimes_\ZZ H_1(\Sigma;\ZZ)\to \ZZ/\gcd(k_1,k_2)$$ induces a 
central extension   of $H_1(\Sigma;\ZZ)^{\oplus 2}$ by $\ZZ/\!\gcd(k_1,k_2) $ which is isomorphic to the divisor braid group $\DivBraid_{(k_1,k_2)}(\Sigma, \bullet\!\! \!-\!\!\!-\!\circ)$.

\begin{proof}
According to Proposition~\ref{Erelations}, the centre $E_\kk( \bullet\!\! \!-\!\!\!-\!\circ)$ is generated as an Abelian group by $\beta_{1,2},\beta_{2,1}$ under the relations $\beta_{1,2}-\beta_{2,1}=0$, $k_2\, \beta_{1,2}=0$ and $k_1\,\beta_{2,1}=0$. Thus $E_{(k_1,k_2)}(\bullet\!\! \!-\!\!\!-\!\circ)$ is generated by $\beta_{1,2}$, with the 
relation $\gcd(k_1,k_2)\, \beta_{1,2}=0$ holding true. However, there could conceivably be more relations.

Let $\widetilde{E}_{(k_1,k_2)}( \bullet\!\! \!-\!\!\!-\!\circ)$ be the Abelian group generated by the symbols $b_{1,2},b_{2,1}$ 
under the relations $b_{1,2}-b_{2,1}=0, k_2b_{1,2}=0$ and $k_1b_{2,1}=0$. This is a cyclic group of order
$\gcd(k_1,k_2)$, generated by $b_{1,2}=b_{2,1}$. It will be handy to write also $b_{1,1}=b_{2,2}=0$ for the zero element of this group.

For any 1-cycle $a \in H_1(\Sigma;\ZZ)$ we shall write $a_1=(a,0)$ and $a_2=(0,a)$ for the two inclusions in the direct sum $\bigoplus_{\lambda=1}^2 H_1(\Sigma;\ZZ)$.
Let $\widetilde{\DivBraid}_{(k_1,k_2)}(\Sigma,\bullet\!\! \!-\!\!\!-\!\circ)$
be the extension of $H_1(\Sigma;\ZZ)^{\oplus 2}$ by $\widetilde{E}_{k_1,k_2}( \bullet\!\! \!-\!\!\!-\!\circ)$ 
corresponding to the 2-cocycle in $Z^2(H_1(\Sigma;\ZZ)^{\oplus 2}, \widetilde{E}_{k_1,k_2}( \bullet\!\! \!-\!\!\!-\!\circ))$ given by
\begin{equation} \label{2cocycle}
(a_\lambda,a'_{\lambda'})\mapsto \sharp (a,a')\, b_{\lambda,\lambda'},
\end{equation}
where $\sharp (\cdot,\cdot)$ is the intersection pairing on $H_1(\Sigma;\ZZ)$ (see \cite[p.~827]{Lan} for background on group extensions and group cohomology). From the way we constructed $\widetilde{E}_{k_1,k_2}( \bullet\!\! \!-\!\!\!-\!\circ)$, it is
clear that the bilinear map (\ref{2cocycle}) only depends on the values of the intersection pairing modulo congruence by ${\rm gcd}(k_1,k_2)$.

It follows from Theorem~\ref{centralextension} that there is 
a ladder diagram of groups with surjective vertical arrows
and exact rows:
\[
\begin{CD}
0 @>>> \widetilde{E}_{(k_1,k_2)}( \bullet\!\! \!-\!\!\!-\!\circ)  
@>>> \widetilde{\DivBraid}_{(k_1,k_2)}(\Sigma,\bullet\!\! \!-\!\!\!-\!\circ)
@>>> H_1(\Sigma;\ZZ)^{\oplus 2} @>>> 0 \\
@. @VVV @VVV @| @. \\
0 @>>> E_{(k_1,k_2)}( \bullet\!\! \!-\!\!\!-\!\circ)  
@>>> \DivBraid_{(k_1,k_2)}(\Sigma,\bullet\!\! \!-\!\!\!-\!\circ)
@>>> H_1(\Sigma;\ZZ)^{\oplus 2} @>>> 0 
\end{CD}
\]
Note that the groups in the middle column are not necessarily commutative. 
The lemma follows if we can show that the left vertical map is an isomorphism.
We already know that the map is surjective, so we have to show injectivity.

We claim that the kernel of the vertical homomorphism on the left, determined by $b_{1,2} \mapsto \beta_{1,2}$,  is trivial. That is, if $\gcd(k_1,k_2)$ does not divide $m$, then $m\, \beta_{1,2}\not=0\in \DivBraid_{(k_1,k_2)}(\Sigma,\bullet\!\! \!-\!\!\!-\!\circ)$. But this follows immediately from the computation of the link invariant in Example~\ref{excalculate} above. \end{proof}
\end{thm}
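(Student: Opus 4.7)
My plan is to leverage the generating/relation information already extracted in Proposition~\ref{Erelations} for the subgroup $E_{(k_1,k_2)}(\bullet\!\!-\!\!-\!\circ)\subset \DivBraid_{(k_1,k_2)}(\Sigma,\bullet\!\!-\!\!-\!\circ)$, together with the link invariant $L$ constructed earlier in this section, to pin down the central extension exactly. From Proposition~\ref{Erelations}, the generators $\beta_{1,2}$ and $\beta_{2,1}$ satisfy $\beta_{1,2}=\beta_{2,1}$, $k_2\beta_{1,2}=0$ and $k_1\beta_{2,1}=0$, so $E_{(k_1,k_2)}(\bullet\!\!-\!\!-\!\circ)$ is a cyclic group whose order divides $\gcd(k_1,k_2)$. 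Combined with Theorem~\ref{centralextension}, this realizes $\DivBraid_{(k_1,k_2)}(\Sigma,\bullet\!\!-\!\!-\!\circ)$ as a central extension of $H_1(\Sigma;\ZZ)^{\oplus 2}$ by this cyclic group.

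Next, I would construct the model group $\widetilde{\DivBraid}_{(k_1,k_2)}(\Sigma,\bullet\!\!-\!\!-\!\circ)$ as a central extension of $H_1(\Sigma;\ZZ)^{\oplus 2}$ by the abstract cyclic group $\widetilde{E}\cong\ZZ/\gcd(k_1,k_2)$ whose 2-cocycle $(a_\lambda,a'_{\lambda'})\mapsto\sharp(a,a')\,b_{\lambda,\lambda'}$ is induced by the intersection pairing on the surface (here $b_{1,2}=b_{2,1}$ generates $\widetilde{E}$ and $b_{1,1}=b_{2,2}=0$). The computation of commutators in Lemma~\ref{commutator}, interpreted homologically via $h(\alpha_{\lambda,\ell})$, shows that the true group $\DivBraid_{(k_1,k_2)}(\Sigma,\bullet\!\!-\!\!-\!\circ)$ represents precisely the same cohomology class in $H^2(H_1(\Sigma;\ZZ)^{\oplus 2};\widetilde{E})$ \emph{up to} the surjection $\widetilde E\twoheadrightarrow E_{(k_1,k_2)}(\bullet\!\!-\!\!-\!\circ)$. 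This yields a ladder diagram of central extensions with identity on the quotient $H_1(\Sigma;\ZZ)^{\oplus 2}$ and a surjection $\widetilde E\twoheadrightarrow E_{(k_1,k_2)}(\bullet\!\!-\!\!-\!\circ)$; by the short five lemma it suffices to prove this surjection is injective.

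The injectivity is where the link invariant earns its keep. I would argue that, for any integer $m$ with $0<m<\gcd(k_1,k_2)$, the element $m\beta_{1,2}\in\DivBraid_{(k_1,k_2)}(\Sigma,\bullet\!\!-\!\!-\!\circ)$ is nontrivial. Since $\beta_{1,2}\in E_{(k_1,k_2)}(\bullet\!\!-\!\!-\!\circ)$ is supported on a small disc and its colour-$\lambda$ sub-braids each carry trivial homology class in $H_1(\Sigma;\ZZ)$, the invariant $L$ of Section~\ref{sec:linking} is well defined on $m\beta_{1,2}$ and takes values in $\ZZ/\gcd(k_1,k_2)$; by Example~\ref{excalculate} we have $L(m\beta_{1,2})\equiv m\pmod{\gcd(k_1,k_2)}$, which is nonzero. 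Hence $\beta_{1,2}$ has exact order $\gcd(k_1,k_2)$, making the vertical map on the left of the ladder an isomorphism.

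The main obstacle is verifying that $L$ genuinely descends to a homotopy- and crossing-move-invariant function on divisor braids with null-homologous colour components; this is exactly what was established in Proposition~\ref{chaininvariant} (combined with the remark that subdividing chains at a crossing point shows invariance under crossing moves), so nothing new is required. Everything else is a matter of assembling the central-extension data and invoking the five lemma.
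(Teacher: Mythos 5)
Your proposal follows essentially the same route as the paper's own proof: the presentation of $E_{(k_1,k_2)}(\bullet\!\!-\!\!-\!\circ)$ from Proposition~\ref{Erelations}, the model extension $\widetilde{\DivBraid}_{(k_1,k_2)}(\Sigma,\bullet\!\!-\!\!-\!\circ)$ built from the intersection-pairing cocycle, the ladder diagram reducing everything to injectivity of $\widetilde{E}\to E$, and the link invariant of Example~\ref{excalculate} to show $m\beta_{1,2}\neq 0$ when $\gcd(k_1,k_2)\nmid m$. The argument is correct; your explicit appeal to the short five lemma and to Lemma~\ref{commutator} for matching the cocycle only makes precise what the paper leaves implicit via Theorem~\ref{centralextension}.
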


\section{Linking numbers for general negative colour schemes}
\label{sec:colors}

Now we consider the case of more than two colours, dealing with negative colour schemes $(\Gamma,\kk)$ based on a general graph $\Gamma$ without self-loops. We shall
still restrict our attention to the case where $\kk$ is very composite, as in the last section and most of the previous one.
Our main aim is to compute the centre $E_\kk( \Gamma)$ of the group $\DivBraid_\kk(\Sigma, \Gamma)$ presented as central extension in Theorem~\ref{centralextension}.

Let $D_\kk(\Gamma)$ be the free Abelian group generated by the symbols $b_{\lambda,\mu}$ for all pairs of colours $(\lambda,\mu)$ with  $1\leq \lambda<\mu \leq r$ connected by an edge in $\Gamma$. According to Proposition \ref{Erelations},  there is a surjective map $\lp :D_\kk(\Gamma) \to E_\kk(\Gamma)$ given by $\lp (b_{\lambda,\mu})=\beta_{\lambda,\mu}$. The kernel of this map contains the image of the map 
$P_{(\Gamma,\kk)}:\ZZ^{\oplus r}\to D_\kk(\Gamma)$ given on generators $e_	\lambda$ by the formula $P_{(\Gamma,\kk)}(e_\lambda)=\sum_{\mu\not=\lambda} k_\mu b_{\lambda,\mu}$. 

The problem is that there could well be further relations. The main purpose of  this section is to prove that there are none. To do this, we have to show that certain elements of $\DivBraid_\kk(\Sigma,\Gamma)$ are nontrivial. Our basic strategy is to follow the method that we successfully used in the case of two colours. We shall show how to construct a suitable generalisation of the linking number for cycles, depending on each graph $\Gamma$, and then use it to define invariants of elements of  $\DivBraid_\kk(\Sigma, \Gamma)$. 

\subsection{The $\Gamma$-linking number}

Let $K$ be an Abelian group.  
We shall define linking numbers for families of cycles with values in $K$ for a given negative colour scheme $(\Gamma,\kk)$ in $r$ colours.

Let $M$ be an oriented  manifold. We fix a chain  $\basic \in C_*(M;\ZZ)$ and call it the {\em basic chain} in $M$. The main example we will be interested is when $M = S^1\times \Sigma$ for $\Sigma$ a connected surface, and  $\basic$ is   $S^1 \times \{ z \}$ for any chosen point $z \in \Sigma$. The linking number that we shall introduce below will depend on the choice of $\basic$. 

We say that two chains $\xi,\eta\in C_*(M;\ZZ)$ are disjoint if there exist  disjoint open sets $U,V\subset  M$, such that $\xi$ is in the image of the chain map $C_*(U;\ZZ)\to C_*(M;\ZZ)$ induced by the inclusion, whereas $\eta$ is in the image of $C_*(V;\ZZ)\to C_*(M;\ZZ)$. 

\begin{defn} Fix a graph $\Gamma$ without self-loops and a basic chain $\basic$ in $M$.

(i)
A  {\em $\Gamma$-adapted pair} $(a,b)$ consists of two collections of chains  $a= \{a_\lambda \}_{1\leq \lambda \leq r}$ and $b= \{b_\mu \}_{1\leq \mu \leq r}$ in $C_*(M;\ZZ)$ indexed by the vertices $\lambda$ of $\Gamma$ such that whenever there is an edge in $\Gamma$ between the vertices $\lambda$ and $\mu$, the chains $a_\lambda$ and $b_\mu$ are disjoint.

(ii) Let two negative colour schemes $(\Gamma,\kk)$ and $(\Gamma,\kk')$ be given.
A {\em $(\Gamma,\kk,\kk')$-allowable pair} consists of two collections of chains  $a= \{a_\lambda \}_{1\leq \lambda \leq r}$ and $b= \{b_\mu \}_{1\leq \mu \leq r}$  satisfying the following conditions:
\begin{itemize}
\item Each $a_\lambda$ and $b_\mu$ is a cycle.
\item The pair $(a,b)$ is $\Gamma$-adapted.
\item For every colour, we have an equality of homology classes 
$[a_\lambda] = k_\lambda [\basic]$ and $[b_\mu] = k'_\mu[\basic]$, respectively.
\end{itemize}
The collection $a$ is a called {\em  $(\Gamma,\kk)$-allowable} if $(a,a)$ is a $(\Gamma,\kk,\kk)$-allowable pair.
\end{defn}

A continuous one-parameter family of allowable pairs of chains $\{ (a(t),b(t))\,|\,t \in [0,1] \}$ is called a homotopy
between the allowed pairs of chains $(a(0),b(0))$ and $(a(1),b(1))$. 

We want to find homotopy invariants of  linking type for $(\Gamma,\kk,\kk')$-allowable pairs.  
The first step is to choose, for each colour $\lambda$,
a chain $A_{\lambda}\in C_*(M;\ZZ)$ such that
\begin{equation}\label{chainA}
\partial A_{\lambda}=a_\lambda-k_\lambda\basic.
\end{equation}
Next, we try to find  linear combinations of 
the chains $A_{\lambda}\otimes b_\mu$
with the property that their boundary is contained in the 
sub-chain complex $C_*(M\times M \setminus M_\Delta;K)$, where $M_\Delta$ is the diagonal.
For this purpose, we consider linear combinations
\begin{equation}\label{SY}
S_{\class Y}({a,b})=\sum_{\lambda,\mu}A_{\lambda}\otimes b_\mu \otimes  Y_{\lambda,\mu}  \in
C_*(M;\ZZ)\otimes C_*(M;\ZZ)\otimes K,
\end{equation}
where $\class Y := \{Y_{\lambda,\mu}\}_{\lambda,\mu=1}^r$ is a certain family of elements in $K$ (which we will refer to as a {\em $K$-collection} for short), which
we will specify more precisely later.
The boundary of this chain is
\begin{eqnarray*}
\partial  S_{\class Y}(a,b)&=&\sum_{i,j}(a_\lambda - k_\lambda\basic) \otimes b_\mu
\otimes Y_{\lambda,\mu} \\
&=&\sum_{\lambda,\mu} a_\lambda \otimes b_\mu \otimes Y_{\lambda,\mu}
-\sum_{\lambda,\mu} \basic \otimes b_\mu \otimes k_\lambda Y_{\lambda,\mu} .
\end{eqnarray*}

Now we shall assume that  the $K$-collection $\class Y$ satisfies the following 
\emph{$(\Gamma,\kk,\kk')$-allowability conditions}:
 
  \begin{itemize}
\item[(A1)] $\qquad$ $\displaystyle \sum_{\lambda=1}^r k_\lambda  Y_{\lambda,\mu} = 0\;$ and $\; \displaystyle \sum_{\mu=1}^r k_\mu' Y_{\lambda,\mu}=0 \; $ in  $K$.\\[-3 pt]
\item[(A2)] $\qquad$ If $(\lambda,\mu)$ is not an  edge of $\Gamma$, then also $Y_{\lambda,\mu}=0 \in K$.
  \end{itemize}

If $\class Y$ is the $K$-collection $\{Y_{\lambda,\mu}\}_{\lambda,\mu=1}^r$, we define 
its transpose $\class Y^T$ to be the $K$-collection $\{Y_{\mu,\lambda}\}_{\lambda,\mu=1}^r$, where the colour indices $\lambda$ and $\mu$ have been interchanged.  Clearly,  $\class Y$ is  $(\Gamma,\kk,\kk')$-allowable if and only if  $\class Y^T$ is  $(\Gamma,\kk',\kk)$-allowable. We say that a $K$-collection is $(\Gamma,\kk)$-allowable in the case
$\kk'=\kk$.

Note that if $\class Y$ is  $(\Gamma,\kk,\kk')$-allowable, then by (A1)
\[
\partial  S_{\class Y}(a,b)
=\sum_{\lambda,\mu} a_\lambda \otimes b_\mu \otimes Y_{\lambda,\mu}.
\]

Recall the Eilenberg--Zilber map  from~\cite{EilZil}. This map is a chain homotopy equivalence
\[
{\rm EZ}_{A,B}:C_*(A)\otimes C_*(B)\to C_*(A\times B)
\] 
which is functorial in $A$ and $B$. 
In particular, if $\xi,\xi'$ are two disjoint chains in $C_*(M;\ZZ)$, it 
follows from said functoriality that 
$${\rm EZ}_{M,M}(\xi\otimes \xi')\in C_*(M\times M\setminus M_\Delta;\ZZ).$$

Assume that $(a,b)$ is a $(\Gamma,\kk,\kk')$-allowable pair, and that $\partial A_{i} = a_i -k_i\basic$ as above. 
Fix a  family $\class Y= \{Y_{\lambda,\mu}\}_{\lambda,\mu=1}^r$ satisfying the $(\Gamma,\kk,\kk')$-allowability conditions (A1) and (A2).

\begin{lemma} \label{IndependentA}
 With the hypotheses stated, the homology class of $S_{\class Y}({a,b})$ in (\ref{SY}) does not depend on the choices of
the chains $A_{\lambda}$. 
\begin{proof}
The  difference between two choices of the chains $A_{\lambda}$ is a cycle
$c_{\lambda}\in C_*(M;\ZZ) $. The difference between the two corresponding
definitions of $S_{\class Y}({a,b})$ is the homology class
\begin{eqnarray*}
[\sum_{\lambda,\mu} c_{\lambda}\otimes b_\mu \otimes Y_{\lambda,\mu}]&=&
\sum_{\lambda,\mu} [c_{\lambda}] \otimes [b_\mu] \otimes Y_{\lambda,\mu}\\
&=&\sum_{\lambda,\mu}[c_{\lambda}] \otimes [\basic]  \otimes k_\mu Y_{\lambda,\mu},
\end{eqnarray*}
which vanishes by (A2), and this establishes the claim.
\end{proof}
\end{lemma}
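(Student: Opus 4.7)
The plan is to follow the same logic sketched in the paper's proof, while being explicit about the ambient chain complex in which homology is being taken. The natural target of $S_{\class Y}(a,b)$ (after Eilenberg--Zilber) is the relative chain complex $C_*(M\times M,\, M\times M\setminus M_\Delta;\, K)$: conditions (A1) and (A2), together with $\Gamma$-adaptedness, are exactly what is needed to force the boundary of $S_{\class Y}(a,b)$ into $C_*(M\times M\setminus M_\Delta;\, K)$, so that $S_{\class Y}(a,b)$ represents a well-defined relative homology class.

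Fix a second choice of chains $A'_{\lambda}$ with $\partial A'_{\lambda}=a_\lambda-k_\lambda\basic$, and set $c_\lambda := A_\lambda - A'_{\lambda}$, which is a cycle in $C_*(M;\ZZ)$. First I would compute the difference
\[
S_{\class Y}(a,b) - S'_{\class Y}(a,b)\;=\;\sum_{\lambda,\mu} c_\lambda\otimes b_\mu\otimes Y_{\lambda,\mu}.
\]
Since every $c_\lambda$ and every $b_\mu$ is a cycle, this is itself a cycle in $C_*(M;\ZZ)\otimes C_*(M;\ZZ)\otimes K$, and I will show that its homology class already vanishes in the \emph{absolute} complex $C_*(M\times M;\, K)$ (which is of course stronger than vanishing relatively).

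For this I would pass to homology using the Künneth formula and the fact that $(a,b)$ is $(\Gamma,\kk,\kk')$-allowable, which gives $[b_\mu]=k'_\mu[\basic]$ in $H_*(M;\ZZ)$. Then
\[
\Bigl[\sum_{\lambda,\mu} c_\lambda\otimes b_\mu\otimes Y_{\lambda,\mu}\Bigr]
\;=\;\sum_{\lambda,\mu}[c_\lambda]\otimes[\basic]\otimes\bigl(k'_\mu Y_{\lambda,\mu}\bigr)
\;=\;\sum_{\lambda}\,[c_\lambda]\otimes[\basic]\otimes\Bigl(\sum_{\mu}k'_\mu Y_{\lambda,\mu}\Bigr),
\]
and the inner sum is zero by the second half of the allowability condition (A1). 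Pushing forward along the Eilenberg--Zilber map, which is a chain homotopy equivalence functorial in the pair $(M\times M,\, M\times M\setminus M_\Delta)$, the relative homology class of $S_{\class Y}(a,b)$ is therefore independent of the choice of the $A_\lambda$.

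The only place to be careful is the bookkeeping of the two weight vectors: condition (A1) is stated in terms of both $\kk$ and $\kk'$, and one must match $[b_\mu]=k'_\mu[\basic]$ (not $k_\mu[\basic]$) with the second sum in (A1); the symmetric role of $\kk$ would appear instead if one varied the $B_\mu$'s and fixed the $A_\lambda$'s. This is really the only subtle point — apart from that, the argument is a direct Künneth computation, which is why I expect no significant obstacle.
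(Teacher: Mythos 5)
Your proof is correct and follows essentially the same route as the paper: compute the difference $\sum_{\lambda,\mu} c_\lambda\otimes b_\mu\otimes Y_{\lambda,\mu}$, replace $[b_\mu]$ by a multiple of $[\basic]$ in homology, and kill the resulting sum using the allowability condition. In fact your bookkeeping is more careful than the paper's, which writes $k_\mu$ where $k'_\mu$ is meant and cites (A2) where the relevant condition is the second half of (A1), exactly as you point out.
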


Let now $$j_*:C_*(M\times M;\ZZ) \to C_*(M \times M,M\times M\setminus M_\Delta;\ZZ)$$ be the canonical  quotient map.
We consider the chain
\[
j_*{\rm EZ}_{M,M} (S_{\class Y}(a,b) )\,  \in\,  C_*(M \times M,M\times M\setminus M_\Delta;\ZZ).
\]
The boundary of this chain can be written as
\[
j_*{\rm EZ}_{M,M} (\partial S_{\class Y}(a,b)) =
\sum_{\lambda,\mu} j_*{\rm EZ}_{M,M}(a_\lambda\otimes b_\mu)\otimes Y_{\lambda,\mu}
\]

Each nontrivial term in the sum above is contained in $C_*(M\times M\setminus M_\Delta;K)$.
If there is an edge in $\Gamma$ connecting the colours $\lambda$ and $\mu$, then 
${\rm EZ}_{M,M}(a_\lambda \otimes b_\mu)\in C_*(M\times M\setminus M_\Delta;K)$, because $(a,b)$ is a
$(\Gamma,\kk,\kk')$-adapted pair and we are assuming condition (A2) above.
It follows that
\[
j_*{\rm EZ}_{M,M}( S_{\class Y}(a,b)) \; \in \; C_*(M \times M,M\times M\setminus M_\Delta;\ZZ)
\]
is a cycle. 

The orientation of $M$ specifies a dual generator $u\in H^n(M\times M,M\times M\setminus M_\Delta;\ZZ)\cong \ZZ$, 
and we define  the linking invariant $\psi_{\class Y}(a,b)\in K$ by imposing that the formula
\begin{equation} \label{Gammalink}
u(\psi_{\class Y}(a,b)):=u (j_*{\rm EZ}_{M,M} (S_{\class Y}(a,b))) \;\in K
\end{equation}
shall hold. Note that if the condition (\ref{Gammalink}) is satisfied for the orientation $u$, it is also satisfied for the opposite orientation $-u$, and thus $\psi_{\class Y}(a,b)$ is defined independently of the orientation.

\begin{lemma}
\label{le:psi}
The  operation $\psi_\class Y$  defined from the $(\Gamma,\kk,\kk')$-allowable $K$-collection $\class Y$ satisfies the following properties:
\begin{itemize}
\item[(i)] 
\label{Symmetry}
 $\psi_\class Y(a,b)$= $(-1)^{\dim M+\deg \basic}\psi_{\class Y^T}(b,a)$.

\item[(ii)]
\label{Linearity} 
$\psi_\class Y$ is linear in the second argument, in the following sense:  if $(a,b)$, $(a,b')$ and $(a,b'+b)$ all are all $(\Gamma,\kk,\kk')$-allowable pairs, then
$$\psi_\class Y(a,b+b')=\psi_\class Y(a,b)+\psi_\class Y(a,b').$$ Similarly, $\psi_\class Y$ is linear in the first argument.

\item [(iii)]
\label{Homotopy}
If the $(\Gamma,\kk,\kk')$-allowable pair $(a,b)$ is homotopic to the $(\Gamma,\kk,\kk')$-allowable pair $(a',b')$ through
$(\Gamma,\kk,\kk')$-allowable pairs, then 
$\psi_\class Y(a,b)=\psi_\class Y(a',b')$. 
\end{itemize}
\begin{proof}
Assume that $(a,b)$ is a $(\Gamma,\kk,\kk')$-allowable pair.  Put
$a_\lambda - k_\lambda \basic =\partial A_{\lambda}$ and $b_\mu - k'_\mu \basic = \partial B_\mu$. Then

\begin{align*}
  \partial (\sum_{\lambda,\mu}A_{\lambda}\otimes B_{\mu}\otimes Y_{\lambda,\mu})
&=
\sum_{\lambda,\mu}\partial A_{\lambda}\otimes B_{\mu}\otimes Y_{\lambda,\mu}+
(-1)^{\deg \basic+1}\sum_{\lambda,\mu}A_{\lambda}\otimes \partial B_{\mu}\otimes Y_{\lambda,\mu}\\
&=
\sum_{\lambda,\mu}(a_\lambda-k_\lambda \basic)\otimes B_{\mu}\otimes Y_{\lambda,\mu} \\
& \qquad +
(-1)^{\deg \basic+1}\sum_{\lambda,\mu}A_{\lambda}\otimes (b_\mu- k'_\mu \basic)\otimes Y_{\lambda,\mu}\\
\label{symmetry}&
=
\sum_{\lambda,\mu} a_\lambda\otimes B_{\mu}\otimes Y_{\lambda,\mu}+
(-1)^{\deg \basic+1}\sum_{\lambda,\mu}A_{i}\otimes b_j\otimes Y_{\lambda,\mu}.
\end{align*}

It follows that the homology class $$[j_*{\rm EZ}_{M,M}(\sum_{\lambda,\mu}A_{\lambda}\otimes b_\mu \otimes Y_{\lambda,\mu})]\; \in\; C_*(M\times M,M\times M\setminus M_\Delta;\ZZ)$$ 
equals the homology class 
$$(-1)^{\deg \basic}[j_*{\rm EZ}_{M,M}(\sum_{\lambda,\mu} a_\lambda \otimes B_{\mu}\otimes Y_{\lambda,\mu})].$$

 Let $\tau:M\times M \to M\times M$ be the map 
swapping the two factors. Then
\begin{align*}
u(j_*{\rm EZ}_{M,M}(\sum_{\mu,\lambda} B_\mu \otimes a_{\lambda}\otimes Y_{\mu,\lambda}))&=\tau^*(u)(j_*{\rm EZ}_{M,M}(\sum_{\lambda,\mu} a_\lambda \otimes B_{\mu}\otimes Y_{\lambda,\mu}))\\
&=(-1)^{\dim M}u(j_*{\rm EZ}_{M,M}(\sum_{\lambda,\mu} a_\lambda \otimes B_{\mu}\otimes Y_{\lambda,\mu}))\\
&=(-1)^{\dim M + \deg \basic}u(\sum_{\lambda,\mu} A_\lambda\otimes b_{\mu}\otimes Y_{\lambda,\mu})
\end{align*}
and claim (i) follows.

That $\psi_\class Y$ is linear in the second argument is immediate from the definition.
Linearity in the first variable follows from linearity in the second variable and assertion (i).
This finishes the proof of assertion (ii).

Finally, a homotopy between allowable pairs can be approximated by a composition of homotopies, each of which fixes either $a$ or $b$, so to prove (iii) it is enough to show that such homotopies preserve $\psi_\class Y(a,b)$. This follows from Lemma~\ref{IndependentA}, together with the usual argument for homotopy invariance of homology. 
\end{proof}
\end{lemma}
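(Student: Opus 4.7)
The plan is to handle the three assertions in turn, with the symmetry (i) being the most intricate of the three. For (i), the natural approach is to introduce an auxiliary primitive for both $a$ and $b$. Choose chains $A_\lambda$, $B_\mu$ in $C_*(M;\ZZ)$ satisfying $\partial A_\lambda = a_\lambda - k_\lambda \basic$ and $\partial B_\mu = b_\mu - k'_\mu \basic$, and consider
$$T_{\class Y}(a,b) := \sum_{\lambda,\mu} A_\lambda \otimes B_\mu \otimes Y_{\lambda,\mu} \;\in\; C_*(M;\ZZ)\otimes C_*(M;\ZZ)\otimes K.$$
When one applies the Leibniz rule to $\partial T_{\class Y}(a,b)$, the terms involving $\basic$ collapse by the two halves of the allowability condition (A1): the sum $\sum_\lambda k_\lambda Y_{\lambda,\mu}$ kills one family of degenerate terms, while $\sum_\mu k'_\mu Y_{\lambda,\mu}$ kills the other. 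What survives is a chain-level identity of the form $\partial T_{\class Y}(a,b) = \widetilde S_{\class Y}(a,b) + (-1)^{\deg \basic + 1} S_{\class Y}(a,b)$, where $\widetilde S_{\class Y}(a,b) := \sum_{\lambda,\mu} a_\lambda \otimes B_\mu \otimes Y_{\lambda,\mu}$. Pushing this through the Eilenberg--Zilber map and the quotient $j_*$, the two chains $j_*{\rm EZ}_{M,M}(S_{\class Y}(a,b))$ and $(-1)^{\deg \basic}\,j_*{\rm EZ}_{M,M}(\widetilde S_{\class Y}(a,b))$ represent the same class in $H_*(M\times M,M\times M\setminus M_\Delta;K)$. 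I would then let the swap $\tau:M\times M\to M\times M$ act on $\widetilde S$: since $\tau^* u = (-1)^{\dim M} u$ and $\tau$ relabels $\widetilde S_{\class Y}(a,b)$ into $S_{\class Y^T}(b,a)$, evaluating against $u$ produces the asserted sign $(-1)^{\dim M + \deg \basic}$.

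Assertion (ii) should be essentially automatic. Linearity of $\psi_{\class Y}$ in the second argument is visible directly from definition (\ref{SY}): replacing $b$ by $b+b'$ only affects the second factor in each summand, while the $A_\lambda$ (which depend only on $a$) are unchanged; both the Eilenberg--Zilber map and the evaluation against $u$ are $\ZZ$-linear, and the required disjointness is preserved because $(a,b)$, $(a,b')$ and $(a,b+b')$ are simultaneously allowable. Linearity in the first argument then follows formally by combining this with (i).

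For (iii), I plan to use Lemma~\ref{IndependentA} together with a decomposition of the homotopy. By a standard compactness argument, any one-parameter family $(a(t),b(t))$ of $(\Gamma,\kk,\kk')$-allowable pairs factors into a finite composition of sub-homotopies in each of which either $a$ or $b$ is held constant; by (i) it suffices to treat the case where $b$ is fixed. The trace of a homotopy $a(0)\rightsquigarrow a(1)$ provides chains $H_\lambda$ with $\partial H_\lambda = a_\lambda(1)-a_\lambda(0)$, so that $A_\lambda + H_\lambda$ is a valid chain bounding $a_\lambda(1)-k_\lambda \basic$. The difference of the two relevant chains reads $S_{\class Y}(a(1),b)-S_{\class Y}(a(0),b) = \sum_{\lambda,\mu} H_\lambda \otimes b_\mu \otimes Y_{\lambda,\mu}$, and by (A2) only pairs $(\lambda,\mu)$ labelling edges of $\Gamma$ contribute; for those, the allowability assumption keeps $H_\lambda$ disjoint from $b_\mu$ throughout the homotopy. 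Hence the difference already lies in $C_*(M\times M\setminus M_\Delta;K)$ and is annihilated by $j_*$, establishing invariance.

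The main obstacle I anticipate is the bookkeeping of signs in (i): correctly combining the Koszul-type sign produced by Leibniz on a tensor product of chain complexes with the sign $(-1)^{\dim M}$ arising from how $\tau$ acts on the Thom class $u$ dual to the diagonal, so as to land precisely on $(-1)^{\dim M + \deg \basic}$. The rest of the argument is a rather standard template for linking-type invariants on closed manifolds, and the steps go through as soon as the disjointness imposed by (A2) is fed into the Eilenberg--Zilber map.
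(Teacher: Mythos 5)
Your proposal is correct and follows essentially the same route as the paper's proof: the Leibniz computation on $\sum_{\lambda,\mu}A_\lambda\otimes B_\mu\otimes Y_{\lambda,\mu}$ with the allowability conditions killing the $\basic$-terms, the swap map $\tau$ with $\tau^*u=(-1)^{\dim M}u$ for the sign in (i), linearity read off from the definition for (ii), and the decomposition of a homotopy into pieces fixing one argument for (iii). Your treatment of (iii) merely spells out explicitly the ``usual homotopy-invariance argument'' (via the prism chain $H_\lambda$ and the disjointness forced by (A2)) that the paper leaves implicit.
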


\subsection{Application to  link invariants of divisor braids}

Let $(\Gamma,\kk)$ be a negative colour scheme on $r$ colours with $k_\lambda \ge 2$ for every colour $\lambda$; recall that we use the wording `very composite' to refer to this situation. We want to construct a presentation of the divisor braid group 
   $\DivBraid_\kk(\Sigma , \Gamma)$, where $\Sigma$ is the orientable Riemann surface. 
   
 Recall that Theorem~\ref{centralextension} established the existence of a central extension
\[
E_\kk(\Gamma)\to \DivBraid_\kk(\Sigma, \Gamma) \to H_1(\Sigma;\ZZ)^{\oplus r}
\]
where $E_\kk(\Gamma)$ is the subgroup generated by the homotopy classes $\beta_{\lambda,\mu}$ defined in (\ref{alphabeta}). 

An element $\alpha\in E_\kk (\Gamma)$ can be represented by $r$ sets of maps
$$\tilde  \alpha_{\lambda,j}: S^1\to M:=S^1\times \Sigma,\qquad j=1,\ldots, k_\lambda;$$ 
each such set defines a cycle $\tilde\alpha_\lambda $ in $M$,  representing the closed sub-braid of colour $\lambda$. 
Using the K\"u{}nneth theorem, we have a natural isomorphism $H_1(M;\ZZ)\cong H_1(S^1;\ZZ)\oplus H_1(\Sigma;\ZZ)$. 
The image of $[\tilde\alpha_\lambda]$ in the first factor is determined by the degree $k_\lambda$ of the divisor braid (or number of strands) in colour $\lambda$, that is, it equals $k_\lambda[S^1]$.
The image of 
$[\tilde\alpha_\lambda]$ in the second factor $H_1(\Sigma;\ZZ)$ is trivial, since each curve representing $\beta_{\lambda,\mu}$ yields the
trivial homology class in $H_1(\Sigma;\ZZ)$. 
It follows that the homology class of $[\tilde\alpha_\lambda]$ equals the homology class of $k_\lambda[S^1]\in H_1(S^1\times \Sigma;\ZZ)$, 
so that we can consider link invariants with respect to a basic chain 
$\basic \in C_1(M;\ZZ)$ representing $[S^1]$. 

\begin{defn} 
Let $\class Y$ be a $(\Gamma,\kk)$-allowable collection in an Abelian group $K$; then we define the {\em $\class Y$-linking} of the central divisor braid $\alpha \in E_\kk (\Gamma)$ to be
\[
\theta_{\class Y}(\alpha):=\psi_{\class Y}(\tilde \alpha,\tilde \alpha).
\] 
\end{defn}

The following result allows the calculation of $\class Y$-linkings with respect to a given $(\Gamma,\kk)$-allowed collection $\class Y$.

\begin{lemma}
\label{le:invariants}
 The quantity $\theta_\class Y(\alpha)$ is a well-defined element of $K$. The $\class Y$-linking yields a group  homomorphism
$$\theta_\class Y:E_\kk(\Gamma)\to K.$$ On the generators $\beta_{\lambda,\mu}$, it is given simply by the formula
\[
\theta_\class Y(\beta_{\lambda,\mu})=Y_{\lambda,\mu}.
\]
\begin{proof}
 If $\alpha$ and $\alpha'$ are homotopic braids, then the corresponding cycles $\tilde\alpha_\lambda$
and $\tilde\alpha'_\lambda$ are homotopic, through homotopies that
respect the underlying colour scheme $(\Gamma,\kk)$. It follows from Lemma~\ref{le:psi} that 
$\Psi_\class Y(\alpha)$ is homotopy invariant, 
meaning that it is well defined.

Consider braids  $\alpha',\alpha^{\prime\prime}$ representing elements in $E_\kk (\Gamma)$, and let 
$\alpha := \alpha'*\alpha^{\prime\prime}$ be their concatenation.
For each colour $\lambda$,   then $\tilde \alpha$ 
is the composition of two paths in $S^1\times \Sigma$, corresponding to
$\tilde \alpha'$ respectively $\tilde\alpha^{\prime\prime}$. 

It is convenient to introduce some normalisation in this situation. For instance, we can assume that the paths $\alpha'$ and
$\alpha''$ are constant paths close to $\{1\}\times \Sigma$, so that the composed path  
is constant close to $\{1\}\times \Sigma$ and $\{-1\}\times \Sigma$. There are  
2-chains $A'_\lambda,A''_\lambda$ such that $\partial A_\lambda' = \tilde \alpha_\lambda' - k_\lambda \basic$ and
$\partial A_\lambda'' = \tilde \alpha_\lambda'' - k_\lambda \basic$. We can further normalise these so that
$A_\lambda'$ and $A_\lambda''$ agree
close to $\{1\}\times \Sigma$. Then they patch up to a chain $A_\lambda$ in $S^1\times \Sigma$ 
whose boundary is $\tilde \alpha_\lambda - k_\lambda \basic$. With some abuse of notation, we can think of
$\tilde\alpha$ as the union of $\tilde\alpha_\lambda'$ and $\tilde \alpha_\lambda''$ over the common boundary, 
and similarly consider $A_\lambda$ as a union of $A'_\lambda$ and $A''_\lambda$ over their common boundary.
Figure~6 gives a schematic representation of this setup.

\begin{figure}[h]
\label{fig.flag}
\vspace{5mm}
\includegraphics[width=8cm,angle=0]{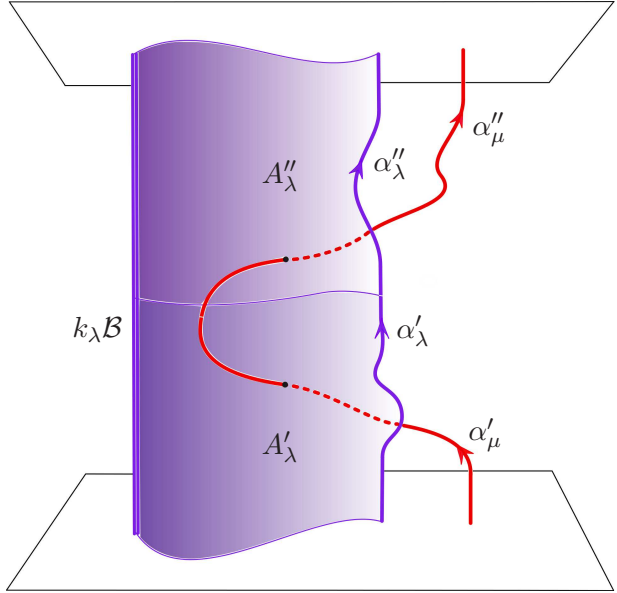} \\[-0pt] 
\vspace{-65mm}\hspace{47mm}$\alpha''_\mu$\\
\vspace{0mm}\hspace{21mm}$\alpha''_\lambda$\\
\vspace{-3mm}\hspace{-8mm}$A''_\lambda$\\
\vspace{16mm}\hspace{27mm}$\alpha'_\lambda$\\
\vspace{9mm}\hspace{46mm}$\alpha'_\mu$\\
\vspace{-3mm}\hspace{-8mm}$A'_\lambda$\\
\vspace{-20mm}\hspace{-56mm}$k_\lambda\basic$\\
\vspace{37mm}
\caption{The normalised setup in the proof of Lemma~\ref{le:invariants}.}
\end{figure}

We now need to compute the intersection number of $A_\lambda$ with $\tilde \alpha_\mu$. 
There will be no intersections between   $A^\prime_{\lambda}$ and $\alpha_\mu^{\prime\prime}$, or between   $A^{\prime\prime}_{\lambda}$ and $\alpha_{\mu}^{\prime}$. 
It follows that
\begin{align*}
\theta_\class Y(\alpha'*\alpha^{\prime\prime})&=
\psi_\class Y(\tilde \alpha,\tilde\alpha)\\
&=\psi_\class Y(\alpha',\alpha^{\prime})+\psi_\class Y(\alpha^{\prime\prime},\alpha^{\prime\prime})\\
&=\theta_\class Y(\alpha')+\theta_\class Y(\alpha^{\prime\prime}).
\end{align*}
This shows that $\theta_{\class Y}$ is a homomorphism.

To compute $\theta_{\class Y}(\beta_{\lambda,\mu})$, we first note that, for $x_\mu \in \Sigma$, the difference
$\tilde \beta_{\lambda,\mu}-x_\mu$ consists of the difference between a path of colour $\mu$ winding around
a constant path of colour $\lambda$ and a constant path of colour $\mu$. 
We chose $A_\lambda$ to be the chain described in 
Example~\ref{simpleloop}. Using the result in this example, we see that
the intersection between $A_\lambda$ and the constant path $x_\mu$
is the congruence class of 1. 
So we obtain
$$\theta_{\class Y}(\beta_{\lambda,\mu})=\psi_{\class Y}(\tilde \beta_{\lambda,\mu},\tilde \beta_{\lambda,\mu})=Y_{\lambda,\mu}$$
as claimed.
\end{proof}
\end{lemma}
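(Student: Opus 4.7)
The plan is to break the lemma into its three assertions and handle each with tools already in place. For well-definedness of $\theta_\class Y(\alpha)=\psi_\class Y(\tilde\alpha,\tilde\alpha)$, I would observe that a different representative of the same central braid class $\alpha\in E_\kk(\Gamma)$ yields a homotopy of $\tilde\alpha$ through closed braids in $S^1\times\Sigma$ respecting the negative colour scheme. Such a homotopy is, by construction, a homotopy of $(\Gamma,\kk,\kk)$-allowable pairs when applied diagonally, so Lemma~\ref{le:psi}(iii) gives invariance; and Lemma~\ref{IndependentA} already absorbs the choice of bounding chain $A_\lambda$.

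For the homomorphism property, I would use the concatenation formula. Given $\alpha',\alpha''\in E_\kk(\Gamma)$, normalise so that both braids are constant near $t=1/2$, where their concatenation $\alpha=\alpha'*\alpha''$ takes place. Choose bounding 2-chains $A'_\lambda, A''_\lambda$ with $\partial A'_\lambda=\tilde\alpha'_\lambda-k_\lambda \basic$ and analogously for $A''_\lambda$, arranged so that their restrictions to a neighbourhood of $\{1/2\}\times\Sigma$ coincide; they then patch to a chain $A_\lambda$ bounding $\tilde\alpha_\lambda-k_\lambda\basic$. Because $A'_\lambda$ sits in the first temporal half and $\tilde\alpha''_\mu$ in the second (and vice versa), the cross intersections vanish, so the intersection pairing splits as $A_\lambda\cdot\tilde\alpha_\mu = A'_\lambda\cdot\tilde\alpha'_\mu + A''_\lambda\cdot\tilde\alpha''_\mu$. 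Weighting by $Y_{\lambda,\mu}$ and summing yields $\theta_\class Y(\alpha'*\alpha'')=\theta_\class Y(\alpha')+\theta_\class Y(\alpha'')$.

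For the evaluation on generators, the braid $\beta_{\lambda,\mu}$ differs from the constant braid by a single $\lambda$-coloured strand tracing a small circle around a fixed $\mu$-coloured point. Hence the corresponding $A_{\lambda}$ can be chosen as the disc swept out by this winding (as in Lemma~\ref{simpleloop}), and all other $A_{\lambda'}$ can be taken as cylinders on constant strands. The only nonzero intersection in the sum defining $\psi_\class Y$ is $A_\lambda\cdot\tilde\alpha_\mu=1$ coming from Lemma~\ref{simpleloop}; so $\theta_\class Y(\beta_{\lambda,\mu})=1\cdot Y_{\lambda,\mu}=Y_{\lambda,\mu}$.

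The main delicate point will be the normalisation step underlying the additivity argument: one must ensure simultaneously that (a) the bounding chains can be chosen to match on the interface slice $\{1/2\}\times\Sigma$, so they glue to a genuine chain; (b) the colour-pure disjointness required by $\Gamma$-adaptedness is maintained throughout the homotopy used to place each half in a collar of that slice. Both are straightforward but require some care in bookkeeping; everything else reduces to invoking Lemma~\ref{le:psi} and Lemma~\ref{simpleloop}.
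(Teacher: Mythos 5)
Your proposal is correct and follows essentially the same route as the paper's proof: well-definedness via Lemma~\ref{IndependentA} and the homotopy invariance of $\psi_{\class Y}$ from Lemma~\ref{le:psi}, additivity via normalising the concatenation so the bounding chains patch along the interface slice and the cross intersections vanish, and evaluation on $\beta_{\lambda,\mu}$ via the local computation of Lemma~\ref{simpleloop}. The only difference is cosmetic (you place the interface at $t=1/2$ in the interval parametrisation rather than at $\{\pm 1\}\subset S^1$), and your closing caveats about matching the chains on the interface and preserving $\Gamma$-adaptedness are exactly the bookkeeping the paper also leaves implicit.
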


For a given  $(\Gamma, \kk)$, let  $F_\kk(\Gamma)$ denote the free Abelian group generated by 
the symbols $b_{\lambda,\mu}$ for $ 1\leq \lambda,\mu \leq r$. Consider the subgroup $R_\kk(\Gamma)\subset F_\kk(\Gamma)$  generated by the following elements:
\begin{enumerate}
\item  $b_{\lambda,\mu}-b_{\mu,\lambda}$, $1\le \lambda < \mu \le r$;
\item $b_{\lambda,\mu}$ if there is not an edge between $\lambda$ and $\mu$ in $\Gamma$;
\item $P_\mu :=\sum_{\lambda \not=\mu} k_\lambda b_{\lambda,\mu}$ for $1\leq \mu \leq r$.
\end{enumerate}

We define a map $\lp': F_\kk(\Gamma)\to E_\kk(\Gamma) \subset {\sf DB}_\kk(\Sigma,\Gamma)$ 
by setting $\lp'(b_{\lambda,\mu})=\beta_{\mu,\nu}$ on generators.
From the definition of $E_\kk(\Gamma)$ given after Lemma~\ref{central}, it follows immediately that
this is a surjective homomorphism.

From Proposition~\ref{Erelations}, we know that
$\lp'$ factors over a  surjective quotient map 
\begin{equation} \label{lp}
\lp:F_\kk(\Gamma)/R_\kk(\Gamma)\to E_\kk(\Gamma).
\end{equation}

\begin{thm}
\label{thm:RationalIso}
  The map $\lp$ is an isomorphism.
  \begin{proof}

Since we already know that the map (\ref{lp}) is surjective, we only have
to check that it is injective. For this, it suffices to find
a right-inverse. 

Let us take $Y_{\lambda,\mu} = [b_{\lambda,\mu}]\in  F_\kk(\Gamma)/R_\kk(\Gamma)$. Then we see that
$\sum k_\lambda Y_{\lambda,\mu}=\sum k_\mu Y_{\lambda,\mu}=0$; moreover, if there is no edge in $\Gamma$
between $\lambda$ and $\mu$, then also $Y_{\lambda,\mu}=0$. Thus $\mathbf{Y}= \{ Y_{\lambda,\mu}\}_{\lambda,\mu}$ is a $(\Gamma,\kk)$-allowable $F_\kk(\Gamma)/R_\kk(\Gamma)$-collection, and it determines a $\class Y$-linking  $\theta_{\class Y}: 
E_\kk (\Gamma)\to  F_\kk (\Gamma)/R_\kk(\Gamma)$.
   
It follows from Lemma~\ref{le:invariants} that 
$\lp \circ \theta_{\class Y}$ is the identity.
\end{proof}
\end{thm}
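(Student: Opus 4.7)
The plan is to construct an explicit two-sided inverse for the surjective homomorphism $\lp$ using the machinery of linking invariants developed in Lemma~\ref{le:invariants}. The universal choice is to take the coefficient group $K$ to be the target $F_\kk(\Gamma)/R_\kk(\Gamma)$ itself and tautologically set $Y_{\lambda,\mu} := [b_{\lambda,\mu}]$ for the $K$-collection $\class Y$.

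The first step is to verify that this $\class Y$ is a $(\Gamma,\kk)$-allowable $K$-collection. Condition (A2) is immediate from relation~(2) in the definition of $R_\kk(\Gamma)$: if no edge of $\Gamma$ connects $\lambda$ and $\mu$ then $b_{\lambda,\mu}\in R_\kk(\Gamma)$, so $Y_{\lambda,\mu}=0$. For condition (A1), I will use that the graph $\Gamma$ has no self-loops, so $b_{\mu,\mu}\in R_\kk(\Gamma)$ and $Y_{\mu,\mu}=0$; then
\[
\sum_{\lambda=1}^r k_\lambda Y_{\lambda,\mu}
= \sum_{\lambda\neq \mu} k_\lambda [b_{\lambda,\mu}]
= [P_\mu] = 0
\]
since $P_\mu \in R_\kk(\Gamma)$ by relation~(3). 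The analogous statement for the second sum follows by applying the symmetry relation~(1) before repeating the same computation (or simply by the symmetric definition of $R_\kk(\Gamma)$).

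The second step is to feed this allowable collection $\class Y$ into Lemma~\ref{le:invariants}, which produces a group homomorphism $\theta_\class Y: E_\kk(\Gamma)\to F_\kk(\Gamma)/R_\kk(\Gamma)$ satisfying $\theta_\class Y(\beta_{\lambda,\mu})=Y_{\lambda,\mu}=[b_{\lambda,\mu}]$ on the generators. Comparing with the definition $\lp([b_{\lambda,\mu}])=\beta_{\lambda,\mu}$, we see at once that both compositions $\lp \circ \theta_\class Y$ and $\theta_\class Y\circ \lp$ act as the identity on a set of generators (the $\beta_{\lambda,\mu}$ on one side, the $[b_{\lambda,\mu}]$ on the other); since these are generators of Abelian groups and both compositions are group homomorphisms, each is the identity everywhere. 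This furnishes $\theta_\class Y$ as a two-sided inverse of $\lp$, which is therefore an isomorphism.

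There is no real obstacle left at this stage, since all the heavy lifting is absorbed by the construction of the linking invariant $\psi_\class Y$ and its verification in Lemma~\ref{le:invariants}. The only subtlety worth double-checking is the evaluation $\theta_\class Y(\beta_{\lambda,\mu})=Y_{\lambda,\mu}$ given by that lemma, which relies on the local linking-number computation of Lemma~\ref{simpleloop} inside a chart, combined with the additivity of $\theta_\class Y$ under concatenation of braids; the very composite hypothesis $k_\lambda\geq 2$ is used implicitly here to guarantee enough basepoints of each colour to realise the generators $\beta_{\lambda,\mu}$ as monic braids, consistently with the conventions of Section~\ref{sec:pictures}.
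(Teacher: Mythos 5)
Your proposal is correct and follows essentially the same route as the paper: the tautological choice $K=F_\kk(\Gamma)/R_\kk(\Gamma)$, $Y_{\lambda,\mu}=[b_{\lambda,\mu}]$, the verification of the allowability conditions from the defining relations of $R_\kk(\Gamma)$, and the evaluation $\theta_{\class Y}(\beta_{\lambda,\mu})=Y_{\lambda,\mu}$ from Lemma~\ref{le:invariants}. Your added care in checking that $\theta_{\class Y}\circ\lp$ (and not just $\lp\circ\theta_{\class Y}$) is the identity on generators is a welcome clarification of the paper's slightly loose ``right-inverse'' phrasing, but it does not change the argument.
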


We now sum up what we have proved so far.
Let $(\Gamma,\kk)$ be a very composite negative colour scheme, that is, $k_\lambda\geq 2$ for all colours $\lambda$. Let $\Sigma$ be a compact, oriented surface of genus $g$. 
We are going to describe the group $\DivBraid_\kk(\Sigma, \Gamma)$  using a simple set of generators and relations. There are two sets of generators.
For a pair of colours $1\leq \lambda,\mu\leq r$ we define a generator $b_{\lambda,\mu}$. Choose a set of generators $a_\ell$ ($1\leq \ell \leq 2g$) for
the free Abelian group $H_1(\Sigma;\ZZ)$. For each $\ell$  and each $\lambda$, we introduce a generator $a_{\lambda,\ell}$ (a copy of the 1-cycle $a_\ell$ in colour $\lambda$).
\begin{thm} \label{present}
  The divisor braid group  $\DivBraid_\kk(\Sigma, \Gamma)$  is isomorphic to the group generated by the symbols $b_{\mu,\lambda}$, $a_{\lambda,\ell}$ with  indices as described above, subject
to the following relations:
\begin{enumerate}
\item[(i)] $b_{\lambda,\mu}b_{\lambda',\mu'}=b_{\lambda',\mu'}b_{\lambda,\mu}$
\item[(ii)] $b_{\lambda,\mu}=e$ (the neutral element)  if there is no edge in $\Gamma$ connecting $\lambda$ and $\mu$
\item[(iii)] $b_{\lambda,\mu}=b_{\mu,\lambda}$
\item[(iv)] 
$
\prod_{\mu \not=\lambda} b_{\lambda,\mu}^{k_\mu}=e
$
\item[(v)] $b_{\lambda,_\mu}a_{\nu, \ell}=a_{\nu,\ell}b_{\lambda_,\mu}$
\item[(vi)] $a_{\lambda,\ell}a_{\mu,\ell'}a_{\lambda,\ell}^{-1}a_{\mu,\ell'}^{-1}
=b_{\lambda,\mu}^{\sharp(a_\ell,  a_{\ell'})}$. \label{lastrel} 
\end{enumerate} 
In the last relation, $\sharp (\cdot,\cdot)$ denotes the  intersection pairing in $H_1(\Sigma;\ZZ)$.
\begin{proof}

According to Theorem \ref{centralextension}, there is a central extension
\begin{equation}
 E_\kk(\Gamma) \to \DivBraid_\kk (\Sigma,\Gamma) \xrightarrow{h} H_1(\Sigma;\ZZ)^{\oplus r}.
\end{equation}
The group $H_1(\Sigma;\ZZ)^{\oplus r}$ is a free Abelian group with $2gr$ generators. Let $G$ denote the group presented in the statement of the theorem.
We define a homomorphism $\bar \lp:G\to \DivBraid_\kk (\Sigma,\Gamma)$ by setting
$$
\bar\lp(a_{\lambda, \ell})=\alpha_{\lambda,\ell} \quad \text{and} \quad
\bar\lp(b_{\lambda, \mu})=\beta_{\lambda,\mu}.
$$
We have to check that $\bar\lp $ respects the relations (i)--(vi). The first four relations are
respected according to Lemma \ref{Erelations}. 
According to Lemma \ref{central}, the fifth relation is satisfied.
By Lemma \ref{commutator}, the last relation is also satisfied.
We obtain a commutative diagram of not necessarily commutative groups with exact rows:
\[
\begin{CD}
@. F_\kk(\Gamma)/R_\kk(\Gamma)
@>f>> G @>>> G/\langle b_{\lambda,\mu}\rangle_{\lambda,\mu=1}^r @>>> 0\\
@. @V\lp VV @V{\bar \lp}VV @VVV @. \\
0 @>>> E_{(\kk)}(\Gamma)  
@>>> \DivBraid_{\kk}(\Sigma,\Gamma)
@>>> H_1(\Sigma;\ZZ)^{\oplus r} @>>> 0 
\end{CD}
\]
According to Theorem \ref{thm:RationalIso} the map $\lp$ is an isomorphism. From this and from the diagram it follows that the map $f$ is injective. It is easy to see that the right vertical map takes
\begin{align*}
  a_{\lambda,\ell} \mapsto (0,\dots,0,a_\ell,0,\dots,0), 
\end{align*}
with the $a_\ell$ inserted into the summand corresponding to the colour $\lambda$, and is thus an isomorphism. It now follows from the five-lemma that $\bar \lp$ is an isomorphism. 
\end{proof}
\end{thm}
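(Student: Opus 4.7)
My plan is first to construct a homomorphism from the abstract group $G$ defined by the given presentation to $\DivBraid_\kk(\Sigma,\Gamma)$, and then to recognise it as an isomorphism of central extensions via a diagram chase. Concretely, I would set $\bar\lp(b_{\lambda,\mu}) := \beta_{\lambda,\mu}$ and $\bar\lp(a_{\lambda,\ell}) := \alpha_{\lambda,\ell}$. Well-definedness amounts to checking the six families of relations: (i)--(iv) are covered by Proposition~\ref{Erelations} and Lemma~\ref{central}; (v) is again Lemma~\ref{central}; and (vi) is precisely the content of Lemma~\ref{commutator}. Surjectivity of $\bar\lp$ is immediate from Lemma~\ref{generating.set}.

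Next I would assemble a commutative ladder whose bottom row is the central extension of Theorem~\ref{centralextension} and whose top row is built intrinsically from the presentation of $G$:
\[
\begin{CD}
@. F_\kk(\Gamma)/R_\kk(\Gamma) @>f>> G @>>> G/B @>>> 0 \\
@. @V\lp VV @V{\bar\lp}VV @VVV @. \\
0 @>>> E_\kk(\Gamma) @>>> \DivBraid_\kk(\Sigma,\Gamma) @>h>> H_1(\Sigma;\ZZ)^{\oplus r} @>>> 0
\end{CD}
\]
Here $B \subseteq G$ denotes the subgroup generated by the $b_{\lambda,\mu}$, and $f$ sends the abstract generator $b_{\lambda,\mu}$ to itself (the relations cut out by $R_\kk(\Gamma)$ are imposed in $G$ by (ii), (iii), (iv)). Relations (i) and (v) make $B$ central, hence Abelian and normal, while relation (vi) forces all commutators $[a_{\lambda,\ell},a_{\mu,\ell'}]$ to lie in $B$. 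The quotient $G/B$ is therefore Abelian, generated by the images of the $a_{\lambda,\ell}$ with no further relations, so $G/B \cong H_1(\Sigma;\ZZ)^{\oplus r}$ and the right-hand vertical map is an isomorphism. The top row is right-exact by construction, with $\operatorname{image}(f) = \ker(G\to G/B)$.

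The climax is a routine diagram chase. Since $\lp$ is an isomorphism by Theorem~\ref{thm:RationalIso} and the right vertical arrow is an isomorphism by construction, any $g\in\ker\bar\lp$ must map to the identity in $G/B$, hence lies in $\operatorname{image}(f)$, say $g=f(x)$; then $\lp(x)=\bar\lp(f(x))=e$ forces $x=0$, so $g=e$. Combined with surjectivity, this yields the desired isomorphism $\bar\lp\colon G\xrightarrow{\cong}\DivBraid_\kk(\Sigma,\Gamma)$.

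The main obstacle is, of course, Theorem~\ref{thm:RationalIso} itself: one must rule out any hidden relations among the $\beta_{\lambda,\mu}$ beyond those defining $F_\kk(\Gamma)/R_\kk(\Gamma)$, and this is where the entire machinery of graph-dependent linking invariants $\theta_{\class Y}$ from Section~\ref{sec:colors} does its work. Once that input is available, the passage to the full presentation of $\DivBraid_\kk(\Sigma,\Gamma)$ is essentially formal: the only genuinely new verification is that $G/B$ is free Abelian on the expected $2gr$ generators, and this follows cleanly because (vi) encodes every $a$-commutator in terms of $b$'s that already live in $B$.
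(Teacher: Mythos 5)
Your proposal is correct and follows essentially the same route as the paper: the same map $\bar\lp$ checked against the same lemmas, the same commutative ladder over the central extension of Theorem~\ref{centralextension} with Theorem~\ref{thm:RationalIso} as the key input, and your explicit diagram chase is just an unpacking of the five-lemma step the paper invokes. No substantive difference.
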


\section{The centre of a very composite divisor braid group}
\label{sec:center}

According to Theorem~\ref{thm:RationalIso}, if each $k_\lambda\geq 2$ (i.e.\ the negative colour scheme $(\Gamma,\kk)$ is very composite),
the center $E_\kk(\Gamma)$ of a divisor braid group ${\sf DB}_\kk(\Sigma,\Gamma)$ is isomorphic to the Abelian group
\begin{equation} \label{groupD}
D_\kk(\Gamma):=F_\kk(\Gamma)/R_\kk(\Gamma),
\end{equation}
whose generators and relations were defined after Lemma~\ref{le:invariants}.
Note that this group is also defined if some $k_\lambda=1$. 
In this section, we study in more detail how the quotient (\ref{groupD}) depends on
$\Gamma$ and $\kk$. 

There are a few general statements. 
Consider maps of graphs $f:\Gamma' \to \Gamma$
with the property that if $\lambda,\mu\in {\rm Sk}^0(\Gamma')$ and there is no edge between
$f(\lambda)$ and $f(\mu)$ in $\Gamma$, then there is no edge between $\lambda$ and $\mu$ in $\Gamma'$.
In particular, this condition is satisfied for the inclusion of a subgraph $\Gamma' \hookrightarrow \Gamma$. Another example is provided by transformations of
graphs of the form
\begin{equation}\label{assgraph}
\Gamma = \neg \, {\rm Sk}^1((\partial \Delta)^\vee)
\end{equation}
(associated to a Delzant polytope $\Delta$) induced by the blow-up of
a fixed point of the torus action of $X=X_{{\sf Fan}_\Delta}$; we shall provide some concrete examples at the end of this section.
Recall that the assignment (\ref{assgraph}) is motivated by equation~(\ref{fundgroups}).

Given such a map $f$ and a negative colour scheme $(\Gamma,\kk)$, we can define an induced coloured degree
$\kk'= f^*(\kk)$ for the source graph $\Gamma'$ by setting $k'_\lambda = k_{f(\lambda)}$.
There is an induced homomorphism map $D_{\kk'}(\Gamma') \to D_\kk(\Gamma)$ given on generators by
$b_{\lambda,\mu}\mapsto b_{f(\lambda),f(\mu)}$. 

\begin{remark} \label{rk:connect}
Suppose that $\Gamma =\Gamma' \cup \Gamma''$ as a disjoint union.
Then $(\Gamma,\kk)$ determines $\kk'$ (respectively $\kk''$) by restriction 
to $\Gamma'$ (respectively $\Gamma''$), and it is easy to see that
the maps induced by the inclusions together define an isomorphism 
\begin{equation}
\label{eq:sum}
D_\kk(\Gamma) \cong D_{\kk'}(\Gamma') \oplus D_{\kk''}(\Gamma'').
\end{equation}
\end{remark}

From now on, and based on (\ref{eq:sum}), we shall restrict our attention to connected graphs $\Gamma$ unless explicitly stated.

\begin{example}
Here is a very concrete illustration of the presence of torsion in the divisor braid group. The configuration space of three labelled points on the two-sphere is homotopy equivalent to ${\rm SO}(3)$. 
To see this, use that three distinct points on $S^2$ are on a unique plane in $\RR^3$. In particular, the pure braid group ${\sf PB}_3(S^2)$ on three strands on $S^2$ is
isomorphic to $\ZZ_2$. It follows that if  $\Gamma$ is the complete graph with  $r=3$ vertices and the genus of $\Sigma$ is zero, then we have $\beta_{\lambda,\mu} = \beta$ for each pair of distinct colours, a relation
$2\beta=0\in D_{(1,1,1)}(\Gamma)$ and
${\sf DB}_{(1,1,1)}(\Sigma,\Gamma)\cong \ZZ_2$. The negative colour scheme in this example is not very composite, but $D_\kk(\Sigma)\cong E_\kk(\Sigma)$ still holds.
\end{example}

\subsection{The rank of the centre} \label{sec_rankcentre}

The free part of $D_\kk(\Gamma)$ is determined up to isomorphism by its rank. This equals the dimension over $\QQ$ of the rational
vector space $D_\kk(\Gamma)\otimes_\ZZ \QQ$.
This space can be recast as the $r^2$-dimensional $\QQ$-vector space $F_r:= F_\kk(\Gamma) \otimes_\ZZ \QQ$ (which in fact depends only on the number of colours $r$), generated by the symbols
$b_{\lambda,\mu}$ for $1 \leq \lambda ,\mu \leq r$, quotiented by the subspace $R_\kk(\Gamma) \otimes_\ZZ \QQ$  spanned by  the relations (1)--(3)
after Lemma~\ref{le:invariants}. It follows that
\begin{equation}\label{therank}
{\rm rk} \,  D_\kk (\Gamma) = r^2 - \dim_\QQ (R_\kk(\Gamma)\otimes_\ZZ \QQ).
\end{equation}

\begin{lemma}
\label{k-independence}
The rank of the Abelian group $D_\kk (\Gamma)$ does not depend on $\kk$.
\end{lemma}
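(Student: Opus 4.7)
The plan is to tensor with $\QQ$, identify $\operatorname{rank} D_\kk(\Gamma)$ with the corank of an explicit matrix over $\QQ$, and show that this matrix has $\kk$-independent rank by a rescaling. Since $F_\kk(\Gamma)$ is free of rank $r^2$, we have $\operatorname{rank} D_\kk(\Gamma) = r^2 - \dim_\QQ (R_\kk(\Gamma)\otimes_\ZZ \QQ)$, so it suffices to prove the latter dimension is $\kk$-independent.

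First I would observe that the type (1) and type (2) generators of $R_\kk(\Gamma)$ are manifestly $\kk$-independent: they identify $b_{\lambda,\mu}$ with $b_{\mu,\lambda}$ and kill $b_{\lambda,\mu}$ whenever $\{\lambda,\mu\}$ is not an edge of $\Gamma$ (including the diagonal case $\lambda = \mu$, since $\Gamma$ has no self-loops). Quotienting $F_\kk(\Gamma) \otimes_\ZZ \QQ$ by the $\QQ$-span of these relations yields a vector space $V(\Gamma)$ of dimension $|E(\Gamma)|$ with canonical basis $\{b_e\}_{e\in E(\Gamma)}$, where $b_{\{\lambda,\mu\}} := b_{\lambda,\mu}$. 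The type (3) generators then project onto $\bar P_\mu = \sum_{\lambda \sim \mu} k_\lambda b_{\{\lambda,\mu\}} \in V(\Gamma)$, so that $\operatorname{rank} D_\kk(\Gamma) = |E(\Gamma)| - \operatorname{rank}(M_\kk)$, where $M_\kk \in \QQ^{r \times |E(\Gamma)|}$ has $(\mu, e)$-entry equal to $k_\nu$ when $\mu = \lambda$, $k_\lambda$ when $\mu = \nu$ (for $e = \{\lambda,\nu\}$), and $0$ otherwise.

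The key step is to show $\operatorname{rank}(M_\kk)$ depends only on $\Gamma$. I would rescale each column $e = \{\lambda,\nu\}$ of $M_\kk$ by $(k_\lambda k_\nu)^{-1}$ and each row $\mu$ by $k_\mu$; these operations are invertible since every $k_\lambda$ is a positive integer, and a direct check shows the resulting matrix is the unsigned vertex--edge incidence matrix $B$ of $\Gamma$, whose $(\mu, e)$-entry is $1$ when $\mu$ is an endpoint of $e$ and $0$ otherwise. Hence $\operatorname{rank}(M_\kk) = \operatorname{rank}(B)$ is a graph-theoretic invariant of $\Gamma$ alone, proving the lemma. I do not anticipate any real obstacle beyond verifying this rescaling identity. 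As a bonus, a standard argument computing $\ker B^{\top}$ gives $\operatorname{rank}(B) = r - c_b(\Gamma)$, where $c_b(\Gamma)$ is the number of bipartite connected components of $\Gamma$, yielding the explicit formula $\operatorname{rank} D_\kk(\Gamma) = |E(\Gamma)| - r + c_b(\Gamma)$.
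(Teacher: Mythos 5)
Your proof is correct and is essentially the paper's argument: the paper defines the automorphism $\varphi(b_{\lambda,\mu})=\frac{k'_\lambda k'_\mu}{k_\lambda k_\mu}\,b_{\lambda,\mu}$ of $F_r=F_\kk(\Gamma)\otimes_\ZZ\QQ$ carrying $R_\kk(\Gamma)\otimes\QQ$ onto $R_{\kk'}(\Gamma)\otimes\QQ$, and your column scaling by $(k_\lambda k_\nu)^{-1}$ together with the (span-preserving) row scaling by $k_\mu$ is exactly this rescaling specialised to $\kk'=\mathbf{1}$, presented in matrix form. Your bonus formula $\operatorname{rank} D_\kk(\Gamma)=|E(\Gamma)|-r+c_b(\Gamma)$ via the unsigned incidence matrix likewise reproduces what the paper establishes immediately afterwards by computing $\ker d(\Gamma)$.
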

 \begin{proof}
Let $\kk$ and $\kk'$ be two different coloured degrees associated to the same graph $\Gamma$ with $r$ vertices.
They define two different subspaces $R_\kk(\Gamma), R_{\kk'}(\Gamma)$ of the rational 
vector space $F_r$.
There exists an automorphism $\varphi$ of $F_r$ that restricts to
an isomorphism between these two subspaces; in particular
\[
\varphi(R_\kk (\Gamma)\otimes \QQ) =  R_{\kk'}(\Gamma)\otimes \QQ.
\]
This automorphism is given in the defining generators as
$$\varphi(b_{\lambda,\mu})=\frac {k_\lambda' k_\mu'}{k_\lambda k_\mu}\,  b_{\lambda,\mu}.$$
So we conclude from (\ref{therank}) that 
$${\rm rk}\, D_\kk (\Gamma) = {\rm rk} \, D_{\kk'}(\Gamma).$$
\end{proof}

We now set $D(\Gamma):=D_{\mathbf{1}}(\Gamma)$ where $\mathbf{1}$ is the coloured degree with $\mathbf{1}_\lambda = 1$ for all colours $\lambda \in {\rm Sk}^0(\Gamma)$.
The computation of the rank simplifies for this group and, by the previous lemma, it coincides with the rank of $D_\kk(\Gamma)$ for general $\kk$.

The remaining computation can be recast as a problem in cohomology as follows.
Let $\mathcal{C}^0(\Gamma)$ be the $\QQ$-vector space
generated by the vertices of $\Gamma$, and $\mathcal{C}^1(\Gamma)$
the $\QQ$-vector space generated by the edges of
$\Gamma$. There is a unique $\QQ$-linear map 
\begin{equation} \label{mapd}
d(\Gamma):\mathcal{C}^0(\Gamma)\to \mathcal{C}^1(\Gamma)
\end{equation}
associating to
a given vertex the sum of all the edges 
incident to that vertex. As a consequence of
Theorem~\ref{thm:RationalIso}, we have 
that the vector space
$D (\Gamma)\otimes_\ZZ \QQ$ is isomorphic
to the cokernel of this map (\ref{mapd}).

\begin{proposition}
Let $\Gamma$ be a connected graph with $r$ 
vertices and $s$ edges. Then
$$
\dim_\QQ \, \coker(d(\Gamma)) =
\left\{ 
\begin{array}{ll}
s-r+1 & \text{ if $\Gamma$ is bipartite;} \\
s-r & \text{ if $\Gamma$ is not bipartite.}
\end{array}
\right.
$$
\begin{proof}
 We first determine the dimension of the kernel of the map $d(\Gamma)$.
A class $\sum_{\lambda\in {\rm Sk}^0(\Gamma)}a_\lambda \lambda \in \mathcal{C}^0(\Gamma)$ 
is in this kernel  if and only if the following condition holds:
\begin{equation*} \label{*}
\text{If there is an edge between $\lambda$ and $\mu$, then $a_\lambda = -a_\mu$.}
\end{equation*}
Since $\Gamma$ is connected, the coefficients $a_\lambda$ all agree up to sign.
A nontrivial element of the kernel determines a bipartitioning of 
$\Gamma$, according to whether the $a_\lambda$ are positive or negative at the vertices $\lambda$. 
Conversely, a bipartitioning of $\Gamma$ defines a nontrivial
element of  $\ker (d(\Gamma))$ (up to a nonzero scalar). 

It follows that if $\Gamma$ is not bipartite, then
$\ker (d(\Gamma)) = 0$. If $\Gamma$ has a bipartitioning, it is unique,
hence $\dim\ker (d(\Gamma)) = 1$.
   
The proposition now follows from this and from the dimension formula.  
\end{proof}
\end{proposition}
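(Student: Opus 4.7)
The plan is to compute $\dim_\QQ \ker(d(\Gamma))$ explicitly, and then recover the dimension of the cokernel from the rank-nullity theorem applied to the map $d(\Gamma):\mathcal{C}^0(\Gamma)\to\mathcal{C}^1(\Gamma)$, using that $\dim \mathcal{C}^0(\Gamma)=r$ and $\dim \mathcal{C}^1(\Gamma)=s$.

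First, I would unpack the kernel condition. If $c=\sum_{\lambda}a_\lambda\lambda\in\mathcal{C}^0(\Gamma)$ satisfies $d(\Gamma)(c)=0$, then for each edge $e=\{\lambda,\mu\}$ the coefficient of $e$ in the image equals $a_\lambda+a_\mu$, so $d(\Gamma)(c)=0$ forces $a_\mu=-a_\lambda$ whenever $\lambda,\mu$ are adjacent. Because $\Gamma$ is connected, this recursion determines the value of $a_\mu$ at every vertex $\mu$ from the value at any single base vertex $\lambda_0$, up to a sign governed by the parity of the distance (in any path) from $\lambda_0$ to $\mu$.

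The key dichotomy is then whether the assignment produced by this recursion is well defined. A nontrivial kernel element exists iff one can consistently assign $\pm a_0$ to all vertices, i.e.\ iff $\Gamma$ admits a proper 2-coloring, i.e.\ iff $\Gamma$ is bipartite; equivalently, an odd cycle in $\Gamma$ would force $a_{\lambda_0}=-a_{\lambda_0}$ and hence the zero solution. Moreover, in the bipartite case the solution is determined up to the choice of $a_{\lambda_0}\in\QQ$, so the kernel is one-dimensional. Hence
\[
\dim_\QQ \ker(d(\Gamma))=\begin{cases}1 & \Gamma\text{ bipartite},\\ 0 & \Gamma\text{ not bipartite}.\end{cases}
\]

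Finally, from the exact sequence
\[
0\longrightarrow \ker(d(\Gamma))\longrightarrow \mathcal{C}^0(\Gamma)\stackrel{d(\Gamma)}{\longrightarrow}\mathcal{C}^1(\Gamma)\longrightarrow \coker(d(\Gamma))\longrightarrow 0,
\]
I read off $\dim_\QQ\coker(d(\Gamma))=s-r+\dim_\QQ\ker(d(\Gamma))$, which yields the two cases stated. No step here appears delicate; the only substantive point is the bipartiteness characterization of the kernel, which is entirely standard once the sign-propagation is observed.
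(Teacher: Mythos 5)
Your proposal is correct and follows essentially the same route as the paper: identify the kernel condition $a_\mu=-a_\lambda$ across each edge, use connectedness to reduce to a sign-consistency question that is equivalent to bipartiteness (giving kernel dimension $1$ or $0$), and conclude by rank–nullity. The only cosmetic difference is that you phrase the dichotomy via sign propagation along paths and odd cycles, while the paper phrases it via the existence and uniqueness of a bipartitioning; these are the same observation.
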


\begin{corollary} \label{corollrank}
  Let $\Gamma$ be a graph with connected components $\Gamma_i$.
Let the number of vertices in $\Gamma$ be $r$, the number of edges 
of $\Gamma$ be $s$, and the number of bipartite components $\Gamma_i$ be
$t$. Then the rank of the group $D_\kk(\Gamma)$ is $s-r+t$.
\begin{proof}
If $\Gamma$ is connected, this is just the preceding proposition. 
In the general case, we reduce to the connected case using the sum formula~(\ref{eq:sum}).
\end{proof}
\end{corollary}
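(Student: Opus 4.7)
The plan is to reduce the statement for a disconnected graph directly to the connected case already treated by the preceding proposition, using the sum formula of Remark~\ref{rk:connect}. By Lemma~\ref{k-independence} the rank of $D_\kk(\Gamma)$ is independent of $\kk$, so it suffices to work with $D(\Gamma) = D_{\mathbf 1}(\Gamma)$, whose rationalisation is the cokernel of the linear map $d(\Gamma):\mathcal C^0(\Gamma)\to \mathcal C^1(\Gamma)$ from (\ref{mapd}).

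First I would decompose $\Gamma = \bigsqcup_i \Gamma_i$ into its connected components, with $r_i$ vertices and $s_i$ edges in $\Gamma_i$, so that $r = \sum_i r_i$ and $s = \sum_i s_i$. The coloured degree $\mathbf 1$ restricts on each $\Gamma_i$ to the constant coloured degree $\mathbf 1$ on that component, and the direct sum isomorphism (\ref{eq:sum}) of Remark~\ref{rk:connect} iterates to give
\begin{equation*}
D(\Gamma) \;\cong\; \bigoplus_i D(\Gamma_i).
\end{equation*}
Since rank is additive over finite direct sums of finitely generated Abelian groups, this reduces the problem to computing $\mathrm{rk}\, D(\Gamma_i)$ for each connected component.

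For each connected $\Gamma_i$, the preceding proposition yields
\begin{equation*}
\mathrm{rk}\, D(\Gamma_i) \;=\; \dim_\QQ\coker\bigl(d(\Gamma_i)\bigr) \;=\;
\begin{cases} s_i - r_i + 1 & \text{if $\Gamma_i$ is bipartite,} \\ s_i - r_i & \text{otherwise.} \end{cases}
\end{equation*}
Summing over $i$ and recalling that $t$ is precisely the number of indices for which $\Gamma_i$ is bipartite gives
\begin{equation*}
\mathrm{rk}\, D_\kk(\Gamma) \;=\; \sum_i \mathrm{rk}\, D(\Gamma_i) \;=\; \sum_i (s_i - r_i) + t \;=\; s - r + t,
\end{equation*}
as claimed. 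There is no real obstacle here: the two ingredients (independence from $\kk$ and additivity under disjoint union) are already in place, and the argument is essentially a bookkeeping check. The only point requiring mild care is to verify that the rank formula of the preceding proposition, which was stated for a single connected graph, is indeed the $\QQ$-dimension of $D(\Gamma_i)\otimes_\ZZ \QQ$ and thus equal to $\mathrm{rk}\,D(\Gamma_i)$; this is immediate from Theorem~\ref{thm:RationalIso}.
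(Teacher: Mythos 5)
Your proof is correct and follows essentially the same route as the paper: reduce to the connected case via the direct-sum decomposition of Remark~\ref{rk:connect} and apply the preceding proposition to each component. You simply spell out the bookkeeping (additivity of rank, independence from $\kk$) that the paper leaves implicit.
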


\begin{remark}\label{zerorank}
If $\Gamma$ is a connected graph, we observe that Corollary~\ref{corollrank} implies that the group $D_\kk(\Gamma)$ is a finite group (independently of $\kk$) if and only if either of the following conditions is met:
\begin{itemize}
\item  $\Gamma$ is a tree;
\item  $\Gamma$ contains a cycle of odd order, and if we remove an edge from that cycle, the remaining graph is a tree.
\end{itemize}
\end{remark}

\subsection{Torsion elements and Diophantine equations} \label{sec:diophantine}

In order to compute very composite divisor braid groups ${\sf DB}_\kk(\Sigma,\Gamma)$, we would like to determine the corresponding finitely generated Abelian groups  
$D_\kk(\Gamma) \cong E_\kk(\Gamma)$, i.e. their central commutators.
We have already found a simple formula for the rank of $D_\kk(\Gamma)$. To obtain the full abstract structure of the group, we still
have to determine its torsion subgroup ${\rm Tor} \, D_\kk (\Gamma)$. 

The group $D_\kk(\Gamma)$  has been defined using generators and relations in (\ref{groupD}), so determining its torsion subgroup
 is a purely algebraic question. This group is the cokernel of a map between
free Abelian groups given by a matrix $D$ with integer coefficients.
Given a particular negative colour scheme $(\Gamma, \kk)$, a computer can easily calculate the Smith form of $D$, which means that
it can determine its cokernel. One interesting question is how this   
cokernel changes when one fixes the graph $\Gamma$ and varies the weights.
We will make some preliminary remarks here, but 
the algebraic problem at hand turns out to have a surprisingly rich structure. We will return to this question in~\cite{Bok}.

It is convenient to consider a slightly simpler presentation of the group.
Let $F'(\Gamma)$ be the Abelian group generated by the symbols $v_{\lambda,\mu}$ for $1 \leq \lambda < \mu \leq r$,
with the relations that $v_{\lambda,\mu}=0$ if there is no edge in $\Gamma$ between the vertices $\lambda$ and $\mu$. 
Let 
$$P'_\mu := \sum_{\lambda<\mu}k_\lambda\, v_{\lambda,\mu} + \sum_{\mu<\lambda}k_\lambda \, v_{\mu,\lambda}$$
and let $R'_\kk(\Gamma)\subset F'(\Gamma)$
be the subgroup generated by the elements $P_\mu'$ for $1\le \mu \le r$.
Clearly, there is an isomorphism $$F(\Gamma)/ R_\kk(\Gamma)\;  \stackrel{\cong}{\longrightarrow}\;  F'(\Gamma)/R'_\kk(\Gamma)$$ given on generators by $b_{\lambda,\mu}\mapsto v_{\lambda,\mu}$.

Now we consider $t\in F'(\Gamma)$ such that the reduction $[t]\in F'(\Gamma)/R'_\kk(\Gamma)$ is a torsion class of order $m\in \ZZ$, i.e.\ $mt \in R'_\kk(\Gamma)$.
Let us denote the subgroup of all such elements by
\[
\bar R_\kk'(\Gamma) := \{t \in F'(\Gamma) \vert  
\text{ $\exists\,  m\in \NN$ such that $mt \in R'_\kk(\Gamma)$}\}. 
\]
It is easy to check that the inclusion $\bar R'_\kk(\Gamma)\subset F'(\Gamma)$ induces an isomorphism
from  the quotient $\bar R_\kk'(\Gamma)/R'_\kk(\Gamma)$
to the torsion subgroup of  $F'(\Gamma)/R_\kk'(\Gamma)$.

Let us set $\mathbf{c}=(c_\lambda)_{\lambda\in {\rm Sk}^0(\Gamma)}=(c_1,\ldots,c_r)$ and define
\begin{equation}
\label{dio}
C_\kk(\Gamma):=\{ \mathbf{c} \in  (\QQ/\ZZ)^{\oplus r} \vert\,  k_\lambda c_\mu + k_\mu c_\lambda  = 0 \text{ if there is an edge between $\lambda$ and $\mu$}\}.
\end{equation}

In the case where  $\Gamma$ is a bipartite graph, there is a distinguished cyclic subgroup $\Delta_\kk(\Gamma) \subset C_\kk (\Gamma)$ 
generated by a  solution of the form 
$ ([\pm \frac{1}{k_\lambda}])_{\lambda\in {\rm Sk}^0(\Gamma)}$,
where the signs are given by 
the bipartitioning. Evidently, the subgroup itself does not depend on the choice of overall sign, as $C_\kk (\Gamma)$ is pure torsion. The order of 
$\Delta_\kk(\Gamma)$ is the least common multiple of the integers $k_{\lambda}$.

We are now ready to present the main result of this section.

\begin{thm}
Let $(\Gamma,\kk)$ be a negative colour scheme on a connected graph. Then
$$
{\rm Tor}\, D_\kk (\Gamma) \cong
\left\{ 
\begin{array}{ll}
C_\kk(\Gamma)/\Delta_\kk(\Gamma) & \text{ if $\Gamma$ is bipartite;} \\
C_\kk(\Gamma) & \text{ if $\Gamma$ is not bipartite.}
\end{array}
\right.
$$
\begin{proof}  
We define a homomorphism $\phi:C_\kk(\Gamma)\to \bar R_\kk'(\Gamma)/R_\kk'(\Gamma)$ as follows.
Let $\mathbf{c}$ be an element of  $C_\kk(\Gamma)$, represented by some $r$-tuple $(\tilde c_1,\dots,\tilde c_r)\in\QQ^{\oplus r}$. Let us set 
$$\tilde \phi(\tilde c_1,\dots,\tilde c_r):=\sum_{\lambda=1}^r \tilde c_\lambda P'_\lambda \in R'_\kk(\Gamma)\otimes \QQ.$$
Actually, $\tilde \phi(\tilde c_1,\dots,\tilde c_r) \in R'_\kk(\Gamma)\subset R'_\kk(\Gamma)\otimes \QQ$. 
To see this, note that the coefficient in $\tilde \phi(\tilde c_1,\dots,\tilde c_r)$ 
of the edge connecting $\mu$ and $\lambda$ is
$k_\lambda \tilde c_\mu+k_\mu \tilde c_\lambda$, which is an integer, since
$\mathbf{c}\in C_\kk(\Gamma)$. 
We define $\phi(\mathbf{c})$ to be the equivalence class of 
$\tilde \phi(\tilde c_1,\dots,\tilde c_r)$ in $\bar R_\kk'(\Gamma)/R_\kk'(\Gamma)$.
We need to check that this does not depend on the choice of representative $(\tilde c_1,\dots,\tilde c_r)$.
But $(\tilde c_1,\dots, \tilde c_r)$ represents $0\in C_\kk(\Gamma)$ exactly when each $\tilde c_\lambda$
is an integer. Then 
\[
\tilde \phi(\tilde c_1,\dots,\tilde c_r)=\sum_\lambda\tilde c_\lambda P'_\lambda 
\]
is in $R_\kk'(\Gamma)$ by the definitions.

We now claim that the homomorphism $\phi$ is surjective. Let
$t \in \bar R'_\kk(\Gamma)$ with $mt \in R'_\kk(\Gamma)$, $m\in \NN$. 
One can write 
\[
t\otimes 1 =\sum_{\lambda=1}^r  P'_\lambda \otimes \frac{m_\lambda}{m} \in R'_\kk(\Gamma)\otimes_\ZZ \QQ
\]
for suitable $m_\lambda \in \ZZ$. We see that $t=\tilde \phi(\frac {m_1}m,\dots,\frac{m_r}m)$, and it follows that $\phi$ is surjective.

Now we want to determine the kernel of $\phi$. Let
$\phi(\mathbf{c})=0$. Choose a representative $(\tilde c_1,\dots,\tilde c_r)\in \QQ^r$
such that $\tilde \phi(\tilde c_1,\dots,\tilde c_r)=0$ (this can easily be arranged).
Then $\sum_\lambda c_\lambda P'_\lambda=0$ is a linear dependence of the 
classes $P_\lambda$ over $\QQ$. We can rewrite this dependence as
\[
\sum_{\lambda <\mu}(k_{\lambda} \tilde c_\mu+k_\mu\tilde c_\lambda)=0.
\] 
Equating coefficients, we get that 
\[
\frac {\tilde c_\lambda}{k_\lambda}+\frac {\tilde c_\mu}{k_\mu}=0.
\]
In particular, $\vert \frac {\tilde c_\lambda}{k_\lambda}\vert$ does not depend on $\lambda$.
The sign of $\tilde c_\lambda$ does depend on $\lambda$ in such a way that if $\lambda$ and
$\mu$ are connected by an edge, then $\tilde c_\lambda$ and $\tilde c_\mu$ have opposite signs. 
This determines a bipartitioning of $\Gamma$. Therefore, if $\Gamma$ is not bipartite,
we need to have $\tilde c_\lambda=0$. On the other hand, if $\Gamma$ has a bipartitioning, 
we see that the linear dependence
has to satisfy that $\tilde c_\lambda=\pm \frac s{k_\lambda}$ where the signs are determined by the bipartitioning.
In our case we have the further condition that $\tilde c_\lambda\in \ZZ$, which say that 
$\tilde c_\lambda=\pm \frac {ms'}{k_\lambda}$, where the signs are determined by the bipartitioning, 
and with $s'\in \ZZ$. This completes the proof.
\end{proof}
\end{thm}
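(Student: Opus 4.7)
The plan is to exploit the identification established just before the theorem statement: the torsion subgroup of $D_\kk(\Gamma) \cong F'(\Gamma)/R'_\kk(\Gamma)$ is canonically isomorphic to $\bar R'_\kk(\Gamma)/R'_\kk(\Gamma)$, where $\bar R'_\kk(\Gamma)$ consists of those elements $t \in F'(\Gamma)$ some integer multiple of which lies in $R'_\kk(\Gamma)$. So the task reduces to describing $\bar R'_\kk(\Gamma)/R'_\kk(\Gamma)$, and the hope is that $C_\kk(\Gamma)$ parametrises it naturally (modulo an explicit bipartite correction).

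First I would construct a map $\phi:C_\kk(\Gamma)\to \bar R'_\kk(\Gamma)/R'_\kk(\Gamma)$ as follows. Given $\mathbf{c}=(c_1,\dots,c_r)\in(\QQ/\ZZ)^{\oplus r}$ satisfying the edge conditions in (\ref{dio}), lift each $c_\lambda$ to some $\tilde c_\lambda\in\QQ$ and set $\tilde\phi(\tilde c_1,\dots,\tilde c_r):=\sum_\lambda \tilde c_\lambda P'_\lambda \in R'_\kk(\Gamma)\otimes_\ZZ \QQ$. The key point is that the coefficient of the edge $(\lambda,\mu)$ in this expression is exactly $k_\lambda \tilde c_\mu + k_\mu \tilde c_\lambda$, which is an integer by the defining condition on $C_\kk(\Gamma)$; hence the expression actually lies in $F'(\Gamma)$, and (being a $\QQ$-combination of the $P'_\mu$) in $\bar R'_\kk(\Gamma)$. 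Changing the lift by integer amounts alters $\tilde\phi$ by an element of $R'_\kk(\Gamma)$, so $\phi(\mathbf{c})$ is well-defined in the quotient.

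Next I would check surjectivity by a direct extraction of coordinates. Any $t\in\bar R'_\kk(\Gamma)$ satisfies $t\otimes 1=\sum_\lambda P'_\lambda \otimes (m_\lambda/m)$ in $R'_\kk(\Gamma)\otimes_\ZZ\QQ$ for suitable integers $m_\lambda$ and $m\in\NN$, and then $t=\tilde\phi(m_1/m,\dots,m_r/m)$, with the tuple $(m_\lambda/m)$ automatically landing in $C_\kk(\Gamma)$ since the edge coefficients of $t$ are integers.

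The main obstacle --- and the step that distinguishes the two cases of the theorem --- will be computing $\ker\phi$. An element $\mathbf{c}\in\ker\phi$ admits a lift $(\tilde c_\lambda)$ with $\sum_\lambda \tilde c_\lambda P'_\lambda=0$ in $F'(\Gamma)$, i.e.\ with integer $\tilde c_\lambda$. Collecting coefficients of each edge gives a linear dependence forcing $\tilde c_\lambda/k_\lambda + \tilde c_\mu/k_\mu = 0$ whenever $\lambda$ and $\mu$ are joined by an edge. Thus $|\tilde c_\lambda/k_\lambda|$ is constant along any path in $\Gamma$, and the signs of $\tilde c_\lambda$ are forced to alternate across every edge. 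Connectedness of $\Gamma$ then dichotomises the situation: if $\Gamma$ is not bipartite, an odd cycle yields a sign contradiction, forcing all $\tilde c_\lambda=0$ and hence $\ker\phi=0$; if $\Gamma$ is bipartite, the possible lifts are exactly the integer multiples of $(\pm s/k_\lambda)_\lambda$ with signs dictated by the bipartition, which modulo $\ZZ$ generates precisely the cyclic subgroup $\Delta_\kk(\Gamma)$. Combining these with the first isomorphism theorem then yields the claimed description of ${\rm Tor}\, D_\kk(\Gamma)$.
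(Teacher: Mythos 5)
Your proposal follows the paper's proof essentially step for step: the same identification of ${\rm Tor}\,D_\kk(\Gamma)$ with $\bar R'_\kk(\Gamma)/R'_\kk(\Gamma)$, the same map $\phi$ built from $\tilde\phi(\tilde c)=\sum_\lambda \tilde c_\lambda P'_\lambda$ with the integrality of the edge coefficients $k_\lambda\tilde c_\mu+k_\mu\tilde c_\lambda$ giving well-definedness, the same surjectivity argument via $t\otimes 1=\sum_\lambda P'_\lambda\otimes(m_\lambda/m)$, and the same kernel analysis through the sign-alternation/bipartiteness dichotomy. One caution on your kernel step: the parenthetical ``i.e.\ with integer $\tilde c_\lambda$'' is not justified --- a lift satisfying $\sum_\lambda\tilde c_\lambda P'_\lambda=0$ need not be integral (take $\tilde c_\lambda=\pm k_\lambda/3$ on a single edge) --- and the solutions of $\tilde c_\lambda/k_\lambda+\tilde c_\mu/k_\mu=0$ on a connected graph are the tuples proportional to $(\pm k_\lambda)_\lambda$, not to $(\pm s/k_\lambda)_\lambda$, so the identification of the kernel with the stated generator of $\Delta_\kk(\Gamma)$ deserves a more careful check; the second of these slips is inherited verbatim from the paper's own write-up, so it does not distinguish your argument from theirs.
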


The most obvious approach to the problem of determining the torsion in $D_\kk(\Gamma)$ 
consists of keeping the graph $\Gamma$ fixed, and varying the coloured degree $\kk$;
this is more in the spirit of the application motivated in Section~\ref{sec:vortices}.
In the next section, we will explore some examples
where it is possible to determine the group as a function of the integers $k_\lambda$. 

There is also the possibility of allowing the graph itself to change.
There is some naturality occurring in this setting. 

Let $f:\Gamma\to \Gamma'$ be a map of graphs, in the sense given in the remarks preceding Section~\ref{sec_rankcentre}.
Firstly, if $\kk$ is a coloured degree for $\Gamma$, it induces a coloured degree $f_*\kk$ on
$\Gamma'$ by the formula $(f_*\kk)_{\lambda'} = \sum_{f(\lambda)=\lambda'}k_\lambda$. There is a map
$C_\kk(\Gamma) \to C_{f_*\kk}(\Gamma')$ given by 
$\{c_\lambda\}_{\lambda\in {\rm Sk}^0(\Gamma)} \mapsto \{c'_{\lambda'}\}_{\lambda' \in {\rm Sk}^0(\Gamma')}$ where $c'_{\lambda'}=\sum_{f(\lambda)=\lambda'}c_\lambda$.

Similarly, recall that a coloured degree $\kk'$ on $\Gamma'$ induces a coloured degree
$f^*\kk'$ on $\Gamma$ where $(f^*\kk')_\lambda = \kk'_{f(\lambda)}$. Now there is a map
$C_{\kk'}(\Gamma')\to C_{f^*\kk'}(\Gamma)$
given by $\{c'_{\lambda'}\}_{\lambda'\in{\rm Sk}^0(\Gamma')} \mapsto \{c_\lambda\}_{\lambda \in {\rm Sk}^0(\Gamma)}$ where $c_\lambda = c_{f(\lambda)}$. 

One can use standard methods of homological algebra to relate groups corresponding to
different graphs, but in general it seems rather difficult to give explicit formulas for these groups.

\section{A handful of examples from gauge theory} \label{sec.examples}

So far, the divisor braid groups that we have been considering were assigned to a general negative colour scheme $(\Gamma,\kk)$ and a closed Riemann surface $\Sigma$. We will now discuss  a few examples coming from gauge theory --- i.e. divisor braid groups that are realised by the fundamental groups of
moduli spaces of vortices in compact fibre bundles with typical toric fibre $X$ over $\Sigma$, as described in Section~\ref{sec:vortices}.

\begin{example} \label{egP1}
We start by considering the only compact toric manifold in complex dimension one, $X=\PP^1$. This can be obtained from a one-dimensional Delzant polytope $\Delta$ with the shape of a real bounded closed interval. There are two facets in $\Delta$ corresponding to the endpoints, and they are disjoint. Thus we recover the graph $\Gamma = \bullet\!\! \!-\!\!\!-\!\circ$ already discussed in Section~\ref{sec:twocolours}. Theorem~\ref{twocolors} gave a metabelian presentation (\ref{centrext}) of the divisor braid group $\DivBraid_{(k_1,k_2)}(\Sigma, \bullet\!\! \!-\!\!\!-\!\circ)$, whose finite cyclic centre $\ZZ_{{\rm gcd}(k_1,k_2)}$ is determined from the integers $k_1,k_2>1$ through straightforward arithmetic.
\end{example}

\begin{example}\label{egHirz}

\begin{figure}[b]
\label{fig.cross}
\vspace{5mm}
\includegraphics[width=3cm,angle=0]{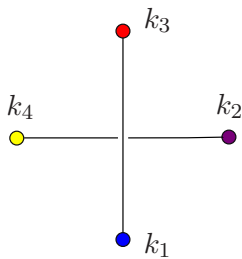} \\[-0pt] 
\vspace{-4mm}\hspace{9mm}$k_1$\\
\vspace{-23mm}\hspace{28mm}$k_2$\\
\vspace{-16mm}\hspace{9mm}$k_3$\\
\vspace{7mm}\hspace{-27mm}$k_4$\\
\vspace{18mm}
\caption{The graph $\Gamma$ for a Hirzebruch surface $X=\mathbb{F}_h$ is disconnected.}
\end{figure}

The simplest nontrivial example in complex dimension two corresponds to a Hirzebruch surface $X=\mathbb{F}_h$, where $h\in \NN_0$; its Delzant polytope $\Delta\subset N_\RR$ is a trapezium with right angles (possibly degenerating to a rectangle), see e.g.~\cite{CanSTM}. Accordingly, the graph $\Gamma$ consists of two edges, each of which connecting a pair of coloured vertices corresponding to opposite facets, as depicted in
Figure~7. As a consequence of Remark~\ref{rk:connect}, we obtain a factorisation
$$
\DivBraid_{(k_1,k_2,k_3,k_4)} (\Sigma,\Gamma) \cong \DivBraid_{(k_1,k_3)} (\Sigma,\bullet\!\! \!-\!\!\!-\!\circ) \times  \DivBraid_{(k_2,k_4) }(\Sigma,\bullet\!\! \!-\!\!\!-\!\circ)
$$
where the factors are the same divisor braid groups of Example~\ref{egP1}, following a corresponding factorisation of the group centres. In the case $\mathbb{F}_0=\PP^1 \times \PP^1$, this result also follows from a factorisation of moduli spaces implied by the explicit description in Theorem~\ref{conjecture}, Proposition~\ref{onlySk1} and functoriality of $\pi_1$; but here we see that the divisor braid groups also factorise  when the target is a nontrivial $\PP^1$-bundle over $\PP^1$ (i.e.~$h\ge 1$). This example also illustrates that disconnected graphs $\Gamma$ may well arise from connected  targets $X$ that are not Cartesian products.

\end{example}

\begin{example} \label{egBlHirz}
Consider a toric surface $X={\rm Bl}_{\mathbf{x}} \, \mathbb{F}_h$  obtained by blowing up a Hirzebruch surface (discussed in the previous example) at any of the four fixed points ${\mathbf x} \in X$ of the 2-torus action. For a discussion of the symplectic blow-up at a fixed point, see e.g.~\cite{CanSTM}; under this operation, the Delzant polytope is chopped transversally near the vertex corresponding to the chosen fixed point, so in this way we obtain an irregular pentagon in our example. 

\begin{figure}[h]
\label{fig.pentagram}
\vspace{5mm}
\includegraphics[width=3cm,angle=0]{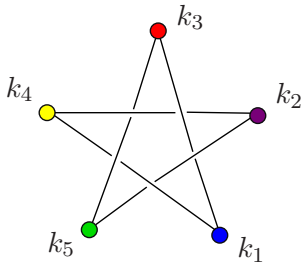} \\[-0pt] 
\vspace{-3mm}\hspace{26mm}$k_1$\\
\vspace{-25mm}\hspace{36mm}$k_2$\\
\vspace{-15mm}\hspace{10mm}$k_3$\\
\vspace{5mm}\hspace{-35mm}$k_4$\\
\vspace{16mm}\hspace{-24mm}$k_5$\\
\vspace{0mm}
\caption{The graph $\Gamma$ for the surface $X={\rm Bl}_{\mathbf x}\, \mathbb{F}_h$ is a five-cycle.}
\end{figure}

The corresponding graph $\Gamma$ has five vertices (each corresponding to a side of the pentagon), which should be labelled with different colours; introducing edges to account for empty intersections of pairs of facets, a five-cycle emerges, which we prefer to depict as a pentagram like the one in Figure~8. The circular rainbow scale that we used to colour the vertices suggests the cyclic ordering of facets in the Delzant polytope, while the disposition of vertices and edges renders the comparison with Figure~7 more immediate (the green vertex corresponds to the facet created by the blow-up). Unlike Examples~\ref{egP1} and \ref{egHirz}, this graph is not bipartite; but since its vertices and edges are equal in number, by
 Corollary~\ref{corollrank} it gives rise to a finite centre as in the previous examples.
 
 The order of the centre $E_{\kk}(\Gamma)=D_\kk(\Gamma)$ in this case is given by the product $2\,  {\rm gcd}(k_1,k_2,k_3,k_4,k_5) \prod_{\lambda=1}^5 k_\lambda$. However, the precise structure of this group is quite intricate, bifurcating over certain conditions that involve valuations. 
 We illustrate this with some examples in the following table. \\

 \begin{center}
 \begin{tabular}{c|c}
 \hline\hline \\[-7pt]
$ \kk=(k_1,k_2,k_3,k_4,k_5)$ & $D_\kk(\Gamma)$ \\ \\[-7pt] \hline \\[-8pt]
$(2,3,5,7,11)$ & $ \ZZ_{2^2\cdot 3\cdot 5\cdot 7 \cdot 11}$ \\
$(2,2,2,3,3)$ & $\ZZ_{2^3\cdot 3^2} \oplus \ZZ_{2}$  \\
$(2,2,3,2,3)$ & $\ZZ_{2^3\cdot 3}\oplus \ZZ_{2\cdot 3}$  \\
$(3,3,2,3,2)$ & $ \ZZ_{2^2\cdot 3^2} \oplus \ZZ_{2\cdot 3}$ \\
$(3,3,5,3,5)$ & $ \qquad \ZZ_{2\cdot 3^2\cdot 5} \oplus \ZZ_{3\cdot 5}\qquad $ \\
$(a,a,a,a,a), \; a\ge 2$ & $\ZZ_{2\cdot a} \oplus  \ZZ_a^{\oplus 4} $ \\[3pt]
\hline\hline
 \end{tabular}
 \end{center}
 \end{example}

\begin{example} \label{egBlcube}
Let us take the toric threefold $X={\rm Bl}_{\mathbf{x}} \, (\PP^1)^3$ obtained by blowing up the Cartesian product of three projective lines at any of the eight fixed points.

\begin{figure}[h]
\label{fig.tree}
\vspace{5mm}
\includegraphics[width=3cm,angle=0]{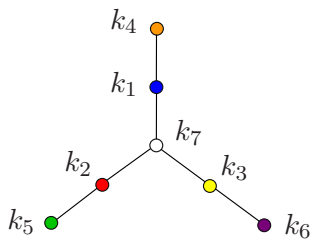} \\[-0pt] 
\vspace{-17mm}\hspace{8mm}$k_7$\\
\vspace{-11mm}\hspace{-9mm}$k_1$\\
\vspace{6mm}\hspace{-21mm}$k_2$\\
\vspace{-4mm}\hspace{20mm}$k_3$\\
\vspace{-24mm}\hspace{-9mm}$k_4$\\
\vspace{22mm}\hspace{-36mm}$k_5$\\
\vspace{-4mm}\hspace{38mm}$k_6$
\vspace{2mm}
\caption{The graph $\Gamma$ for the threefold $X={\rm Bl}_{\mathbf x}\, (\PP^1)^3$ is a tree.}
\end{figure}

It is easy to see that the corresponding graph $\Gamma$ is the tree of three branches depicted in Figure~9. The trivalent white vertex corresponds to the facet introduced by the blow-up. This graph is bipartite and the centre is once again a finite group.

For a coloured degree $\kk$ with components as indicated in Figure~9,
the order of this group is $(k_1k_2k_3)(k_7)^2 {\rm gcd}(k_1,k_2,k_3,k_4,k_5,k_6,k_7)$.  Again, resolving its structure  is an intricate problem, for which some solutions are given in the table below.  \\

\begin{center}
 \begin{tabular}{c|c}
 \hline\hline \\[-7pt]
$ \kk=(k_1,k_2,k_3,k_4,k_5,k_6,k_7)$ & $ D_\kk(\Gamma)$ \\ \\[-7pt] \hline \\[-8pt]
$(2,3,5,7,11,13,17)$  & $ \qquad \ZZ_{2\cdot 3\cdot 5\cdot 17}\oplus \ZZ_{17}\qquad $ \\
$(2,3,3,3,3,3,5)$ & $\ZZ_{2\cdot 3\cdot 5} \oplus \ZZ_{3\cdot 5}$  \\
$(2,2,3,3,3,3,3)$ & $\ZZ_{2\cdot 3}^{\oplus 2}\oplus \ZZ_{3}$ \\
 $(2,2,2,3,3,3,3)$ &  $\ZZ_{2\cdot 3}^{\oplus 2}\oplus  \ZZ_{3}$ \\
 $(2,2,2^2,2^2,2^2,2^2,2^2)$ &  $\ZZ_{2^2}^{\oplus 3} \oplus \ZZ_{2}^{\oplus 3}$\\ 
$(2^3,2,3,3,3,3,2^5)$& $ \ZZ_{2^8\cdot 3} \oplus \ZZ_{2^6}$   \\
$(2^3,2^3,3,3,3,3,2^5)$& $ \ZZ_{2^8\cdot 3} \oplus \ZZ_{2^8}$   \\
 $(2^3,2^5,3,3,3,3,2^5)$& $ \ZZ_{2^{10}\cdot 3} \oplus \ZZ_{2^8}$ \\
$(a,a,a,a,a,a,a),\; a\ge 2$ & $ \ZZ_a^{\oplus 6} $ \\[3pt]
\hline\hline
 \end{tabular}
 \end{center}

\end{example}

\begin{example} \label{egBl2cube}
Finally, we consider targets of the type $X={\rm Bl}_{\{ \mathbf{x}, \mathbf{x'} \}}\, (\PP^1)^3$, obtained by blowing up an additional fixed point $\mathbf{x}' \ne \mathbf{x}$ of the obvious $(S^1)^3$-action in the previous example. There are three distinct cases to consider, which are depicted in Figure~10. The colouring of the vertices was chosen so as to  facilitate comparison with the graph of Figure~9; in each graph, the facet introduced by the second blow-up is represented by the black vertex. Note that all these are connected graphs with eight vertices and ten edges, hence they give rise to divisor braid groups with infinite centres, in contrast with all the previous examples.
\begin{figure}[t]
\label{fig.kites}
\vspace{5mm}
\includegraphics[width=11cm,angle=0]{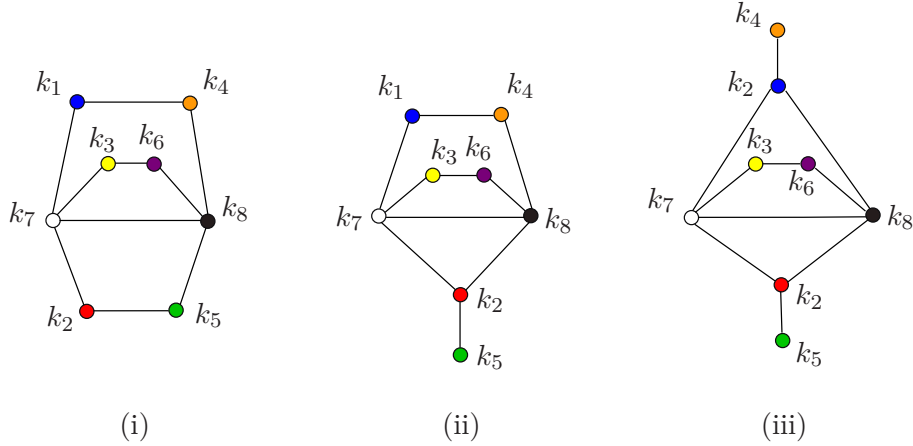} \\[-0pt] 
\vspace{-41mm}\hspace{-65mm}$k_4$\\
\vspace{13mm}\hspace{-116mm}$k_7$\\
\vspace{-22mm}\hspace{-109mm}$k_1$\\
\vspace{3mm}\hspace{-95mm}$k_3$\\
\vspace{-5mm}\hspace{-82mm}$k_6$\\
\vspace{5mm}\hspace{-60mm}$k_8$\\
\vspace{10mm}\hspace{-106mm}$k_2$\\
\vspace{-5mm}\hspace{-67mm}$k_5$\\

\vspace{-35mm}\hspace{15mm}$k_4$\\
\vspace{13mm}\hspace{-30mm}$k_7$\\
\vspace{-22mm}\hspace{-19mm}$k_1$\\
\vspace{4mm}\hspace{-5mm}$k_3$\\
\vspace{-5mm}\hspace{4mm}$k_6$\\
\vspace{5mm}\hspace{25mm}$k_8$\\
\vspace{6mm}\hspace{7mm}$k_2$\\
\vspace{3mm}\hspace{7mm}$k_5$\\

\vspace{-50mm}\hspace{75mm}$k_4$\\
\vspace{20mm}\hspace{52mm}$k_7$\\
\vspace{-20mm}\hspace{73mm}$k_2$\\
\vspace{3mm}\hspace{78mm}$k_3$\\
\vspace{1mm}\hspace{89mm}$k_6$\\
\vspace{0mm}\hspace{115mm}$k_8$\\
\vspace{6mm}\hspace{91mm}$k_2$\\
\vspace{3mm}\hspace{91mm}$k_5$\\
\vspace{5mm}
{\rm (i)} \hspace{102pt} {\rm (ii)} \hspace{101pt}{\rm (iii)}
\caption{The three graphs $\Gamma$ for $X={\rm Bl}_{\{\mathbf{x},\mathbf{x}'\} }(\PP^1)^3$: (i) is bipartite, whereas (ii) and (iii) are not.}
\end{figure}

Case (i) corresponds to choosing fixed points $\mathbf x$ and $\mathbf x'$ associated to diagonally opposite vertices in the Delzant polytope of $(\PP^1)^3$ (a cube). This graph is bipartite and determines a centre of rank three in the divisor braid groups.
The graphs (ii) and (iii) arise from choosing fixed points corresponding to vertices on a the same facet of the cube, either diagonally opposite or in the same boundary edge, respectively. Neither of them is  bipartite, and so they give rise to centres of rank two. 

The torsion groups are even more complicated now, and even presenting a formula for their order would be fairly complicated.  We restrict ourselves to a few illustrative examples in the
following table:

\begin{center}
\begin{tabular}{c|ccc}
\hline\hline \\[-7pt]
$\kk=(k_1,k_2,\ldots,
k_8)$ 
& $  \mathrm{Tor}\, D_\kk(\Gamma_{({\rm i})}) \!\!\!\!\!\!\!$ 
& $ \mathrm{Tor}\, D_\kk(\Gamma_{({\rm ii})})$  
& $ \mathrm{Tor}\, D_\kk(\Gamma_{({\rm iii})})$ \\ \\[-7pt] \hline \\[-8pt]
$(3,5,7,11,13,17,19,23)$
& $ \{0\}$ 
& $ \ZZ_{2\cdot 5\cdot 19\cdot 23}\oplus \ZZ_5$ 
& $ \ZZ_{2\cdot 3\cdot 5\cdot 19\cdot 23}$\\
$(2,3,3,3,3,3,5,7)$ & 
$\{0\} $
&$ \ZZ_{2\cdot 3\cdot 5\cdot 7}\oplus \ZZ_3$
&$\ZZ_{2\cdot 3\cdot 5\cdot 7} \oplus \ZZ_2$ \\[3pt]
$(2,2,3,3,3,3,3,5)$ 
& $\ZZ_3^{\oplus 3} $
&$  \ZZ_{2^2\cdot 3\cdot 5}\oplus \ZZ_{2\cdot 3}\oplus \ZZ_3 $
&$ \ZZ_{2^2\cdot 3\cdot 5}\oplus  \ZZ_{2\cdot 3}\oplus  \ZZ_2 $ \\[3pt]
$(2,2,2,3,3,3,3,5)$ 
& $\ZZ_3^{\oplus 3} $
&$ \ZZ_{2^2\cdot 3\cdot 5}\oplus \ZZ_{2\cdot3} \oplus \ZZ_3 $
&$\ZZ_{2^2\cdot 3\cdot 5}\oplus \ZZ_{2\cdot  3} \oplus \ZZ_2 $ \\[3pt]
$(3,3,3,5,5,5,5,7)$
& $\ZZ_5^{\oplus 3} $
&$ \ZZ_{2\cdot 3\cdot 5\cdot 7}\oplus \ZZ_{3\cdot5} \oplus \ZZ_5 $
&$ \ZZ_{2\cdot 3\cdot 5\cdot 7}\oplus \ZZ_{3\cdot5} \oplus \ZZ_3 $\\[3pt]
$(2,2,2,2^2,2^2,2^2,2^2,3)$ 
& $ \ZZ_{2^2}^{\oplus 3} $
&$ \ZZ_{2^4\cdot 3}\oplus \ZZ_{2^2}^{\oplus 2} \oplus \ZZ_2 $
&$ \ZZ_{2^4\cdot 3}\oplus \ZZ_{2^2} \oplus \ZZ_2^{\oplus 2} $ \\[3pt]
$(2^3,2,3,3,3,3,2^5,5)$ 
& $ \{0\}$
&$ \ZZ_{2^7\cdot 5}\oplus \ZZ_2 $
&$ \ZZ_{2^7\cdot 5}\oplus \ZZ_{2^3} \oplus \ZZ_{2} $ \\[3pt]
$(2^3,2^3,3,3,3,3,2^5,5)$ 
& $ \{0\}$
&$ \ZZ_{2^9\cdot 5}\oplus \ZZ_{2^3} $
&$ \ZZ_{2^9\cdot 5}\oplus \ZZ_{2^3}^{\oplus 2}$ \\[3pt]
$(2^3,2^5,3,3,3,2^5,5)$ 
& $ \{0\}$
&$ \ZZ_{2^{11}\cdot 5}\oplus \ZZ_{2^5} $
&$ \ZZ_{2^9\cdot 5}\oplus \ZZ_{2^5} \oplus \ZZ_{2^3} $ \\[3pt]
\hline\hline
 \end{tabular}
 \end{center}

From this table, we infer that it is possible to  distinguish among the three toric target manifolds through the 
order of the torsion subgroup of the centre alone.

\end{example}

\section{Divisor braids and noncommutative geometry} \label{sec:ncg}

In this final section, we want to provide a first glimpse at what the problem of computing Murray--von Neumann dimensions (that we motivated through
the discussion of two-dimensional supersymmetric QFTs in Section~\ref{sec:susy}) might look like for the divisor braid groups studied in this paper. 
The first step in this problem is to obtain a good description of the group von Neumann algebras
$\mathcal{N}(\pi_1\, \mathcal{M}^{X}_{\mathbf{h}}(\Sigma))$. We shall argue that, for very composite BPS charges $\mathbf{h}$, this can be given in terms
of objects that are familiar from noncommutative geometry~\cite{Con,GVF}.

We shall describe the von Neumann algebras $\mathcal{N}({\sf DB}_\kk (\Sigma,\Gamma))$ of our divisor braid groups (assumed to be very composite) in three stages. First, we deal with the two-colour case
$\Gamma =  \bullet\!\! \!-\!\!\!-\!\circ$; if we let
$k_1=\kk(\bullet)$ and $k_2=\kk(\circ)$, we already know that $E_{\kk}( \bullet\!\! \!-\!\!\!-\!\circ)=\ZZ_k$ with $k:={\rm gcd}(k_1,k_2)$ from our discussion in Section~\ref{sec:twocolours}, and this will lead to
totally explicit formulas. In a second stage, we consider the case where $\Gamma$ is more general but still leads to a finite group $E_\kk(\Gamma)$. Finally, we describe the general
situation where $E_\kk(\Gamma)$ may be infinite.

Consider the case  $\Gamma =  \bullet\!\! \!-\!\!\!-\!\circ$, and let us simplify the notation in Theorem~\ref{present} by denoting $\beta=\beta_{1,2}$. We shall write $R:=\CC[{\sf DB}_\kk(\Sigma, \bullet\!\! \!-\!\!\!-\!\circ)]$
for the group ring, and $\bar R=C^*({\sf DB}_\kk(\Sigma, \bullet\!\! \!-\!\!\!-\!\circ))$ for the reduced group $C^*$-algebra, that is, the completion of $R$ with respect to the operator norm of bounded operators on
the Hilbert space $\ell^2({\sf DB}_\kk(\Sigma, \bullet\!\! \!-\!\!\!-\!\circ))$ where ${\sf DB}_\kk(\Sigma, \bullet\!\! \!-\!\!\!-\!\circ)$ acts via the left-regular representation $\pi_L$ (see e.g.~Section 2.6 of \cite{BroOza}). Let  $\zeta$ be a fixed primitive $k$th root of unity.

\begin{lemma} \label{projectors}
The central elements in $R$ given by
\begin{equation} \label{projector}
\beta_j := \frac{1}{k} \sum_{i=0}^{k-1} \bar \zeta^{ij} \beta^i, \qquad j=0,\ldots, k-1
\end{equation}
are idempotent (i.e.~$(\beta_j)^2 = \beta_j$),  and they satisfy
\begin{equation*} 
\beta_j \beta_{j'} = 0 \; \text{ for } j\ne j' \qquad \text{ and } \qquad
\sum_{j=0}^{k-1} \beta_j = e.
\end{equation*}
\begin{proof}
The action of the centre $E_{\kk} (\Sigma, \bullet\!\! \!-\!\!\!-\!\circ) \cong \ZZ_k$ (generated by $\beta$) on its group algebra yields a $k$-dimensional representation which splits into cyclic summands, 
and (\ref{projector}) is a formula for the projector onto the summand where $\beta$ acts with character $\zeta^j$ (see e.g.~$\S$2.4 in~\cite{FulHar}). We observe that all the formulas
in the lemma are standard properties of these projectors.
\end{proof}
\end{lemma}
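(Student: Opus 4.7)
The plan is to treat this as a standard discrete Fourier/character-projector calculation on the finite cyclic group $\ZZ_k \cong E_\kk(\bullet\!\! \!-\!\!\!-\!\circ)$ generated by $\beta$. The two facts I would extract first are: (a) $\beta$ lies in the centre of ${\sf DB}_\kk(\Sigma,\bullet\!\! \!-\!\!\!-\!\circ)$ (hence of $R$) by Lemma~\ref{central}, so each $\beta_j$, being a $\CC$-linear combination of powers of $\beta$, is also central; and (b) $\beta^k = e$ in $R$, because Theorem~\ref{twocolors} identifies the subgroup $\langle \beta \rangle \subset E_\kk(\bullet\!\! \!-\!\!\!-\!\circ)$ with $\ZZ/k$. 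These two properties allow me to treat the $\beta^i$ as formal symbols with $\beta^{i+i'} = \beta^{(i+i')\bmod k}$ and to commute everything freely.

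Next I would execute the three computations by reducing them to the Fourier identity
\begin{equation*}
\tfrac{1}{k}\sum_{i=0}^{k-1}\zeta^{i\ell} \;=\; \delta_{\ell \equiv 0\,(\mathrm{mod}\,k)},
\end{equation*}
which is standard since $\zeta$ is a primitive $k$-th root of unity. For the completeness relation $\sum_j \beta_j = e$, I swap the order of summation to get $\frac{1}{k}\sum_i \beta^i\bigl(\sum_j \bar\zeta^{ij}\bigr) = \beta^0 = e$. For orthogonality $\beta_j\beta_{j'} = \delta_{jj'}\beta_j$ (which subsumes the idempotency claim), I compute
\begin{equation*}
\beta_j\beta_{j'} \;=\; \frac{1}{k^2}\sum_{i,i'=0}^{k-1}\bar\zeta^{ij+i'j'}\beta^{i+i'},
\end{equation*}
substitute $m \equiv i+i'\pmod k$ and $i' \equiv m-i\pmod k$, and pull out the factor $\bar\zeta^{mj'}$ so that the inner sum becomes $\sum_i \bar\zeta^{i(j-j')} = k\,\delta_{j\equiv j'}$. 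When $j=j'$ this yields $\beta_j$; when $j\neq j'$ (with both in $\{0,\dots,k-1\}$) it yields $0$.

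There is essentially no obstacle here: the statement is a formal consequence of $\beta$ being a central element of finite order $k$ in $R$ and of orthogonality of characters of $\ZZ_k$. The only thing to be careful about is that the sums over $i+i'$ are taken modulo $k$, which is legitimate precisely because $\beta^k = e$; so I would make this reindexing step fully explicit to avoid any confusion between the summation variables and the exponents reduced modulo $k$. No analytic input (completion to $\bar R$, left-regular representation, etc.) is needed at this stage — everything happens already at the level of the group algebra $R$.
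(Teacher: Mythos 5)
Your proposal is correct and is essentially the same argument the paper uses: the paper simply cites the standard projector formula for characters of the cyclic group $\ZZ_k \cong E_\kk(\bullet\!\!\!-\!\!\!-\!\circ)$ acting on its group algebra, and your explicit orthogonality-of-characters computation (together with the observations that $\beta$ is central and satisfies $\beta^k=e$) is precisely the standard verification of that formula. No gaps.
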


For each $0\le j\le k-1 $, we have a surjective ring homomorphism $m_{\beta_j}: R \rightarrow \beta_j R$ given by multiplication $ \gamma \mapsto \beta_j \gamma$; this follows from the idempotency of $\beta_j$, which plays the role of unit in the target ring $ \beta_j R$.  Lemma~\ref{projectors}  shows that  these $k$ homomorphisms fit together to provide an isomorphism to the product ring
\begin{equation} \label{prodring}
R \stackrel{\cong}{\longrightarrow} \prod_{i=0}^{k-1} (\beta_j R).
\end{equation}

We  are interested in the composed ring extension
\begin{equation} \label{extension}
R \hookrightarrow \bar R \hookrightarrow \bar R'' =: \mathcal{N}({\sf DB}_\kk (\Sigma,\Gamma)) =: \mathcal{N},
\end{equation}
corresponding to a completion with respect to the weak operator topology. The isomorphism (\ref{prodring}) induces a splitting $\mathcal{N} \cong \prod_{j=0}^{k-1} \mathcal{N}_j$, where each $\mathcal {N}_j$ can
be understood as the bicommutant of the target $C^*$-algebra
$\beta_j \bar R \subset \bar R$ inside $\mathcal{B}(\ell^2({\sf DB}_\kk(\Sigma, \bullet\!\! \!-\!\!\!-\!\circ)))$. These algebras also have the following description.

\begin{proposition} \label{vNcyclic}
Each factor $\mathcal{N}_j$ with $j\ne 0$ is the enveloping von Neumann algebra of a noncommutative torus of dimension $4g$, generated by unitaries $\UU_{\lambda,\ell}(j)$  (for $\lambda=1,2$ and $\ell = 1,\ldots, 2g$) satisfying the relations
\begin{equation}\label{nctorus}
\UU_{\lambda,\ell} (j) \, \UU_{\lambda',\ell'} (j)  = \exp\left(2 \pi {\rm i} \, \vartheta^{\lambda,\ell}_{\lambda',\ell'}(j) \right)\,  \UU_{\lambda',\ell'} (j)  \, \UU_{\lambda,\ell} (j) ,
\end{equation}
where 
\begin{equation} \label{nctheta}
\vartheta^{\lambda,\ell}_{\lambda',\ell'}(j) = \frac{j}{k} (1-\delta_{\lambda,\lambda'}) J_{\ell, \ell'};
\end{equation}
$\delta$ is the Kronecker delta and $J = \left[ \begin{array}{cc} {\mathbf 0} & {\mathbf 1} \\ -{\mathbf 1} & {\mathbf 0} \end{array} \right]$ the standard $2g \times 2g$ symplectic matrix.
The factor $\mathcal{N}_0$ is the commutative von Neumann algebra
\begin{equation} \label{N0}
\mathcal{N}_0 \cong \mathcal{N}(H_1(\Sigma;\ZZ)^{\oplus 2})\cong L^\infty({\rm Pic}^0(\Sigma)^{\oplus 2}).
\end{equation}

\begin{proof}

Fixing $j\ne 0$, we will show that $\beta_j \bar R$ realises the noncommutative torus in the statement of the theorem (see e.g.~\cite{Rie} or \cite{GVF} for the definition in terms of presentations of $C^*$-algebras); then the statement about $\mathcal{N}_j$ follows from the uniqueness of the enveloping von Neumann algebra (see~\cite{DixCsA}, 12.1.5).

A consequence of Theorem \ref{centralextension} is that the group ${\sf DB}_\kk (\Sigma,\Gamma)$ is amenable (since both $E_\kk(\Sigma)$ and $H_1(\Sigma;\ZZ)^{\oplus r}$ are); see Section 2.6 in~\cite{BroOza}. Amenability is equivalent (by Theorem 2.6.8 in~\cite{BroOza}) to the condition that the reduced group $C^*$-algebra $ \bar R$
is isomorphic to  the universal group $C^*$-algebra $\tilde R$ obtained by completing $R$ with respect to the norm $\| \gamma\|_{\rm univ}: =\sup_{u}\|u(\gamma)\|_{\mathcal{B}(H_u)}$, where the supremum is taken with respect to cyclic $*$-representations $u$ of $R$ (uniquely extending unitary representations of ${\sf DB}_\kk (\Sigma,  \bullet\!\! \!-\!\!\!-\!\circ )$) on some Hilbert space $H_u$. 

Suppose that a unitary representation of  
${\sf DB}_\kk (\Sigma,  \bullet\!\! \!-\!\!\!-\!\circ )$ on some Hilbert space $\mathcal{H}$ is given such that the induced $*$-homomorphism $\pi: R \rightarrow \mathcal{B}(\mathcal{H})$ factors through 
$m_{\beta_j}:R\rightarrow \beta_j R$, as in the bottom row of the diagram
\[
\xymatrix{
\bar R  \ar[r]^{m_{\beta_j}} 
& \beta_\chi \bar R 
\ar@{.>}[rd]^{\bar \rho} &\\
R \ar[r]^{m_{\beta_j}} \ar@{^{(}->}[u]^{\bar \iota}&\beta_\chi R \ar@{^{(}->}[u] \ar[r]^\rho& \mathcal{B}(\mathcal{H})
}
\]
The vertical arrows correspond to the completion to reduced group $C^*$-algebras, but since $\bar R \cong \tilde R$, we obtain by universality (see Proposition 2.5.2 of~\cite{BroOza}) a $*$-homomorphism $\bar\pi:\bar R \rightarrow \mathcal{B}(\mathcal{H})$ lifting the given
$\pi=\rho\circ m_{\beta_j}$, i.e. $\bar\pi \circ \bar\iota= \pi$. Now we claim that this $\bar\pi$ factors through a map $\bar{\rho}$ corresponding to the dotted arrow in the diagram. To see this, we observe that taking any $0\le j' \le k-1$ with $j'\ne j$ leads to
$$
\bar\pi(\beta_{j'} \bar \gamma) = \bar\pi(\beta_{j'}) \bar \pi(\bar \gamma)=  \pi(\beta_{j'}) \bar \pi(\bar \gamma) = 0 \,   \pi(\bar \gamma) =0
$$
for any $\bar \gamma \in \bar R$, and refer to Lemma~\ref{projectors}.

To establish that $\beta_j \bar R$ is the noncommutative torus in the statement, it remains to check that the relations (\ref{nctorus}) and (\ref{nctheta}) match up with the presentation
of the divisor braid group given in Theorem~\ref{present}. Under an identification of unitaries $\beta_j \alpha_{\lambda,\ell} \mapsto \UU_{\lambda,\ell}(j)$, one has
\begin{eqnarray*}
\beta_j \alpha_{\lambda,\ell}^{-1} &=& \frac{1}{k} \sum_{i=1}^k \bar \zeta^{ij} \beta^i \alpha_{\lambda, \ell}^{-1} = \frac{1}{k} \sum_{i=1}^k\bar  \zeta^{(k-i)j}  \beta^{(k-i)} \alpha_{\lambda, \ell}^{-1} \\
&=& \frac{1}{k} \sum_{i=1}^k \bar \zeta^{-ij} \alpha_{\lambda, \ell}^{-1} (\beta^{i})^{-1}  =  \frac{1}{k} \sum_{i=1}^k \bar \zeta^{-ij} (\beta^i \alpha_{\lambda, \ell} )^* \\
&= & (\beta_j \alpha_{\lambda, \ell})^* \mapsto \UU_{\lambda,\ell}^*,
\end{eqnarray*}
\begin{eqnarray*}
\beta_j &=& (\beta_j)^4 = (\beta_j)^4 [\alpha_{1,1}, \alpha_{1,1}] = \beta_j \alpha_{1,1} \beta_j  \alpha_{1,1} \beta_j  \alpha_{1,1}^{-1} \beta_j  \alpha_{1,1}^{-1} \\
&\mapsto & (\UU_{1,1} (j) )^2 (\UU^*_{1,1} (j) )^2 = \mathbb{I}
\end{eqnarray*}
and more generally
\begin{eqnarray*}
\beta_j \beta^l &= & \frac{1}{k}\sum_{i=1}^k \bar  \zeta^{ij} \beta^{i+l}  = \zeta^{jl} \frac{1}{k} \sum_{i=1+l}^{k+l} \bar \zeta^{ij} \beta^i \\
&=& \zeta^{jl} \beta_j \mapsto  \zeta^{jl} \, \mathbb{I}.
\end{eqnarray*}
Thus such an identification extends to a $*$-homomorphism from $\beta_j \bar R$ to any representation of the noncommutative torus with $4g$ unitary generators
$ \UU_{\lambda,\ell}(j)$ and relations as required.

The same argument can be followed through in the case $j=0$, but now all the generators $\UU_{\lambda,\ell}(0)$ commute; thus they can be modelled on multiplication by the coordinate functions of a $4g$-dimensional
torus, acting on the space of all bounded functions on that torus (identified whenever they agree almost everywhere). The torus itself can be naturally interpreted as a product of two copies of the complex torus ${\rm Pic}^0(\Sigma)$ of topologically trivial holomorphic line bundles (or flat ${\rm U}(1)$-connections) on the Riemann surface $\Sigma$. The isomorphisms in
(\ref{N0}) are standard from Fourier analysis.

\end{proof}
\end{proposition}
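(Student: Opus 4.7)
The plan is to combine amenability of $\mathsf{DB}_\kk(\Sigma, \bullet\!\! \!-\!\!\!-\!\circ)$, the block decomposition from Lemma~\ref{projectors}, and a universal-property argument to identify each corner $\beta_j \bar R$ with the noncommutative torus $C^*$-algebra $A_{\vartheta(j)}$, and then pass to the bicommutant. First I would observe that $\mathsf{DB}_\kk(\Sigma, \bullet\!\! \!-\!\!\!-\!\circ)$ is amenable: by Theorem~\ref{centralextension} it is a central extension of $H_1(\Sigma;\ZZ)^{\oplus 2}$ by the finite cyclic group $\ZZ_k$, and amenability is preserved under extensions of amenable groups. Hence the reduced and universal group $C^*$-algebras coincide, $\bar R \cong \tilde R$, by Theorem~2.6.8 of \cite{BroOza}. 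Combined with Lemma~\ref{projectors}, the orthogonal central idempotents $\beta_j$ split $\bar R$ into a direct product $\bar R \cong \prod_{j=0}^{k-1} \beta_j \bar R$ of $C^*$-algebras; taking bicommutants in the left-regular representation produces the desired decomposition $\mathcal{N} \cong \prod_{j=0}^{k-1} \mathcal{N}_j$ with $\mathcal{N}_j = (\beta_j \bar R)''$.

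Next I would verify that, fixing $j$, the elements $\UU_{\lambda,\ell}(j) := \beta_j \alpha_{\lambda,\ell}$ are unitaries in $\beta_j \bar R$ (whose unit is the projection $\beta_j$) satisfying the noncommutative torus relations~(\ref{nctorus})--(\ref{nctheta}). This is a short calculation: from $\beta_j^* = \beta_j$ and $\beta_j^2 = \beta_j$ one obtains $\UU_{\lambda,\ell}(j)^* = \beta_j \alpha_{\lambda,\ell}^{-1}$; from the definition (\ref{projector}) one has $\beta_j \beta^l = \zeta^{jl}\beta_j$; and multiplying relation~(vi) of Theorem~\ref{present} on the left by $\beta_j$ turns the group-theoretic commutator $\alpha_{\lambda,\ell}\alpha_{\mu,\ell'}\alpha_{\lambda,\ell}^{-1}\alpha_{\mu,\ell'}^{-1} = \beta^{(1-\delta_{\lambda\mu})\sharp(a_\ell,a_{\ell'})}$ into a scalar commutator $\UU_{\lambda,\ell}(j)\UU_{\mu,\ell'}(j) = e^{2\pi{\rm i}\, \vartheta^{\lambda,\ell}_{\mu,\ell'}(j)}\UU_{\mu,\ell'}(j)\UU_{\lambda,\ell}(j)$, where $J_{\ell,\ell'} = \sharp(a_\ell,a_{\ell'})$ for the standard symplectic basis. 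Since the family $\{\UU_{\lambda,\ell}(j)\}_{\lambda,\ell}$ generates $\beta_j \bar R$ as a $C^*$-algebra (the $\alpha_{\lambda,\ell}$ together with $\beta$ generate $\mathsf{DB}_\kk$, and $\beta_j \beta = \zeta^j \beta_j$ is already a scalar multiple of the unit), this exhibits $\beta_j \bar R$ as a quotient of the universal noncommutative torus $C^*$-algebra $A_{\vartheta(j)}$.

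The converse direction---that this surjection $A_{\vartheta(j)} \twoheadrightarrow \beta_j \bar R$ is in fact an isomorphism---is the main obstacle, and it relies crucially on the double use of universality enabled by amenability. The argument goes through the diagram displayed in the excerpt: given any cyclic $*$-representation $\rho: A_{\vartheta(j)} \to \mathcal{B}(\mathcal{H})$, the assignments $\alpha_{\lambda,\ell} \mapsto \rho(\UU_{\lambda,\ell})$ and $\beta \mapsto \zeta^j \mathbb{I}$ preserve all the relations of Theorem~\ref{present} (upon setting $\beta = \zeta^j$, each relation of $\mathsf{DB}_\kk$ reduces to a defining relation of $A_{\vartheta(j)}$), and hence define a unitary representation of $\mathsf{DB}_\kk(\Sigma,\bullet\!\! \!-\!\!\!-\!\circ)$ on $\mathcal{H}$. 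By the universal property of $\tilde R \cong \bar R$ this extends to a $*$-homomorphism $\bar\pi:\bar R \to \mathcal{B}(\mathcal{H})$; since $\bar\pi(\beta_{j'})=0$ for $j' \neq j$, the extension factors through $\beta_j \bar R$, producing a section of the quotient map at the level of every cyclic representation. Taking the supremum of operator norms over all such representations and invoking the universal $C^*$-norm on $A_{\vartheta(j)}$ yields injectivity, so $\beta_j \bar R \cong A_{\vartheta(j)}$.

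Finally, I would pass to von Neumann algebras: since the enveloping von Neumann algebra of a $C^*$-algebra is unique up to isomorphism (Dixmier \cite{DixCsA}, 12.1.5), $\mathcal{N}_j$ is the enveloping von Neumann algebra of $A_{\vartheta(j)}$, completing the claim for $j \neq 0$. For $j = 0$, the matrix $\vartheta(0)$ vanishes, so $A_{\vartheta(0)}$ is commutative and isomorphic to $C^*(\ZZ^{4g}) \cong C((S^1)^{4g})$ by Fourier analysis; its enveloping von Neumann algebra is $L^\infty((S^1)^{4g})$. Using the identification $\mathrm{Pic}^0(\Sigma) \cong H^1(\Sigma;\RR)/H^1(\Sigma;\ZZ) \cong (S^1)^{2g}$ coming from Poincar\'e duality, this produces precisely (\ref{N0}). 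The essential use of amenability is what allows the universal property of $\tilde R$ (needed to construct $\bar\pi$ from $\rho$) to transfer to $\bar R$; without it, the surjection $A_{\vartheta(j)} \twoheadrightarrow \beta_j \bar R$ could in principle have nontrivial kernel.
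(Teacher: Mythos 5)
Your proposal is correct and follows essentially the same route as the paper: amenability of the central extension gives $\bar R \cong \tilde R$, the idempotents $\beta_j$ of Lemma~\ref{projectors} split the algebra, the explicit computations with $\beta_j\alpha_{\lambda,\ell}$ and relation (vi) of Theorem~\ref{present} identify the corner $\beta_j\bar R$ with the universal noncommutative torus, and the $j=0$ case is handled by Fourier analysis. Your framing of the key step as ``surjection from $A_{\vartheta(j)}$ plus injectivity via the universal norm'' is just a slightly more explicit articulation of the paper's lifting diagram, not a different argument.
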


It is clear that the statement in this proposition generalises in an almost obvious fashion to any very composite 
divisor braid group ${\sf DB}_\kk(\Sigma,\Gamma)$ as in Theorem~\ref{present}, if we assume that this group has finite centre. If $r=|{\rm Sk}^0(\Gamma)|$ is the number of colours as before, then the associated von Neumann algebra $\mathcal{N}({\sf DB}_\kk(\Sigma,\Gamma))$ is a finite product of $2gr$-dimensional  noncommutative tori  $\mathcal{N}_\chi$ over the group of characters $E_\kk(\Gamma)^*$ of the centre, but with a commutative factor  $\mathcal{N}_0 = L^\infty({\rm Pic}^0(\Sigma)^{\oplus r})$ over the trivial character $\chi=0$; 
all these tori $\mathcal{N}_\chi$ (as many as the order of the centre) have rational noncommutative parameters. These can be described in relation to any basis of $H_1(\Sigma;\ZZ)$ by quantities
$$\vartheta_{\lambda',\ell'}^{\lambda,\ell}(\chi) \in \QQ,\qquad \text{ for }\; \lambda,\lambda' =1,\ldots, r,\; \ell,\ell' =1,\ldots, 2g\,  \text{ and }\chi \in E_\kk(\Gamma)^*. $$ 
Once again, the square matrices $[\vartheta_{\lambda',\ell'}^{\lambda,\ell}(\chi)]_{\lambda,\ell; \lambda',\ell' =1}^{r,2g}$ are skew-symmetric under swapping $\ell \leftrightarrow \ell'$,
symmetric under swapping $\lambda \leftrightarrow \lambda'$, and have zero entries for $\lambda=\lambda'$. Understanding their dependence on $\chi$ is  equivalent  to the calculation of the torsion
group $E_\kk(\Gamma)$, as discussed in Section~\ref{sec:diophantine}. 
A computation of $E_\kk(\Gamma)$ in the sense of primary decomposition (see e.g.~\cite{HarHaw}, Theorem~8.14) 
leads to a description of the Pontryagin dual $E_\kk(\Gamma)^*$ (see~\cite{GRS}, Chapter~4) as a sum of finite cyclic groups. Then one may choose a symplectic basis for $H_1(\Sigma;\ZZ)$, factorise the von Neumann algebra over the set of primary cyclic summands, and refer to Proposition~\ref{vNcyclic} for a concrete description of the corresponding splitting as finite product of noncommutative tori.

Our final result describes what happens in the general case, allowing for a centre $E_\kk(\Gamma)$ of the divisor braid group with nontrivial free part.

\begin{thm}
Let $(\Gamma, \kk)$ be a very composite negative colour scheme in $r$ colours. 
Then there is a product decomposition
\begin{equation}\label{generalnctori}
\mathcal{N}({\sf DB}_\kk (\Sigma,\Gamma)) = \prod_{\chi \in ({\rm Tor}\, E_\kk(\Gamma) )^*}  \int^\oplus_{ {\rm U}(1)^{{\rm rk} E_\kk (\Gamma)}}  \mathcal{N}_{\chi,\uu} \, {\rm d}\uu
\end{equation}
over the Pontryagin dual of the centre $E_\kk(\Gamma)$, where $\mathcal{N}_{\chi,\uu}$ is the enveloping von Neumann algebra of a $2gr$-dimensional noncommutative torus generated by unitaries
$\UU_{\lambda,\ell}(\chi,\uu)$ ($\lambda=1,\ldots, r$ and $\ell=1,\ldots,2g$)
for $(\chi,\uu)\ne (0,(1,\ldots,1))$, and $\mathcal{N}_{0,(1,\ldots,1)}\cong L^\infty({\rm Pic}^0(\Sigma)^{\oplus r})$.
In  (\ref{generalnctori}), a presentation of the noncommutative torus associated to $\mathcal{N}_{\chi,\uu}$ with rational noncommutative parameters 
$\vartheta_{\lambda',\ell'}^{\lambda,\ell}(\chi,\uu)$ in all the relations
$$ \UU_{\lambda,\ell}(\chi,\uu) \UU_{\lambda',\ell'}(\chi,\uu) =  \exp\left( 2 \pi {\rm i} \,  \vartheta_{\lambda',\ell'}^{\lambda,\ell}(\chi,\uu) \right) \UU_{\lambda',\ell'}(\chi,\uu) \UU_{\lambda,\ell}(\chi,\uu)$$
can be given if and only if $\uu$ is a torsion element of the ${\rm rk}\, E_\kk(\Gamma)$-torus. 

\begin{proof}
The first product in (\ref{generalnctori}) corresponds to the factorisation sketched after Proposition~\ref{vNcyclic}; it can be obtained by considering the elements of $\CC[{\rm Tor}\, E_\kk(\Gamma)]$
\begin{equation}\label{projbetai}
\beta_\chi:= \frac{1}{|{\rm Tor}\, E_\kk(\Gamma)|} \sum_{\gamma \in {\rm Tor}\, E_\kk(\Gamma) } {\rm e}^{-{\rm i} \chi(\gamma)} \gamma \qquad \text{ for }\chi \in ({\rm Tor}\, E_\kk(\Gamma))^*
\end{equation}
in analogy to (\ref{projector}). The properties
\begin{equation}\label{projprops}
\beta_\chi^2 = \beta_\chi, \quad \beta_\chi \beta_{\chi'}=0 \text{ for }\chi \ne \chi'  \quad \text{ and } \quad \sum_{\chi \in ({\rm Tor}\, E_\kk(\Gamma))^*} \beta_\chi=e
\end{equation}
can be checked exactly as in Lemma~\ref{projectors}. The idempotents (\ref{projbetai}) provide projection operators in the reduced group $C^*$-algebra acting on the Hilbert space
$$ 
\mathcal{H}:=\ell^2({\sf DB}_\kk (\Sigma,\Gamma)),
$$ 
and taking the bicommutant of their ranges we obtain von Neumann subalgebras
  $\mathcal{N}_\chi = \beta_\chi  \mathcal{N}({\sf DB}_\kk (\Sigma,\Gamma))$, as well as a finite product decomposition $\mathcal{N}({\sf DB}_\kk (\Sigma,\Gamma)) \cong \prod_{\chi \in ({\rm Tor}\, E_\kk(\Gamma))^*} \mathcal{N}_{\chi}$, which parallels our discussion in the two-colour case.

To factorise $\mathcal{N}_\chi$ (or the corresponding factors of the group $C^*$-algebra) further, we need to employ the characters of the free part of the centre; however, they do not supply a family of mutually orthogonal projectors as in the discussion of the
torsion part (see \cite{DixVNA}, Section 2 of Chapter I in Part I).
Instead, one needs to  recast each $\mathcal{N}_{\chi}$ as a (measurable) field of von Neumann algebras (see also \cite{DixVNA}, Chapter~3 in Part~II) over the torus ${\rm U}(1)^{{\rm rk} E_\kk(\Gamma)}$ equipped with its Haar measure.

The decomposition of each $\mathcal{N}_{\chi}$ as a field of von Neumann algebras can be achieved via the spectral decomposition of the Cartan subalgebra 
$$\mathcal A_\chi := \mathcal{N}_\chi \cap \mathcal{N}'_\chi \subset \mathcal{N}_\chi.$$
This construction can be understood via the technique of virtual eigenvectors introduced originally by von Neumann, and which is presented in some detail in part (i) of Section~24 (in \S 6) of Chapter~XI in reference~\cite{God}. Applying this
machinery, one decomposes the subspace $\mathcal{H}_\chi := {\rm im}\, \pi_L(\beta_\chi)  \subset \mathcal{H}$, where the operators representing $\beta \in {\rm Tor}\, E_\kk(\Gamma)$ act via the character $\chi$, as a direct integral of Hilbert spaces (see part (ii) of the same Section in~\cite{God}) over the spectrum (i.e.~space of maximal ideals) of $\mathcal{A}_\chi$. Because ${\mathcal A}_\chi \cong \mathcal{N}(\ZZ^{\oplus {\rm rk} E_\kk(\Gamma)})$, this spectrum can be identified with the torus ${\rm U}(1)^{{\rm rk} E_\kk(\Gamma)}$ (see e.g. \S 19 and Chapter IV in reference~\cite{GRS}), so one obtains a direct integral splitting
\begin{equation}\label{Hilbsplit}
\mathcal{H}_\chi = \int^\oplus_{{\rm U}(1)^{{\rm rk} E_\kk(\Gamma)}} \mathcal{H}_\chi(\uu)\, {\rm d}{\uu}.
\end{equation}
Strictly speaking, in the general theory the factors $\mathcal{H}_\chi(\uu)$ depend on a choice of measure, but one can make this choice natural by taking ${\rm d}{\uu}$ in
(\ref{Hilbsplit}) to be the normalised (probability) Haar measure on ${\rm U}(1)^{{\rm rk} E_\kk(\Gamma)}$ (see \S 24  of~\cite{GRS}).

The orthogonal decomposition $\mathcal{H}= \bigoplus_{\chi \in ({\rm Tor} E_\kk(\Gamma))^*} \mathcal{H}_\chi $ obtained from the projectors $\beta_\chi$, together with (\ref{Hilbsplit}), combine into a splitting of the whole
of $\mathcal{H}$ for which all the operators $\pi_L(\beta)\in \mathcal{B}(\mathcal{H})$ for $\beta \in E_\kk(\Gamma)$ are diagonal, and thereby the $C^*$-algebra of the centre identifies with the algebra of continuous functions on its Pontryagin dual $({\rm Tor}\, E_\kk(\Gamma))^* \times {\rm U}(1)^{{\rm rk} E_\kk(\Gamma)}$. Let us denote by $\beta^{\tt t} \in {\rm Tor}\, E_\kk(\Gamma)$ the torsion component of an element $\beta \in E_\kk(\Gamma)$. Then by construction $\mathcal{H}_\chi$ is an invariant subspace for $\pi_L(\beta)$ and
\begin{equation}\label{diagpichi}
\pi_L(\beta)|_{\mathcal{H}_\chi} = {\rm e}^{{\rm i} \chi(\beta^{\tt t})} \int^\oplus_{{\rm U}(1)^{{\rm rk} E_\kk(\Gamma)}} \uu(\beta-\beta^{\tt t})\,  {\rm id}_{\mathcal{H}_\chi (\uu)} \, {\rm d}\uu.
\end{equation}
As anticipated in the statement of our theorem, in (\ref{diagpichi}) we are using multiplicative notation for characters $\uu$ of the free part of the centre $E_\kk(\Gamma)$ for convenience (see \S 21 in~\cite{GRS}), but still
using additive notation for the torsion part.

More generally, any operator $\pi_L(\alpha) \in \mathcal{B}(\mathcal{H})$ for $\alpha \in {\sf DB}_\kk(\Gamma)$ decomposes with respect to the splitting obtained for $\mathcal H$; i.e. each $\mathcal{H}_\chi$ is invariant for $\pi_L(\alpha)$ and
$$
\pi(\alpha)|_{\mathcal{H}_\chi} = \int^\oplus_{{\rm U}(1)^{{\rm rk} E_\kk(\Gamma)}}  U_\chi(\uu) \, {\rm d}\uu,
$$
where $U_\chi(\uu)$ is a unitary operator on $\mathcal{H}_\chi(	\uu)$ for each $\uu \in {\rm U}(1)^{{\rm rk} E_\kk(\Gamma)}$. This
follows directly from Theorem~1 in \S 2.5 of Part II of reference~\cite{DixVNA}, as these operators generate the commutant of  $C^*(E_\kk(\Gamma))$.
By von Neumann's bicommutant theorem (see e.g.~Theorem 1.2.1 in \cite{Arv}), these operators generate the group von Neumann algebra of ${\sf DB}_\kk(\Gamma)$, hence the decomposition (\ref{generalnctori}) follows.

To obtain a more precise description of each integrand $\mathcal{N}_{\chi,\uu}$, we proceed in analogy with our argument in Proposition~\ref{vNcyclic}. Recall from Theorem~\ref{present} that a set of generators of the divisor braid group can be written as
$\alpha_{\lambda,\ell} \in {\sf DB}_\kk(\Gamma)$, for $\lambda \in {\rm Sk}^0(\Gamma)$ and $\ell$ an index for a $\ZZ$-basis of $H_1(\Sigma;\ZZ)$ (which we assume symplectic). For a fixed character
$\chi$ of ${\rm Tor}\, E_\kk(\Gamma)$, consider the measurable field $\mathcal{C}_\chi$ of $C^*$-algebras over ${\rm U}(1)^{{\rm rk} E_\kk(\Gamma)}$ generated by unitaries $\UU_{\lambda,\ell}(\chi)= \int^\oplus \UU_{\lambda,\ell}(\chi, \uu) \, {\rm d}\uu$  with labels as above, and whose commutators are assumed diagonalisable:
$$
[\UU_{\lambda,\ell} (\chi), \UU_{\lambda',\ell'}(\chi)]= \int^\oplus_{{\rm U}(1)^{{\rm rk} E_\kk(\Gamma)}}  \exp\left( 2 \pi {\rm i} \,  \vartheta_{\lambda',\ell'}^{\lambda,\ell}(\chi,\uu) \right)  \mathbb{I}(\uu) \, {\rm d}\uu.
 $$
Then the assignment $\pi(\alpha_{\lambda,\ell})|_{\mathcal{H}_\chi} \mapsto \UU_{\lambda,\ell}(\chi)$ provides an isomorphism $\mathcal{N}_\chi\cong \mathcal{C}''_\chi$ if one sets
$$
 \vartheta_{\lambda',\ell'}^{\lambda,\ell}(\chi,\uu) =  \frac{ J_{\ell,\ell'}}{2\pi} \chi([\alpha_{\lambda,\ell},\alpha_{\lambda',\ell'}]^{\tt t}) \arg \uu ([\alpha_{\lambda,\ell},\alpha_{\lambda',\ell'}]-[\alpha_{\lambda,\ell},\alpha_{\lambda',\ell'}]^{\tt t}).
$$
From this equality, we deduce all the remaining assertions in the statement of the theorem.

\end{proof}
\end{thm}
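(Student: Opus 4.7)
The plan is to decompose $\mathcal{N}({\sf DB}_\kk (\Sigma,\Gamma))$ in two stages, handling the torsion and free parts of the centre $E_\kk(\Gamma)$ separately. The outer finite product in (\ref{generalnctori}) will come from discrete projectors attached to characters of ${\rm Tor}\, E_\kk(\Gamma)$, as in the two-colour case treated in Proposition~\ref{vNcyclic}. For each fixed $\chi\in ({\rm Tor}\, E_\kk(\Gamma))^*$, the idempotents
\[
\beta_\chi:= \frac{1}{|{\rm Tor}\, E_\kk(\Gamma)|} \sum_{\gamma \in {\rm Tor}\, E_\kk(\Gamma) } {\rm e}^{-{\rm i} \chi(\gamma)} \gamma
\]
lie in $\CC[{\rm Tor}\, E_\kk(\Gamma)] \subset R:=\CC[{\sf DB}_\kk (\Sigma,\Gamma)]$, are central and orthogonal, and sum to the unit. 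Passing to the reduced $C^*$-algebra and taking bicommutants on the corresponding invariant subspaces of $\mathcal H := \ell^2({\sf DB}_\kk (\Sigma,\Gamma))$ produces the outer finite product $\prod_{\chi \in ({\rm Tor}\, E_\kk(\Gamma))^*} \mathcal{N}_\chi$ of von Neumann subalgebras of $\mathcal{N}({\sf DB}_\kk (\Sigma,\Gamma))$.

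To split each $\mathcal{N}_\chi$ further over the characters $\uu$ of the free part $\ZZ^{{\rm rk}\, E_\kk (\Gamma)}\subset E_\kk(\Gamma)$, the orthogonal projector strategy is no longer available because this group is infinite. Instead, I would identify the Cartan subalgebra $\mathcal{A}_\chi := \mathcal{N}_\chi \cap \mathcal{N}'_\chi$, which contains the image of the group von Neumann algebra of the free part of $E_\kk(\Gamma)$; since this is isomorphic to $\mathcal{N}(\ZZ^{{\rm rk}\, E_\kk(\Gamma)})\cong L^\infty({\rm U}(1)^{{\rm rk}\, E_\kk(\Gamma)})$, its Gelfand spectrum is the torus ${\rm U}(1)^{{\rm rk}\, E_\kk(\Gamma)}$ equipped with its normalised Haar measure. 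Spectral decomposition of $\mathcal{A}_\chi$ then gives a disintegration of $\mathcal{H}_\chi := {\rm im}\, \pi_L(\beta_\chi) \subset \mathcal{H}$ as a direct integral $\int^\oplus \mathcal{H}_\chi(\uu)\, {\rm d}\uu$, and correspondingly $\mathcal{N}_\chi = \int^\oplus \mathcal{N}_{\chi,\uu}\,{\rm d}\uu$. Standard direct-integral machinery (e.g.\ \cite{DixVNA}, Part~II, Chapter~3) ensures that an arbitrary element of $\mathcal{N}_\chi$, being in the commutant of $\mathcal{A}_\chi$, is indeed diagonal with respect to this disintegration.

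To identify the fibres $\mathcal{N}_{\chi,\uu}$, I would mimic the argument of Proposition~\ref{vNcyclic}: using amenability of ${\sf DB}_\kk (\Sigma,\Gamma)$ (which follows from Theorem~\ref{centralextension} since both $E_\kk(\Gamma)$ and $H_1(\Sigma;\ZZ)^{\oplus r}$ are amenable) one has $\bar R \cong \tilde R$, so any set of unitaries on a Hilbert space satisfying the commutation relations of ${\sf DB}_\kk(\Sigma,\Gamma)$ extends uniquely to a $*$-representation of $\bar R$. Feeding the presentation from Theorem~\ref{present} into the chain of isomorphisms $\pi_L(\alpha_{\lambda,\ell})|_{\mathcal{H}_\chi} \mapsto \UU_{\lambda,\ell}(\chi) = \int^\oplus \UU_{\lambda,\ell}(\chi,\uu)\,{\rm d}\uu$, the defining relation $[\alpha_{\lambda,\ell},\alpha_{\lambda',\ell'}] = \beta_{\lambda,\lambda'}^{\,\sharp(a_\ell,a_{\ell'})}$ translates to commutation up to the scalar
\[
\exp\!\Bigl( 2\pi {\rm i}\,\vartheta^{\lambda,\ell}_{\lambda',\ell'}(\chi,\uu)\Bigr),\qquad
\vartheta^{\lambda,\ell}_{\lambda',\ell'}(\chi,\uu) = \frac{J_{\ell,\ell'}}{2\pi}\Bigl( \chi(c^{\tt t}) + \arg \uu (c-c^{\tt t})\Bigr)
\]
with $c=[\alpha_{\lambda,\ell},\alpha_{\lambda',\ell'}]$ and $c^{\tt t}$ its torsion component; and the trivial relations following from (i)--(iv) in Theorem~\ref{present} make each $\UU_{\lambda,\ell}(\chi,\uu)$ unitary. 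The $\vartheta^{\lambda,\ell}_{\lambda',\ell'}(\chi,\uu)$ are rational precisely when $\uu$ is a torsion element of ${\rm U}(1)^{{\rm rk} E_\kk(\Gamma)}$ (equivalently, when $\uu(c-c^{\tt t})$ is a root of unity for every such commutator), while the trivial fibre at $(\chi,\uu)=(0,(1,\ldots,1))$ is abelian and reduces to $L^\infty({\rm Pic}^0(\Sigma)^{\oplus r})$ via Fourier analysis as in~(\ref{N0}).

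The main obstacle I expect is the bookkeeping of measurability in the direct-integral step: showing that the field $\uu\mapsto \mathcal{N}_{\chi,\uu}$ is measurable in the sense of~\cite{DixVNA} and that the candidate noncommutative-torus generators assemble into honest measurable sections of a field of $C^*$-algebras on ${\rm U}(1)^{{\rm rk} E_\kk(\Gamma)}$. Once this measurable structure is in place, the uniqueness of the enveloping von Neumann algebra of each noncommutative torus (see \cite{DixCsA}, 12.1.5) upgrades the fibrewise $C^*$-isomorphisms to the claimed von Neumann isomorphism, and the statement follows by combining the outer product over $({\rm Tor}\, E_\kk(\Gamma))^*$ with the direct integral over the dual torus of the free part.
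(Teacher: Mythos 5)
Your proposal follows essentially the same route as the paper's proof: the same torsion projectors $\beta_\chi$ yield the outer finite product, the same Cartan subalgebra $\mathcal{A}_\chi=\mathcal{N}_\chi\cap\mathcal{N}_\chi'$ with spectrum ${\rm U}(1)^{{\rm rk}\, E_\kk(\Gamma)}$ yields the direct-integral splitting, and the fibres are identified via amenability and the presentation of Theorem~\ref{present}, with the measurability bookkeeping you flag being exactly what the paper delegates to the cited references. The one substantive divergence is your formula for $\vartheta^{\lambda,\ell}_{\lambda',\ell'}(\chi,\uu)$, which \emph{adds} the torsion and free contributions where the paper's displayed formula multiplies them; your additive version is the one consistent with the scalar ${\rm e}^{{\rm i}\chi(\beta^{\tt t})}\,\uu(\beta-\beta^{\tt t})$ by which the paper itself says a central element acts fibrewise.
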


\end{document}